\numberwithin{equation}{subsection}
\newtheorem{theorem}{Theorem}[section]
\newtheorem*{theorem*}{Theorem}
\newtheorem{lemma}[theorem]{Lemma}
\newtheorem{proposition}[theorem]{Proposition}
\newtheorem{corollary}[theorem]{Corollary}
\newtheorem*{corollary*}{Corollary}
\theoremstyle{remark}
\newtheorem{definition}[theorem]{Definition}
\theoremstyle{remark}
\newtheorem{example}[theorem]{Example}
\theoremstyle{remark}
\newtheorem{remark}[theorem]{Remark}
\theoremstyle{remark}
\newtheorem{notation}[theorem]{Notation}
\DeclareMathOperator{\Mot}{Mot}
\DeclareMathOperator{\Mix}{KPM}
\DeclareMathOperator{\Mixtr}{KMM}
\DeclareMathOperator{\hocolim}{hocolim}
\DeclareMathOperator{\incl}{incl}
\DeclareMathOperator{\id}{id}
\newcommand{\Ho}{\mathsf{Ho}}
\newcommand{\ko}{\: , \;}
\newcommand{\too}{\longrightarrow}
\newcommand{\gm}{\mathsf{gm}}
\newcommand{\abs}{\mathsf{abs}}
\newcommand{\dg}{\mathsf{dg}}
\newcommand{\dgHo}{\mathsf{H}^0}
\newcommand{\cAo}{{\mathcal A}^\op}
\newcommand{\cBo}{{\mathcal B}^\op}
\newcommand{\cA}{{\mathcal A}}
\newcommand{\cB}{{\mathcal B}}
\newcommand{\cC}{{\mathcal C}}
\newcommand{\cD}{{\mathcal D}}
\newcommand{\cE}{{\mathcal E}}
\newcommand{\cK}{{\mathcal K}}
\newcommand{\cL}{{\mathcal L}}
\newcommand{\cM}{{\mathcal M}}
\newcommand{\cN}{{\mathcal N}}
\newcommand{\cR}{{\mathcal R}}
\newcommand{\cS}{{\mathcal S}}
\newcommand{\cT}{{\mathcal T}}
\newcommand{\cU}{{\mathcal U}}
\newcommand{\cV}{{\mathcal V}}
\newcommand{\cW}{{\mathcal W}}
\newcommand{\bbC}{\mathbb{C}}
\newcommand{\bbD}{\mathbb{D}}
\newcommand{\bbL}{\mathbb{L}}
\newcommand{\bbK}{I\mspace{-6.mu}K}
\newcommand{\bbR}{\mathbb{R}}
\newcommand{\bbT}{\mathbb{T}}
\newcommand{\bbN}{\mathbb{N}}
\newcommand{\bbZ}{\mathbb{Z}}
\newcommand{\op}{\mathsf{op}} 
\newcommand{\ie}{\textsl{i.e.}\ }
\newcommand{\eg}{\textsl{e.g.}}
\newcommand{\Hmo}{\mathsf{Hmo}}
\newcommand{\tri}{\mathsf{tri}} 
\newcommand{\perf}{\mathsf{perf}} 
\newcommand{\Flat}{\mathsf{flat}}
\newcommand{\f}{\mathsf{f}}
\newcommand{\flt}{\mathsf{flt}}
\newcommand{\p}{\mathsf{p}}
\newcommand{\loc}{\mathsf{loc}}
\newcommand{\sSet}{\mathsf{sSet}}
\newcommand{\Set}{\mathsf{Set}}
\newcommand{\Map}{\mathsf{Map}}
\newcommand{\Cat}{\mathsf{Cat}} 
\newcommand{\CAT}{\mathsf{CAT}} 
\newcommand{\Hom}{\mathsf{Hom}} 
\newcommand{\rep}{\mathsf{rep}} 
\newcommand{\Loc}{\mathsf{L}}
\newcommand{\St}{\mathsf{St}}
\newcommand{\stab}{\mathsf{stab}}
\newcommand{\Fun}{\mathsf{Fun}} 
\newcommand{\pref}[1]{\smash{\widehat{#1}}}
\newcommand{\spref}[1]{\smash{\mathsf{s}\, \widehat{#1}}}
\newcommand{\dgcat}{\mathsf{dgcat}}
\newcommand{\HO}{\mathsf{HO}} 
\newcommand{\Mloc}{\Mot^{\mathsf{loc}}_{\dg}}
\newcommand{\Mlocu}{\Mot^{\mathsf{uloc}}_{\dg}}
\newcommand{\cMloc}{\cM ot^{\mathsf{loc}}_{\dg}}
\newcommand{\cMlocu}{\cM ot^{\mathsf{uloc}}_{\dg}}
\newcommand{\Uloc}{\cU^{\mathsf{loc}}_{\dg}}
\newcommand{\Spt}{\mathsf{Sp}}
\newcommand{\Sp}{\mathsf{Sp}^{\Sigma}}
\newcommand{\uHom}{\underline{\mathsf{Hom}}}
\newcommand{\HomC}{\uHom_{!}}
\newcommand{\HomL}{\uHom_{\mathsf{loc}}}
\newcommand{\dgS}{\cS}
\newcommand{\dgD}{\cD}
\newcommand{\internalcomment}[1]{}
\title[Symmetric monoidal structure on non-commutative motives]
{Symmetric monoidal structure on \\non-commutative motives}
\author[D.-C.~Cisinski]{Denis-Charles Cisinski}
\thanks{The first named author was partially supported by the ANR (grant No.~ANR-07-BLAN-042). The second named author was partially supported by the Est{\'i}mulo {\`a} Investiga{\c c}{\~a}o Award 2008 - Calouste Gulbenkian Foundation.}
\address{Institut Galil{\'e}e\\ Universit{\'e} Paris 13\\
99, Av. Jean-Baptiste Cl{\'e}ment\\ 93430 Villetaneuse\\ France}
\email{cisinski@math.univ-paris13.fr}
\urladdr{http://www-math.univ-paris13.fr/~cisinski/}
\author[G.~Tabuada]{Gon{\c c}alo~Tabuada}
\address{Departamento de Matem{\'a}tica e CMA, FCT-UNL\\ Quinta da Torre\\ 2829-516 Caparica\\ Portugal}
\email{tabuada@fct.unl.pt}
\date{\today}
\subjclass{19D35, 19D55, 18D10, 18D20, 19E08}
\keywords{Non-commutative motives, Non-commutative algebraic geometry, Non-connective algebraic $K$-theory,
Secondary $K$-theory, Hochschild homology, Negative cyclic homology, Periodic cyclic homology}
\begin{document}
\frontmatter
\begin{abstract}
In this article we further the study of non-commutative motives, initiated in \cite{CT,Duke}.
Our main result is the construction of a symmetric monoidal structure on the localizing motivator
\smash{$\Mloc$} of dg categories. As an application, we obtain\,: (1) a computation of the spectra of
morphisms in \smash{$\Mloc$} in terms of non-connective algebraic  $K$-theory; (2) a fully-faithful embedding
of Kontsevich's category $\Mixtr_k$ of non-commutative mixed motives into the
base category \smash{$\Mloc(e)$} of the
localizing motivator;
{(3) a simple construction of the Chern character maps from non-connective algebraic $K$-theory to
negative and periodic cyclic homology}; (4) a precise connection between To{\"e}n's secondary
$K$-theory and the Grothendieck ring of $\Mixtr_k$; (5) a description of the
Euler characteristic {in $\Mixtr_k$} in terms of Hochschild homology.
\end{abstract}
\maketitle
\tableofcontents

\section*{Introduction}
\subsection*{Dg categories}
A {\em differential graded (=dg) category}, over a commutative base ring $k$, is a category enriched over 
complexes of $k$-modules (morphisms sets are such complexes)
in such a way that composition fulfills the Leibniz rule\,:
$d(f\circ~g)=(df)\circ g+(-1)^{\textrm{deg}(f)}f\circ(dg)$.
Dg categories enhance and solve many of the technical problems inherent to triangulated categories;
see Keller's ICM adress~\cite{ICM}. In {\em non-commutative algebraic geometry} in the sense of
Bondal, Drinfeld, Kapranov, Kontsevich, To{\"e}n,
Van den Bergh, $\ldots$ \cite{BKap,Bvan,Drinfeld,Chitalk,IAS,Kontsevich-Langlands,Lattice,Toen},
they are considered as dg-enhancements of derived categories of (quasi-)coherent sheaves on a
hypothetic non-commutative space.

\subsection*{Localizing invariants}
All the classical (functorial) invariants, such as Hochschild homology, cyclic homology and its variants
(periodic, negative, $\ldots$), algebraic $K$-theory, and even topological Hochschild homology and
topological cyclic homology (see \cite{AGT}), extend naturally from $k$-algebras to dg categories.
In order to study {\em all} these classical invariants simultaneously, the second named author
introduced in \cite{Duke} the notion of {\em localizing invariant}. This notion, that we now recall,
makes use of the language of Grothendieck derivators~\cite{Grothendieck}, a formalism which
allows us to state and prove precise universal properties; consult Appendix~\ref{appendix:A}.
Let $\mathit{L}: \HO(\dgcat) \to \bbD$ be a morphism of derivators, from the
derivator associated to the Morita model structure of dg categories (see \S\ref{sub:Modelstructure}), to
a triangulated derivator
(in practice, $\bbD$ will be the derivator associated to a stable model category $\cM$,
and $\mathit{L}$ will come from a functor $\dgcat\to\cM$ which sends derived Morita equivalences to weak
equivalences in $\cM$).
We say that $\mathit{L}$ is a {\em localizing invariant} (see Definition~\ref{not:Linv}) if it preserves filtered homotopy
colimits as well as the terminal object, and sends Drinfeld exact sequences of dg categories
\begin{eqnarray*}
\cA \too \cB \too \cC & \mapsto & \mathit{L}(\cA) \too \mathit{L}(\cB) \too \mathit{L}(\cC) \too \mathit{L}(\cA)[1]
\end{eqnarray*}
to distinguished triangles in the base category $\bbD(e)$ of $\bbD$.
Thanks to the work of Keller~\cite{Exact,Exact1}, Schlichting~\cite{Marco},
Thomason-Trobaugh~\cite{Thomason}, and Blumberg-Mandell~\cite{Blum-Mandell},
all the mentioned invariants satisfy localization\footnote{In the case of algebraic
$K$-theory we consider its non-connective version.}, and so give rise to localizing invariants. 
In \cite{Duke}, the second named author proved that there exists a localizing invariant
$$ \Uloc: \HO(\dgcat) \too \Mloc\,,$$
with values in a strong triangulated derivator (see \S\ref{sub:properties}),
such that given any strong triangulated derivator $\bbD$, we have an induced equivalence of categories
\begin{equation*}
\label{eq:intro} (\Uloc)^{\ast}: \HomC(\Mloc, \bbD) \stackrel{\sim}{\too} \HomL(\HO(\dgcat), \bbD)\,.
\end{equation*}
The left-hand side denotes the category of homotopy colimit preserving morphisms of derivators,
and the right-hand side the category of localizing invariants.

Because of this universality property, which is a reminiscence of motives,
$\Uloc$ is called the {\em universal localizing invariant}, $\Mloc$ the {\em localizing motivator},
and the base category $\Mloc(e)$ of the localizing motivator the
{\em category of non-commutative motives over $k$}.

\subsection*{Symmetric monoidal structure}
The purpose of this article is to develop a new ingredient in the theory of non-commutative motives:
{\em symmetric monoidal structures}. The tensor product extends naturally from $k$-algebras to
dg categories, giving rise to a symmetric monoidal structure on $\HO(\dgcat)$; see Theorem~\ref{thm:Mdgcat}.
Therefore, it is natural to consider localizing invariants which are symmetric monoidal. {Examples include Hochschild homology and the mixed and periodic complex constructions; see Examples~\ref{ex:HH}-\ref{ex:Periodiccomp}}. The main result of this article is the following.
\begin{theorem}{(see Theorem~\ref{thm:main-mon})}\label{thm:intro1}
The localizing motivator $\Mloc$ carries a canonical symmetric monoidal structure $-\otimes^{\bbL}-$,
making the universal localizing
invariant $\Uloc$ symmetric monoidal. Moreover, this tensor product preserves homotopy colimits
in each variable and is characterized by the following universal property: given any strong
triangulated derivator $\bbD$, endowed with a monoidal structure which preserves homotopy colimits,
we have an induced equivalence
of categories
\begin{equation*}
(\Uloc)^{\ast}: \HomC^{\otimes}(\Mloc, \bbD) \stackrel{\sim}{\too} \HomL^{\otimes}(\HO(\dgcat), \bbD)\,,
\end{equation*}
where the left-hand side stands for the category of symmetric monoidal homotopy colimit
preserving morphisms of derivators, while the right-hand side stands for the
category symmetric monoidal morphisms of derivators which are also localizing
invariants; see \S\ref{sub:M1}. Furthermore, $\Mloc$ admits an explicit symmetric monoidal Quillen model.
\end{theorem}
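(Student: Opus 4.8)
The plan is to lift the entire tower of universal constructions producing $\Uloc\colon \HO(\dgcat)\to\Mloc$ to the symmetric monoidal setting, one layer at a time, and then to extract the universal property by splicing together the universal properties of the individual layers. First I would pin down a symmetric monoidal model for dg categories. By Theorem~\ref{thm:Mdgcat} the Morita model structure on $\dgcat$ is symmetric monoidal, but the tensor product of dg categories only computes the correct homotopy type on the full subcategory $\dgcatf$ of dg categories whose $\Hom$-complexes are $k$-flat (equivalently, cofibrant); I would therefore work with $\dgcatf$ throughout. The decisive structural fact to establish at this stage is that $-\otimes\cD\colon\dgcatf\to\dgcatf$ sends Drinfeld exact sequences to Drinfeld exact sequences (which fails over all of $\dgcat$), so that the defining relations of a localizing invariant are stable under tensoring; this is where $k$-flatness enters essentially.

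Next I would realize $\Mloc$ as the derivator of an explicit symmetric monoidal Quillen model, built from $\dgcatf$ by a sequence of operations each of which preserves symmetric monoidality: (i) pass to presheaves of pointed simplicial sets on $\dgcatf$, equipped with the projective model structure and the Day convolution product, so that the Yoneda embedding $\dgcatf\hookrightarrow\widehat{\dgcatf}$ becomes symmetric monoidal; (ii) stabilize, passing to symmetric spectrum objects, which is again symmetric monoidal and makes the ambient model category stable; (iii) left Bousfield localize with respect to the set $S$ of maps encoding Definition~\ref{not:Linv} --- preservation of filtered homotopy colimits, of the terminal object, and the requirement that Drinfeld exact sequences become cofiber sequences. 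The key point for (iii) is that every map in $S$ is assembled from representables via homotopy colimits, and $\otimes$ preserves homotopy colimits in each variable; hence $S$ is a \emph{monoidal} localizing set, i.e.\ $s\otimes X$ is an $S$-local equivalence for all $s\in S$ and all $X$. By the standard criterion it suffices to verify this for $X$ among the (co)domains of the generating cofibrations, which are representables, and there it reduces to the structural fact from the first paragraph. It follows that the Bousfield localization is symmetric monoidal, that $-\otimes^{\bbL}-$ preserves homotopy colimits in each variable, and that the localization functor --- hence the composite $\Uloc$ --- is symmetric monoidal.

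For the universal property I would assemble the universal properties of the three layers. Day convolution is characterized by the fact that symmetric monoidal, homotopy colimit preserving morphisms out of $\widehat{\dgcatf}$ correspond to symmetric monoidal morphisms out of $\dgcatf$; stabilization has the analogous universal property among stable targets; and a monoidal left Bousfield localization is characterized by the fact that symmetric monoidal homotopy colimit preserving morphisms out of the localization correspond to those out of the source that send $S$ to equivalences. By construction of $S$ the latter condition is precisely that of being a symmetric monoidal localizing invariant. Composing these three equivalences, and translating between symmetric monoidal left Quillen functors and symmetric monoidal homotopy colimit preserving morphisms of derivators via the dictionary of Appendix~\ref{appendix:A}, produces the asserted equivalence $(\Uloc)^{\ast}\colon\HomC^{\otimes}(\Mloc,\bbD)\stackrel{\sim}{\too}\HomL^{\otimes}(\HO(\dgcat),\bbD)$. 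That the underlying derivator is still the localizing motivator of \cite{Duke} is automatic, since forgetting the monoidal structure recovers the non-monoidal universal property that characterizes it.

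The main obstacle is the structural statement isolated in the first paragraph: that tensoring with a $k$-flat dg category preserves Drinfeld exact sequences, together with the verification that restricting to $\dgcatf$ does not disturb the universal property (equivalently, that $\dgcatf\hookrightarrow\dgcat$ induces an equivalence on the relevant localizations). A more bookkeeping-level difficulty is to ensure that the monoidality of $S$ survives stabilization --- one must check that the suspension-spectrum coordinate, i.e.\ tensoring with the unit, does not interfere with the relations --- but this again follows from the fact that, after stabilization, everything in sight is generated under homotopy colimits by (desuspensions of) representables, on which $-\otimes^{\bbL}-$ is computed by the tensor product of dg categories.
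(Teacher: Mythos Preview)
There is a genuine gap, and it is a size issue that hides the main technical content of the paper's proof. You propose to take simplicial presheaves on the category of $k$-flat dg categories, but that category is not small; the projective model structure, Day convolution, and the subsequent localizations are only available (and only have the desired universal properties) over a \emph{small} indexing category. The paper's $\dgcat_{\f}$ is not the category of $k$-flat dg categories: it is a carefully chosen \emph{small} full subcategory of homotopically finitely presented dg categories (satisfying properties (a)--(f) in \S\ref{sub:proof-main}), and the nontrivial point is that such a small category can be taken to be closed under the tensor product. This is precisely Theorem~\ref{thm:tensorstable}, whose proof occupies Section~\ref{sub:hfp} and rests on the characterization of hfp dg categories in Theorem~\ref{thm:enhance} together with Proposition~\ref{prop:key}. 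Your outline never invokes this result, yet without it there is no small symmetric monoidal category on which to run Day convolution, and step~(i) of your plan does not get off the ground. (Relatedly, your claim that ``$k$-flat'' is equivalent to ``cofibrant'' is false --- cofibrant implies $k$-flat, not conversely --- and Theorem~\ref{thm:Mdgcat} does \emph{not} say that the Morita model structure is monoidal; Remark~\ref{rk:monoid-str} explicitly says it is not.)

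Once one has the correct small $\dgcat_{\f}$, your broad strategy is close to the paper's, with two adjustments worth noting. First, the passage from $\underline{\dgcat_{\f}}$ to $\HO(\dgcat)$ is not a mere inclusion: one must localize presheaves at the Morita equivalences $\Sigma$ in $\dgcat_{\f}$, and then argue (Proposition~\ref{prop:FltMon}) that filtered-colimit-preserving symmetric monoidal morphisms out of $\HO(\dgcat)$ match symmetric monoidal colimit-preserving morphisms out of $\Loc_\Sigma\mathsf{Hot}_{\dgcat_{\f}}$; this is how ``preserves filtered homotopy colimits'' is actually encoded, not by adding it to the localizing set $S$. Second, the paper performs the Bousfield localization at exact sequences \emph{before} stabilizing (Propositions~\ref{prop:LocMon} and~\ref{prop:stabMon}), invoking Proposition~\ref{commut} to know the order does not matter; the stabilization is then done via symmetric spectra and compared with Heller's stabilization through Theorem~\ref{thm:main-Hovey}. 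Your identification of Drinfeld's result (\cite[Proposition~1.6.3]{Drinfeld}) that tensoring by a $k$-flat dg category preserves exact sequences is correct and is exactly what is used in Proposition~\ref{prop:LocMon}, but it is not the ``main obstacle'' --- that role is played by Theorem~\ref{thm:tensorstable}.
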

The proof of Theorem~\ref{thm:intro1} is based on an alternative
description of $\Mloc$, with two complementary aspects:
a {\em constructive} one, and another given by \emph{universal properties}.

The constructive aspect, \ie the construction of an explicit symmetric monoidal
Quillen model for $\Mloc$, is described in the main body of the text. The key starting
point is the fact that homotopically finitely presented dg categories
are stable under derived tensor product; see Theorem~\ref{thm:tensorstable}. This allows us to obtain
a small symmetric monoidal category which ``generates'' the entire Morita
homotopy category of dg categories. Starting from this
small monoidal category, we then construct a specific symmetric monoidal Quillen model for each one
of the derivators used in the construction of $\Mloc$; see \S\ref{sub:proof-main}. 

The characterization of $\Mloc$ by its universal property (as stated in Theorem~\ref{thm:intro1}),
relies on general results and constructions in the theory of Grothendieck derivators, and is described in the appendix. We develop some general results concerning the behavior of
monoidal structures under classical operations\,: Kan extension (see Theorem~\ref{monoidalderKanext}),
left Bousfield localization (see Proposition~\ref{derivatorleftlocmonoidal}) and stabilization
(see Theorem~\ref{stableDay}). Using these general results, we then characterize by a precise
universal property each one of the Quillen models used in the construction of the new symmetric
monoidal Quillen model for $\Mloc$; see \S\ref{sub:proof-main}.

Let us now describe some applications of Theorem~\ref{thm:intro1}.

\subsection*{Non-connective $K$-theory}
As mentioned above, non-connective algebraic $K$-theory $\bbK(-)$ is an example of a localizing
invariant. In \cite{CT} the authors proved that this invariant becomes co-representable in $\Mloc(e)$
by the unit object $\Uloc(\underline{k})$ (where $\underline{k}$ corresponds to $k$, seen
as a dg~category with one object). In other words, given any dg category $\cA$, we have a natural
isomorphism in the stable homotopy category of spectra\,:
\begin{equation}\label{eq:intro2}
\bbR\Hom(\,\Uloc(\underline{k}),\, \Uloc(\cA)\,) \simeq \bbK(\cA)\,.
\end{equation}
A fundamental problem of the theory of non-commutative motives is the computation of the
(spectra of) morphisms in the category of non-commutative motives between any two objects.
Using the monoidal structure of Theorem~\ref{thm:intro1} we extend the above natural
isomorphism (\ref{eq:intro2}), and thus obtain a partial solution to this problem.
 \begin{theorem}{(see Theorem~\ref{thm:co-repres-ext})}\label{thm:intro2}
Let $\cB$ be a saturated dg category in the sense of Kontsevich, \ie its complexes of
morphisms are perfect and $\cB$ is perfect as a bimodule over itself; see
Definition~\ref{def:saturated}. Then, for every small dg category $\cA$, we have
a natural isomorphism in the stable homotopy category of spectra
\begin{equation*}
\bbR\Hom(\,\Uloc(\cB),\, \Uloc(\cA)\,) \simeq \bbK(\rep(\cB, \cA))\,,
\end{equation*}
where $\rep(-,-)$ denotes the internal Hom-functor in the Morita homotopy category of dg categories;
see \S\ref{sub:Monoidstructure}.
\end{theorem}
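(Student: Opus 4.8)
\subsection*{Proof proposal}

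The plan is to deduce the statement from the co-representability isomorphism~(\ref{eq:intro2}) for non-connective $K$-theory, by combining the symmetric monoidal structure of Theorem~\ref{thm:intro1} with the rigidity of saturated dg categories inside the Morita homotopy category $\HO(\dgcat)$. Recall that, by definition, $\rep(\cB,\cA)$ is the value $\uHom(\cB,\cA)$ of the internal Hom of the closed symmetric monoidal category underlying $\HO(\dgcat)$ (see \S\ref{sub:Monoidstructure}). The first ingredient is that, since $\cB$ is saturated in the sense of Kontsevich (Definition~\ref{def:saturated}), \ie smooth and proper, it is a dualizable object of $\HO(\dgcat)$; writing $\cB^{\vee}$ for its dual (Morita equivalent to $\cB^{\op}$), the internal Hom then simplifies, for \emph{every} small dg category $\cA$, to a tensor product $\rep(\cB,\cA)\simeq\cB^{\vee}\otimes^{\bbL}\cA$; in particular $\rep(\cB,\cA)$ is again (Morita equivalent to) a small dg category, so $\bbK(\rep(\cB,\cA))$ is defined. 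This is part of derived Morita theory; see \cite{Toen} and \cite{CT}.

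The next step is to transport this rigidity along $\Uloc$. By Theorem~\ref{thm:intro1} the universal localizing invariant $\Uloc\colon\HO(\dgcat)\to\Mloc$ is symmetric monoidal, with $-\otimes^{\bbL}-$ on $\Mloc$ preserving homotopy colimits in each variable; being strong symmetric monoidal, $\Uloc$ carries the evaluation and coevaluation of $\cB$ to duality data for $\Uloc(\cB)$, so $\Uloc(\cB)$ is dualizable in $\Mloc(e)$ with dual $\Uloc(\cB^{\vee})$. As a strong monoidal functor commutes with the internal Hom out of a dualizable object, one gets, naturally in $\cA$,
\begin{equation*}
\uHom_{\Mloc}\!\bigl(\Uloc(\cB),\Uloc(\cA)\bigr)\;\simeq\;\Uloc(\cB)^{\vee}\otimes^{\bbL}\Uloc(\cA)\;\simeq\;\Uloc\bigl(\cB^{\vee}\otimes^{\bbL}\cA\bigr)\;\simeq\;\Uloc\bigl(\rep(\cB,\cA)\bigr),
\end{equation*}
the middle isomorphism using that $\Uloc$ is monoidal and the last one the identification from the previous paragraph.

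Finally, plugging this into the tensor--internal-Hom adjunction of $\Mloc$ and into~(\ref{eq:intro2}) gives, with $\Uloc(\underline{k})$ the $\otimes$-unit of $\Mloc$, natural isomorphisms in the stable homotopy category of spectra
\begin{equation*}
\bbR\Hom\!\bigl(\Uloc(\cB),\Uloc(\cA)\bigr)\;\simeq\;\bbR\Hom\!\bigl(\Uloc(\underline{k}),\uHom_{\Mloc}(\Uloc(\cB),\Uloc(\cA))\bigr)\;\simeq\;\bbR\Hom\!\bigl(\Uloc(\underline{k}),\Uloc(\rep(\cB,\cA))\bigr)\;\simeq\;\bbK\bigl(\rep(\cB,\cA)\bigr),
\end{equation*}
where the first isomorphism is the adjunction between $-\otimes^{\bbL}-$ and $\uHom_{\Mloc}$ (together with $\Uloc(\underline k)\otimes^{\bbL}(-)\simeq(-)$), the second is the chain of isomorphisms above, and the third is~(\ref{eq:intro2}). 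The real content is concentrated in the two imported inputs --- the rigidity of saturated dg categories and the resulting formula for $\rep(\cB,-)$, and the strong symmetric monoidality of $\Uloc$ from Theorem~\ref{thm:intro1}. The main point that then needs care is that the formula ``internal Hom out of a dualizable object $=$ tensoring with the dual'' holds at the level of the spectral mapping objects $\bbR\Hom(-,-)$ of the strong triangulated derivator $\Mloc$, compatibly with its monoidal structure, so that the dualizability of $\Uloc(\cB)$ can genuinely be combined with the spectral co-representability statement~(\ref{eq:intro2}) and not merely with its shadow in the triangulated category $\Mloc(e)$.
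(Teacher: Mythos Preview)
Your proposal is correct and follows essentially the same route as the paper: use that a saturated $\cB$ is dualizable in $\Hmo$ with dual $\cB^\op$ (Theorem~\ref{thm:Toen-dualizable}), transport this along the symmetric monoidal $\Uloc$, identify $\cB^\op\otimes^{\bbL}\cA\simeq\rep(\cB,\cA)$ (Lemma~\ref{lem:tensor-rep}), and reduce to the co-representability theorem~(\ref{eq:intro2}). The only cosmetic difference is that the paper bypasses your internal Hom $\uHom_{\Mloc}$ and instead uses directly the adjunction $(-\otimes^{\bbL}\Uloc(\cB))\dashv(\Uloc(\cB)^\vee\otimes^{\bbL}-)$ from Remark~\ref{rk:adj-dualizable}(ii), lifted to the spectral enrichment via \S\ref{sub:spectral-enr}; this also dispatches the care point you flag in your final sentence.
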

Given a quasi-compact and separated $k$-scheme $X$, there is a natural dg category $\perf(X)$
which enhances the category of perfect complexes (\emph{i.e.} of compact objects)
in the (unbounded) derived category $\cD_{qcoh}(X)$
of quasi-coherent sheaves on $X$; see Example~\ref{ex:Toen}. Moreover, when $X$ is smooth and
proper, the dg category $\perf(X)$ is a saturated dg category. In this geometrical situation, we
have the following computation.
\begin{proposition}{(see Proposition~\ref{prop:co-repres-schemes})}
Given smooth and proper $k$-schemes $X$ and $Y$, we have a natural isomorphism in the stable homotopy
category of spectra
$$\bbR\Hom(\, \Uloc(\perf(X)), \Uloc(\perf(Y)) \,) \simeq \bbK(X \times Y)\,,$$
where $\bbK(X \times Y)$ denotes the non-connective algebraic $K$-theory spectrum of $X \times Y$.
\end{proposition}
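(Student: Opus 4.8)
The plan is to reduce the statement to Theorem~\ref{thm:intro2} together with two standard facts from non-commutative algebraic geometry: that $\perf(X)$ is a saturated dg category when $X$ is smooth and proper, and that the internal Hom $\rep(\perf(X),\perf(Y))$ in the Morita homotopy category is Morita equivalent to $\perf(X\times Y)$. First I would invoke Theorem~\ref{thm:intro2} with $\cB=\perf(X)$ (which is allowed precisely because $\perf(X)$ is saturated in the sense of Kontsevich, cf.\ Example~\ref{ex:Toen}) and $\cA=\perf(Y)$. This immediately yields a natural isomorphism
\begin{equation*}
\bbR\Hom(\,\Uloc(\perf(X)),\,\Uloc(\perf(Y))\,)\simeq\bbK(\rep(\perf(X),\perf(Y)))
\end{equation*}
in the stable homotopy category of spectra.

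It then remains to identify $\rep(\perf(X),\perf(Y))$, up to derived Morita equivalence, with $\perf(X\times Y)$, since $\bbK(-)$ is a derived Morita invariant and $\bbK(\perf(X\times Y))\simeq\bbK(X\times Y)$ by definition of the $K$-theory of a scheme via its dg category of perfect complexes. The key input here is the standard ``tensor product'' description of quasi-coherent sheaves on a product: for $X$ and $Y$ quasi-compact and separated, one has $\perf(X)\otimes^{\bbL}\perf(Y)\simeq\perf(X\times_k Y)$ in $\Ho(\dgcat)$ (this is essentially a theorem of To\"en, relying on the fact that $\cD_{qcoh}(X\times Y)$ is the derived tensor product of $\cD_{qcoh}(X)$ and $\cD_{qcoh}(Y)$ over $\cD_{qcoh}(\Spec k)$, restricted to compact objects). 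Combining this with the adjunction defining the internal Hom in the Morita homotopy category, and using that $\perf(X)$ is saturated (hence rigid/dualizable in the monoidal Morita homotopy category, with dual $\perf(X)^{\op}\simeq\perf(X)$ by smoothness and properness), I get
\begin{equation*}
\rep(\perf(X),\perf(Y))\simeq\perf(X)^{\op}\otimes^{\bbL}\perf(Y)\simeq\perf(X)\otimes^{\bbL}\perf(Y)\simeq\perf(X\times Y)\,.
\end{equation*}

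Finally I would assemble these isomorphisms, checking that each is natural in $X$ and $Y$ (naturality of Theorem~\ref{thm:intro2}, functoriality of $\rep$ and of $\otimes^{\bbL}$, and functoriality of $\bbK$), to conclude
\begin{equation*}
\bbR\Hom(\,\Uloc(\perf(X)),\,\Uloc(\perf(Y))\,)\simeq\bbK(\perf(X)\otimes^{\bbL}\perf(Y))\simeq\bbK(X\times Y)\,.
\end{equation*}

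The main obstacle I anticipate is not Theorem~\ref{thm:intro2} itself but the clean identification $\rep(\perf(X),\perf(Y))\simeq\perf(X\times Y)$: one must be careful that the internal Hom-functor $\rep(-,-)$ in the \emph{Morita} homotopy category agrees with $(-)^{\op}\otimes^{\bbL}(-)$ only after using that $\perf(X)$ is dualizable, which is exactly where saturatedness (perfectness of the complexes of morphisms and of the diagonal bimodule) is used, and that the self-duality $\perf(X)^{\op}\simeq\perf(X)$ uses smoothness and properness of $X$ via Serre duality at the level of dg categories. The scheme-theoretic input $\cD_{qcoh}(X\times Y)\simeq\cD_{qcoh}(X)\otimes^{\bbL}\cD_{qcoh}(Y)$ also requires $X$ (or $Y$) to be of finite $\mathrm{Tor}$-dimension over $k$, which is automatic here since $X,Y$ are smooth over $k$; I would cite To\"en's work for the precise statement rather than reproving it.
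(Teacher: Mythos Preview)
Your proof is correct and follows essentially the same strategy as the paper: apply Theorem~\ref{thm:intro2} (equivalently Theorem~\ref{thm:co-repres-ext}) to reduce to $\bbK(\rep(\perf(X),\perf(Y)))$, then identify $\rep(\perf(X),\perf(Y))$ with $\perf(X\times Y)$, and finally pass to $\bbK(X\times Y)$.

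The only noteworthy difference is in how you carry out the middle identification. The paper simply invokes To\"en's result \cite[Theorem~8.9]{Toen}, which gives directly
\[
\perf(X\times Y)\ \simeq\ \rep(\perf(X),\perf(Y))
\]
in $\Hmo$. You instead factor this through the dualizability of $\perf(X)$ to write $\rep(\perf(X),\perf(Y))\simeq \perf(X)^\op\otimes^{\bbL}\perf(Y)$, then invoke the self-duality $\perf(X)^\op\simeq\perf(X)$ (via Serre duality), and finally use the tensor-product formula $\perf(X)\otimes^{\bbL}\perf(Y)\simeq\perf(X\times Y)$. This is a valid route, but it introduces an extra ingredient (the Serre-duality anti-equivalence) that the paper does not need: To\"en's theorem already packages the identification of the internal Hom without passing through the opposite category. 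Also, the paper cites \cite[\S 8, Theorem~5]{Marco} for $\bbK(\perf(X\times Y))\simeq\bbK(X\times Y)$ rather than treating it as definitional; depending on the conventions for $\bbK$ of a scheme this is a genuine (if standard) comparison result.
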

\subsection*{Kontsevich's category of non-commutative mixed motives}
In his {\em non-commutative algebraic geometry} program \cite{IAS,Kontsevich-Langlands,Lattice},
Kontsevich introduced the category $\Mixtr_k$ of {\em non-commutative mixed motives};
see \S\ref{sub:Kontsevich}. Roughly, $\Mixtr_k$ is obtained by taking a formal Karoubian
triangulated envelope of the category of saturated dg categories (with algebraic $K$-theory of
bimodules as morphism sets). Using Theorem~\ref{thm:intro2}, we prove the following result.
\begin{proposition}{(see Proposition~\ref{prop:Kontsevich})}\label{prop:intro2}
There is a natural fully-faithful embedding (enriched over spectra) of Kontsevich's category
$\Mixtr_k$ of non-commutative mixed motives into the base category $\Mloc(e)$ of the
localizing motivator. The essential image is the thick triangulated subcategory spanned by motives of saturated dg categories.
\end{proposition}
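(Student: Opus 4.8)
The plan is to recognize the co-representability statement of Theorem~\ref{thm:intro2} as precisely the ``full faithfulness on generators'' needed to embed $\Mixtr_k$, and then to propagate this along the formal construction of $\Mixtr_k$ recalled in \S\ref{sub:Kontsevich}. Recall that $\Mixtr_k$ is built in two stages: first one forms the (additive, or spectrally enriched) category $\mathcal{C}_0$ whose objects are saturated dg categories and whose morphism object from $\cB$ to $\cA$ is $\bbK(\rep(\cB,\cA))$ (respectively its $\pi_0$); then one takes a formal triangulated Karoubian envelope of $\mathcal{C}_0$. The target $\Mloc(e)$ is an idempotent complete triangulated category, enriched over spectra, so it is a natural receptacle for such an envelope.

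First I would define the functor on objects by $\cB \mapsto \Uloc(\cB)$, i.e. simply restrict the universal localizing invariant to saturated dg categories; this is manifestly natural. A saturated dg category is smooth and proper, hence homotopically finitely presented, so $\Uloc(\cB)$ is a compact object of $\Mloc(e)$; moreover, being dualizable in the Morita homotopy category while $\Uloc$ is symmetric monoidal by Theorem~\ref{thm:intro1}, $\Uloc(\cB)$ is dualizable in $\Mloc(e)$. On morphism spectra, Theorem~\ref{thm:intro2} yields natural equivalences
\[
\bbR\Hom\bigl(\Uloc(\cB),\Uloc(\cA)\bigr)\;\simeq\;\bbK(\rep(\cB,\cA))
\]
whose $\pi_0$ is $\bbK_0(\rep(\cB,\cA))$. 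These are exactly the morphism objects of $\mathcal{C}_0$, so the only remaining point at this stage is to check that the identifications respect composition --- bimodule tensor product on one side, composition in $\Mloc(e)$ on the other --- which one does using the description of $\rep$ as the internal Hom of the Morita monoidal structure together with the compatibility of (\ref{eq:intro2}) and Theorem~\ref{thm:intro2} with that structure. This produces a fully faithful, spectrally enriched functor $\mathcal{C}_0 \to \Mloc(e)$.

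Next I would extend this functor to $\Mixtr_k$. Since $\Mloc(e)$ is triangulated and idempotent complete and our functor is additive, the universal property of the triangulated Karoubian envelope defining $\Mixtr_k$ (see \S\ref{sub:Kontsevich}) provides a triangulated functor $\Phi\colon \Mixtr_k \to \Mloc(e)$, unique up to unique isomorphism, together with a compatible extension of the spectral enrichment. It then remains to show that $\Phi$ is fully faithful \emph{on mapping spectra}. For this I would run a dévissage: every object of $\Mixtr_k$ is a retract of a finite iterated extension of shifts of objects of $\mathcal{C}_0$; mapping spectra in a triangulated category carry distinguished triangles in either variable to fiber sequences of spectra, and $\Phi$ preserves triangles and retracts; hence a double induction on the number of cells of the source and of the target, with base case the equivalences displayed above, shows that $\Phi$ induces an equivalence on every mapping spectrum.

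Finally, the essential image of $\Phi$ is a full triangulated subcategory of $\Mloc(e)$ which, being the image of an idempotent complete category under a fully faithful triangulated functor, is closed under retracts, hence thick; it contains $\Uloc(\cB)$ for every saturated $\cB$, and is contained in, hence equal to, the thick triangulated subcategory generated by these objects. This identifies the essential image with the thick triangulated subcategory of $\Mloc(e)$ spanned by the motives of saturated dg categories. The main obstacle is the second half of the argument: ensuring that the extension to the Karoubian triangulated envelope is genuinely canonical and enriched over spectra, rather than a mere functor of triangulated categories, and carrying out the dévissage at the level of mapping spectra rather than just on $\pi_0$; this in turn rests on Theorem~\ref{thm:intro2} holding with its natural spectral enrichment and on the compatibility of the comparison equivalence with compositions.
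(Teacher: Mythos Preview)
Your proposal is correct and follows essentially the same route as the paper's proof: establish full faithfulness on the generating category of saturated dg categories via Theorem~\ref{thm:intro2}, extend to the triangulated Karoubian envelope, and propagate full faithfulness by d\'evissage. The paper is terser on two points you spell out: for the extension step it invokes a result on Morita completions of spectral categories (\cite[Proposition~5.3.1]{MISC}), which directly addresses your worry about the extension being canonically spectrally enriched; and for the propagation of full faithfulness it simply observes that full faithfulness on a generating family suffices, which is exactly your d\'evissage. One small discrepancy: Kontsevich's $\Mix_k$ has morphism spectrum $\bbK(\cA^\op\otimes^{\bbL}\cB)$ rather than $\bbK(\rep(\cA,\cB))$, so the paper first invokes Lemma~\ref{lem:tensor-rep} to identify these before applying Theorem~\ref{thm:co-repres-ext}; your formulation implicitly absorbs this identification into the definition of $\mathcal{C}_0$.
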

Note that, in contrast with Kontsevich's {\em ad hoc} definition, the category $\Mloc(e)$
of non-commutative motives is defined purely in terms of precise universal properties.

\subsection*{Chern characters}
Let $$ \mathit{E}: \HO(\dgcat) \too \bbD$$
be a symmetric monoidal localizing invariant. Thanks to Theorem~\ref{thm:intro1} there is a (unique) symmetric monoidal homotopy colimit preserving morphism of derivators $\mathit{E}_{\gm}$ which 
makes the diagram
$$
\xymatrix{
\HO(\dgcat) \ar[r]^-{\mathit{E}} \ar[d]_-{\Uloc} & \bbD \\
\Mloc \ar[ur]_{\mathit{E}_{\gm}} & 
}
$$
commute (up to unique $2$-isomorphism).
We call $\mathit{E}_{\gm}$ the {\em geometric realization}
of $\mathit{E}$. If $\mathit{E}(\underline{k})\simeq {\bf 1}$
denotes the unit of $\bbD$, we can also associate to $\mathit{E}$ its {\em absolute realization}
\begin{eqnarray*}\mathit{E}_{\abs}:=\bbR\Hom_{\bbD}({\bf 1}, \mathit{E}_{\gm}(-))&&
\text{(see Definition \ref{not:geom-real})}
\end{eqnarray*}
\begin{proposition}{(see Proposition~\ref{prop:Chern})}\label{prop:Chern-intro}
The geometric realization of $\mathit{E}$ induces a canonical Chern character
\begin{equation*}
\bbK(-) \Rightarrow \bbR\Hom({\bf 1},E(-))\simeq \mathit{E}_{\abs}(\Uloc(-))\,.
\end{equation*} 
Here, $\bbK(-)$ and $\mathit{E}_{\abs}(\Uloc(-))$ are two morphisms of derivators defined on $\HO(\dgcat)$.
\end{proposition}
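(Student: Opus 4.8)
The plan is to obtain the Chern character as the map induced by the geometric realisation $\mathit{E}_{\gm}$ on spectra of morphisms, by combining the co-representability of non-connective $K$-theory recorded in~\eqref{eq:intro2} with the fact that $\mathit{E}_{\gm}$ preserves the tensor unit. Two preliminary observations are needed. First, since $\underline{k}$ is the $\otimes$-unit of $\HO(\dgcat)$ and $\Uloc$ is symmetric monoidal by Theorem~\ref{thm:intro1}, the non-commutative motive $\Uloc(\underline{k})$ is the $\otimes$-unit of $\Mloc$; as $\mathit{E}_{\gm}$ is symmetric monoidal this produces a canonical isomorphism $\mathit{E}_{\gm}(\Uloc(\underline{k}))\simeq {\bf 1}$ in $\bbD(e)$, compatible with the isomorphism $\mathit{E}(\underline{k})\simeq{\bf 1}$ through the canonical $2$-isomorphism $\mathit{E}\simeq\mathit{E}_{\gm}\circ\Uloc$. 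Second, the co-representability isomorphism~\eqref{eq:intro2} of~\cite{CT} holds at the level of derivators, that is, it is an isomorphism of morphisms of derivators $\HO(\dgcat)\to\HO(\Spt)$
\[
\bbK(-)\;\simeq\;\bbR\Hom_{\Mloc}\bigl(\Uloc(\underline{k}),\,\Uloc(-)\bigr),
\]
where $\bbR\Hom_{\Mloc}(\Uloc(\underline{k}),-)$ denotes the spectral enrichment of the stable derivator $\Mloc$, a homotopy colimit preserving morphism of derivators $\Mloc\to\HO(\Spt)$ by the machinery recalled in the appendix.

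The key step is then to use that $\mathit{E}_{\gm}\colon\Mloc\to\bbD$, being in particular a homotopy colimit preserving morphism of stable derivators, induces for every object $X\in\Mloc(e)$ a natural transformation of morphisms of derivators $\Mloc\to\HO(\Spt)$
\[
\bbR\Hom_{\Mloc}(X,-)\;\Longrightarrow\;\bbR\Hom_{\bbD}\bigl(\mathit{E}_{\gm}(X),\,\mathit{E}_{\gm}(-)\bigr).
\]
Taking $X=\Uloc(\underline{k})$, pre-composing with $\Uloc$, and rewriting the source and the target by the two observations above, we obtain the sought-after natural transformation of morphisms of derivators
\[
\bbK(-)\;\simeq\;\bbR\Hom_{\Mloc}\bigl(\Uloc(\underline{k}),\Uloc(-)\bigr)\;\Longrightarrow\;\bbR\Hom_{\bbD}\bigl({\bf 1},\mathit{E}_{\gm}(\Uloc(-))\bigr)\;\simeq\;\bbR\Hom_{\bbD}({\bf 1},\mathit{E}(-))\;\simeq\;\mathit{E}_{\abs}(\Uloc(-)).
\]
Evaluated at a dg category $\cA$ this sends a class in $\bbK(\cA)$, i.e.\ a morphism $\Uloc(\underline{k})\to\Uloc(\cA)$ in the homotopy category, to its image ${\bf 1}\simeq\mathit{E}_{\gm}(\Uloc(\underline{k}))\to\mathit{E}_{\gm}(\Uloc(\cA))\simeq\mathit{E}(\cA)$ under $\mathit{E}_{\gm}$, as one expects of a Chern character.

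The only point carrying genuine content, beyond unwinding definitions, is the construction of the enriched functoriality map appearing in the key step: one needs to know that $\mathit{E}_{\gm}$ acts on spectra of morphisms compatibly with the derivator structure. I expect this to follow formally from the description, in a stable derivator $\bbD$, of $\bbR\Hom_{\bbD}(X,Y)$ as a homotopy limit, over the relevant diagram category, of mapping spectra built from fibre- and cofibre-type diagrams associated with $X$ and $Y$: since $\mathit{E}_{\gm}$ preserves finite homotopy (co)limits together with the simplicial and spectral structures involved, it induces the comparison map, and naturality then upgrades this to a $2$-morphism of derivators. Finally, independence of the chosen isomorphism $\mathit{E}(\underline{k})\simeq{\bf 1}$ is automatic, since $\mathit{E}_{\gm}$ is symmetric monoidal and $\Uloc(\underline{k})$ is the $\otimes$-unit of $\Mloc$, so all the isomorphisms used above are the canonical ones; this is what makes the Chern character canonical.
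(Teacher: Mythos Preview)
Your proposal is correct and follows essentially the same approach as the paper: use that $\mathit{E}_{\gm}$ is symmetric monoidal to identify $\mathit{E}_{\gm}(\Uloc(\underline{k}))\simeq{\bf 1}$, invoke the co-representability isomorphism $\bbK(\cA)\simeq\bbR\Hom_{\Mloc}(\Uloc(\underline{k}),\Uloc(\cA))$, and then apply $\mathit{E}_{\gm}$ on mapping spectra to obtain the Chern character. The paper's proof is terser and simply asserts the induced map and its functoriality, whereas you spell out the enriched functoriality step and the canonicity; the content is the same.
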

Let $\cA$ be a small dg category. When $\mathit{E}$ is given by the
mixed complex construction (see Example~\ref{ex:Mixedcomp}), the absolute realization of $\Uloc(\cA)$
identifies with the negative cyclic homology complex $\mathit{HC}^-(\cA)$ of $\cA$.
Therefore by Proposition~\ref{prop:Chern-intro}, we obtain a canonical Chern character
$$ \bbK(-) \Rightarrow \mathit{HC}^-(-)$$
from non-connective $K$-theory to negative cyclic homology; see Example~\ref{ex:negative}.
When $\mathit{E}$ is given by the composition of the mixed complex construction with the
periodization procedure (see Example~\ref{ex:Periodiccomp}), the absolute realization of
$\Uloc(\cA)$ identifies with the periodic cyclic homology complex $\mathit{HP}(\cA)$ of $\cA$.
Therefore by Proposition~\ref{prop:Chern-intro}, we obtain a canonical Chern character
$$ \bbK(-) \Rightarrow \mathit{HP}(-)$$
from non-connective $K$-theory to periodic cyclic homology; see Example~\ref{ex:periodic}.

\subsection*{To{\"e}n's secondary $K$-theory}
In his {\em sheaf categorification} program \cite{NW-Toen, Abel}, To{\"e}n introduced a
``categorified'' version of algebraic $K$-theory, named {\em secondary $K$-theory}.
Given a commutative ring $k$, the {\em secondary $K$-theory ring $K_0^{(2)}(k)$ of $k$}
is roughly the quotient of the free abelian group on derived Morita isomorphism classes of saturated
dg categories, by the relations $[\cB]=[\cA]+[\cC]$ coming from Drinfeld exact sequences $\cA \to \cB \to \cC$.
The multiplication is induced from the derived tensor product; see \S\ref{sub:secondary}.
As To{\"e}n pointed out in \cite{NW-Toen}, one of the motivations for the study of this secondary
$K$-theory is its expected connection with an hypothetical Grothendieck ring of motives in the
non-commutative setting. Thanks to the monoidal structure of Theorem~\ref{thm:intro1} we are
now able to make this connection precise.
\begin{definition}{(see Definition~\ref{def:motivic-ring})}\label{def:intro}
Given a commutative ring $k$, the {\em Grothendieck ring $\cK_0(k)$ of non-commutative motives
over $k$} is the Grothendieck ring of Kontsevich mixed motives (\ie of the thick triangulated subcategory of $\Mloc(e)$ generated by the objects $\Uloc(\cA)$, where $\cA$
runs over the family of saturated dg categories).
\end{definition}   
Kontsevich's saturated dg categories can be
characterized conceptually as the dualizable objects in the Morita homotopy category;
see Theorem~\ref{thm:Toen-dualizable}. Therefore, since the universal localizing
invariant $\Uloc$ is symmetric monoidal we obtain a ring homomorphism
\begin{equation*}
\Phi(k): K_0^{(2)}(k) \too \cK_0(k)\,.
\end{equation*}
Moreover, the Grothendieck ring of Definition~\ref{def:intro}
is non-trivial (see Remark~\ref{rk:non-trivial}), functorial in $k$ (see Remark~\ref{rk:functoriality}),
and {the ring homomorphism $\Phi(k)$ is functorial in $k$ and surjective ``up to cofinality''} (see Remark~\ref{rk:connection}). Furthermore, any {\em realization} of $K_0^{(2)}(k)$ (\ie ring homomorphism $K_0^{(2)}(k) \to R$),
which is induced from a symmetric monoidal localizing invariant, factors through $\Phi(k)$.
An interesting example is provided by To{\"e}n's rank map
\begin{eqnarray*}rk_0: K_0^{(2)}(k) \too K_0(k)&&
\text{(see Remark \ref{rk:connection}).}
\end{eqnarray*}
\subsection*{Euler characteristic}
Recall that, in any symmetric mo\-noi\-dal category, we have
the notion of {\em Euler characteristic $\chi(X)$} of a dualizable object $X$
(see Definition~\ref{def:Euler}).
In the symmetric monoidal category of non-commutative motives we have the following computation.

\begin{proposition}{(see Proposition~\ref{prop:Euler})}\label{prop:intro3}
Let $\cA$ be a saturated dg category. Then, $\chi(\Uloc(\cA))$ is the element of the
Grothendieck group $K_0(k)$ which is associated to the (perfect)
Hochschild homology complex $\mathit{HH}(\cA)$ of $\cA$.
\end{proposition}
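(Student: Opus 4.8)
The plan is to compute $\chi(\Uloc(\cA))$ directly from the definitions by exploiting the fact that $\Uloc$ is a symmetric monoidal functor which, by the co-representability result quoted in Theorem~\ref{thm:intro2}, controls all spectra of morphisms out of motives of saturated dg categories. Recall that for a dualizable object $X$ in a symmetric monoidal category the Euler characteristic $\chi(X)$ is the composite ${\bf 1} \xrightarrow{\mathrm{coev}} X \otimes X^{\vee} \xrightarrow{\sigma} X^{\vee} \otimes X \xrightarrow{\mathrm{ev}} {\bf 1}$, an endomorphism of the unit object. So the first step is to identify the relevant endomorphism ring: since $\Uloc(\underline{k})$ is the unit of $\Mloc(e)$, and by \eqref{eq:intro2} we have $\bbR\Hom(\Uloc(\underline k),\Uloc(\underline k))\simeq \bbK(\underline k)=\bbK(k)$, the group of endomorphisms of the unit in the homotopy category is $K_0(k)$. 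Thus $\chi(\Uloc(\cA))$ is automatically an element of $K_0(k)$, and what must be done is to identify \emph{which} element.

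Next I would use functoriality of the Euler characteristic under symmetric monoidal functors. A saturated dg category $\cA$ is precisely a dualizable object of the Morita homotopy category $\Ho(\dgcat)$ (Theorem~\ref{thm:Toen-dualizable}), with dual $\cA^{\op}$, and since $\Uloc$ is symmetric monoidal it sends dualizable objects to dualizable objects and commutes with the formation of evaluation and coevaluation maps; hence $\chi(\Uloc(\cA)) = \Uloc(\chi(\cA))$, where $\chi(\cA) \in \mathrm{End}_{\Ho(\dgcat)}({\underline k})$ is the Euler characteristic computed in the Morita homotopy category. So the problem reduces to two sub-tasks: (i) compute $\chi(\cA)$ as an endomorphism of $\underline k$ in $\Ho(\dgcat)$, and (ii) check that applying $\Uloc$ (equivalently, applying $\bbK(-)$ via co-representability) to that endomorphism yields the class $[\mathit{HH}(\cA)]\in K_0(k)$.

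For sub-task (i), the key computation is that the endomorphisms of $\underline k$ in the Morita homotopy category are given by the derived category of $k$, i.e. $\bbR\Hom_{\Ho(\dgcat)}(\underline k, \underline k)\simeq \mathcal D(k)$ (or at least its relevant truncation), and that under the standard "trace = Hochschild homology" yoga the composite $\mathrm{ev}\circ\sigma\circ\mathrm{coev}$ for a saturated $\cA$ with dual $\cA^{\op}$ is exactly the Hochschild homology complex $\mathit{HH}(\cA) = \cA \otimes^{\bbL}_{\cA\otimes\cA^{\op}}\cA$, viewed as a perfect complex of $k$-modules. This is the categorical incarnation of the classical fact that the Hochschild homology of an algebra is the derived tensor product of the diagonal bimodule with itself; I would make this precise by unwinding the evaluation and coevaluation maps for the duality $(\cA, \cA^{\op})$ in terms of the diagonal bimodule ${}_\cA\cA_\cA$ and its dual, exactly as in To\"en's description of the internal Hom $\rep(-,-)$ and the trace pairing. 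Saturatedness guarantees perfectness of $\mathit{HH}(\cA)$, so it defines a class in $K_0(k)$.

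Sub-task (ii) is then essentially bookkeeping: since $\Uloc$ is symmetric monoidal it carries this trace to the corresponding trace in $\Mloc(e)$, and under the identification $\mathrm{End}_{\Mloc(e)}(\Uloc(\underline k)) = K_0(k)$ coming from \eqref{eq:intro2} (together with compatibility of that identification with composition, which follows from the ring structure on $\bbK(k)$), the image is precisely the class of $\mathit{HH}(\cA)$. The main obstacle I anticipate is sub-task (i) — carefully matching the abstract evaluation/coevaluation of the dualizable object $\cA$ with the concrete complex $\mathit{HH}(\cA)$, i.e. verifying that the categorical trace of the identity bimodule really is Hochschild homology and not merely equivalent to it up to some untracked twist or shift. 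Once that identification is pinned down, functoriality of $\chi$ under the symmetric monoidal functor $\Uloc$ delivers the result with no further work.
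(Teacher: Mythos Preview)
Your proposal is correct and follows essentially the same route as the paper: compute $\chi(\cA)$ in $\Hmo$ as the Hochschild homology complex (your sub-task~(i), which the paper handles by identifying the evaluation and coevaluation with the bimodule $\cA(-,-)$ and invoking Loday's description of $HH$ as $\cA(-,-)\otimes^{\bbL}_{\cA^\op\otimes^{\bbL}\cA}\cA(-,-)$), then push this through the symmetric monoidal functor $\Uloc$ using functoriality of the Euler characteristic, and finally identify the induced map $\mathrm{Iso}\,\cD_c(k)=\Hom_{\Hmo}(\underline{k},\underline{k})\to\Hom(\Uloc(\underline{k}),\Uloc(\underline{k}))=K_0(k)$ with the canonical class map via co-representability. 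One small correction: in $\Hmo$ the endomorphisms of $\underline{k}$ are exactly $\mathrm{Iso}\,\cD_c(k)$ (isomorphism classes of perfect complexes), not $\cD(k)$ or a truncation thereof.
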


When $k$ is the field of complex numbers, the Grothendieck ring $K_0(\bbC)$ is naturally
isomorphic to $\bbZ$ and the Hochschild homology of a smooth and proper $k$-scheme
$X$ agrees with the Hodge cohomology
$H^{\ast}(X, \Omega^{\ast}_X)$ of $X$. Therefore, when we work over $\bbC$,
and if $\perf(X)$ denotes the (saturated) dg category of perfect complexes over $X$, the Euler
characteristic of $\Uloc(\perf(X))$ is the classical Euler characteristic of $X$.

\medbreak\noindent\textbf{Acknowledgments\,:} The authors are very grateful to Bertrand To{\"e}n
for sharing his insights on saturated dg categories {and to Maxim Kontsevich for comments on a previous version which motivated the creation of subsection~\ref{sub:Chern}.} The second named author
would like also to thank Paul Balmer and Christian Haesemeyer for some conversations.
This article was initiated at the Institut Henri Poincar{\'e} in Paris. The authors would like to
thank this institution for its hospitality
and stimulating working environment.
\mainmatter
\section{Preliminaries}\label{sec:preliminaries}

\subsection{Notations}\label{sub:notation}
Throughout the article we will work over a fixed commutative and unital base ring $k$. 

We will denote by $\cC(k)$ be the category of (unbounded) complexes of $k$-modules;
see~\cite[\S2.3]{Hovey}. We will use co-homological notation, \ie the differential increases the degree.
The category $\cC(k)$ is a symmetric monoidal model category (see \cite[Definition\,4.2.6]{Hovey}),
where one uses the {\em projective} model structure for which weak equivalences are quasi-isomorphisms
and fibrations are surjections; see~\cite[Proposition\,4.2.13]{Hovey}.

The category of sets will be denoted by $\mathsf{Set}$, the category of
simplicial sets by $\sSet$, and the category of pointed simplicial sets by $\sSet_{\bullet}$;
see~\cite[\S I]{Jardine}. The categories $\sSet$ and $\sSet_{\bullet}$ are symmetric monoidal model
categories; see~\cite[Proposition\,4.2.8]{Jardine}. The weak equivalences are the maps whose
geometric realization is a weak equivalence of topological spaces, the fibrations are the Kan-fibrations,
and the cofibrations are the inclusion maps.

We will denote by $\Spt^{\bbN}$ the category of spectra and by $\Sp$ the category of symmetric spectra
(of pointed simplicial sets); see~\cite{HSS}.

Finally, the adjunctions will be displayed vertically with the left (resp. right) adjoint on the left- (resp. right-) hand side. 

\subsection{Triangulated categories}\label{sub:triangulated}
Throughout the article we will use the language of triangulated categories.
The reader unfamiliar with this language is invited to consult Neeman's book~\cite{Neeman}
or Verdier's original monograph~\cite{Verdier}. Recall from \cite[Definition~4.2.7]{Neeman}
that given a triangulated category $\cT$ admitting arbitrary small coproducts, an object $G$ in $\cT$
is called {\em compact} if for each family $\{Y_i\}_{i \in I}$ of objects in $\cT$, the canonical morphism
$$ \underset{i \in I}{\bigoplus} \,\Hom_{\cT}(G,Y_i) \stackrel{\sim}{\too}
\Hom_{\cT}(G, \underset{i \in I}{\bigoplus} \,Y_i)$$
is invertible. We will denote by $\cT_c$ the category of compact objects in $\cT$. 

\subsection{Quillen model categories}\label{sub:Qmodel}
Throughout the article we will use freely the language of Quillen model categories.
The reader unfamiliar with this language is invited to consult Goerss~\&~Jardine~\cite{Jardine},
Hirschhorn~\cite{Hirschhorn}, Hovey~\cite{Hovey}, or Quillen's original monograph~\cite{Quillen}.
Given a model category $\cM$, we will denote by $\Ho(\cM)$ its homotopy category and~by 
$$\Map(-,-): \Ho(\cM)^\op \times \Ho(\cM) \too \Ho(\sSet)$$ 
its homotopy function complex; see~\cite[Definition~17.4.1]{Hirschhorn}. 
\subsection{Grothendieck derivators}
Throughout the article we will use the language of Grothendieck derivators. Derivators allow us to
state and prove precise universal properties and to dispense with many of the technical
problems one faces in using Quillen model categories. Since this language may be less
familiar to the reader, we revise it in the appendix. 
Given a model category $\cM$, we will denote by $\HO(\cM)$ its associated derivator;
see \S\ref{sub:derivators}. Any triangulated derivator $\bbD$ is canonically
enriched over spectra; see \S\ref{sub:spectral-enr}. We will denote by $\bbR\Hom_{\bbD}(X,Y)$
the spectrum of maps from $X$ to $Y$ in $\bbD$. When there is no ambiguity, we
will write $\bbR\Hom(X,Y)$ instead of $\bbR\Hom_{\bbD}(X,Y)$.

\section{Background on dg categories}
In this section we collect and recall the notions and results on the (homotopy) theory of dg categories
which will be used throughout the article; see \cite{IMRN, cras, Toen}. At the end of the
section we prove a new result: the derivator associated to the Morita model structure carries
a symmetric monoidal structure; see Theorem~\ref{thm:Mdgcat}. This result is the starting point
of the article, since it allow us to define symmetric monoidal localizing invariants;
see Definition~\ref{not:Linv-mon}.

\begin{definition}\label{def:dgcategory}
A {\em small dg category $\cA$} is a $\cC(k)$-category;
see~\cite[Definition\,6.2.1]{Borceaux}. Recall that this consists of the following data\,: 
\begin{itemize}
\item[-] a set of objects $\mathrm{obj}(\cA)$ (usually denoted by $\cA$ itself); 
\item[-] for each ordered pair of objects $(x,y)$ in $\cA$, a complex of $k$-modules $\cA(x,y)$; 
\item[-] for each ordered triple of objects $(x,y,z)$ in $\cA$, a composition morphism in $\cC(k)$
$$\cA(y,z)\otimes \cA(x,y) \too \cA(x,z)\,,$$
satisfying the usual associativity condition; 
\item[-] for each object $x$ in $\cA$, a morphism $k \to \cA(x,x)$, satisfying the usual unit condition
with respect to the above composition.
\end{itemize}
\end{definition}
\begin{definition}\label{def:dgfunctor}
A {\em dg functor} $F: \cA \to \cB$ is a $\cC(k)$-functor; see~\cite[Definition\,6.2.3]{Borceaux}.
Recall that this consists of the following data\,:
\begin{itemize}
\item[-] a map of sets $F: \mathrm{obj}(\cA) \too \mathrm{obj}(\cB)$;
\item[-] for each ordered pair of objects $(x,y)$ in $\cA$, a morphism in $\cC(k)$
$$ F(x,y): \cA(x,y) \too \cB(Fx, Fy)\,,$$
satisfying the usual unit and associativity conditions.
\end{itemize}
\end{definition}
\begin{notation}\label{not:dgcat}
We denote by $\dgcat$ the category of small dg categories.
\end{notation}
\subsection{Dg cells}\label{sub:dg-cells}
\begin{itemize}
\item[(i)] Let $\underline{k}$ be the dg category with one
object $\ast$ and such that $\underline{k}(\ast,\ast):=k$ (in degree
zero). Note that given a small dg category $\cB$, there is a bijection between the set of
dg functors from $\underline{k}$ to $\cB$ and the set of objects of $\cB$.
\item[(ii)] For $n \in \mathbb{Z}$, let $S^{n}$ be the complex $k[n]$
(with $k$ concentrated in degree $n$) and $D^n$ the mapping
cone on the identity of $S^{n-1}$. We denote by $\dgS(n)$ the dg
category with two objects $1$ and $2$ such that $ \dgS(n)(1,1)=k \ko
\dgS(n)(2,2)=k \ko \dgS(n)(2,1)=0  \ko \dgS(n)(1,2)=S^{n} $ and
composition given by multiplication. We denote by $\dgD(n)$ the dg
category with two objects $3$ and $4$ such that $ \dgD(n)(3,3)=k \ko
\dgD(n)(4,4)=k \ko \dgD(n)(4,3)=0 \ko \dgD(n)(3,4)=D^n $ and
composition given by multiplication. 
\item[(iii)] For $n \in \bbZ$, let $\iota(n):\dgS(n-1)\to \dgD(n)$ be the dg functor that sends $1$ to
$3$, $2$ to $4$ and $S^{n-1}$ to $D^n$ by the identity on $k$ in
degree~$n-1$\,:
$$
\vcenter{
\xymatrix@C=2em@R=1em{
\dgS(n-1) \ar@{=}[d] \ar[rrr]^{\displaystyle \iota(n)}
&&& \dgD(n) \ar@{=}[d]
\\
&&&\\
\\
1 \ar@(ul,ur)[]^{k} \ar[dd]_-{S^{n-1}}
& \ar@{|->}[r] &
& 3\ar@(ul,ur)[]^{k}  \ar[dd]^-{D^n}
\\
& \ar[r]^-{\incl} &
\\
2 \ar@(dr,dl)[]^{k}
& \ar@{|->}[r] &
& 4\ar@(dr,dl)[]^{k}
}}
\qquad\text{where}\qquad
\vcenter{\xymatrix@R=1em@C=.8em{ S^{n-1} \ar[rr]^-{\incl} \ar@{=}[d]
&& D^n \ar@{=}[d]
\\
\ar@{.}[d]
&& \ar@{.}[d]
\\
0 \ar[rr] \ar[d]
&& 0 \ar[d]
\\
0 \ar[rr] \ar[d]
&& k \ar[d]^{\id}
\\
k \ar[rr]^{\id} \ar[d]
&& k \ar[d]
&{\scriptstyle(\textrm{degree }n-1)}
\\
0 \ar[rr] \ar@{.}[d]
&& 0 \ar@{.}[d]
\\
&&}}
$$
\end{itemize}
\begin{notation}\label{not:I-cell}
Let $I$ be the set consisting of the dg functors
$\{\iota(n)\}_{n\in \mathbb{Z}}$ and the dg functor $\emptyset
\rightarrow \underline{k}$ (where the empty dg category $\emptyset$
is the initial object in $\dgcat$).
\end{notation}
\begin{definition}\label{def:dg-cell}
A dg category $\cA$ is called a \emph{dg cell} (resp. a  \emph{finite dg cell})
if the unique dg functor $\emptyset\to\cA$ can be expressed as a transfinite
(resp. finite) composition of pushouts of
dg functors in $I$; see~\cite[Definition~10.5.8(2)]{Hirschhorn}.
\end{definition}
\subsection{Dg modules}\label{sub:modules}
Let $\cA$ be a small (fixed) dg category. 
\begin{definition}\label{def:modules}
\begin{itemize}
\item[-] The category $\dgHo(\cA)$ has the same objects as $\cA$ and
morphisms given by $\dgHo(\cA)(x,y):=\textrm{H}^0(\cA(x,y))$, where $\textrm{H}^0(-)$
denotes the $0$-th co-homology group functor. 
\item[-] The {\em opposite} dg category $\mathcal{A}^{\op}$ of $\cA$ has the same objects
as $\mathcal{A}$ and complexes of morphisms given by
$\mathcal{A}^{\op}(x,y):=\mathcal{A}(y,x)$. 
\item[-] A {\em right dg $\cA$-module} $M$ (or simply a $\cA$-module) is a dg functor
$M:\cA^{\op} \to \cC_{\dg}(k)$ with values in the dg category $\cC_{\dg}(k)$ of complexes of
$k$-modules.
\end{itemize}
\end{definition}
We denote by $\cC(\cA)$ the category of right dg $\cA$-modules. Its objects are the right dg
$\cA$-modules and its morphisms are the natural transformations of dg functors;
see \cite[\S 2.3]{ICM}. The differential graded structure of $\cC_{\dg}(k)$ makes $\cC(\cA)$
naturally into a dg category; see \cite[\S 3.1]{ICM} for details. We denote by $\cC_{\dg}(\cA)$
this dg category of right dg $\cA$-modules.
Recall from~\cite[Theorem~3.2]{ICM} that $\cC(\cA)$ carries a standard {\em projective}
model structure. A morphism $M \to M'$ in $\cC(\cA)$ is a {\em weak equivalence},
resp. a {\em fibration}, if for any object $x$ in $\cA$, the induced morphism $M(x) \to M'(x)$
is a weak equivalence, resp. a fibration, in the projective model structure on $\cC(k)$.
In particular, every object is fibrant. Moreover, the dg category $\cC_{\dg}(\cA)$ endowed
with this model structure is a {\em $\cC(k)$-model category} in the sense of~\cite[Definition~4.2.18]{Hovey}.
\begin{notation}
\begin{itemize}
\item[-] We denote by $\cD_{\dg}(\cA)$ the full dg subcategory of fibrant
and cofibrant objects in $\cC_{\dg}(\cA)$.
\item[-] We denote by $\cD(\cA)$ the {\em derived category of $\cA$}, \ie the localization of
$\cC(\cA)$ with respect to the class of weak equivalences. Note that $\cD(\cA)$ is a triangulated
category with arbitrary small coproducts.
Recall from \S\ref{sub:triangulated} that we denote by $\cD_c(\cA)$
the category of compact objects in $\cD(\cA)$.
\end{itemize}
\end{notation}
Since $\cC_{\dg}(\cA)$ is a $\cC(k)$-model category, \cite[Proposition~3.5]{Toen}
implies that we have a natural equivalence of (triangulated) categories
\begin{equation}\label{eq:equi-cat}
\dgHo(\cD_{\dg}(\cA))\simeq\cD(\cA)\,.
\end{equation}
We denote by
$$
\begin{array}{lcr}
\underline{h}: \cA  \too   \cC_{\dg}(\cA) && x  \mapsto \cA(-,x)\,,
\end{array}
$$
the classical {\em Yoneda dg functor}; see \cite[\S6.3]{Borceaux}. Since the $\cA$-modules
$\cA(-,x)$, $x$ in $\cA$, are fibrant and cofibrant in the projective model structure, the Yoneda
functor factors through the inclusion $\cD_{\dg}(\cA)\subset \cC_{\dg}(\cA)$.
Thanks to the above equivalence (\ref{eq:equi-cat}), we obtain an induced fully-faithful functor
\begin{eqnarray*}
\underline{h}:\dgHo(\cA) \hookrightarrow \cD(\cA) && x \mapsto \cA(-,x)\,.
\end{eqnarray*}
Finally, let $F: \cA \to \cB$ be a dg functor. As shown in \cite[\S 3]{Toen} it induces a
{\em restriction/extension of scalars} Quillen adjunction (on the left-hand side)
$$
\xymatrix{
\cC(\cB) \ar@<1ex>[d]^{F^{\ast}} && \cD(\cB) \ar@<1ex>[d]^{F^{\ast}}\\
\cC(\cA) \ar@<1ex>[u]^{F_!}  && \cD(\cA) \ar@<1ex>[u]^{\bbL F_!} \,,\\
}
$$
which can be naturally derived (on the right-hand side).

\subsection{Morita model structure}\label{sub:Modelstructure}

\begin{definition}\label{def:Morita}
A dg functor $F: \cA \to \cB$ is a called a {\em derived Morita equivalence}
if the restriction of scalars functor $F^{\ast}: \cD(\cB) \stackrel{\sim}{\to} \cD(\cA)$
is an equivalence of triangulated categories. Equivalently, $F$ is a derived Morita equivalence
if the extension of scalars functor $\bbL F_!: \cD(\cA) \stackrel{\sim}{\to} \cD(\cB)$
is an equivalence of triangulated categories.
\end{definition}

\begin{theorem}\label{thm:Morita}
The category $\dgcat$ carries a cofibrantly generated Quillen model structure
(called the {\em Morita model structure}), whose weak equivalences are the
derived Morita equivalences. Moreover, its set of generating cofibrations is the
set $I$ of Notation~\ref{not:I-cell}.
\end{theorem}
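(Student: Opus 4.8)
The plan is to construct the Morita model structure as a left Bousfield localization of a known model structure on $\dgcat$, so that the heavy lifting is done by general localization machinery rather than by hand. First I would invoke the ``quasi-equivalence'' model structure of Tabuada (see \cite{IMRN, cras}): $\dgcat$ carries a cofibrantly generated model structure whose weak equivalences are the dg functors $F\colon\cA\to\cB$ inducing quasi-isomorphisms $\cA(x,y)\to\cB(Fx,Fy)$ on all Hom-complexes and such that $\dgHo(F)\colon\dgHo(\cA)\to\dgHo(\cB)$ is essentially surjective, and whose set of generating cofibrations is precisely the set $I$ of Notation~\ref{not:I-cell} (this is where the explicit dg cells $\dgS(n)$, $\dgD(n)$, $\iota(n)$ earn their keep). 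In particular this model structure is cofibrantly generated with $I$ as generating cofibrations; I would take this as the starting point.

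Next I would perform a left Bousfield localization of this quasi-equivalence model structure at a suitable \emph{set} $S$ of morphisms whose local objects are exactly the dg categories $\cA$ for which the Yoneda functor $\dgHo(\cA)\hookrightarrow\cD_c(\cA)$ is an equivalence — i.e.\ the ``Morita fibrant'' dg categories. Concretely, $S$ should contain, for each dg category built from finite dg cells, the inclusion of that dg category into (a small cofibrant model of) its category of compact perfect modules, together with the maps forcing idempotents to split and forcing closure under shifts and cones; since $\cD_c(\cA)$ is generated under these operations from the representables, a countable such set $S$ suffices. Because the quasi-equivalence structure is cofibrantly generated and $\cC(k)$-enriched (hence the underlying structure is left proper and combinatorial in the relevant sense), Hirschhorn's existence theorem for left Bousfield localizations \cite[Theorem~4.1.1]{Hirschhorn} applies and produces a new cofibrantly generated model structure on $\dgcat$ with the \emph{same cofibrations} — in particular the same generating cofibrations $I$ — whose weak equivalences are the $S$-local equivalences.

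The remaining point is to identify the $S$-local weak equivalences with the derived Morita equivalences of Definition~\ref{def:Morita}. For this I would argue: a dg functor $F\colon\cA\to\cB$ is an $S$-local equivalence iff it becomes a quasi-equivalence after $S$-localization, iff it induces an equivalence on $S$-local (= Morita fibrant) replacements, iff $\dgHo(\widehat{F})$ is an equivalence where $\widehat{\cA}$ is a Morita fibrant model; and by the equivalence \eqref{eq:equi-cat} together with the identification of the Morita fibrant replacement of $\cA$ with a dg enhancement of $\cD_c(\cA)$, this is exactly the condition that $\bbL F_!\colon\cD(\cA)\to\cD(\cB)$ restricts to an equivalence on compact objects, which — since $\cD(\cA)$ is compactly generated by the representables and $\bbL F_!$ preserves coproducts — is equivalent to $\bbL F_!$ being an equivalence, i.e.\ to $F$ being a derived Morita equivalence. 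One then checks conversely that derived Morita equivalences are $S$-local equivalences, which is immediate since they induce equivalences on all the module categories involved in $S$.

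The main obstacle I anticipate is the bookkeeping in the second and third steps: pinning down a genuinely \emph{small} set $S$ of morphisms whose localization has precisely the Morita fibrant dg categories as local objects, and then carefully matching ``$S$-local equivalence'' with ``derived Morita equivalence'' through the dg-enhancement equivalence \eqref{eq:equi-cat} and the compact generation of $\cD(\cA)$. The claim that the generating cofibrations remain $I$ is then essentially free, since left Bousfield localization never changes the cofibrations; the subtlety is entirely in controlling the new weak equivalences. An alternative, more hands-on route would be to build the factorization systems directly by a transfinite small-object argument using $I$ and an explicit set of generating trivial cofibrations adapted to Morita equivalences, but the localization approach is cleaner and reuses the already-established quasi-equivalence structure.
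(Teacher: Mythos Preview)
The paper itself gives no argument here---the proof is a bare citation to \cite[Th{\'e}or{\`e}me~5.3]{IMRN}---and your sketch is essentially a correct outline of the construction carried out in that reference: one starts from the quasi-equivalence model structure on $\dgcat$ (which is combinatorial, left proper, and has $I$ as generating cofibrations) and performs a left Bousfield localization at an explicit small set of maps so that the new fibrant objects are precisely the Morita fibrant dg categories; since localization leaves cofibrations unchanged, $I$ remains the set of generating cofibrations.

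Two minor cautions on the execution. First, Hirschhorn's Theorem~4.1.1 requires \emph{cellularity}, which is not obvious for $\dgcat$; the cleaner route (and the one actually used in \cite{IMRN}) is to invoke Smith's theorem for combinatorial model categories, since $\dgcat$ is locally presentable. Second, your description of the localizing set $S$ is a bit diffuse: in practice one writes down a finite list of explicit dg functors between finite dg cells encoding ``has a zero object'', ``has cones'', ``idempotents split'', etc., rather than ranging over all finite dg cells. With those adjustments your plan goes through, and the identification of $S$-local equivalences with derived Morita equivalences via the fibrant replacement $\cA\mapsto\perf(\cA)$ (Remark~\ref{rk:Yoneda}) is exactly the right endgame.
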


\begin{proof}
See \cite[Th{\'e}or{\`e}me~5.3]{IMRN}.
\end{proof}

\begin{notation}\label{not:hom-Morita}
We denote by $\Hmo$ the homotopy category hence obtained.
\end{notation}
Recall from \cite[Remarque~5.4]{IMRN} that the fibrant objects of the Morita model
structure (called the {\em Morita fibrant dg categories}) are the dg categories $\cA$ for
which the image of the induced functor
\begin{eqnarray*}
\underline{h}:\dgHo(\cA) \hookrightarrow \cD(\cA) && x \mapsto \cA(-,x)
\end{eqnarray*}
is stable under (co-)suspensions, cones and direct factors. 
\subsection{Monoidal structure}\label{sub:Monoidstructure}
\begin{definition}
Let $\cA_1$ and $\cA_2$ be small dg categories. The {\em tensor product}
$\cA_1 \otimes \cA_2$ of $\cA_1$ with $\cA_2$ is defined as follows:
the set of objects of $\cA_1 \otimes \cA_2$ is $\mbox{obj}(\cA_1)\times \mbox{obj}(\cA_2)$
and given objects $(x,x')$ and $(y,y')$ in $\cA_1 \otimes \cA_2$, we set
$$ (\cA_1 \otimes \cA_2)((x,x'),(y,y'))= \cA_1(x,y) \otimes \cA_2(x',y')\,.$$
\end{definition}

\begin{remark}\label{rk:monoid-str}
The tensor product of dg categories gives rise to a symmetric monoidal structure on $\dgcat$,
with unit object the dg category $\underline{k}$ (see \S\ref{sub:dg-cells}).
Moreover, this model structure is easily seen to be closed. However,
the model structure of Theorem~\ref{thm:Morita} endowed with
this symmetric monoidal structure is {\em not} a symmetric monoidal model category,
as the tensor product of two cofibrant objects in $\dgcat$ is not cofibrant in general;
see \cite[\S 4]{Toen}. Nevertheless, the tensor product can be derived into a bifunctor
\begin{eqnarray*}
-\otimes^{\bbL}-\,: \Hmo \times \Hmo \too \Hmo && (\cA_1, \cA_2)
\mapsto Q(\cA_1)\otimes \cA_2=:\cA_1 \otimes^{\bbL}\cA_2\,,
\end{eqnarray*}
where $Q(\cA_1)$ is a Morita cofibrant resolution of $\cA_1$.
\end{remark}
Now, let $\cB$ and $\cA$ be small dg categories. For any object $x$ in $\cB$,
we have a dg functor $\cA \to \cBo \otimes \cA$. It sends an object $y$ in $\cA$ to $(x,y)$
and for each ordered pair of objects $(y,z)$ in $\cA$, the morphism in $\cC(k)$
$$ \cA(y,z) \too (\cBo \otimes \cA)((x,y),(x,z))=\cBo(x,x) \otimes \cA(y,z)$$
is given by the tensor product of the unit morphism $k \to \cBo(x,x)$
with the identity morphism on $\cA(y,z)$. Take a Morita cofibrant resolution $Q(\cBo)$ of $\cBo$
with the same set of objects; see~\cite[Proposition~2.3(2)]{Toen}. Since $\cBo$ and $Q(\cBo)$
have the same set of objects, one sees that for any object $x$ in $\cB$, we have a dg functor
\begin{equation}\label{eq:restrict}
 i_x: \cA \too Q(\cBo)\otimes\cA =: \cBo \otimes^{\bbL} \cA\,.
\end{equation}
\begin{definition}\label{def:(r)quasicomp}
Let $\cA$ be a small dg category. A $\cA$-module $M$ is called {\em perfect}
if it is compact as an object in the derived category $\cD(\cA)$. We denote by
$\cC_{\dg}(\cA)^{\mathsf{pe}}$ the dg category of perfect $\cA$-modules, and put
$$\perf(\cA)=\cC_{\dg}(\cA)^{\mathsf{pe}}\cap \cD_\dg(\cA)\, .$$
We thus have an equivalence of triangulated categories
$$\dgHo(\perf(\cA))\simeq\cD_c(\cA)\, .$$
Let $\cB$ and $\cA$ be small dg categories. A $(\cB^\op \otimes^{\bbL}\cA)$-module
$X$ is said to be {\em locally perfect over $\cB$} if, for any object $x$ in $\cB$, the $\cA$-module
$i^{\ast}_x(X)$ (see \eqref{eq:restrict} and \S\ref{sub:modules}) is perfect.
We denote by $\cC_{\dg}(\cB^\op \otimes^{\bbL} \cA)^{\mathsf{lpe}}$ the dg category
of locally perfect $(\cB^\op\otimes^\bbL\cA)$-modules over $\cB$,
and put
$$\rep(\cB,\cA)=\cC_{\dg}(\cB^\op \otimes^{\bbL} \cA)^{\mathsf{lpe}}
\cap \cD_\dg(\cB^\op\otimes^\bbL\cA )\, .$$
Note that $\dgHo(\rep(\cB,\cA))$ is canonically equivalent to the full
triangulated subcategory of $\cD(\cB^\op\otimes^{\bbL}\cA)$ spanned by the locally perfect
$(\cB^\op\otimes^\bbL\cA)$-modules over $\cB$.
\end{definition}

\begin{remark}\label{rk:Yoneda}
For any dg category $\cA$, the Yoneda embedding
$\underline{h}:\cA\to\perf(\cA)$ is a derived Morita equivalence, and $\perf(\cA)$
is Morita fibrant. This construction thus provides us a canonical
fibrant replacement functor for the Morita model structure.
\end{remark}

\begin{theorem}[To{\"e}n]\label{thm:Toen0}
Given small dg categories $\cB$ and $\cA$, we have a natural bijection
$$ \Hom_{\Hmo}(\cB, \cA) \simeq
\mathrm{Iso} \, \dgHo(\rep(\cB,\cA))$$
(where $\mathrm{Iso}\, \cC$ stands for the set of isomorphism classes
of objects in $\cC$). Moreover, given small
dg categories $\cA_1$, $\cA_2$, and $\cA_3$, the composition in $\Hmo$ corresponds to the
derived tensor product of bimodules\,:
$$
\begin{array}{ccc}
\mathrm{Iso} \, \dgHo(\rep(\cA_1,\cA_2)) \times
\mathrm{Iso} \, \dgHo(\rep(\cA_2,\cA_3))
&\too& \mathrm{Iso} \, \dgHo(\rep(\cA_1,\cA_3)) \\
 ([X],[Y]) &\longmapsto& [X \otimes_{\cA_2}^{\bbL} Y]\,.
\end{array}$$
\end{theorem}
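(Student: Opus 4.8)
The plan is to reduce the statement to To{\"e}n's computation of the mapping spaces of the quasi-equivalence model structure on $\dgcat$, and then to repackage that computation in terms of the dg categories $\rep(-,-)$. First, recall that the Morita model structure of Theorem~\ref{thm:Morita} is a left Bousfield localization of the quasi-equivalence model structure of Tabuada, so that the two structures have the same cofibrations, and that, by Remark~\ref{rk:Yoneda}, the Yoneda embedding $\underline{h}\colon\cA\to\perf(\cA)$ is a functorial Morita-fibrant replacement. Hence, for a cofibrant resolution $Q(\cB)$ of $\cB$ in the quasi-equivalence model structure, we obtain natural bijections
\begin{equation*}
\Hom_{\Hmo}(\cB,\cA)\;\simeq\;\Hom_{\Hqe}\bigl(Q(\cB),\perf(\cA)\bigr)\;=\;\pi_0\,\Map^{qe}\bigl(Q(\cB),\perf(\cA)\bigr)\,,
\end{equation*}
where $\Map^{qe}$ denotes the homotopy function complex of the quasi-equivalence model structure, so the whole problem is to compute this set of connected components.

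Next I would invoke To{\"e}n's theorem that, for $Q(\cB)$ cofibrant and any dg category $\cE$, the space $\Map^{qe}(Q(\cB),\cE)$ is naturally weakly equivalent to the classifying space of the category whose objects are the right quasi-representable $\bigl(\cB^{\op}\otimes^{\bbL}\cE\bigr)$-modules --- that is, the modules $X$ for which $i_x^{\ast}(X)$ is isomorphic in $\cD(\cE)$ to a representable module for every object $x$ of $\cB$ --- and whose morphisms are the quasi-isomorphisms. Applying this with $\cE=\perf(\cA)$ and passing to $\pi_0$, the right-hand side above becomes the set of isomorphism classes, in $\cD(\cB^{\op}\otimes^{\bbL}\perf(\cA))$, of the modules that are locally quasi-representable over $\cB$.

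It then remains to match this last set with $\mathrm{Iso}\,\dgHo(\rep(\cB,\cA))$. Since $\underline{h}\colon\cA\to\perf(\cA)$ is a derived Morita equivalence and the derived tensor product preserves such equivalences (Remark~\ref{rk:monoid-str}), the induced dg functor $\cB^{\op}\otimes^{\bbL}\cA\to\cB^{\op}\otimes^{\bbL}\perf(\cA)$ is again a derived Morita equivalence, hence induces an equivalence of derived categories compatible with all the restriction functors $i_x^{\ast}$. Under this equivalence the representable $\perf(\cA)$-modules correspond to the perfect $\cA$-modules, so a module is locally quasi-representable over $\cB$ if and only if its image in $\cD(\cB^{\op}\otimes^{\bbL}\cA)$ is locally perfect over $\cB$, i.e.\ lies in $\rep(\cB,\cA)$ in the sense of Definition~\ref{def:(r)quasicomp}. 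This yields the asserted bijection, whose naturality in $\cB$ and $\cA$ is inherited from that of all the constructions involved.

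Finally, for the compatibility with composition, I would trace the effect of composing two morphisms through these identifications. A morphism $\cA_1\to\cA_2$ in $\Hmo$ corresponds to a bimodule $X\in\rep(\cA_1,\cA_2)$ which plays the role of the ``graph'' of the associated quasi-functor, and composing with $Y\in\rep(\cA_2,\cA_3)$ amounts, at the level of quasi-functors, to left Kan extension along $\underline{h}$, an operation that on bimodules is exactly the derived tensor product $X\otimes^{\bbL}_{\cA_2}Y$. I expect this to be the main obstacle: one has to reconcile the simplicial (or derivator-level) composition law on the mapping spaces with the bimodule tensor product, carefully tracking the cofibrant resolutions hidden inside $\otimes^{\bbL}$; the cleanest route is to verify the claim on the Yoneda bimodules, which represent the identities, and then to invoke associativity and unitality of $-\otimes^{\bbL}_{\cA_2}-$. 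By contrast, once To{\"e}n's mapping space theorem is granted, the identification of the underlying sets carried out in the first three steps is essentially formal.
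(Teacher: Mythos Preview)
The paper does not give an argument here at all: its entire proof is the sentence ``See \cite[Corollary~4.8]{Toen} and \cite[Remarque~5.11]{IMRN}.'' Your proposal is a correct unpacking of precisely those two references --- you pass from $\Hmo$ to $\Hqe$ via the Morita-fibrant replacement $\perf(\cA)$ (this is the content of \cite[Remarque~5.11]{IMRN}), then invoke To{\"e}n's mapping-space theorem to identify $\pi_0\Map^{qe}$ with isomorphism classes of right quasi-representable bimodules, and finally observe that under the Morita equivalence $\cA\to\perf(\cA)$ the quasi-representable $\perf(\cA)$-modules correspond to perfect $\cA$-modules, giving $\rep(\cB,\cA)$ (this is \cite[Corollary~4.8]{Toen} transported along that equivalence). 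So your approach is not merely compatible with the paper's; it \emph{is} the argument the paper is pointing to.

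One small comment on your final paragraph: the composition statement is also handled directly in \cite{Toen} (it is part of the same corollary), so rather than rederiving it by ``verifying on Yoneda bimodules and invoking associativity,'' you can simply appeal to To{\"e}n's identification of composition of quasi-functors with the derived tensor product of bimodules, and then note that this identification is preserved under the Morita-fibrant replacement step because $-\otimes^{\bbL}_{\cA_2}-$ is invariant under derived Morita equivalence in each variable. That shortens the argument and removes what you flagged as the ``main obstacle.''
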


\begin{proof}
See \cite[Corollary~4.8]{Toen} and \cite[Remarque~5.11]{IMRN}.
\end{proof}


\begin{theorem}[To{\"e}n]\label{thm:Toen}
The derived tensor product $-\otimes^{\bbL}-$ on $\Hmo$ admits
the bifunctor
$$\rep(-,-): \Hmo^{\op} \times \Hmo \too \Hmo$$
as an internal Hom-functor.
\end{theorem}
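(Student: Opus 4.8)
The plan is to construct, for each fixed $\cA_2$ and naturally in $\cA_1,\cA_3\in\Hmo$, a bijection
$$
\Hom_{\Hmo}(\cA_1\otimes^{\bbL}\cA_2,\,\cA_3)\;\cong\;\Hom_{\Hmo}(\cA_1,\,\rep(\cA_2,\cA_3))\,,
$$
which exhibits $\rep(\cA_2,-)$ as a right adjoint to $-\otimes^{\bbL}\cA_2$; the bifunctoriality of $\rep(-,-)$ on $\Hmo^{\op}\times\Hmo$ and the naturality in $\cA_2$ are then purely formal (uniqueness of adjoints, together with the functoriality of $-\otimes^{\bbL}-$ in the second variable). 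By Theorem~\ref{thm:Toen0} the left-hand side is $\mathrm{Iso}\,\dgHo(\rep(\cA_1\otimes^{\bbL}\cA_2,\cA_3))$. Using the associativity of $-\otimes^{\bbL}-$ on $\Hmo$, together with the identity $(\cA_1\otimes\cA_2)^{\op}=\cA_1^{\op}\otimes\cA_2^{\op}$ applied to cofibrant resolutions chosen with the same objects (as for $Q(\cB^{\op})$ preceding \eqref{eq:restrict}), this becomes the set of isomorphism classes of $(\cA_1^{\op}\otimes^{\bbL}\cA_2^{\op}\otimes^{\bbL}\cA_3)$-modules $X$ such that $i^{\ast}_{(x_1,x_2)}X$ is a perfect $\cA_3$-module for every object $x_1$ of $\cA_1$ and every object $x_2$ of $\cA_2$.

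To describe the right-hand side, I first record that $\rep(\cA_2,\cA_3)$ is Morita fibrant. Indeed, its homotopy category is the full triangulated subcategory of $\cD(\cA_2^{\op}\otimes^{\bbL}\cA_3)$ of modules perfect over $\cA_2$, and this subcategory is thick because each $i^{\ast}_{x_2}$ preserves cones and suspensions and $\cD_c(\cA_3)$ is idempotent complete; hence $\rep(\cA_2,\cA_3)$, being a full dg subcategory of $\cD_{\dg}(\cA_2^{\op}\otimes^{\bbL}\cA_3)$ closed under the dg cone, the shift and direct factors, is pretriangulated and idempotent complete, so it satisfies the fibrancy criterion recalled after Theorem~\ref{thm:Morita}. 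Consequently $\Hom_{\Hmo}(\cA_1,\rep(\cA_2,\cA_3))$ is computed as the set of homotopy classes of dg functors from a cofibrant resolution of $\cA_1$ into $\rep(\cA_2,\cA_3)$. Because $\rep(\cA_2,\cA_3)\subseteq\cD_{\dg}(\cA_2^{\op}\otimes^{\bbL}\cA_3)$ and $\cC_{\dg}(\cA_2^{\op}\otimes^{\bbL}\cA_3)$ is a $\cC(k)$-model category, such a dg functor corresponds — by the tensor--hom adjunction, i.e.\ ``currying'' — to an $(\cA_1^{\op}\otimes^{\bbL}\cA_2^{\op}\otimes^{\bbL}\cA_3)$-module whose restriction $i^{\ast}_{x_1}$ along each object $x_1$ of $\cA_1$ is (weakly equivalent to a fibrant--cofibrant module lying in) $\rep(\cA_2,\cA_3)$, that is, is perfect over $\cA_3$ locally over $\cA_2$. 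Passing to homotopy classes, $\Hom_{\Hmo}(\cA_1,\rep(\cA_2,\cA_3))$ is therefore the set of isomorphism classes of $(\cA_1^{\op}\otimes^{\bbL}\cA_2^{\op}\otimes^{\bbL}\cA_3)$-modules $X$ with $i^{\ast}_{(x_1,x_2)}X$ perfect over $\cA_3$ for every $(x_1,x_2)$ — exactly the set produced in the previous paragraph. Tracing the two identifications shows that the resulting bijection is natural in $\cA_1$ and $\cA_3$, which completes the construction and hence the proof.

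The delicate point is the currying step: one must make the passage between dg functors $Q(\cA_1)\to\rep(\cA_2,\cA_3)\subseteq\cD_{\dg}(\cA_2^{\op}\otimes^{\bbL}\cA_3)$ and $(\cA_1^{\op}\otimes^{\bbL}\cA_2^{\op}\otimes^{\bbL}\cA_3)$-modules both precise and homotopically correct. Concretely, one has to choose compatible cofibrant resolutions of the various tensor factors — with the same objects, as in the construction of the $i_x$ near \eqref{eq:restrict} — so that the ``evaluation at an object'' restriction functors behave well with respect to the two bracketings $(\cA_1\otimes^{\bbL}\cA_2)\otimes^{\bbL}\cA_3$ and $\cA_1\otimes^{\bbL}(\cA_2\otimes^{\bbL}\cA_3)$, and check that fibrancy--cofibrancy and perfectness in the $\cA_3$-slot are matched in both directions; this is essentially To{\"e}n's internal-Hom construction adapted to the ``locally perfect'' condition. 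The remaining ingredients — the Morita fibrancy of $\rep(\cA_2,\cA_3)$, the invariance of $\cD(\cA_1^{\op}\otimes^{\bbL}-)$ under derived Morita equivalences, and the bookkeeping that upgrades the bijection to an adjunction of bifunctors — are routine given Theorem~\ref{thm:Toen0} and the material recalled in this section.
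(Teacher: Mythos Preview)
The paper does not prove this result: its proof consists entirely of the citation ``See \cite[Theorem~6.1]{Toen} and \cite[Remarque~5.11]{IMRN}''. So there is nothing to compare at the level of argument --- the paper treats this as a black box imported from To\"en's work on the homotopy theory of dg categories.

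Your proposal is an attempt at a direct proof, and its outline is the natural one: use Theorem~\ref{thm:Toen0} to translate both sides into isomorphism classes of suitable bimodules, and match them via currying. This is essentially the strategy of To\"en's original argument. You correctly identify the delicate point --- the currying step --- and you are honest that it requires care. But as written it remains a gap: you need to show that \emph{homotopy classes} of dg functors $Q(\cA_1)\to\rep(\cA_2,\cA_3)$ correspond bijectively to \emph{isomorphism classes in the derived category} of $(\cA_1^\op\otimes^\bbL\cA_2^\op\otimes^\bbL\cA_3)$-modules satisfying the local perfectness condition. The passage from strict dg functors into $\cD_{\dg}(-)\subset\cC_{\dg}(-)$ to modules via currying is fine on the nose, but identifying the resulting equivalence relation on modules with quasi-isomorphism, and checking that the pointwise cofibrancy/fibrancy conditions on the $\cA_3$-side match up with the correct derived notion, is exactly the technical core of \cite[Theorem~6.1]{Toen}. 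Your last paragraph acknowledges this and essentially defers back to To\"en, so in the end your proof and the paper's agree: both rest on that external result.

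A small remark: rather than computing the right-hand side via Morita fibrancy and homotopy classes of dg functors, it is slightly cleaner to apply Theorem~\ref{thm:Toen0} to \emph{both} sides, so that the problem becomes identifying $\rep(\cA_1\otimes^\bbL\cA_2,\cA_3)$ with $\rep(\cA_1,\rep(\cA_2,\cA_3))$ up to derived Morita equivalence; this avoids mixing two different descriptions of $\Hom_{\Hmo}$.
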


\begin{proof}
See \cite[Theorem~6.1]{Toen} and \cite[Remarque~5.11]{IMRN}.
\end{proof}

\begin{corollary}\label{cor:Toen}
Given small dg categories $\cA_1$, $\cA_2$ and $\cA_3$, we have a natural isomorphism in $\Ho(\sSet)$
\begin{equation}\label{eq:adj}
\Map(\cA_1 \otimes^{\bbL} \cA_2, \cA_3) \simeq \Map(\cA_1, \rep(\cA_2, \cA_3))\,.
\end{equation}
\end{corollary}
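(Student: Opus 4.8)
The plan is to upgrade the internal-Hom adjunction of Theorem~\ref{thm:Toen}, which only records a bijection on the $\Hom$-sets of $\Hmo$, to a weak equivalence of homotopy function complexes, by computing both sides through one and the same cosimplicial resolution. First I would make the standard reductions. Since $\Map(-,-)$ is invariant under (zig-zags of) weak equivalences in either variable, and since $-\otimes\cA_2$ preserves weak equivalences between cofibrant dg categories when $\cA_2$ is cofibrant (\cite[\S4]{Toen}), I may assume $\cA_1$ and $\cA_2$ to be Morita cofibrant, so that $\cA_1\otimes^{\bbL}\cA_2=\cA_1\otimes\cA_2$. Using the Yoneda embedding I may replace $\cA_3$ by the Morita fibrant dg category $\perf(\cA_3)$ (Remark~\ref{rk:Yoneda}); by Theorem~\ref{thm:Toen} the functor $\rep(\cA_2,-)$ is already defined on $\Hmo$, so this does not change $\rep(\cA_2,\cA_3)$ up to Morita equivalence. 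I would also note that $\rep(\cA_2,\cA_3)$ is itself Morita fibrant: by the characterisation recalled after Notation~\ref{not:hom-Morita} (see \cite[Remarque~5.4]{IMRN}) it is enough that the triangulated category $\dgHo(\rep(\cA_2,\cA_3))$, namely the subcategory of $\cD(\cA_2^{\op}\otimes^{\bbL}\cA_3)$ spanned by the bimodules locally perfect over $\cA_2$, be stable under (co)suspensions, cones and direct factors, which is immediate from the corresponding stability of perfect modules.

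Next I would choose a cosimplicial resolution $\Gamma^{\bullet}$ of $\cA_1$ in $\dgcat$ (a Reedy cofibrant cosimplicial frame with $\Gamma^{0}=\cA_1$; see \cite{Hirschhorn}). Because $\rep(\cA_2,\cA_3)$ is Morita fibrant, the simplicial set $\Hom_{\dgcat}(\Gamma^{\bullet},\rep(\cA_2,\cA_3))$ represents $\Map(\cA_1,\rep(\cA_2,\cA_3))$. For the other side I would use that the symmetric monoidal structure on $\dgcat$ is closed (Remark~\ref{rk:monoid-str}), so that $-\otimes\cA_2$ is a left adjoint, preserves colimits, and in particular commutes with latching objects; combined with its good behaviour on cofibrant objects (\cite[\S4]{Toen}) this shows that $\Gamma^{\bullet}\otimes\cA_2$ is again a cosimplicial frame computing $\Map(\cA_1\otimes^{\bbL}\cA_2,-)$, whence $\Hom_{\dgcat}(\Gamma^{\bullet}\otimes\cA_2,\perf(\cA_3))$ represents $\Map(\cA_1\otimes^{\bbL}\cA_2,\cA_3)$. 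By the tensor-Hom adjunction of the closed category $\dgcat$, this last simplicial set is naturally isomorphic to $\Hom_{\dgcat}(\Gamma^{\bullet},\uHom_{\dgcat}(\cA_2,\perf(\cA_3)))$, where $\uHom_{\dgcat}$ is the strict internal Hom. Finally, Theorem~\ref{thm:Toen} (\ie \cite[Theorem~6.1]{Toen}) says exactly that $\uHom_{\dgcat}(\cA_2,\perf(\cA_3))$ is a model for the derived internal Hom, hence Morita equivalent to $\rep(\cA_2,\cA_3)$; since $\Gamma^{\bullet}$ is a cosimplicial resolution, $\Hom_{\dgcat}(\Gamma^{\bullet},-)$ turns this Morita equivalence into a weak equivalence of simplicial sets. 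Splicing the three identifications together yields $\Map(\cA_1\otimes^{\bbL}\cA_2,\cA_3)\simeq\Map(\cA_1,\rep(\cA_2,\cA_3))$ in $\Ho(\sSet)$, and naturality in $\cA_1$, $\cA_2$, $\cA_3$ is automatic, every construction being functorial.

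The hard part is entirely the failure of the Morita model structure to be a symmetric monoidal model category, which is what makes the two passages between $\otimes$ and $\uHom_{\dgcat}$ at the level of frames delicate. I need to know (i) that tensoring a cosimplicial resolution with a fixed cofibrant dg category still computes the correct mapping space --- equivalently, that $-\otimes\cA_2$ behaves like a left Quillen functor on cofibrant objects, even though $\cA_1\otimes\cA_2$ need not be cofibrant --- and (ii) that the strict internal Hom out of a cofibrant dg category into the Morita fibrant $\perf(\cA_3)$ already has the homotopy type of $\rep(\cA_2,\cA_3)$, so that no discrepancy is introduced by a further fibrant replacement. Both are precisely the points established by To{\"e}n in \cite[\S4 and Theorem~6.1]{Toen} (see also \cite[Remarque~5.11]{IMRN}), so in the written proof I would invoke those results rather than reprove them; in essence the corollary is the content of Theorem~\ref{thm:Toen} transported from $\Hmo$ to homotopy function complexes, and the steps above are the bookkeeping that makes the transport legitimate.

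I did consider an alternative ``generator'' route: viewed as functors of $\cA_1$, both sides send derived Morita equivalences to weak equivalences and homotopy colimits to homotopy limits, so by Theorem~\ref{thm:Morita} it would suffice to treat $\cA_1\in\{\underline{k},\dgS(n),\dgD(n)\}$, and for $\cA_1=\underline{k}$ the statement becomes the identification of $\Map(\cA_2,\cA_3)$ with $\Map(\underline{k},\rep(\cA_2,\cA_3))$. However, computing $\Map(\underline{k},-)$ in $\dgcat$ is itself essentially To{\"e}n's description of mapping spaces of dg categories, so this route is no shorter, and I would not pursue it.
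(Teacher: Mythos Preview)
The paper does not give an argument here at all: its proof of Corollary~\ref{cor:Toen} is the single sentence ``See \cite[Corollary~6.4]{Toen} and \cite[Remarque~5.11]{IMRN}.'' In other words, the corollary is quoted verbatim from To{\"e}n's work (with the passage from the quasi-equivalence to the Morita model structure supplied by \cite{IMRN}), and no independent proof is attempted.

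Your sketch is therefore not the paper's proof but a reconstruction of how To{\"e}n's Corollary~6.4 is obtained from his Theorem~6.1. The strategy---fix a cosimplicial frame $\Gamma^\bullet$ of a cofibrant $\cA_1$, use closedness of $\dgcat$ to rewrite $\Hom_{\dgcat}(\Gamma^\bullet\otimes\cA_2,\perf(\cA_3))$ as $\Hom_{\dgcat}(\Gamma^\bullet,\uHom_{\dgcat}(\cA_2,\perf(\cA_3)))$, and then identify the strict internal Hom with $\rep(\cA_2,\cA_3)$ via \cite[Theorem~6.1]{Toen}---is exactly the right shape, and you correctly isolate the two delicate points (that $\Gamma^\bullet\otimes\cA_2$ is still a cosimplicial frame despite the failure of the monoidal model axiom, and that the strict internal Hom into a Morita fibrant target already has the correct homotopy type). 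One small gap worth flagging: in the last step you invoke that $\Hom_{\dgcat}(\Gamma^\bullet,-)$ sends the Morita equivalence $\uHom_{\dgcat}(\cA_2,\perf(\cA_3))\to\rep(\cA_2,\cA_3)$ to a weak equivalence, but for this you need \emph{both} sides to be Morita fibrant, not just $\rep(\cA_2,\cA_3)$; you should either argue that $\uHom_{\dgcat}(\cA_2,\perf(\cA_3))$ is itself Morita fibrant (which is part of what To{\"e}n establishes), or first fibrantly replace it. With that addition your argument is complete and is essentially what lies behind the citation.
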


\begin{proof}
See \cite[Corollary~6.4]{Toen} and \cite[Remarque~5.11]{IMRN}.
\end{proof}

\begin{remark}\label{rk:Toen}
Observe that, when $\cB=\underline{k}$, the
dg category $\cB^\op \otimes^{\bbL}\cA$ identifies with $\cA$, which gives a
natural isomorphism in $\Hmo$\,:
\begin{equation}\label{eq:ident}
\rep(\underline{k}, \cA)\simeq\perf(\cA)\,.
\end{equation}
\end{remark}

We finish this section by showing how to endow the derivator associated
to the Morita model structure on dg categories with a symmetric monoidal
structure; see Theorem~\ref{thm:Mdgcat}.
Recall from \S\ref{sub:Monoidstructure} that the Morita model structure is {\em not} a monoidal model structure and so we cannot apply the general Proposition~\ref{prop:Mderivator}.
We sidestep this difficulty by introducing the notion of {\em $k$-flat dg category}.

\begin{definition}\label{def:k-flatdg}
A complex of $k$-modules is called {\em $k$-flat} if for any acyclic complex $N$,
the complex $M\otimes N$ is acyclic.

A dg category is called {\em $k$-flat}
if for each ordered pair of objects $(x,y)$ in $\cA$, the complex $\cA(x,y)$ is $k$-flat.
\end{definition}

\begin{notation}\label{not:k-flat}
We denote by $\dgcat_{\Flat}$ the category of $k$-flat dg categories.
\end{notation}

\begin{remark}\label{rk:k-flatprop}
Clearly $\dgcat_{\Flat}$ is a full subcategory of $\dgcat$. Since complexes of $k$-flat modules
are stable under tensor product, the category $\dgcat_{\Flat}$ is a symmetric monoidal full
subcategory of $\dgcat$ (see \S \ref{sub:Monoidstructure}). Moreover, the tensor product
$$ -\otimes - : \dgcat_{\Flat} \times \dgcat_{\Flat} \too \dgcat_{\Flat}$$
preserves derived Morita equivalences (in both variables).
\end{remark}

\begin{notation}\label{not:Der-Mon}
We denote by $\HO(\dgcat)$ the derivator associated to the Morita model structure on dg categories;
see Theorem~\ref{thm:Morita}.
\end{notation}

\begin{theorem}\label{thm:Mdgcat}
The derivator $\HO(\dgcat)$ carries a symmetric monoidal structure $-\otimes^{\bbL}-$;
see \S\ref{sub:M1}. Moreover, the induced symmetric monoidal structure on $\HO(\dgcat)(e)=\Hmo$
coincides with the one described in Remark~\ref{rk:monoid-str}.
\end{theorem}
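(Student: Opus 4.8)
The plan is to transport the symmetric monoidal structure along the zig-zag of derived functors relating $\dgcat$, equipped with the Morita model structure, and its full subcategory $\dgcat_{\Flat}$ of $k$-flat dg categories. The essential point, already isolated in Remark~\ref{rk:k-flatprop}, is that the tensor product restricts to a functor $-\otimes-\colon \dgcat_{\Flat}\times\dgcat_{\Flat}\to\dgcat_{\Flat}$ which preserves derived Morita equivalences in each variable, so that on $\dgcat_{\Flat}$ the honest (underived) tensor product already computes the derived one. First I would endow $\dgcat_{\Flat}$ with the model structure right-induced (or rather, the restriction of the Morita model structure — one must check that the relevant (co)fibrant replacements can be taken $k$-flat, which follows since every dg cell is $k$-flat, $k[n]$ and its mapping cones being flat, and Morita cofibrant objects are retracts of cellular ones). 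Thus the inclusion $\dgcat_{\Flat}\hookrightarrow\dgcat$ is a Quillen equivalence, inducing an equivalence of derivators $\HO(\dgcat_{\Flat})\xrightarrow{\sim}\HO(\dgcat)$.

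Next I would upgrade this to a monoidal statement using the general machinery recalled in the appendix (specifically the results around Proposition~\ref{prop:Mderivator} on monoidal derivators associated to monoidal model categories, and Theorem~\ref{monoidalderKanext} on monoidality of Kan extension). The idea is: although $(\dgcat,\otimes)$ with the Morita structure fails to be a symmetric monoidal model category, one only needs a symmetric monoidal category $\cM$ together with a class $W$ of weak equivalences such that $\otimes$ preserves $W$ in each variable and such that $W$ admits a calculus of fractions compatible with products of categories (so that the derivator $\HO(\cM)$ exists and is monoidal). On $\dgcat_{\Flat}$ this hypothesis holds by Remark~\ref{rk:k-flatprop}: the tensor product preserves derived Morita equivalences in both variables. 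Hence $\HO(\dgcat_{\Flat})$ carries a canonical symmetric monoidal structure, and transporting it along the equivalence of derivators $\HO(\dgcat_{\Flat})\simeq\HO(\dgcat)$ yields the desired symmetric monoidal structure $-\otimes^{\bbL}-$ on $\HO(\dgcat)$, as in \S\ref{sub:M1}. The unit is the image of $\underline{k}$, which is already $k$-flat.

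Finally, for the compatibility statement I would evaluate at the point $e$. By construction the monoidal structure on $\HO(\dgcat_{\Flat})(e)$ is induced by the underived tensor product on $k$-flat dg categories localized at derived Morita equivalences; transporting along $\HO(\dgcat_{\Flat})(e)\simeq\Hmo$ gives a bifunctor on $\Hmo$ which on a pair $(\cA_1,\cA_2)$ is computed by choosing $k$-flat replacements and tensoring. Since a Morita cofibrant resolution $Q(\cA_1)$ is in particular $k$-flat, and since $Q(\cA_1)\otimes\cA_2\to Q(\cA_1)\otimes\cA_2$ needs no further correction in the second variable (again by Remark~\ref{rk:k-flatprop}, flatness of $Q(\cA_1)$ makes $Q(\cA_1)\otimes-$ homotopically meaningful on all of $\dgcat$), this agrees with the formula $\cA_1\otimes^{\bbL}\cA_2 = Q(\cA_1)\otimes\cA_2$ of Remark~\ref{rk:monoid-str}.

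**Main obstacle.** The routine part is the flatness bookkeeping; the real work is in the appendix-level input, namely producing a clean general statement ("a symmetric monoidal category with a class of weak equivalences stable under $\otimes$ in each variable, and with enough structure to define a derivator, yields a symmetric monoidal derivator, naturally in monoidal Quillen/localization equivalences") and verifying that the coherence isomorphisms (associativity, symmetry, unit) transported along the equivalence of derivators are themselves coherent — i.e.\ that the equivalence $\HO(\dgcat_{\Flat})\simeq\HO(\dgcat)$ is monoidal, not merely that both sides are monoidal. This is where one must invoke the 2-categorical naturality of the constructions in \S\ref{sub:M1} rather than argue object by object.
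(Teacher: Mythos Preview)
Your overall strategy matches the paper's: pass to the full subcategory $\dgcat_{\Flat}$, where the underived tensor product already preserves derived Morita equivalences, obtain a symmetric monoidal structure there, and transport it along the equivalence with $\HO(\dgcat)$. The compatibility check at $e$ is also the same.

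Where you diverge is in the execution, and this introduces a genuine difficulty that the paper avoids. You want to equip $\dgcat_{\Flat}$ with a model structure (right-induced, or the restriction of the Morita structure) so that the inclusion becomes a Quillen equivalence. But $\dgcat_{\Flat}$ is not obviously complete or cocomplete as a category: limits and arbitrary colimits of $k$-flat complexes need not be $k$-flat, so the model category axioms may simply fail. The observation that cofibrant dg categories are $k$-flat tells you that cofibrant replacements land in $\dgcat_{\Flat}$, but it does not give you a model structure on $\dgcat_{\Flat}$ itself. The paper sidesteps this entirely: it never claims $\dgcat_{\Flat}$ is a model category. Instead it works with the bare \emph{prederivator} $\underline{\dgcat_{\Flat}}[\cM or^{-1}]$ obtained by localizing presheaf categories levelwise (\S\ref{sub:prederivators}). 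Since $\otimes$ on $\dgcat_{\Flat}$ preserves $\cM or$, this prederivator is symmetric monoidal by the elementary remark in \S\ref{sub:M1} (no model-categorical input needed, and in particular neither Proposition~\ref{prop:Mderivator} nor Theorem~\ref{monoidalderKanext} is invoked here). The functorial cofibrant resolution $Q:\dgcat\to\dgcat_{\Flat}$ and the inclusion $i:\dgcat_{\Flat}\hookrightarrow\dgcat$, together with the natural transformations $i\circ Q\Rightarrow\mathrm{Id}$ and $Q\circ i\Rightarrow\mathrm{Id}$ (both objectwise Morita equivalences), yield quasi-inverse morphisms of prederivators by the $2$-functoriality of $(\cM,\cW)\mapsto\underline{\cM}[\cW^{-1}]$. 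One then transports the monoidal structure along this equivalence.

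This also dissolves what you flag as the ``main obstacle'': the coherence of the transported monoidal structure is automatic from the $2$-functoriality, since the equivalence is exhibited by explicit functors and explicit natural isomorphisms. No separate argument for monoidality of the equivalence is needed.
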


\begin{proof}
Let $\HO(\dgcat_\Flat)=\underline{\dgcat_{\Flat}}[{\cM or}^{-1}]$
be the prederivator associated to the class $\cM or$
of derived Morita equivalences in $\dgcat_{\Flat}$; see~\S\ref{sub:prederivators}.
Since $\dgcat_{\Flat}$ is a symmetric monoidal category, and the tensor product preserves the class
$\cM or$, \ie $\cM or \otimes \cM or \subset \cM or$, the prederivator
$\HO(\dgcat_\Flat)$ carries a natural symmetric monoidal structure;
see \S\ref{sub:M1}. Now, choose a functorial Morita cofibrant resolution functor
$$
\begin{array}{lcr}
Q: \dgcat \too \dgcat & & Q \Rightarrow \mbox{Id}\,.
\end{array}
$$ 
Thanks to \cite[Proposition~2.3(3)]{Toen} every cofibrant dg category is $k$-flat, and so we obtain functors
$$
\begin{array}{lcr}
i : \dgcat_{\Flat} \hookrightarrow \dgcat & & Q: \dgcat \too \dgcat_{\Flat}
\end{array}
$$
and natural transformations
\begin{eqnarray}\label{eq:nat}
i \circ Q \Rightarrow \mbox{Id} & & Q \circ i \Rightarrow \mbox{Id}\,.
\end{eqnarray}
Since the functors $i$ and $Q$ preserve derived Morita equivalences and the above natural
transformations (\ref{eq:nat}) are objectwise derived Morita equivalences, we obtain by
$2$-functoriality (see \S\ref{sub:prederivators}) morphisms of derivators
$$
\begin{array}{lcr}
i : \HO(\dgcat_\Flat) \too \HO(\dgcat) & & Q: \HO(\dgcat) \too \HO(\dgcat_\Flat)
\end{array}
$$
which are quasi-inverse to each other.
Using this equivalence of derivators, we transport the
monoidal structure from $\HO(\dgcat_\Flat)$ to $\HO(\dgcat)$.
The fact that the induced monoidal structure on $\HO(\dgcat)(e)=\Hmo$
coincides with the one described in Remark~\ref{rk:monoid-str} is now clear.
\end{proof}

\begin{proposition}\label{prop:Fltdgcat}
The tensor product on $\HO(\dgcat)$ of Theorem~\ref{thm:Mdgcat}
preserves homotopy colimits in each variable.
\end{proposition}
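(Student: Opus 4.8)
The plan is the following. Since the tensor product $-\otimes^{\bbL}-$ is symmetric, it is enough to prove that for each small dg category $\cB$ the morphism of derivators $-\otimes^{\bbL}\cB : \HO(\dgcat)\to\HO(\dgcat)$ preserves homotopy colimits. Using the equivalence of derivators $\HO(\dgcat)\simeq\HO(\dgcat_{\Flat})$ constructed in the proof of Theorem~\ref{thm:Mdgcat}, I would replace $\cB$ by a $k$-flat (for instance Morita cofibrant) model. Then, by Remark~\ref{rk:k-flatprop}, tensoring with $\cB$ is a \emph{strict} endofunctor of $\dgcat$ which preserves derived Morita equivalences; moreover it preserves all small colimits, because $(\dgcat,\otimes)$ is a closed symmetric monoidal category (see Remark~\ref{rk:monoid-str}). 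Finally, recall that a morphism between cocomplete derivators preserves homotopy colimits as soon as it preserves arbitrary coproducts and homotopy pushouts (every homotopy colimit being built, via a bar construction, from homotopy coproducts and homotopy pushouts), so we are reduced to these two cases.

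The case of coproducts is immediate: the disjoint union of a family of dg categories is a homotopy coproduct in $\HO(\dgcat)$ (disjoint union preserves derived Morita equivalences and sends cofibrant dg categories to cofibrant ones), and $-\otimes\cB$ commutes with disjoint unions. For homotopy pushouts, I would start with a span $\cC\leftarrow\cA\rightarrow\cD$ in $\dgcat$. Since $-\otimes\cB$ preserves derived Morita equivalences, one may assume that $\cA,\cC,\cD$ are Morita cofibrant and that $\cA\to\cC$ is a cofibration, so that the homotopy pushout is the ordinary pushout $\cC\sqcup_{\cA}\cD$; applying the colimit-preserving functor $-\otimes\cB$ produces the ordinary pushout $(\cC\otimes\cB)\sqcup_{\cA\otimes\cB}(\cD\otimes\cB)$. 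Expressing $\cA\to\cC$ as a retract of a transfinite composition of pushouts of the generating cofibrations of Notation~\ref{not:I-cell}, the dg functor $\cA\otimes\cB\to\cC\otimes\cB$ becomes a retract of a transfinite composition of pushouts of the dg functors $\iota(n)\otimes\cB$ ($n\in\bbZ$) and $\emptyset\to\cB$. Since the class of \emph{$h$-cofibrations} --- dg functors along which cobase change preserves derived Morita equivalences --- is closed under pushout, transfinite composition and retracts, it suffices to check that these two families of dg functors are $h$-cofibrations; granting this, the displayed pushout is a homotopy pushout and we are done.

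The decisive point, and the only place where the $k$-flatness of $\cB$ enters, is the claim that $\iota(n)\otimes\cB$ is an $h$-cofibration. Here I would unwind the explicit (and very small) description of $\iota(n)\colon\dgS(n-1)\to\dgD(n)$ from \S\ref{sub:dg-cells}: the dg functor $\iota(n)\otimes\cB$ is the identity on objects, and on each complex of morphisms it is obtained from the inclusion $S^{n-1}\hookrightarrow D^{n}$ by tensoring with the complex $\cB(x,y)$; since $\cB(x,y)$ is $k$-flat (Definition~\ref{def:k-flatdg}) this remains a degreewise split monomorphism of complexes, hence an $h$-cofibration of complexes. A direct inspection of the pushout of such a ``$\cB$-linearised cell'' --- which, just as for $\iota(n)$ itself, amounts to freely adjoining a contracting datum for a chosen family of cycles and closing up under composition --- then shows that cobase change along $\iota(n)\otimes\cB$ preserves quasi-isomorphisms on each complex of morphisms, hence derived Morita equivalences; the case of $\emptyset\to\cB$ reduces to the fact that $-\sqcup\cB$ preserves derived Morita equivalences. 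I expect this $h$-cofibration statement to be the genuinely delicate step, precisely because $-\otimes\cB$ is \emph{not} a left Quillen functor for the Morita model structure (the tensor product of Morita cofibrant dg categories need not be Morita cofibrant; see Remark~\ref{rk:monoid-str}), so there is no left properness to invoke. As a consistency check one may note that the statement is also predicted by To{\"e}n's internal Hom theorem: $-\otimes^{\bbL}\cB$ admits $\rep(\cB,-)$ as a right adjoint on $\Hmo$ (Corollary~\ref{cor:Toen}), so an alternative, more formal proof would consist in promoting this adjunction to an adjunction of morphisms of derivators, after which $-\otimes^{\bbL}\cB$, being a left adjoint, preserves homotopy colimits automatically.
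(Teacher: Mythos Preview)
Your ``alternative, more formal proof'' at the very end is exactly what the paper does: the entire proof in the paper is the single sentence ``This follows immediately from Corollary~\ref{cor:Toen}.'' The mapping-space adjunction
\[
\Map(\cA_1\otimes^{\bbL}\cA_2,\cA_3)\simeq\Map(\cA_1,\rep(\cA_2,\cA_3))
\]
shows that $-\otimes^{\bbL}\cA_2$ has a homotopical right adjoint, hence preserves homotopy colimits; no promotion to an adjunction of derivator morphisms is really needed beyond the observation that homotopy colimits are detected by mapping spaces.

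Your main route, by contrast, is far more laborious and contains a genuine gap precisely where you flag it. The claim that $\iota(n)\otimes\cB$ is an $h$-cofibration is not a matter of ``direct inspection'': pushouts in $\dgcat$ are free products, and the hom-complexes of a pushout along $\iota(n)\otimes\cB$ are computed by an intricate filtration involving iterated tensor products over $\cB$. Showing that cobase change along this map preserves quasi-isomorphisms amounts to reproving, in a $\cB$-linearised form, the hardest part of the construction of the quasi-equivalence model structure on $\dgcat$ --- and since $\iota(n)\otimes\cB$ is \emph{not} a cofibration (this is exactly the failure of monoidality you cite from Remark~\ref{rk:monoid-str}), you cannot borrow that argument directly. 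Your sketch (``degreewise split monomorphism of complexes, hence an $h$-cofibration of complexes'') conflates $h$-cofibrations of complexes with $h$-cofibrations in $\dgcat$; the latter is a statement about pushouts of dg categories, not about the individual hom-complexes of the map itself. So the main argument, as written, does not close; you should lead with the adjunction argument and drop the rest.
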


\begin{proof}
This follows immediately from Corollary~\ref{cor:Toen}.
\end{proof}
\section{Homotopically finitely presented dg categories}\label{sub:hfp}
In this section we give a new characterization of homotopically
finitely presented dg categories; see Theorem~\ref{thm:enhance}(3).
Using this new characterization we show that homotopically finitely
presented dg categories are stable under derived tensor product;
see Theorem~\ref{thm:tensorstable}. This stability result will play
a key role in the construction of a symmetric monoidal model structure
on the category of non-commutative motives; see Section~\ref{chap:monoidal}.

\begin{proposition}\label{prop:key}
For any small dg category $\cA$ and $n \in \bbZ$,
we have natural isomorphisms in $\Hmo$
\begin{equation*}
\cS(n)^\op \otimes \cA\simeq \cS(n)^\op \otimes^\bbL \cA \simeq \rep(\dgS(n), \cA) \,.
\end{equation*}
\end{proposition}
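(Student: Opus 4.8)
The plan is to reduce everything to a direct analysis of the very simple dg category $\cE:=\cS(n)^\op\otimes\cA$. First I would check that $\cS(n)^\op$ is a finite dg cell, hence Morita cofibrant: two pushouts of $\emptyset\to\underline{k}$ produce $\underline{k}\amalg\underline{k}$, and one further pushout of the generating cofibration $\iota(n+2)$, along the (essentially unique) dg functor out of its domain $\cS(n+1)$, freely adjoins a closed degree-$n$ morphism between the two objects, which is exactly $\cS(n)^\op$. Since $\cS(n)^\op$ is then cofibrant, it serves as its own Morita cofibrant resolution, so $\cE$ is a model for $\cS(n)^\op\otimes^\bbL\cA$; this already gives the first isomorphism $\cS(n)^\op\otimes\cA\simeq\cS(n)^\op\otimes^\bbL\cA$ in $\Hmo$ and reduces the proof to identifying $\cE$ with $\rep(\dgS(n),\cA)$.

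Next I would unwind $\cE$. It has object set $\{1,2\}\times\mathrm{obj}(\cA)$ and morphism complexes $\cE((i,x),(j,y))=\cS(n)^\op(i,j)\otimes\cA(x,y)$, the only nonzero ones being $\cA(x,y)$ (for $i=j$) and $S^n\otimes\cA(x,y)$ from $(2,x)$ to $(1,y)$; thus $\cE$ is ``upper triangular'' over $\cA$. Unravelling this, the category of right dg $\cE$-modules is equivalent, as an abelian category, to the category of triples $(M_1,M_2,\varphi)$ with $M_1,M_2$ right dg $\cA$-modules and $\varphi$ a closed degree-$n$ morphism $M_1\to M_2$ in $\cC_{\dg}(\cA)$ (equivalently, to dg functors $\dgS(n)\to\cC_{\dg}(\cA)$); this equivalence preserves quasi-isomorphisms and so induces an equivalence of $\cD(\cE)$ with the derived category of such diagrams, under which the restriction functors $i_1^\ast,i_2^\ast\colon\cD(\cE)\to\cD(\cA)$ (see \eqref{eq:restrict}) become the two projections $(M_1,M_2,\varphi)\mapsto M_1$ and $(M_1,M_2,\varphi)\mapsto M_2$. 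In this language the representable $\cE$-modules are $h_{(2,x)}\cong(0,\cA(-,x),0)$ and $h_{(1,x)}$, the latter fitting into the distinguished triangle $(0,S^n\otimes\cA(-,x),0)\to h_{(1,x)}\to(\cA(-,x),0,0)\to(0,S^n\otimes\cA(-,x),0)[1]$ coming from the obvious two-step filtration.

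I would then show $\cD_c(\cE)$ is exactly the full subcategory of diagrams $(M_1,M_2,\varphi)$ with $M_1,M_2\in\cD_c(\cA)$. For one inclusion: the functors $j_!:=(0,-,0)$ and $k_!:=(-,0,0)\colon\cD(\cA)\to\cD(\cE)$ are exact and preserve coproducts, and the two facts just noted show they carry the compact generators $\cA(-,x)$ of $\cD(\cA)$ into $\cD_c(\cE)$; hence they carry all of $\cD_c(\cA)$ into $\cD_c(\cE)$, and the triangle $j_!M_2\to(M_1,M_2,\varphi)\to k_!M_1\to j_!M_2[1]$ exhibits $(M_1,M_2,\varphi)$ as perfect as soon as $M_1,M_2$ are. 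For the other inclusion: $i_1^\ast$ and $i_2^\ast$ are exact, preserve coproducts, and send each compact generator $h_{(l,x)}$ of $\cD(\cE)$ to a perfect $\cA$-module, hence carry $\cD_c(\cE)$ into $\cD_c(\cA)$. Since an $\cE$-module is locally perfect over $\dgS(n)$ precisely when $i_1^\ast$ and $i_2^\ast$ of it are perfect, ``locally perfect'' and ``perfect'' coincide for $\cE$-modules, so $\rep(\dgS(n),\cA)$ and $\perf(\cE)$ are the same full dg subcategory of $\cD_{\dg}(\cE)$. By Remark~\ref{rk:Yoneda} the Yoneda functor $\cE\to\perf(\cE)=\rep(\dgS(n),\cA)$ is a derived Morita equivalence, giving $\cS(n)^\op\otimes\cA\simeq\rep(\dgS(n),\cA)$ in $\Hmo$; combined with the first step, this completes the proof.

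The hard part will be the computation of $\cD_c(\cE)$ — more precisely, the implication that a diagram of \emph{perfect} $\cA$-modules is a \emph{perfect} $\cE$-module. This uses in an essential way that $\dgS(n)$ is a finite (hence smooth) dg category, so that its off-diagonal bimodule is perfect and the two-step filtration above reduces the question to the two explicit representables $h_{(1,x)},h_{(2,x)}$. (Once the closed symmetric monoidal structure on $\Hmo$ is in place one could alternatively read the statement off the fact that $\dgS(n)$ is saturated with dual $\dgS(n)^\op$, but the argument just sketched is self-contained and available at this stage.)
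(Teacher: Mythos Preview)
Your proof is correct and follows essentially the same strategy as the paper: both arguments use cofibrancy of $\cS(n)^\op$ for the first isomorphism, identify $\cE$-modules with degree-$n$ morphism diagrams in $\cC(\cA)$, and establish that perfect and locally perfect $\cE$-modules coincide via a two-step filtration/extension argument. The only cosmetic difference is that the paper works with the extension-of-scalars functors $(i_1)_!(M)=(M,M[-n],1_M)$ and $(i_2)_!$ (and so must pass through the cokernel of the given map), whereas your choice of the ``trivial'' sections $j_!=(0,-,0)$ and $k_!=(-,0,0)$ yields the slightly cleaner triangle $j_!M_2\to(M_1,M_2,\varphi)\to k_!M_1$ directly.
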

\begin{proof}
Since $\dgS(n) \simeq \dgS(n)^\op$ is cofibrant, we obtain the following isomorphism in $\Hmo$
$$ \dgS(n)^\op \otimes^\bbL \cA\simeq \dgS(n)^\op \otimes \cA\, .$$
By construction, the dg category $\dgS(n)$ is also {\em locally perfect},
\ie its complexes of morphisms are perfect complexes of $k$-modules;
see~\cite[Definition~2.4(1)]{Moduli}. We obtain then by \cite[Lemma\,2.8]{Moduli}
the following inclusion
$$\dgS(n)^\op\otimes \cA\simeq \perf(\dgS(n)^\op\otimes^\bbL \cA) \subset \rep(\dgS(n), \cA) \,.$$
Let us now prove the converse inclusion. Observe that we have two natural dg functors
$$
\begin{array}{lccr}
\begin{array}{rcc}
i_1: \cA & \too & \dgS(n)^\op\otimes \cA \\
x & \longmapsto & (1,x)
\end{array}
& &
\begin{array}{rcc}
i_2: \cA & \too & \dgS(n)^\op\otimes \cA \\
x & \longmapsto & (2,x)
\end{array}\,,
\end{array}
$$
and the category of $(\dgS(n)^\op\otimes \cA)$-modules identifies
with the category of morphisms of degree $n$ in $\cC(\cA)$, \ie to the category
of triples $(M,M',f)$, where $M$ and $M'$ are $\cA$-modules, and
$f:M\too M'[n]$ is a morphism of $\cA$-modules. Under this identification, we 
obtain the following extension of scalars functors (see~\S\ref{sub:modules})\,:
\begin{eqnarray*}
(i_1)_!: \cC(\cA) \too \cC(\dgS(n)^\op\otimes \cA) && M \longmapsto (M,M[-n],1_M) \\
(i_2)_!: \cC(\cA) \too \cC(\dgS(n)^\op\otimes \cA) && M \longmapsto (0,M,0)\,.
\end{eqnarray*}
Now, let $X$ be an object in $\rep(\dgS(n), \cA)$.
Note that $X$ corresponds to a cofibration
$$
\xymatrix{
*+<2.5ex>{f: M} \ar@{>->}[r] & M'[n]
}
$$
in $\cC(\cA)$ between cofibrant and perfect $\cA$-modules (see Definition~\ref{def:(r)quasicomp}).
Consider the following short exact sequence of morphisms of $\cA$-modules
\begin{equation}\label{eq:ses} 
\xymatrix{
*+<2.5ex>{(M[-n])[n]=M} \ar@{>->}[rr]^-f && M'[n] \ar[rr] && \mathsf{coker}(f)=(\mathsf{coker}(f)[-n])[n] \\
M \ar@{=}[rr]  \ar[u]^{1_M} && *+<2.5ex>{M} \ar@{>->}[u]_f \ar[rr] && 0 \ar[u]
}
\end{equation}
where $\mathsf{coker}(f)$ denotes the cokernel of $f$ in the category $\cC(\cA)$.
Since $M$ and $M'[n]$ are cofibrant and perfect $\cA$-modules, and $f$ is a cofibration, the
$\cA$-module $\mathsf{coker}(f)$ is also cofibrant and perfect. Perfect modules are stable
under suspension, and so $\mathsf{coker}(f)[-n]$ is a perfect (and cofibrant) $\cA$-module.
We have natural isomorphisms\,:
$$
\begin{array}{lcr}
 (i_1)_!(M)\simeq  (M,M[-n],1_M)  &
 \mathrm{and} &
 (i_2)_!(\mathsf{coker}(f)[-n])\simeq (0,\mathsf{coker}(f)[-n],0) \,.
\end{array}
$$
Since the extension of scalars functors $\bbL (i_1)_!$ and $\bbL (i_2)_!$
preserve perfect modules, these two objects are perfect.
Finally, since $\perf(\dgS(n)^\op\otimes \cA)$ is stable under extensions
in $\cD_\dg(\dgS(n)^\op\otimes \cA)$, and in the above short exact
sequence (\ref{eq:ses}) the left and right vertical morphisms belong to $\perf(\dgS(n)^\op\otimes \cA)$,
we conclude that the object $X$ also belongs to $\perf(\dgS(n)^\op\otimes \cA)$.
\end{proof}
\begin{definition}[{\cite[Definition~2.1(3)]{Moduli}}]\label{def:HFP}
Let $\cM$ be a Quillen model category. An object $X$ in $\cM$ is
called {\em homotopically finitely presented} if for any filtered system
of objects $\{Y_j\}_{j \in J}$ in $\cM$, the induced map
\begin{equation*}
\underset{j \in J}{\hocolim}\, \Map(X,Y_j) \too \Map(X,\underset{j \in J}{\hocolim}\, Y_j)
\end{equation*}
is an isomorphism in $\Ho(\sSet)$. 
\end{definition}
\begin{theorem}\label{thm:enhance}
Let $\cB$ be a small dg category. In the Morita model structure (see Theorem~\ref{thm:Morita})
the following conditions are equivalent\,:
\begin{itemize}
\item[(1)] The dg category $\cB$ is homotopically finitely presented;
\item[(2)] The dg category $\cB$ is a retract in $\Hmo$ of a finite dg cell (see Definition~\ref{def:dg-cell});
\item[(3)] For any filtered system $\{\cA_j\}_{j \in J}$ of dg categories, the induced morphism
\begin{equation}\label{eq:induced}
\underset{j \in J}{\hocolim}\, \rep(\cB,\cA_j) \stackrel{\sim}{\too} \rep(\cB,\underset{j \in J}{\hocolim}\, \cA_j)
\end{equation}
is an isomorphism in $\Hmo$.
\end{itemize}
\end{theorem}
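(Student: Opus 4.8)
The plan is to establish the cycle of implications $(2)\Rightarrow(3)\Rightarrow(1)\Rightarrow(2)$. The implication $(1)\Rightarrow(2)$ is essentially a standard cell-structure argument for cofibrantly generated model categories: a homotopically finitely presented object is, up to retract, a finite cell object built from the generating cofibrations $I$ of Notation~\ref{not:I-cell} (this is in the spirit of \cite{Moduli}), and the finite cells in the Morita model structure are precisely the finite dg cells of Definition~\ref{def:dg-cell}. The implication $(3)\Rightarrow(1)$ is the easy direction: given $(3)$, apply $\Map(\underline{k},-)$-type considerations; more precisely, for any dg category $\cA$ one has $\Map(\cB,\cA)\simeq\Map(\underline{k},\rep(\cB,\cA))$ by Corollary~\ref{cor:Toen} and Remark~\ref{rk:Toen}, and since $\underline{k}$ is a finite dg cell it is homotopically finitely presented, so $\Map(\underline{k},-)$ commutes with filtered homotopy colimits; combining this with the isomorphism \eqref{eq:induced} of $(3)$ shows that $\Map(\cB,-)$ commutes with filtered homotopy colimits, which is exactly condition~$(1)$.

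The substantive implication is $(2)\Rightarrow(3)$. First I would reduce to the case where $\cB$ is itself a finite dg cell: the class of dg categories $\cB$ satisfying $(3)$ is closed under retracts in $\Hmo$, because $\rep(-,\cA)$ is a functor on $\Hmo$ (Theorem~\ref{thm:Toen0}, Theorem~\ref{thm:Toen}) and homotopy colimits commute with retracts. Next, since a finite dg cell is obtained from $\emptyset$ by finitely many pushouts along maps in $I=\{\iota(n)\}_{n\in\bbZ}\cup\{\emptyset\to\underline{k}\}$, I would argue by induction on the number of cells. The base case $\cB=\emptyset$ is trivial and the case $\cB=\underline{k}$ follows from the identification $\rep(\underline{k},\cA)\simeq\perf(\cA)$ of \eqref{eq:ident} together with the fact that $\perf(-)$ commutes with filtered homotopy colimits of dg categories (this last point uses that compact objects in a filtered homotopy colimit of derived categories come from a finite stage). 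For the inductive step, suppose $\cB'$ is obtained from $\cB$ by a pushout along $\iota(n):\dgS(n-1)\to\dgD(n)$. Applying $\rep(-,\cA)$ to this homotopy pushout square in $\Hmo$, and using that $\rep(-,\cA):\Hmo^{\op}\to\Hmo$ sends homotopy pushouts to homotopy pullbacks (it is an internal Hom, hence continuous, by Theorem~\ref{thm:Toen}), I get a homotopy pullback square expressing $\rep(\cB',\cA)$. Since $\dgD(n)$ is contractible (it is a cone, so derived Morita equivalent to $\underline{k}$ in the relevant sense — more precisely $\rep(\dgD(n),\cA)\simeq\rep(\underline{k},\cA)\simeq\perf(\cA)$), and $\rep(\dgS(n),\cA)\simeq\dgS(n)^\op\otimes^{\bbL}\cA$ by Proposition~\ref{prop:key}, both of which commute with filtered homotopy colimits in $\cA$ (the former by the $\perf$ case, the latter because $-\otimes^{\bbL}-$ preserves homotopy colimits in each variable by Proposition~\ref{prop:Fltdgcat}), and $\rep(\cB,-)$ commutes with filtered homotopy colimits by the induction hypothesis, I conclude that $\rep(\cB',-)$ does too, since finite homotopy limits commute with filtered homotopy colimits in a (stable) triangulated derivator.

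The main obstacle I anticipate is the careful handling of the behaviour of $\rep(-,\cA)$ on homotopy pushouts together with the interchange of finite homotopy limits and filtered homotopy colimits: one must work in the triangulated (stable) setting where the relevant pullback/pushout squares are squares of triangulated categories and the homotopy (co)limits are those of the ambient derivator, and one must be sure that $\rep(-,\cA)$ genuinely converts the cell-attachment pushout square into a homotopy pullback square at the level of $\Hmo$. A clean way to sidestep delicate point-set issues is to phrase everything in terms of the derived category and the characterization of $\rep$ via locally perfect bimodules (Definition~\ref{def:(r)quasicomp}), reducing the claim to the statement that perfect complexes over a filtered homotopy colimit of dg categories are detected at a finite stage — which is the standard finite-presentation property of $\perf$. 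The identification $\rep(\dgD(n),\cA)\simeq\perf(\cA)$ should be recorded explicitly, as it is the geometric input that makes the induction close up.
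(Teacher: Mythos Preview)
Your approach is essentially the same as the paper's: the implication $(3)\Rightarrow(1)$ via $\Map(\cB,\cA)\simeq\Map(\underline{k},\rep(\cB,\cA))$ is exactly how the paper argues, and for $(2)\Rightarrow(3)$ the paper likewise reduces to the domains and codomains of $I$ using that $\rep(-,\cA)$ turns homotopy pushouts into homotopy pullbacks and that filtered homotopy colimits in $\Hmo$ commute with homotopy pullbacks. The only organizational difference is that the paper phrases this as ``the class satisfying~(3) is stable under retracts and homotopy pushouts, hence it suffices to check on domains and codomains of $I$'', rather than running an explicit induction on the number of cells; the content is the same.

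There is one slip you should fix: $\cD(n)$ is \emph{not} derived Morita equivalent to $\underline{k}$. The morphism complex $D^n$ is acyclic, so $\cD(n)$ is derived Morita equivalent to $\underline{k}\amalg\underline{k}$ (this is precisely what the paper uses: $\underline{k}\amalg^{\bbL}_{\emptyset}\underline{k}\stackrel{\sim}{\to}\cD(n)$). Consequently $\rep(\cD(n),\cA)\simeq\perf(\cA)\times\perf(\cA)$, not $\perf(\cA)$. This does not damage your argument---the conclusion that $\rep(\cD(n),-)$ commutes with filtered homotopy colimits still follows immediately from the case of $\underline{k}$---but the stated identification is incorrect. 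With that corrected, your proof goes through and matches the paper.
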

\begin{proof}
$(1) \Leftrightarrow (2)$ : This equivalence follows from \cite[Proposition~5.2]{Duke} and
\cite[Example~5.1]{Duke}. 

\vspace{0.2cm}

$(3) \Rightarrow (1):$ Let $\{ \cA_j \}_{j \in J}$ be a filtered system of dg categories. By
hypothesis, we have an induced isomorphism in $\Hmo$
\begin{equation}\label{eq:iso} 
\underset{j \in J}{\hocolim}\, \rep(\cB,\cA_j) \stackrel{\sim}{\too} \rep(\cB,\underset{j \in J}{\hocolim}\, \cA_j)\,.
\end{equation}
Thanks to Corollary~\ref{cor:Toen} we have, for any dg category $\cA$, natural isomorphisms in $\Ho(\sSet)$\,:
$$ \Map(\cB, \cA) \simeq \Map(\underline{k} \otimes^{\bbL} \cB, \cA)
\simeq \Map(\underline{k}, \rep(\cB, \cA))\,.$$
The dg category $\underline{k}$ is a finite dg cell, and so using equivalence
$(1) \Leftrightarrow (2)$ we conclude that $\underline{k}$ is homotopically finitely presented.
Therefore, by applying the functor
$$ \Map(\underline{k}, -) : \Hmo \too \Ho(\mathsf{sSet})$$
to the above isomorphism (\ref{eq:iso}), we obtain an induced isomorphism in $\Ho(\sSet)$
$$\underset{j \in J}{\hocolim}\, \Map(\cB,\cA_j) \stackrel{\sim}{\too}
\Map(\cB,\underset{j \in J}{\hocolim}\, \cA_j)\,.$$
This shows that $\cB$ is homotopically finitely presented.

\vspace{0.2cm}

$(2) \Rightarrow (3):$ The class of objects in $\Hmo$ which satisfy condition (3) is clearly
stable under retracts. Moreover, given a small dg category $\cA$, the functor 
$$\rep(-,\cA): \Hmo^\op \too \Hmo$$ sends homotopy colimits to homotopy limits.
Since by construction homotopy pullbacks in $\Hmo$ commute with filtered homotopy
colimits, we conclude that the class of objects in $\Hmo$ which satisfy condition (3) is
also stable under homotopy pushouts. Therefore, it is enough to verify condition (3) for
the domains and codomains of the elements of the set $I$; see Notation~\ref{not:I-cell}.
If $\cB = \emptyset$ this is clear. The case where $\cB =\underline{k}$ was proved in
\cite[Lemma\,2.10]{Moduli}. If $\cB= \dgD(n)$, $n \in \bbZ$, we have a natural derived Morita equivalence
$$ \underline{k} \amalg^{\bbL}_{\emptyset} \underline{k} \simeq \underline{k}\amalg \underline{k}
\stackrel{\sim}{\too} \dgD(n)\,.$$
Therefore, it remains to prove the case where $\cB= \dgS(n)$, $n \in \bbZ$. Since by Proposition~\ref{prop:Fltdgcat} the derived tensor product preserves homotopy colimits, this case 
follows automatically from Proposition~\ref{prop:key}.
\end{proof}

\begin{theorem}\label{thm:tensorstable}
Let $\cB_1$ and $\cB_2$ be homotopically finitely presented dg categories.
Then $\cB_1 \otimes^{\mathbb{L}}\cB_2$ is a homotopically finitely presented dg category.
\end{theorem}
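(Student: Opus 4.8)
The plan is to use the characterization of homotopically finitely presented dg categories provided by Theorem~\ref{thm:enhance}, specifically the equivalence $(1) \Leftrightarrow (2)$: a dg category is homotopically finitely presented if and only if it is a retract in $\Hmo$ of a finite dg cell. So it suffices to show that $\cB_1 \otimes^{\bbL} \cB_2$ is such a retract whenever each $\cB_i$ is. First I would reduce to the case of finite dg cells themselves: if $\cB_i$ is a retract in $\Hmo$ of a finite dg cell $\cC_i$, then $\cB_1 \otimes^{\bbL} \cB_2$ is a retract in $\Hmo$ of $\cC_1 \otimes^{\bbL} \cC_2$, since $-\otimes^{\bbL}-$ is a bifunctor on $\Hmo$ (see Remark~\ref{rk:monoid-str}) and hence preserves retracts in each variable. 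Thus the heart of the matter is to prove that the derived tensor product of two finite dg cells is homotopically finitely presented.

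For this I would argue by a double induction on the cellular structure, exploiting that $-\otimes^{\bbL}-$ preserves homotopy colimits in each variable (Proposition~\ref{prop:Fltdgcat}). Fix a finite dg cell $\cB_1$; I want to show $\cB_1 \otimes^{\bbL} \cC$ is homotopically finitely presented for every finite dg cell $\cC$. The empty dg category gives $\cB_1 \otimes^{\bbL} \emptyset \simeq \emptyset$, which is trivially homotopically finitely presented. A finite dg cell $\cC'$ obtained from $\cC$ by one pushout along a generating cofibration in $I$ fits into a homotopy pushout square in $\Hmo$; applying $\cB_1 \otimes^{\bbL} -$ and using that it preserves this homotopy pushout, we see $\cB_1 \otimes^{\bbL} \cC'$ is a homotopy pushout of dg categories of the form $\cB_1 \otimes^{\bbL}(\text{domain/codomain of a map in }I)$. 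Since the class of homotopically finitely presented dg categories is closed under homotopy pushouts (it agrees, via Theorem~\ref{thm:enhance}, with the class satisfying condition (3), which was shown there to be stable under homotopy pushouts), it remains only to check that $\cB_1 \otimes^{\bbL}\underline{k}$, $\cB_1 \otimes^{\bbL}\dgS(n)$, and $\cB_1 \otimes^{\bbL}\dgD(n)$ are homotopically finitely presented for each $n \in \bbZ$.

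These three base cases are handled as follows. For $\underline{k}$: since $\underline{k}$ is the monoidal unit, $\cB_1 \otimes^{\bbL}\underline{k} \simeq \cB_1$, which is homotopically finitely presented by assumption. For $\dgD(n)$: using the derived Morita equivalence $\underline{k}\amalg^{\bbL}_{\emptyset}\underline{k} \simeq \dgD(n)$ already recorded in the proof of Theorem~\ref{thm:enhance}, together with the fact that $\cB_1 \otimes^{\bbL}-$ preserves this homotopy pushout, we get $\cB_1 \otimes^{\bbL}\dgD(n) \simeq \cB_1 \amalg^{\bbL}_{\emptyset} \cB_1$, a homotopy pushout of homotopically finitely presented dg categories, hence homotopically finitely presented. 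The case of $\dgS(n)$ is the main obstacle, and is where Proposition~\ref{prop:key} does the essential work: that proposition gives a natural isomorphism $\dgS(n)^\op \otimes^{\bbL} \cA \simeq \rep(\dgS(n),\cA)$ in $\Hmo$, and since $\dgS(n) \simeq \dgS(n)^\op$ one obtains $\cB_1 \otimes^{\bbL}\dgS(n) \simeq \rep(\dgS(n), \cB_1)$ after commuting the tensor factors. Now invoke Theorem~\ref{thm:enhance}(3) for the (homotopically finitely presented, hence retract-of-finite-cell) dg category $\cB_1$: for any filtered system $\{\cA_j\}$ the canonical map $\hocolim_j \rep(\cB_1,\cA_j) \to \rep(\cB_1, \hocolim_j \cA_j)$ is an isomorphism in $\Hmo$. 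Combined with Proposition~\ref{prop:key} applied with $\cA = \cA_j$ and $\cA = \hocolim_j \cA_j$, and the compatibility of $-\otimes^{\bbL}-$ with filtered homotopy colimits (Proposition~\ref{prop:Fltdgcat}), this shows directly that $\cB_1 \otimes^{\bbL}\dgS(n)$ satisfies condition~(3) of Theorem~\ref{thm:enhance}, hence is homotopically finitely presented. This completes the induction and the proof.
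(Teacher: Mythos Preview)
Your inductive strategy is correct in outline, but the crucial $\dgS(n)$ base case is not argued convincingly, and once you patch it you will find that the whole induction is superfluous. You claim that condition~(3) of Theorem~\ref{thm:enhance} for $\cB_1\otimes^{\bbL}\dgS(n)$ follows from Proposition~\ref{prop:key}, condition~(3) for $\cB_1$, and Proposition~\ref{prop:Fltdgcat}; but these ingredients do not visibly combine to control $\rep(\cB_1\otimes^{\bbL}\dgS(n),-)$. What is missing is the adjunction $\Map(\cB_1\otimes^{\bbL}\dgS(n),\cA)\simeq\Map(\cB_1,\rep(\dgS(n),\cA))$ from Corollary~\ref{cor:Toen} (or equivalently the internal-Hom identity for $\rep$). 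With that in hand your base case goes through---but then so does the general case, with $\dgS(n)$ replaced by an arbitrary homotopically finitely presented $\cB_2$.

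Indeed, the paper's proof is exactly this four-line direct verification: for any filtered system $\{\cA_j\}$,
\begin{align*}
\Map(\cB_1\otimes^{\bbL}\cB_2,\hocolim_j\cA_j)
&\simeq \Map(\cB_1,\rep(\cB_2,\hocolim_j\cA_j)) && \text{(Corollary~\ref{cor:Toen})}\\
&\simeq \Map(\cB_1,\hocolim_j\rep(\cB_2,\cA_j)) && \text{(Theorem~\ref{thm:enhance}(3) for $\cB_2$)}\\
&\simeq \hocolim_j\Map(\cB_1,\rep(\cB_2,\cA_j)) && \text{(Definition~\ref{def:HFP} for $\cB_1$)}\\
&\simeq \hocolim_j\Map(\cB_1\otimes^{\bbL}\cB_2,\cA_j) && \text{(Corollary~\ref{cor:Toen})}.
\end{align*}
No cell induction, no retract reduction, no separate treatment of $\underline{k}$, $\dgD(n)$, $\dgS(n)$. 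Your approach is not wrong, but it recreates the machinery behind Theorem~\ref{thm:enhance} rather than simply invoking its output; and the one step where real content is needed is precisely the step that, done correctly, yields the paper's short argument for arbitrary $\cB_2$.
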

\begin{proof}
Let $\{\cA_j\}_{j \in J}$ be a filtered system of dg categories. The proof is a
consequence of the following weak equivalences\,:
\begin{eqnarray}
\Map(\cB_1 \otimes^{\mathbb{L}}\cB_2, \underset{j \in J}{\hocolim}\, \cA_j) & \simeq & 
\Map(\cB_1, \rep(\cB_2, \underset{j \in J}{\hocolim}\,\cA_j))  \label{eq:number1} \\
& \simeq & \Map(\cB_1, \underset{j \in J}{\hocolim}\, \rep(\cB_2,\cA_j)) \label{eq:number2} \\
& \simeq & \underset{j \in J}{\hocolim}\, \Map(\cB_1, \rep(\cB_2,\cA_j)) \label{eq:number3} \\
& \simeq &
\underset{j \in J}{\hocolim}\,\Map(\cB_1 \otimes^{\mathbb{L}}\cB_2, \cA_j)\,. \label{eq:number4} \\ \nonumber
\end{eqnarray} 
Equivalences (\ref{eq:number1}) and (\ref{eq:number4}) follow from Corollary~\ref{cor:Toen},
Equivalence (\ref{eq:number2}) follows from Theorem~\ref{thm:enhance}, and
Equivalence (\ref{eq:number3}) follows from Definition~\ref{def:HFP}.  
\end{proof}

\section{Saturated dg categories}\label{chap:saturated}
In this section we introduce Kontsevich's notion of saturated dg category.
Following To{\"e}n's work, we characterize these dg categories as the dualizable objects
in the Morita homotopy category; see Theorem~\ref{thm:Toen-dualizable}.
This conceptual characterization will play an important role in our applications;
see Section~\ref{sub:applications}.
\begin{definition}[Kontsevich]{(\cite{IAS, Lattice, KS})}\label{def:saturated}
\begin{itemize}
\item[-] A small dg category $\cA$ is called {\em smooth} if the right
dg $(\cA^\op\otimes^{\bbL}\cA)$-module
\begin{eqnarray}\label{eq:Kont}
\cA(-,-): \cA\otimes^{\bbL}\cA^\op \too \cC_{\dg}(k) && (x,y) \mapsto \cA(y,x)
\end{eqnarray}
is perfect; see Definition~\ref{def:(r)quasicomp}.
\item[-] A small dg category $\cA$ is called {\em proper} if for each ordered pair
of objects $(x,y)$ in $\cA$, the complex of $k$-modules $\cA(x,y)$ is perfect.
\item[-] A small dg category $\cA$ is called {\em saturated} if it is smooth and proper.
\end{itemize}
\end{definition}

\begin{remark}\label{rk:bi-module}
Given a dg functor $F:\cB \to \cA$, we have a natural $(\cB^\op\otimes^{\bbL}\cA)$-module
\begin{eqnarray*}
\cB\otimes^{\bbL}\cA^\op \too \cC_{\dg}(k) && (x,y) \mapsto \cA(y,Fx)\,,
\end{eqnarray*}
which belongs to $\rep(\cB,\cA)$ (see Definition~\ref{def:(r)quasicomp}). The above $(\cA^\op\otimes^{\bbL}\cA)$-module $\cA(-,-)$ (\ref{eq:Kont}) is obtained by taking $\cB=\cA$ and $F$ the identity dg functor.
\end{remark}

\begin{notation}\label{not:sat}
Note that the class of smooth (resp. proper) dg categories
is invariant under derived Morita equivalences (see Definition~\ref{def:Morita}).
We denote by $\Hmo_{\mathsf{sat}}$ the full subcategory of $\Hmo$
(see Notation~\ref{not:hom-Morita}) whose objects are the saturated dg categories.
\end{notation}

\begin{example}
Consider the dg categories $\cS(n), n \geq 0$, from \S\ref{sub:dg-cells}(ii).
By construction these dg categories are proper. Thanks to Proposition~\ref{prop:key},
we have a natural isomorphism $\cS(n)^\op\otimes^{\bbL}\cS(n) \simeq \rep(\cS(n), \cS(n))$
in $\Hmo$. Since the $(\cS(n)^\op \otimes^{\bbL}\cS(n))$-module $\cS(n)(-,-)$ belongs to
$\rep(\cS(n),\cS(n))$ (see Remark~\ref{rk:bi-module}) and we have a natural derived Morita
equivalence $\cS(n)^\op\otimes^{\bbL}\cS(n)\to \perf(\cS(n)^\op\otimes^{\bbL}\cS(n))$
(see Remark~\ref{rk:Yoneda}), we conclude that the dg categories $\cS(n)$ are also smooth.
Therefore, they are saturated.
\end{example}

\begin{example}\label{ex:Toen}
\begin{itemize}
\item[(i)] Let $X$ be a quasi-compact and separated $k$-scheme. Consider the category
$\cC(QCoh(X))$ of (unbounded) complexes of quasi-coherent sheaves on $X$.
Thanks to~\cite{HoveySchemes}, $\cC(QCoh(X))$ is a model category with
monomorphisms as cofibrations and quasi-isomorphisms as weak equivalences.
Moreover, when endowed with its natural $\cC(k)$-enrichment, $\cC(QCoh(X))$ becomes
a $\cC(k)$-model category in the sense of~\cite[Definition~4.2.18]{Hovey}.
Let $\cL_{qcoh}(X)$ be the dg category of fibrant objects in $\cC(QCoh(X))$.
Note that $\dgHo(\cL_{qcoh}(X))$ is naturally
equivalent to the (unbounded) derived category $\cD_{qcoh}(X)$ of quasi-coherent
sheaves on $X$. Finally, let $\perf(X)$ be the full dg subcategory of $\cL_{qcoh}(X)$
whose objects are the perfect complexes. Note that $\dgHo(\perf(X))$ is naturally equivalent
to the category of compact objects in $\cD_{qcoh}(X)$.
Thanks to To{\"e}n (see~\cite[Lemma~3.27]{Moduli}), when $X$ is a smooth and
proper $k$-scheme, $\perf(X)$ is a saturated dg category.
\item[(ii)] Let $A$ be a $k$-algebra, which is projective of finite rank as a $k$-module, and
of finite global cohomological dimension. Then, the dg category of perfect complexes of
$A$-modules is a saturated dg category.
\item[(iii)] For examples coming from {\em deformation quantization}, we invite the reader
to consult \cite{IAS}. 
\end{itemize}
\end{example}

\begin{definition}\label{def:dualizable}
Let $\cC$ be a symmetric monoidal category with monoidal product $\otimes$ and unit object ${\bf 1}$.
An object $X$ in $\cC$ is called {\em dualizable} (or {\em rigid}) if there exists an object
$X^{\vee}$ in $\cC$, and maps $\mathrm{ev}:X \otimes X^{\vee} \to {\bf 1}$ and
$\delta: {\bf 1} \to X^{\vee} \otimes X$ such that the composites
\begin{equation}\label{eq:dualizable1}
X \simeq  X \otimes{\bf 1} \stackrel{\id \otimes \delta}{\too}
X \otimes X^{\vee} \otimes X \stackrel{  \mathrm{ev} \otimes \id}{\too}
{\bf 1} \otimes  X  \simeq X
\end{equation}
and 
\begin{equation}\label{eq:dualizable2}
 X^{\vee} \simeq  {\bf 1} \otimes X^{\vee}  \stackrel{\delta \otimes \id}{\too}
 X^{\vee} \otimes X \otimes X^{\vee} \stackrel{\id \otimes \mathrm{ev}}{\too}
  X^{\vee} \otimes {\bf 1} \simeq X^{\vee}
\end{equation}
are identities. The object $X^{\vee}$ is called the {\em dual} of $X$, the map $\mathrm{ev}$
is called the {\em evaluation map}, and the map $\delta$ is called the {\em co-evaluation map}.
\end{definition}
\begin{remark}\label{rk:adj-dualizable}
\begin{itemize}
\item[(i)] Given $((X^{\vee})_1, \mathrm{ev}_1, \delta_1)$ and $((X^{\vee})_2, \mathrm{ev}_2, \delta_2)$
as in Definition~\ref{def:dualizable}, there is a unique isomorphism
$(X^{\vee})_1 \stackrel{\sim}{\to} (X^{\vee})_2$ making the natural diagrams commute.
Therefore, the dual of $X$, together with the evaluation and the co-evaluation maps, is well-defined
up to unique isomorphism.
\item[(ii)] Let $X$ be a dualizable object in $\cC$. Thanks to Equations (\ref{eq:dualizable1})
and (\ref{eq:dualizable2}), the evaluation and co-evaluation maps give rise to an adjunction\,:
\begin{equation}\label{eq:adj-dualizable}
\xymatrix{
\cC \ar@<1ex>[d]^{X^{\vee}\otimes -} \\
\cC \ar@<1ex>[u]^{- \otimes X}
}
\end{equation}
In fact, an object $X$ of $\cC$ is dualizable if and only if there exists
an object $X'$, together with a functorial isomorphism
\begin{equation*}
\Hom_\cC(Y\otimes X,Z)\simeq \Hom_\cC(Y,X'\otimes Z)
\end{equation*}
for any objects $Y$ and $Z$ in $\cC$ (and of course such an $X'$
is a dual of $X$).
In other words, $X$ is dualizable with dual $X^\vee$ if and only if, for any
object $Z$ of $\cC$, $X^\vee\otimes Z$ is the internal Hom object from
$X$ to $Z$.
\item[(iii)] Let $X$ and $Y$ be dualizable objects in $\cC$. Then $X \otimes Y$ is also a dualizable
object with dual $Y^{\vee} \otimes X^{\vee}$.
\item[(iv)] Let $F:\cC \to \cC'$ be a symmetric monoidal functor. Then, if $X$ is a dualizable object
in $\cC$, $F(X)$ is a dualizable object in $\cC'$ with dual $F(X^{\vee})$.
\end{itemize}
\end{remark}

We are now ready to state the following folklore result:

\begin{theorem}
\label{thm:Toen-dualizable}
The dualizable objects in the Morita homotopy category $\Hmo$ (see \S\ref{sub:Monoidstructure})
are the saturated dg categories. Moreover, the dual of a saturated dg category $\cA$ is its opposite
dg category $\cA^\op$.
\end{theorem}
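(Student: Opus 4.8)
The plan is to show that $\cA^\op$ is a dual of $\cA$ precisely when $\cA$ is saturated. Throughout I would work with the dictionary of Theorem~\ref{thm:Toen0}: morphisms in $\Hmo$ out of a dg category are isomorphism classes of suitable bimodules and composition is the derived tensor product of bimodules; combined with Theorem~\ref{thm:Toen} (which says that $\rep(-,-)$ is the internal Hom of the closed symmetric monoidal category $(\Hmo,\otimes^{\bbL},\underline{k})$), Remark~\ref{rk:adj-dualizable}(ii) reduces the first assertion to the claim that the canonical morphism
$$
\cA^\op\otimes^{\bbL}\cB\too\rep(\cA,\cB)
$$
is an isomorphism in $\Hmo$ for every small dg category $\cB$ (by the general properties of dualizable objects it would even suffice to treat $\cB=\cA$).

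First I would unwind both sides. By Remark~\ref{rk:Yoneda} the left-hand side is Morita-equivalent to $\perf(\cA^\op\otimes^{\bbL}\cB)$, while by Definition~\ref{def:(r)quasicomp} the right-hand side is the thick dg subcategory of $\cD_{\dg}(\cA^\op\otimes^{\bbL}\cB)$ spanned by the modules whose restriction $i^{\ast}_x(-)$ along each object $x$ of $\cA$ is a perfect $\cB$-module; under these identifications the displayed morphism corresponds to comparing these two full triangulated subcategories of $\cD(\cA^\op\otimes^{\bbL}\cB)$, so it is an isomorphism in $\Hmo$ exactly when the two subcategories coincide. Thus the theorem becomes: \emph{$\cA$ is saturated if and only if, for every $\cB$, a perfect $(\cA^\op\otimes^{\bbL}\cB)$-module is the same thing as one that is locally perfect over $\cA$.} For the implication from saturated to dualizable, properness of $\cA$ gives one inclusion — each representable $(\cA^\op\otimes^{\bbL}\cB)$-module restricts, along each object of $\cA$, to a twist by a perfect complex of $k$-modules of a representable $\cB$-module, which is perfect, and the locally-perfect-over-$\cA$ modules form a thick subcategory — and smoothness gives the other: for $X$ locally perfect over $\cA$ one has $X\simeq X\otimes^{\bbL}_{\cA}\cA(-,-)$, and since the diagonal bimodule $\cA(-,-)$ is perfect over $\cA^\op\otimes^{\bbL}\cA$ it is a retract of a finite iterated extension of external products of representables, so that $X$ becomes a retract of a finite iterated extension of external products $\cA(-,a)\otimes_k i^{\ast}_b(X)$, each of which is perfect over $\cA^\op\otimes^{\bbL}\cB$. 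Remark~\ref{rk:adj-dualizable}(ii) then gives that $\cA$ is dualizable with dual $\cA^\op$.

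For the converse, I would start from a dualizable $\cA$ with some dual $\cA^{\vee}$, so that $\cA^{\vee}\otimes^{\bbL}\cB\simeq\rep(\cA,\cB)$ naturally in $\cB$. The hard part — and the main obstacle of the whole proof — is to identify $\cA^{\vee}$ with $\cA^\op$; once that is done the rest is formal. I would incarnate, via Theorem~\ref{thm:Toen0}, the evaluation $\cA\otimes^{\bbL}\cA^{\vee}\to\underline{k}$ as a bimodule $E$ (lying in $\rep(\cA\otimes^{\bbL}\cA^{\vee},\underline{k})$, \ie an $(\cA^\op\otimes^{\bbL}(\cA^{\vee})^\op)$-module with values perfect complexes of $k$-modules) and the co-evaluation $\underline{k}\to\cA^{\vee}\otimes^{\bbL}\cA$ as a bimodule $D$ (lying in $\rep(\underline{k},\cA^{\vee}\otimes^{\bbL}\cA)\simeq\perf(\cA^{\vee}\otimes^{\bbL}\cA)$ by Remark~\ref{rk:Toen}, \ie a perfect $(\cA^{\vee}\otimes^{\bbL}\cA)$-module). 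Since composition in $\Hmo$ is the derived tensor product of bimodules, the two triangle identities (\ref{eq:dualizable1}) and (\ref{eq:dualizable2}) translate into isomorphisms $D\otimes^{\bbL}_{\cA^{\vee}}E\simeq\cA(-,-)$ and $E\otimes^{\bbL}_{\cA}D\simeq\cA^{\vee}(-,-)$ (partial traces over $\cA^{\vee}$ and over $\cA$ respectively); read through Theorem~\ref{thm:Toen0}, these say that $D$ and $E$ are mutually inverse invertible bimodules, whence $\cA^{\vee}\simeq\cA^\op$ in $\Hmo$. With this identification in hand the remaining conclusions are immediate: the co-evaluation $\underline{k}\to\cA^\op\otimes^{\bbL}\cA$ is by Theorem~\ref{thm:Toen0} an object of $\dgHo(\perf(\cA^\op\otimes^{\bbL}\cA))$ which the triangle identities force to be the diagonal bimodule $\cA(-,-)$, so $\cA(-,-)$ is perfect over $\cA^\op\otimes^{\bbL}\cA$, \ie $\cA$ is smooth; likewise the evaluation $\cA\otimes^{\bbL}\cA^\op\to\underline{k}$ is an object of $\dgHo(\rep(\cA\otimes^{\bbL}\cA^\op,\underline{k}))$ which is again the diagonal bimodule $\cA(-,-)$, so its values $\cA(x,y)$ are perfect complexes of $k$-modules, \ie $\cA$ is proper. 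Hence $\cA$ is saturated with dual $\cA^\op$. I expect the delicate bimodule bookkeeping (the several opposite categories, and the precise form of the partial traces) behind ``$\cA^{\vee}\simeq\cA^\op$'' to be the only genuinely non-formal ingredient; this is the folklore input going back to To{\"e}n.
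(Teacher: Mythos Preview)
Your approach is correct in outline but differs from the paper's in one organizing idea that makes the whole argument considerably shorter. The paper introduces an auxiliary symmetric monoidal category $\mathsf{DGCAT}$ with the same objects as $\Hmo$ but with $\Hom(\cB,\cA)=\mathrm{Iso}\,\cD(\cB^\op\otimes^{\bbL}\cA)$ (the full derived category, not just $\rep$), so that there is a faithful symmetric monoidal functor $\Hmo\to\mathsf{DGCAT}$. In $\mathsf{DGCAT}$, \emph{every} object is dualizable: $\cA^\vee=\cA^\op$, and both the evaluation and the co-evaluation are the class of the diagonal bimodule $\cA(-,-)$; the triangle identities are then a one-line check. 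This single observation handles both directions at once. If $\cA$ is saturated, properness says $\cA(-,-)$ lands in $\rep(\cA\otimes^{\bbL}\cA^\op,\underline{k})$ and smoothness says it lands in $\perf(\cA^\op\otimes^{\bbL}\cA)$, so the duality data of $\mathsf{DGCAT}$ already lives in $\Hmo$ and $\cA$ is dualizable there. Conversely, if $\cA$ is dualizable in $\Hmo$, its image is dualizable in $\mathsf{DGCAT}$, and by uniqueness of duals (Remark~\ref{rk:adj-dualizable}(i)) the dual is $\cA^\op$ and the (co-)evaluation maps are forced to be $[\cA(-,-)]$; since these maps came from $\Hmo$, the diagonal bimodule lies in $\rep$ and in $\perf$, i.e.\ $\cA$ is proper and smooth.

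Compared with this, your forward direction proves more than needed: you establish $\cA^\op\otimes^{\bbL}\cB\simeq\rep(\cA,\cB)$ for all $\cB$ via the standard ``perfect $=$ locally perfect over $\cA$'' argument, which is fine (and is essentially Lemma~\ref{lem:tensor-rep}), but the paper bypasses this entirely. Your converse is where the difference matters most: the step ``the triangle identities say $D$ and $E$ are mutually inverse invertible bimodules, whence $\cA^\vee\simeq\cA^\op$'' is exactly the content of uniqueness of duals unwound by hand, and your translation of the triangle identities into partial traces is not yet in the form of a Morita equivalence between $\cA^\vee$ and $\cA^\op$ --- you would still have to manufacture the comparison map and check it is invertible. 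The $\mathsf{DGCAT}$ trick packages all of that bookkeeping into a single appeal to Remark~\ref{rk:adj-dualizable}(i); what it buys is that you never have to name $D$ or $E$ at all.
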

\begin{proof}
Let us start by introducing the category $\mathsf{DGCAT}$. Its objects are the small dg categories.
Given small dg categories $\cB$ and $\cA$, the set of morphisms from $\cB$ to $\cA$ is the set of
isomorphism classes in $\cD(\cB^\op \otimes^{\bbL}\cA)$.
Given small dg categories $\cA_1$, $\cA_2$, and $\cA_3$, the composition corresponds
to the derived tensor product of bimodules\,:
$$
\begin{array}{ccc}
\mathrm{Iso} \,\cD(\cA_1^\op \otimes^{\bbL} \cA_2)
\times \mathrm{Iso} \,\cD(\cA_2^\op \otimes^{\bbL} \cA_3)
&\too& \mathrm{Iso} \,\cD(\cA_1^\op \otimes^{\bbL} \cA_3) \\
 ([X],[Y]) &\mapsto& [X \otimes_{\cA_2}^{\bbL} Y]\,.
\end{array}
$$
As in the case of $\Hmo$, the derived tensor product of dg categories gives rise to a symmetric
monoidal structure on $\mathsf{DGCAT}$, with unit object the dg category $\underline{k}$.
The key property of $\mathsf{DGCAT}$ is that all its objects are dualizable: given a small dg
category $\cA$, take for dual its opposite dg category $\cAo$, for evaluation map the isomorphism
class in $\cD((\cA\otimes^{\bbL}\cA^\op)^\op\otimes^{\bbL} k) \simeq \cD(\cA^\op\otimes \cA)$ of
the $(\cA^\op\otimes^{\bbL}\cA)$-module
\begin{eqnarray}\label{eq:bi-module1}
\cA(-,-): \cA\otimes^{\bbL}\cA^\op \too \cC_{\dg}(k) && (x,y) \mapsto \cA(y,x)\,,
\end{eqnarray}
and for co-evaluation map the isomorphism class in
$\cD(\underline{k}^\op\otimes^{\bbL}(\cA^\op\otimes^{\bbL}\cA)) \simeq \cD(\cA^\op\otimes \cA)$
of the same $(\cA^\op\otimes^{\bbL}\cA)$-module (\ref{eq:bi-module1}).
With this choices, both composites (\ref{eq:dualizable1}) and (\ref{eq:dualizable2}) are identities.
Note also that, by construction of $\mathsf{DGCAT}$, and by Theorem~\ref{thm:Toen0}, we have a
natural symmetric monoidal functor
\begin{eqnarray}\label{eq:functor}
\Hmo \too \mathsf{DGCAT} && \cA \mapsto \cA\,,
\end{eqnarray}
which is faithful but not full.

Let $\cA$ be a dualizable object in $\Hmo$, with dual $\cA^{\vee}$, evaluation map
$\mathrm{ev}: \cA \otimes^{\bbL}\cA^{\vee} \to \underline{k}$ and co-evaluation map
$\delta: \underline{k} \to \cA^{\vee} \otimes^{\bbL}\cA$. Since the above functor
(\ref{eq:functor}) is symmetric monoidal, Remark~\ref{rk:adj-dualizable}(iv) implies that $\cA$
is dualizable in $\mathsf{DGCAT}$. By unicity of duals (see Remark~\ref{rk:adj-dualizable}(i)),
$\cA^{\vee}$ is the opposite dg category $\cA^\op$, and $\mathrm{ev}$ and $\delta$ are the isomorphism class in
$\cD(\cA^\op\otimes^{\bbL}\cA)$ of the above $(\cA^\op\otimes^{\bbL}\cA)$-module $\cA(-,-)$
(see \ref{eq:bi-module1}). The
morphism $\mathrm{ev}$ belongs to $\Hmo$, and so
$\cA(-,-)$ takes values in perfect complexes of $k$-modules.
We conclude then that $\cA$ is proper. Similarly, since $\delta$ is a
morphism in $\Hmo$, $\cA(-,-)$ belongs to
$\perf(\cA^\op\otimes^\bbL\cA)$. In
this case we conclude that $\cA$ is smooth. In sum, we have shown that $\cA$ is saturated
and that $\cA^\vee=\cA^\op$.

Now, let $\cA$ be a saturated dg category. Since $\cA$ is proper $\cA(-,-)$
takes values in perfect complexes of $k$-modules. Similarly, since $\cA$ is smooth $\cA(-,-)$ belongs to $\perf(\cA^\op\otimes^\bbL\cA)$.
We conclude that the evaluation and co-evaluation maps
of $\cA$ in $\mathsf{DGCAT}$ belong to the subcategory $\Hmo$.
This implies that $\cA$ is dualizable in $\Hmo$.
\end{proof}
We finish this section with a comparison between saturated and homotopically finitely presented
dg categories (see \S\ref{sub:hfp}).

\begin{lemma}\label{lem:tensor-rep}
Let $\cB$ be a saturated dg category. Then, for every dg category $\cA$,
we have a functorial isomorphism in $\Hmo$
$$ \cB^\op \otimes^{\bbL}\cA \simeq \rep(\cB,\cA)\,.$$
\end{lemma}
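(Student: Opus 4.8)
The plan is to prove the lemma by combining the dualizability of saturated dg categories (Theorem~\ref{thm:Toen-dualizable}) with the fact that, in a closed symmetric monoidal category, dualizable objects have an internal Hom computed by tensoring with the dual. Concretely, since $\cB$ is saturated, Theorem~\ref{thm:Toen-dualizable} tells us that $\cB$ is a dualizable object of $\Hmo$ with dual $\cB^\op$. By Remark~\ref{rk:adj-dualizable}(ii), for a dualizable object $X$ with dual $X^\vee$ in \emph{any} symmetric monoidal category, the object $X^\vee \otimes Z$ serves as the internal Hom from $X$ to $Z$, for every object $Z$; that is, there is a functorial isomorphism $\Hom_\cC(Y \otimes X, Z) \simeq \Hom_\cC(Y, X^\vee \otimes Z)$. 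Applying this in $\Hmo$ with $X = \cB$, $X^\vee = \cB^\op$, and $Z = \cA$ yields, for every dg category $\cA$, a functorial isomorphism
$$
\Hom_{\Hmo}(\cY \otimes^{\bbL} \cB, \cA) \simeq \Hom_{\Hmo}(\cY, \cB^\op \otimes^{\bbL} \cA)\,.
$$

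The second ingredient is Theorem~\ref{thm:Toen}, which says that $\rep(-,-)$ is an internal Hom-functor for $-\otimes^{\bbL}-$ on $\Hmo$; equivalently (and made explicit in Corollary~\ref{cor:Toen} at the level of mapping spaces, or directly via Theorem~\ref{thm:Toen0} at the level of Hom-sets), there is a functorial isomorphism $\Hom_{\Hmo}(\cY \otimes^{\bbL} \cB, \cA) \simeq \Hom_{\Hmo}(\cY, \rep(\cB, \cA))$. Since an internal Hom object is unique up to unique isomorphism (being a representing object for the functor $\cY \mapsto \Hom_{\Hmo}(\cY \otimes^{\bbL} \cB, \cA)$), comparing the two displayed isomorphisms and invoking the Yoneda lemma produces a canonical isomorphism $\cB^\op \otimes^{\bbL} \cA \simeq \rep(\cB, \cA)$ in $\Hmo$. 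Functoriality in $\cA$ follows because both sides are functorial and the Yoneda isomorphism is natural; one should check this is compatible with the evident $\cA$-variable functoriality, which is routine.

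I expect the main (minor) obstacle to be purely bookkeeping: being careful that the internal-Hom characterization of dualizable objects from Remark~\ref{rk:adj-dualizable}(ii) is being applied in the correct symmetric monoidal category $\Hmo$ — not in $\mathsf{DGCAT}$ — and that the dual furnished by Theorem~\ref{thm:Toen-dualizable} really is $\cB^\op$ with $-\otimes^{\bbL}-$ as monoidal product. One must also note that the existence of the internal Hom $\rep(\cB, \cA)$ for \emph{all} $\cA$ (Theorem~\ref{thm:Toen}) is what lets us match it against $\cB^\op \otimes^{\bbL} \cA$ object-by-object and conclude the isomorphism is natural in $\cA$; absent Theorem~\ref{thm:Toen} one would only get a family of abstract isomorphisms. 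An alternative route, avoiding the abstract uniqueness argument, would be to exhibit the comparison map $\cB^\op \otimes^{\bbL}\cA \to \rep(\cB,\cA)$ directly — it is the natural transformation adjoint to the evaluation $\cB \otimes^{\bbL}(\cB^\op \otimes^{\bbL}\cA) \to \cA$ coming from $\mathrm{ev}_{\cB}$ — and then check it is an isomorphism by testing on compact generators; but the Yoneda argument above is cleaner and suffices.
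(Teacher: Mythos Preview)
Your proposal is correct and follows essentially the same route as the paper: the paper's proof is a one-line invocation of the general theory of dualizable objects (Remark~\ref{rk:adj-dualizable}), using that $\cB^\vee=\cB^\op$ so that $\cB^\op\otimes^{\bbL}\cA$ is the internal Hom from $\cB$ to $\cA$, which by Theorem~\ref{thm:Toen} is $\rep(\cB,\cA)$. Your write-up simply unpacks this argument in detail.
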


\begin{proof}
Since $\cB^\vee=\cB^\op$, the general theory of dualizable objects implies the claim; see Remark \ref{rk:adj-dualizable}~(iii).
\end{proof}

\begin{proposition}
Every saturated dg category is homotopically finitely presented.
\end{proposition}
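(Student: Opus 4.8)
The plan is to reduce the statement to the new characterization of homotopically finitely presented dg categories obtained in Theorem~\ref{thm:enhance}(3). Recall that a dg category $\cB$ is homotopically finitely presented if and only if, for every filtered system $\{\cA_j\}_{j\in J}$ of dg categories, the canonical morphism
$$\underset{j\in J}{\hocolim}\,\rep(\cB,\cA_j)\stackrel{\sim}{\too}\rep(\cB,\underset{j\in J}{\hocolim}\,\cA_j)$$
is an isomorphism in $\Hmo$. So it suffices to verify this condition when $\cB$ is a saturated dg category.

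The key input is Lemma~\ref{lem:tensor-rep}: for a saturated dg category $\cB$ and any dg category $\cA$, one has a functorial isomorphism $\rep(\cB,\cA)\simeq\cB^\op\otimes^{\bbL}\cA$ in $\Hmo$. First I would replace $\rep(\cB,-)$ everywhere in the displayed morphism by $\cB^\op\otimes^{\bbL}(-)$, using naturality of the isomorphism of Lemma~\ref{lem:tensor-rep}. Then the morphism in question becomes the canonical comparison map
$$\underset{j\in J}{\hocolim}\,\bigl(\cB^\op\otimes^{\bbL}\cA_j\bigr)\too\cB^\op\otimes^{\bbL}\bigl(\underset{j\in J}{\hocolim}\,\cA_j\bigr)\,.$$
By Proposition~\ref{prop:Fltdgcat}, the derived tensor product on $\HO(\dgcat)$ preserves homotopy colimits in each variable; in particular it commutes with the filtered homotopy colimit over $J$, so this comparison map is an isomorphism in $\Hmo$. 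Hence $\cB$ satisfies condition~(3) of Theorem~\ref{thm:enhance}, and is therefore homotopically finitely presented.

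The only point requiring a little care is checking that the isomorphism of Lemma~\ref{lem:tensor-rep} is natural enough in $\cA$ to identify the two comparison maps — i.e. that the square relating $\rep(\cB,-)$ and $\cB^\op\otimes^{\bbL}(-)$, together with the structural maps into the homotopy colimit, commutes in $\Hmo$. This follows from the fact that the isomorphism comes from the adjunction attached to the dualizable object $\cB$ (Remark~\ref{rk:adj-dualizable}(iii) and Theorem~\ref{thm:Toen-dualizable}), which is functorial by construction. Apart from that bookkeeping, no real obstacle arises; the substance of the result is entirely carried by Lemma~\ref{lem:tensor-rep}, Proposition~\ref{prop:Fltdgcat}, and the equivalence $(1)\Leftrightarrow(3)$ of Theorem~\ref{thm:enhance}.
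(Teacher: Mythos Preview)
Your proof is correct and follows essentially the same route as the paper: use Lemma~\ref{lem:tensor-rep} to identify $\rep(\cB,-)$ with $\cB^\op\otimes^{\bbL}(-)$, invoke Proposition~\ref{prop:Fltdgcat} to see that the latter preserves (filtered) homotopy colimits, and conclude via the characterization of Theorem~\ref{thm:enhance}(3). Your additional remark on naturality is a fair point of bookkeeping but not a real obstacle.
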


\begin{proof}
Let $\cB$ be a saturated dg category.
Since the derived tensor product of dg categories preserves homotopy colimits in each variable
(see Proposition~\ref{prop:Fltdgcat}), Lemma~\ref{lem:tensor-rep} implies that
the functor $\rep(\cB,-)$ commutes with homotopy colimits.
Using Theorem \ref{thm:enhance}(3) the proof is achieved.
\end{proof}

\section{Simplicial presheaves}\label{chap:simplicial}
In this section we present a general theory of symmetric monoidal
structures on simplicial presheaves. These
general results will be used in the construction of an explicit symmetric monoidal structure on the category
of non-commutative motives; see Section~\ref{chap:monoidal}. 

Starting from a small symmetric monoidal category $\cC$, we recall that the tensor product of $\cC$ extends to simplicial presheaves on $\cC$; see \S\ref{sub:Day}. Then, we prove that this symmetric monoidal structure is compatible
with the projective model structure; see Theorem~\ref{thm:Mprojective}.
 Finally, we study its behavior under left Bousfield localizations. We describe
 in particular a ``minimal'' compatibility condition between the tensor product and a
 localizing set; see Theorem~\ref{thm:monloc}.

\subsection{Notations}\label{sub:not-simpl} Throughout this section
$\cC$ will denote a (fixed) small category.
\begin{itemize}
\item[(i)] We denote by $\pref\cC$ the {\em category of presheaves of sets on $\cC$},
\ie the category of contravariant functors from $\cC$ to $\Set$.
\item[(ii)] Given an object $\alpha$ in $\cC$, we still denote by $\alpha$
the presheaf represented by $\alpha$
\begin{eqnarray*}
{\alpha}: \cC^\op \too \Set \,,&& \beta \mapsto \Hom_{\cC}(\beta, \alpha)
\end{eqnarray*}
(\ie we consider the Yoneda embedding as a full inclusion).
\item[(iii)] Let $\Delta$ be the category of simplices, \ie the full
subcategory of the category of ordered sets spanned by the
sets $\Delta[n]=\{0,\ldots,n\}$ for $n\geq 0$.
We set
$$\sSet=\pref\Delta\, .$$
\item[(iv)]  We denote by $\spref\cC\simeq\pref{\Delta\times\cC}$
the category of simplicial objects in $\pref\cC$,
\ie the category of contravariant functors from $\Delta$ to $\pref\cC$.
\item[(v)] Finally, we consider $\pref\cC$ as a full subcategory of $\spref\cC$.
A presheaf of sets on $\cC$ is identified with a simplicially constant object of $\spref\cC$.
\end{itemize}
\subsection{Simplicial structure}\label{sub:simpl}
Recall that we have a bifunctor
$$-\otimes- : \pref\cC\times\Set\too \pref\cC$$
defined by
$$X\otimes K=\coprod_{k\in K} X\, .$$
This defines an action of the category $\Set$ on $\pref\cC$.
This construction extends to simplicial objects
\begin{eqnarray*}
\spref\cC \times  \sSet \too  \spref\cC && (F, K) \mapsto F \otimes K\,,
\end{eqnarray*}
where for $n\geq 0$\,:
$$ (F\otimes K)_n =F_n\otimes K_n \,.$$
This makes $\spref\cC$ into a simplicial category; see for instance \cite[\S II Definition~2.1]{Jardine}.
\subsection{Quillen Model structure}\label{sub:modelBK}
The generating cofibrations of the classical cofibrantly generated Quillen model structure on
$\sSet$ are the boundary inclusions
\begin{eqnarray*} 
i_n: \partial \Delta[n] \too \Delta[n] && n \geq 0
\end{eqnarray*}
and the generating trivial cofibrations are the horn inclusions
\begin{eqnarray*} j^k_n: \Lambda[k,n] \too \Delta[n] && 0 \leq k \leq n,\,\,n \geq 1\,.
\end{eqnarray*}
We have the \emph{projective model structure} on $\spref\cC$: the weak equivalences
are the termwise simplicial weak equivalences, and the fibrations are the
termwise Kan fibrations; see for instance \cite[page~314]{BK}
or \cite[Theorem~11.6.1]{Hirschhorn}. 
The projective model structure is proper and cellular/combinatorial.
In particular, it is cofibrantly generated, with
generating cofibrations
$$1_\alpha \otimes i_n: \alpha\otimes \partial \Delta[n] \too
\alpha\otimes \Delta[n] \ , \quad \alpha \in \cC,\,\, n \geq 0\,,$$
and generating trivial cofibrations
$$1_\alpha \otimes j^k_n: \alpha\otimes \Lambda[k,n]
\too \alpha \otimes \Delta[n]\ , \quad \alpha \in \cC,\,\, 0 \leq k \leq n, \,\, n \geq 1\,.$$
In particular, observe that representable presheaves are cofibrant in $\spref\cC$.
\subsection{Day's convolution product}\label{sub:Day}
Throughout this subsection, and until the end of Section~\ref{chap:simplicial},
we will assume that our (fixed) small category $\cC$ carries a symmetric
monoidal structure, with tensor product $\otimes$ and unit object ${\bf 1}$.
Under this assumption, the general theory of left Kan extensions in categories of presheaves
implies formally that $\pref\cC$ is endowed with a unique closed
symmetric monoidal structure which makes the Yoneda embedding a symmetric monoidal functor.
We will also denote by $\otimes$ the corresponding tensor product on
$\pref\cC$; the reader who enjoys explicit formulas is invited to consult~\cite[\S3]{Day}.

This monoidal structure extends to the category $\spref\cC$ in an obvious way:
given two simplicial presheaves $F$ and $G$ on $\cC$, we define $F\otimes G$
by the formula
$$(F\otimes G)_n=F_n\otimes G_n\ , \quad n\geq 0\, .$$
The functor
$$\sSet\too \spref\cC \ , \quad K\longmapsto {\bf 1}\otimes K$$
is naturally endowed with a structure of symmetric monoidal functor
(where $\sSet$ is considered as a symmetric monoidal category with the
cartesian product as tensor product).

\begin{definition}[{\cite[Definition~4.2.1]{Hovey}} ]\label{def:push-prod}
Given maps $f:X \to Y$ and $g: A \to B$ in a symmetric monoidal category
(with tensor product $\otimes$), the {\em pushout product map} $f\square g$ of $f$ and $g$ is given by\,:
$$ f \square g:  X\otimes B \underset{X\otimes A}{\coprod} Y\otimes A \too Y\otimes B\,.$$
\end{definition}

\begin{theorem}\label{thm:Mprojective}
The category $\spref\cC$, endowed with the projective model category structure
is a symmetric monoidal model category (see \cite[Definition~4.2.6]{Hovey}).
\end{theorem}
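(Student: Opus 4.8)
The plan is to verify the two axioms in Hovey's definition of a symmetric monoidal model category: the pushout-product axiom and the unit axiom. For the pushout-product axiom, since the projective model structure on $\spref\cC$ is cofibrantly generated with the explicit generating (trivial) cofibrations recalled in \S\ref{sub:modelBK}, it suffices by a standard reduction (\cite[Corollary~4.2.5]{Hovey}) to check that the pushout product of two generating cofibrations is a cofibration, and that the pushout product of a generating cofibration with a generating trivial cofibration is a trivial cofibration. First I would compute these pushout products explicitly. A generating cofibration has the form $1_\alpha\otimes i_n$ where $i_n:\partial\Delta[n]\to\Delta[n]$ and $\alpha\in\cC$; by definition of the tensor product on $\spref\cC$ (degreewise, and using that the Day convolution makes Yoneda monoidal) one has $(\alpha\otimes K)\otimes(\beta\otimes L)\cong(\alpha\otimes\beta)\otimes(K\times L)$ for $K,L\in\sSet$. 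Hence the pushout product $(1_\alpha\otimes i_n)\square(1_\beta\otimes i_m)$ is canonically isomorphic to $1_{\alpha\otimes\beta}\otimes(i_n\square i_m)$, the functor $K\mapsto(\alpha\otimes\beta)\otimes K$ being a left adjoint and hence commuting with the pushout defining $\square$.

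The key step is then to invoke the fact that $\sSet$, with the Kan–Quillen model structure and the cartesian product, is itself a symmetric monoidal model category (recalled in \S\ref{sub:notation}, \cite[Proposition~4.2.8]{Jardine}): this gives that $i_n\square i_m$ is a cofibration of simplicial sets and that $i_n\square j^k_m$ is a trivial cofibration. It remains to transport this along the functor $\alpha\otimes\beta\otimes(-):\sSet\to\spref\cC$. This functor sends cofibrations to cofibrations (it sends each $i_n$, resp. $j^k_m$, to a generating cofibration, resp. trivial cofibration, and preserves colimits, so preserves all cofibrations and trivial cofibrations), and it preserves weak equivalences between cofibrant objects, hence sends trivial cofibrations to trivial cofibrations. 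Therefore $1_{\alpha\otimes\beta}\otimes(i_n\square i_m)$ is a cofibration in $\spref\cC$ and $1_{\alpha\otimes\beta}\otimes(i_n\square j^k_m)$ is a trivial cofibration, which is exactly what the pushout-product axiom demands on generators.

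For the unit axiom I would argue that it holds trivially: the monoidal unit of $\spref\cC$ is ${\bf 1}\otimes\Delta[0]$, the image of the unit object ${\bf 1}\in\cC$ under the Yoneda embedding, and representable presheaves are cofibrant in the projective model structure (as observed at the end of \S\ref{sub:modelBK}). Since the unit is already cofibrant, the unit axiom is automatic.

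The main obstacle, such as it is, is purely bookkeeping: one must be careful that the identification $(\alpha\otimes K)\otimes(\beta\otimes L)\cong(\alpha\otimes\beta)\otimes(K\times L)$ is natural enough to commute with the pushout in the definition of $\square$, i.e. that $f\square g$ for $f=1_\alpha\otimes i_n$, $g=1_\beta\otimes i_m$ really is $1_{\alpha\otimes\beta}\otimes(i_n\square i_m)$. This follows because the bifunctor $\otimes$ on $\spref\cC$ preserves colimits in each variable separately (Day convolution is closed, hence so is its degreewise extension), so applying $(\alpha\otimes\beta)\otimes(-)$ to the pushout square defining $i_n\square i_m$ yields the pushout square defining $(1_\alpha\otimes i_n)\square(1_\beta\otimes i_m)$. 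Once this naturality is in place, everything reduces to the known case of simplicial sets, and no genuinely hard analysis is required.
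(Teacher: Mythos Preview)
Your proof is correct and follows essentially the same approach as the paper: both reduce the pushout-product axiom to generators, use the key identification $(1_\alpha\otimes i)\square(1_{\alpha'}\otimes i')\simeq 1_{\alpha\otimes\alpha'}\otimes(i\square i')$, and then invoke that $\sSet$ is a symmetric monoidal model category together with the fact that $K\mapsto\gamma\otimes K$ is a left Quillen functor. Your treatment is more detailed (you spell out the unit axiom and the colimit-preservation justifying the pushout-product identification), but the argument is the same.
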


\begin{proof}
As the model category of simplicial sets is a symmetric monoidal model category
(with the cartesian product as tensor product),
the result follows from the explicit description of the generating
cofibrations and generating trivial cofibrations of $\spref\cC$: given two
objects $\alpha$ and $\alpha'$ in $\cC$ and two
maps $i:K\to L$ and $i':K'\to L'$ in $\sSet$, we have
$$(1_\alpha\otimes i)\square (1_{\alpha'}\otimes i')\simeq
1_{\alpha\otimes\alpha'}\otimes (i\square i')\,.$$
Since for any object $\alpha$ in $\cC$, the functor
$K\mapsto  \alpha \otimes K$ is a left Quillen functor from $\sSet$ to
$\spref\cC$, the proof is finished.
\end{proof}

Let $\spref\cC_\bullet$ be the category of pointed simplicial presheaves on $\cC$.
The forgetful functor $U:\spref\cC_\bullet\to\spref\cC$ has a left adjoint
\begin{equation}\label{adpointssimpl}
\spref\cC\too\spref\cC_\bullet \ , \quad F\longmapsto F_+\, ,
\end{equation}
where $F_+$ denotes the pointed simplicial presheaf $F\amalg \star$, with $\star$
the terminal object of $\spref\cC$. The category $\spref\cC_\bullet$ is then
canonically a cofibrantly generated model category, in such a way that
the functor \eqref{adpointssimpl} is a left Quillen functor; see \cite[Proposition~1.1.8 and Lemma~2.1.21]{Hovey}.

Furthermore, there is a unique symmetric monoidal structure on $\spref\cC_\bullet$ making
the functor \eqref{adpointssimpl} symmetric monoidal.
The unit object is ${\bf 1}_+$, and the tensor product $\otimes_\bullet$
is defined as follows: for two pointed simplicial presheaves $F$ and $G$,
their tensor product is given by the following pushout in the category
$\spref\cC$ of unpointed simplicial presheaves\,:
\begin{equation*}\begin{split}
\xymatrix{(F\otimes\star) \amalg (\star \otimes G) \ar[r]\ar[d] & F\otimes G \ar[d]\\
\star \ar[r] & F \otimes_\bullet  G}
\end{split}\end{equation*}
In particular, for two simplicial presheaves $F$ and $G$, we have
\begin{equation}\label{adpointmonoidal}
(F\otimes G)_+ \simeq F_+ \otimes_\bullet  G_+\, .
\end{equation}

\begin{proposition}\label{prop:Mprojectivepointed}
With the above definition, the model category $\spref\cC_\bullet$
is a symmetric monoidal model category.
\end{proposition}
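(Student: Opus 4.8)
The plan is to deduce the proposition from Theorem~\ref{thm:Mprojective} by transporting the statement along the functor $(-)_+\colon\spref\cC\to\spref\cC_\bullet$ of \eqref{adpointssimpl}. Recall (see \cite[Proposition~1.1.8 and Lemma~2.1.21]{Hovey}) that the pointed model structure on $\spref\cC_\bullet$ is cofibrantly generated, with $(-)_+$ a left Quillen functor, and with sets of generating cofibrations and generating trivial cofibrations obtained by applying $(-)_+$ to the sets of generating cofibrations $\{1_\alpha\otimes i_n\}$ and generating trivial cofibrations $\{1_\alpha\otimes j^k_n\}$ of $\spref\cC$ described in \S\ref{sub:modelBK}. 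In particular, $(-)_+$ sends cofibrations to cofibrations and trivial cofibrations to trivial cofibrations.

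First I would check the pushout-product axiom for $\otimes_\bullet$. By \cite[Corollary~4.2.5]{Hovey} it is enough to verify it on the generating (trivial) cofibrations, that is, on maps of the form $f_+$ with $f$ a generating cofibration or a generating trivial cofibration of $\spref\cC$. Now $(-)_+$, being a left adjoint, preserves pushouts, and it is symmetric monoidal by the very construction of $\otimes_\bullet$ recorded just above \eqref{adpointmonoidal}; consequently it is compatible with the formation of pushout-product maps, giving a natural isomorphism
\[
f_+\,\square_\bullet\,f'_+\;\simeq\;(f\,\square\,f')_+
\]
for any two such maps $f$ and $f'$. By Theorem~\ref{thm:Mprojective}, the map $f\,\square\,f'$ is a cofibration in $\spref\cC$, and it is moreover a trivial cofibration as soon as $f$ or $f'$ is one. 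Applying the left Quillen functor $(-)_+$, we conclude that $f_+\,\square_\bullet\,f'_+$ is a cofibration in $\spref\cC_\bullet$, trivial whenever $f$ or $f'$ is; this is precisely the pushout-product axiom.

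It then remains to check the unit axiom, and here there is nothing to do once one observes that the unit ${\bf 1}_+$ of $\otimes_\bullet$ is cofibrant: the monoidal unit ${\bf 1}$ of $\spref\cC$ is the presheaf represented by the unit object of $\cC$, hence cofibrant (representable presheaves are cofibrant in $\spref\cC$; see \S\ref{sub:modelBK}), so ${\bf 1}_+=({\bf 1})_+$ is cofibrant because $(-)_+$ is left Quillen, and a symmetric monoidal model category with cofibrant unit satisfies the unit axiom automatically. I do not anticipate a genuine obstacle: the only point demanding a little care is the identification $f_+\,\square_\bullet\,f'_+\simeq(f\,\square\,f')_+$, which rests on $(-)_+$ being simultaneously a left adjoint and a strong symmetric monoidal functor — both built into the definition of $\otimes_\bullet$.
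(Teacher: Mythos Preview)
Your proof is correct and follows essentially the same approach as the paper: both argue that the generating (trivial) cofibrations of $\spref\cC_\bullet$ are of the form $f_+$, then use formula~\eqref{adpointmonoidal} together with the fact that $(-)_+$ is a left Quillen functor to reduce the pushout-product axiom to Theorem~\ref{thm:Mprojective}. Your version is simply more explicit, in particular spelling out the identification $f_+\,\square_\bullet\,f'_+\simeq (f\,\square\,f')_+$ and the unit axiom, which the paper leaves implicit.
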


\begin{proof}
The generating cofibrations (resp. generating trivial cofibrations) of $\spref\cC_\bullet$
are the maps of shape $A_+\to B_+$ for $A\to B$ a generating
cofibration (resp. generating trivial cofibration) of $\spref\cC$.
As the functor \eqref{adpointssimpl} is a left Quillen functor,
the result follows immediately from
Formula \eqref{adpointmonoidal} and from Theorem \ref{thm:Mprojective}.
\end{proof}

\begin{remark}
In practice, we shall refer to the pointed
tensor product $\otimes_\bullet$ as the canonical tensor product
on $\spref\cC_\bullet$ associated to the monoidal structure
on $\cC$. Whenever there is no ambiguity, we denote the pointed tensor product simply by $\otimes$.
\end{remark}

\subsection{Left Bousfield localization}
\begin{definition}{(\cite[Definition~3.1.4]{Hirschhorn})}
Let $\cM$ be a model category and $S$ a set of morphisms in $\cM$. An
object $X$ in $\cM$ is called {\em $S$-local} if it is fibrant and for every element
$s: A \to B$ of the set $S$, the induced map of homotopy function complexes
$$ s^{\ast}: \Map(B, X) \too \Map(A,X)$$
is a weak equivalence. A map $g:X \to Y$ in $\cM$ is called a {\em $S$-local equivalence}
if for every $S$-local object $W$, the induced map of homotopy function complexes
$$ g^{\ast}: \Map(Y, W) \too \Map(X,W)$$
is a weak equivalence

Recall that, if $\cM$ is cellular (or combinatorial) and left proper, the \emph{left Bousfield localization} of $\cM$ is the model category $\Loc_S\cM$
whose underlying category is $\cM$, whose cofibrations are those of
$\cM$, and whose weak equivalences are the $S$-local weak equivalences; see \cite[Definition~9.3.1(1)]{Hirschhorn}.
The fibrant objects of $\Loc_S\cM$ are then the objects of $\cM$ which are both fibrant and $S$-local, and
the fibrations between fibrant objects in $\Loc_S\cM$ are the fibrations of $\cM$.
\end{definition}

\begin{proposition}\label{prop:loc-mon}
Let $\cM$ be a left proper, cellular (or combinatorial), symmetric monoidal model category
(with tensor product $\otimes$), and $S$ a set of morphisms in $\cM$.
Assume that the following conditions hold\,:
\begin{itemize}
\item[(i)] $\cM$ admits generating sets of cofibrations and of trivial cofibrations consisting of maps between cofibrant objects;
\item[(ii)] every element of $S$ is a map between cofibrant objects;
\item[(iii)] given a cofibrant object $X$, the functor $X\otimes (-)$
sends the element of $S$ to $S$-local weak equivalences.
\item[(iv)] the unit object of the monoidal structure on $\cM$ is cofibrant.
\end{itemize} 
Then the left Bousfield localization $\Loc_S \cM$ of $\cM$
with respect to the set $S$ is a symmetric monoidal model category.
\end{proposition}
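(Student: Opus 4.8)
The plan is to verify the pushout product axiom and the unit axiom for the localized model category $\Loc_S\cM$, reducing everything to the corresponding axioms for $\cM$ (which hold by hypothesis) together with the localization conditions (i)--(iv). Recall that by definition $\Loc_S\cM$ has the same underlying category, the same tensor product, and the same cofibrations as $\cM$; only the class of weak equivalences has been enlarged to the $S$-local equivalences. Hence the pushout product of two cofibrations is a cofibration by exactly the same computation as in $\cM$. The only thing to check is that if, moreover, one of the two maps is a trivial cofibration in $\Loc_S\cM$ (\ie a cofibration which is also an $S$-local equivalence), then the pushout product is an $S$-local equivalence.

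First I would reduce to generators. Since $\cM$ is left proper and cellular (or combinatorial), so is $\Loc_S\cM$, and by condition (i) we may choose generating sets of cofibrations and trivial cofibrations of $\cM$ consisting of maps between cofibrant objects; the generating trivial cofibrations of $\Loc_S\cM$ can be taken to be the generating trivial cofibrations of $\cM$ together with (cofibrant replacements of pushout products of) the maps in $S$ with the generating cofibrations, all again between cofibrant objects. By a standard argument (closure of the class of maps $f$ such that $f\square g$ is a trivial cofibration under pushouts, transfinite composition and retracts, applied in both variables), it suffices to show that $i\square j$ is an $S$-local equivalence when $i$ is a generating cofibration between cofibrant objects and $j$ is a generating trivial cofibration of $\Loc_S\cM$ between cofibrant objects. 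There are two cases. If $j$ is already a trivial cofibration of $\cM$, then $i\square j$ is a trivial cofibration of $\cM$ by the pushout product axiom for $\cM$, hence an $S$-local equivalence. If $j$ is (built from) an element $s\colon A\to B$ of $S$, then by condition (iii) and the fact that $i$ has cofibrant domain and codomain, tensoring $s$ with those objects yields $S$-local equivalences between cofibrant objects; since $i\square s$ is a cofibration between cofibrant objects whose "rows" (the maps $X\otimes A\to X\otimes B$ and $Y\otimes A\to Y\otimes B$ for $i\colon X\to Y$) are $S$-local equivalences, the gluing lemma for the left proper model category $\Loc_S\cM$ shows that $i\square s$ is an $S$-local equivalence. (Here one uses left properness of $\Loc_S\cM$, which follows from that of $\cM$ since cofibrations and the underlying category are unchanged.) The general $j$ coming from $s$ is handled by the same closure arguments.

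For the unit axiom, note that by condition (iv) the unit object ${\bf 1}$ is cofibrant, so the unit axiom is automatic: for any cofibrant $X$, the map ${\bf 1}\otimes X\to X$ (which is an isomorphism here, or at worst a weak equivalence in $\cM$) is in particular an $S$-local equivalence. This completes the verification that $\Loc_S\cM$ is a symmetric monoidal model category.

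The main obstacle I anticipate is the bookkeeping in the reduction to generators: one must be careful that the generating trivial cofibrations of the Bousfield localization can indeed be chosen between cofibrant objects (this is where condition (i) and condition (ii) are used, to replace the "horns" $s\,\square\,(\text{gen.\ cof.})$ built from $S$ by maps between cofibrant objects without leaving the class of $S$-local trivial cofibrations), and that the closure properties used to pass from generators to all trivial cofibrations are valid in each variable separately. Condition (iii) is exactly the minimal hypothesis that makes the second case above work; everything else is formal from left properness and the pushout product axiom in $\cM$.
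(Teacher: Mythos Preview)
Your argument is correct and follows the paper's strategy: reduce to generating (trivial) cofibrations and analyze the pushout product square. The paper's execution is slightly more streamlined: rather than splitting into the two cases ``$j$ is an old trivial cofibration'' versus ``$j$ is built from $S$'', the paper observes once and for all that condition~(iii) forces $X\otimes(-)$ (for $X$ cofibrant) to preserve \emph{all} $S$-local equivalences between cofibrant objects (it is then a left Quillen endofunctor of $\Loc_S\cM$), after which a single two-out-of-three argument in the pushout diagram handles every generating trivial cofibration of $\Loc_S\cM$ uniformly. This sidesteps your case split and the attendant bookkeeping about horns, though both approaches ultimately rely on the same fact that generating trivial cofibrations of $\Loc_S\cM$ can be chosen between cofibrant objects (using (i) and (ii)).
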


\begin{proof}
The left Bousfield localization $\Loc_S \cM$ of $\cM$ with respect to the set
$S$ is cofibrantly generated (see \cite[Theorem~4.1.1(3)]{Hirschhorn}) and
thanks to condition (iv) the unit object is cofibrant. Therefore, by \cite[Lemma\,4.2.7]{Hovey}
it is enough to verify the pushout product axiom on the sets of generating (trivial) cofibrations.
The class of cofibrations in $\Loc_S\cM$ and in $\cM$ is the same, and so half of the pushout product axiom is automatically verified. Now, let
$ 
\xymatrix{
g: A\,\, \ar@{>->}[r]  & B
}
$
be a generating cofibration in $\Loc_S \cM$ and 
$
\xymatrix{
f: X\,\, \ar@{>->}[r]^{\sim} &Y 
}
$
a generating trivial cofibration in $\Loc_S \cM$.
By condition (i), we may assume,
that the objects $X$, $Y$, $A$ and $B$ are
cofibrant. Moreover, condition (iii) implies that tensoring by
a cofibrant object preserves $S$-local weak equivalences.
Consider the following commutative diagram\,:
$$
\xymatrix@=6pt@C=12pt{
*+<2pc>{A \otimes X} \ar@{>->}[dd]_{g \otimes \id_X} \ar[rr]^{ \id_A \otimes f}
& & *+<2pc>{A \otimes Y} \ar@{>->}[dd]^{g \otimes \id_Y} \ar[ld]_{i_{(A\otimes Y)}}\\
 & B \otimes X \underset{A \otimes X}{\amalg} A\otimes Y \ar@{>-->}[dr]^{f \square g} & \\
 *+<1pc>{B \otimes X} \ar[ur]^{i_{(B \otimes X)}} \ar[rr]_{\id_B \otimes f} && B \otimes Y\,.
}
$$
Using the two-out-of-three property for $S$-local weak equivalences,
we conclude that $f \square g$ is an $S$-local equivalence.
\end{proof}

\begin{theorem}\label{thm:monloc}
Let $S$ be a set of morphisms between cofibrant
objects in $\spref\cC$. Assume that the following condition holds\,:
\begin{itemize}
\item[{\bf (C)}] given an object $\alpha$ in $\cC$ and a map
$G \to H$ in $S$, the morphism
${\alpha} \otimes G \to {\alpha}\otimes  H$
is an $S$-local equivalence.
\end{itemize}
Then the left Bousfield localization $\Loc_S\, \spref\cC$ of $\spref\cC$
with respect to the set $S$, is a symmetric monoidal model category.
\end{theorem}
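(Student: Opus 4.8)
The plan is to deduce the statement from Proposition~\ref{prop:loc-mon}, applied to $\cM=\spref\cC$ equipped with the projective model structure; this model structure is left proper, cellular/combinatorial, and, by Theorem~\ref{thm:Mprojective}, symmetric monoidal. Conditions~(i), (ii) and~(iv) of that proposition are essentially immediate. Indeed, by \S\ref{sub:modelBK} the generating cofibrations and generating trivial cofibrations of $\spref\cC$ are the maps $1_\alpha\otimes i_n$ and $1_\alpha\otimes j^k_n$, and since $K\mapsto\alpha\otimes K$ is a left Quillen functor from $\sSet$ to $\spref\cC$ their domains and codomains are cofibrant, which gives~(i); condition~(ii) is part of the hypothesis; and~(iv) holds because the unit object ${\bf 1}$, being representable, is cofibrant. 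Thus the real content is the verification of condition~(iii): for every cofibrant object $X$ of $\spref\cC$ and every map $s:G\to H$ in $S$, the morphism $X\otimes s:X\otimes G\to X\otimes H$ should be an $S$-local equivalence.

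To establish this, I would introduce the class $\cE$ of cofibrant objects $X$ of $\spref\cC$ such that $X\otimes s$ is an $S$-local equivalence for every $s$ in $S$, and prove that $\cE$ contains every cofibrant object. First, $\cE$ is stable under retracts, because the $S$-local equivalences are the weak equivalences of $\Loc_S\spref\cC$; it contains the initial object $\emptyset$, since the tensor product preserves colimits in each variable (the monoidal structure being closed), so that $\emptyset\otimes s=\id_\emptyset$; and it contains every representable presheaf, which is exactly hypothesis~\textbf{(C)}. A useful preliminary observation is that $\cE$ is stable under the simplicial tensoring $X\mapsto X\otimes K$ by a simplicial set $K$: using the identification $X\otimes({\bf 1}\otimes K)=X\otimes K$ one gets $(X\otimes K)\otimes s\cong(X\otimes s)\otimes K$, and since $\Loc_S\spref\cC$ is again a left proper, cellular, simplicial model category (see \cite{Hirschhorn}), the functor $-\otimes K$ is left Quillen for the localized structure and hence preserves $S$-local equivalences between cofibrant objects; here $X\otimes G$ and $X\otimes H$ are cofibrant because $\spref\cC$ is a symmetric monoidal model category. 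In particular the domains and codomains $\alpha\otimes\partial\Delta[n]$ and $\alpha\otimes\Delta[n]$ of the generating cofibrations belong to $\cE$.

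It then remains to check that $\cE$ is stable under the two operations out of which cell complexes are assembled, since every cofibrant object is a retract of a cell complex. Given a pushout of a generating cofibration $\alpha\otimes\partial\Delta[n]\to\alpha\otimes\Delta[n]$ along a map into a cofibrant object $X'\in\cE$, I would tensor the resulting pushout square with $G$ and with $H$ --- again pushout squares, as $-\otimes(-)$ preserves colimits in each variable --- and obtain a morphism of pushout squares in which the three ``incoming'' objects $\alpha\otimes\partial\Delta[n]$, $\alpha\otimes\Delta[n]$ and $X'$ all lie in $\cE$ (so the corresponding maps are $S$-local equivalences) and in which the leg $\alpha\otimes\partial\Delta[n]\otimes G\to\alpha\otimes\Delta[n]\otimes G$ (and its analogue over $H$) is a cofibration by the pushout product axiom of $\spref\cC$; the gluing lemma in the left proper model category $\Loc_S\spref\cC$ then shows that the induced map on pushouts, which is precisely $X\otimes s$ for $X$ the resulting cell complex, is an $S$-local equivalence. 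Stability under transfinite composition is handled in the same spirit: $-\otimes G$ and $-\otimes H$ commute with the colimit of the tower, the successive maps are cofibrations between objects of $\cE$ by the previous step, and $S$-local equivalences are stable under transfinite composition along cofibrations between cofibrant objects. I expect the main obstacle to lie precisely in this passage from representables to arbitrary cofibrant objects, that is, in setting up the cell-complex induction cleanly --- in having at hand left properness and the simplicial structure of $\Loc_S\spref\cC$, the compatibility of $-\otimes-$ with colimits, and the stability of $S$-local equivalences under the relevant colimit operations. Once these ingredients are assembled, condition~(iii) holds and Proposition~\ref{prop:loc-mon} yields the theorem.
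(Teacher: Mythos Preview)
Your argument is correct, and it reaches the same destination as the paper---reducing to Proposition~\ref{prop:loc-mon} and checking its condition~(iii)---but by a different route. You pass from representables to arbitrary cofibrant objects by a cell-complex induction: closure under simplicial tensoring, pushouts along generating cofibrations (via the gluing lemma in the left proper localized model category), transfinite composition, and retracts. The paper instead works entirely at the level of the homotopy category: since the derived tensor product $(-)\otimes^{\bbL}F$ preserves homotopy colimits, the localization functor $\Ho(\spref\cC)\to\Ho(\Loc_S\,\spref\cC)$ preserves homotopy colimits, and every simplicial presheaf is a homotopy colimit of representable presheaves (citing \cite{Dugger} or \cite{pcmth}), condition~{\bf (C)} for representables propagates immediately to all objects. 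The paper's argument is shorter and avoids any point-set cell-complex bookkeeping; yours is more hands-on and self-contained, not relying on the ``every presheaf is a homotopy colimit of representables'' input, at the cost of having to invoke left properness of $\Loc_S\,\spref\cC$ and assemble the cube/gluing lemma carefully.
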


\begin{proof}
We shall apply Proposition~\ref{prop:loc-mon}.
It is sufficient to prove condition (iii) of \emph{loc. cit.}
In other words, we need to prove that for any
object $F$ of $\spref\cC$ and any map $G\to H$ in $S$, the map
$$F\otimes^\bbL G\too F\otimes^\bbL H$$
in $\Ho(\spref\cC)$ is sent to an isomorphism in
$\Ho(L_S\, \spref\cC)$.
Thanks to Condition {\bf (C)} this is the case when $F$ is representable.
As the functors $(-)\otimes^\bbL F$ commute with
homotopy colimits, the general case follows from
the fact that the functor $\Ho(\spref\cC)\to\Ho(\Loc_S\, \spref\cC)$
commutes with homotopy colimits, and that any simplicial presheaf is a homotopy
colimit of representable presheaves
(see for instance \cite[Proposition~2.9]{Dugger} or \cite[Proposition 3.4.34]{pcmth}).
\end{proof}

\begin{corollary}\label{cor:monloc}
Assume that $S$ is a set of maps in $\cC$ which is closed under
tensor product in $\cC$ (up to isomorphism).
Then, by considering $S$ as a set of maps in $\spref\cC$
via the Yoneda embedding, the left Bousfield localization $\Loc_S\, \spref\cC$
is a symmetric monoidal model category.
\end{corollary}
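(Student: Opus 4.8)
The plan is to deduce this from Theorem~\ref{thm:monloc}, so the only thing to check is that the hypothesis \textbf{(C)} of that theorem is satisfied when $S$ is a set of maps in $\cC$ (viewed inside $\spref\cC$ via Yoneda) which is closed under the monoidal product of $\cC$ up to isomorphism. So let $\alpha$ be an object of $\cC$ and let $G \to H$ be a map in $S$; since $S$ consists of maps in $\cC$, we may write this as $\beta \to \gamma$ with $\beta,\gamma$ objects of $\cC$. I first recall that the Yoneda embedding $\cC \to \pref\cC \subset \spref\cC$ is symmetric monoidal for the Day convolution product (this is exactly the defining property of the tensor product on $\pref\cC$ recalled in \S\ref{sub:Day}), so that under the identification of representables with their images, the map ${\alpha}\otimes {\beta} \to {\alpha}\otimes {\gamma}$ in $\spref\cC$ is precisely the image under Yoneda of the map $\alpha\otimes\beta \to \alpha\otimes\gamma$ obtained by tensoring $\beta\to\gamma$ with $\alpha$ in $\cC$.

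The key point is then that, by the closure hypothesis on $S$, this latter map $\alpha\otimes\beta \to \alpha\otimes\gamma$ is (isomorphic in $\cC$, hence in $\spref\cC$, to) an element of $S$. Any element of $S$ is by definition a map whose image in $\Ho(\Loc_S\,\spref\cC)$ is an isomorphism, i.e.\ an $S$-local equivalence; and being an $S$-local equivalence is invariant under isomorphism in the arrow category. Therefore ${\alpha}\otimes {G} \to {\alpha}\otimes {H}$ is an $S$-local equivalence, which is exactly condition \textbf{(C)}. Note also that the representable presheaves are cofibrant in $\spref\cC$ (as observed at the end of \S\ref{sub:modelBK}), so the elements of $S$, being maps between representables, are indeed maps between cofibrant objects, as required in the statement of Theorem~\ref{thm:monloc}.

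Having verified \textbf{(C)} together with the cofibrancy hypothesis, Theorem~\ref{thm:monloc} applies verbatim and yields that $\Loc_S\,\spref\cC$ is a symmetric monoidal model category, which is the assertion. The only mild subtlety — and the one place where one must be slightly careful — is the bookkeeping around ``up to isomorphism'': one should check that $S$-local equivalences really are stable under replacing a map by an isomorphic one in the category of arrows of $\spref\cC$, which is immediate from the definition of $S$-local equivalence in terms of the homotopy function complexes $\Map(-,W)$, since isomorphic arrows induce the same map up to isomorphism on mapping spaces. There is no real obstacle here; the corollary is essentially a direct unwinding of the fact that Yoneda is monoidal plus the combinatorial hypothesis on $S$.
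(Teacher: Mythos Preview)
Your proof is correct and follows exactly the paper's approach: the paper's proof consists of the single sentence ``Condition \textbf{(C)} of Theorem~\ref{thm:monloc} is trivially satisfied,'' and you have simply unpacked why this is so, using that the Yoneda embedding is symmetric monoidal so that $\alpha\otimes(\beta\to\gamma)$ lies in $S$ up to isomorphism and is hence an $S$-local equivalence. Your additional remark that representables are cofibrant (so that $S$ consists of maps between cofibrant objects, as required by Theorem~\ref{thm:monloc}) is a detail the paper leaves implicit.
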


\begin{proof}
Condition {\bf (C)} of the Theorem~\ref{thm:monloc} is trivially satisfied.
\end{proof}
\section{Monoidal stabilization}\label{chap:stab}
In this section we relate the general theory of spectra with the general theory of symmetric spectra; see Theorem~\ref{thm:main-Hovey}. This will be used in the construction of an explicit symmetric monoidal structure
on the category of non-commutative motives; see Section~\ref{chap:monoidal}.

Let $\cM$ be a cellular (or combinatorial) pointed
simplicial left proper model category.
There is then a natural action of the category of pointed
simplicial sets 
\begin{eqnarray*}
\cM\times \sSet_\bullet\too \cM \ , && (X,K)\longmapsto X\otimes K
\end{eqnarray*}
(in the literature, $X\otimes K$ is usually denoted by
$X\wedge K$).
We denote by $\Spt^{\bbN}(\cM)$ the stable model category of {\em $S^1$-spectra on $\cM$},
where $S^1=\Delta[1]/\partial\Delta[1]$ is the simplicial circle,
seen as endofunctor $X\mapsto X\otimes S^1$ of $\cM$;
see \cite[\S 1]{HoveyS}.

Assume that $\cM$ is a symmetric monoidal model category
with cofibrant unit object ${\bf 1}$.
We write $\Sp(\cM)$ for the stable symmetric monoidal model category
of {\em symmetric $S^1\otimes {\bf 1}$-spectra on $\cM$}; see \cite[\S 7]{HoveyS}. In this situation we have a symmetric monoidal left Quillen functor
\begin{equation*}
\Sigma^\infty:\cM\too \Sp(\cM)\, .
\end{equation*}

\begin{theorem}\label{thm:main-Hovey}
Under the above assumptions, the model categories
$\Spt^{\bbN}(\cM)$ and $\Sp(\cM)$
are canonically Quillen equivalent.
\end{theorem}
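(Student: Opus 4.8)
The plan is to reduce the statement to a known comparison result of Hovey between symmetric spectra and ordinary spectra, after checking that the hypotheses of that result are satisfied in our setting. Recall from \cite[\S 7-8]{HoveyS} that for a sufficiently nice symmetric monoidal model category $\cM$, one can form both the category $\Spt^{\bbN}(\cM)$ of (non-symmetric) spectra with respect to the endofunctor $(-) \otimes S^1$ and the category $\Sp(\cM)$ of symmetric spectra with respect to the cofibrant object $S^1 \otimes {\bf 1}$, and that there is a forgetful functor $U : \Sp(\cM) \too \Spt^{\bbN}(\cM)$ with a left adjoint. Hovey's stabilization machinery shows that this adjunction is a Quillen equivalence provided the cyclic permutation condition holds, i.e. the permutation action on $(S^1 \otimes {\bf 1})^{\otimes 3}$ is, up to homotopy, trivial in an appropriate sense. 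First I would recall why this holds here: the object $S^1 = \Delta[1]/\partial\Delta[1]$ is the simplicial circle, and the cyclic permutation of $(S^1)^{\wedge 3}$ is homotopic to the identity as a pointed self-map of a simplicial set (this is the classical fact used in the construction of symmetric spectra of simplicial sets in \cite{HSS}); tensoring with the cofibrant unit ${\bf 1}$ transports this homotopy into $\cM$, so the permutation on $(S^1 \otimes {\bf 1})^{\wedge 3}$ is homotopic to the identity in $\cM$.

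The key steps, in order, are: (1) verify that $\cM$, being cellular (or combinatorial), pointed, simplicial, left proper, and symmetric monoidal with cofibrant unit, satisfies Hovey's standing hypotheses for the existence of the stable model structures on both $\Spt^{\bbN}(\cM)$ and $\Sp(\cM)$; (2) observe that the evaluation/adjoint pair $(\Sigma^\infty, \Omega^\infty)$ and more precisely the forgetful--left-adjoint pair $\Sp(\cM) \rightleftarrows \Spt^{\bbN}(\cM)$ is a Quillen pair for the stable structures, again by \cite[\S 8-9]{HoveyS}; (3) invoke Hovey's theorem \cite[Theorem~8.11 and Theorem~10.1]{HoveyS} (the comparison of $\Sp$ with $\Spt^{\bbN}$) which asserts that this Quillen pair is a Quillen equivalence precisely under the cyclic permutation hypothesis; (4) discharge that hypothesis using the homotopy triviality of the cyclic permutation on $(S^1)^{\wedge 3}$ recalled above, transported to $\cM$ via $(-) \otimes {\bf 1}$, which is a left Quillen functor out of $\sSet_\bullet$.

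The main obstacle—such as it is—is bookkeeping rather than mathematics: one must make sure that the symmetric monoidal structure on $\Sp(\cM)$ is the one Hovey constructs (symmetric $S^1 \otimes {\bf 1}$-spectra, with the smash product of symmetric spectra), that the two notions of ``spectrum'' use compatible conventions (so that $U$ is indeed the obvious forgetful functor and $\Sigma^\infty$ factors correctly), and that the cofibrancy of ${\bf 1}$ is genuinely used to guarantee that $S^1 \otimes {\bf 1}$ is cofibrant and that tensoring with it is a left Quillen endofunctor. Once these compatibilities are in place, the statement is a direct application of Hovey's comparison theorem, so I would keep the proof short, essentially citing \cite{HoveyS} for the structural facts and the literature on simplicial sets for the permutation homotopy, and spelling out only the verification that the cyclic permutation condition is inherited by $\cM$ from $\sSet_\bullet$.
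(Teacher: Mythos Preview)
Your proposal is correct and follows essentially the same route as the paper: reduce to Hovey's comparison theorem in \cite{HoveyS}, then verify the cyclic permutation condition on $(S^1\otimes{\bf 1})^{\otimes 3}$ by transporting the known fact for $(S^1)^{\wedge 3}$ in $\sSet_\bullet$ along the simplicial tensoring. The only differences are bibliographic: the paper invokes \cite[Theorem~10.3 and Corollary~10.4]{HoveyS} rather than the numbers you cite, uses \cite[Corollaire~6.8]{propuni} to make the reduction to $\sSet_\bullet$ precise, and cites \cite[Lemma~6.6.2]{Hovey} for the classical fact on the simplicial sphere.
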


\begin{proof}
By applying \cite[Theorem~10.3 and Corollary~10.4]{HoveyS}, it is sufficient to prove that
$S^1\otimes {\bf 1}$ is symmetric in $\Ho(\cM)$, \ie that the permutation $(1,2,3)$
acts trivially on $(S^1\otimes {\bf 1})^{\otimes 3}$.
Using \cite[Corollaire~6.8]{propuni}, we see that 
$(S^1\otimes {\bf 1})^{\otimes 3}\simeq S^3\otimes {\bf 1}$
in $\Ho(\cM)$, and that it is sufficient to check this condition in the case where $\cM$ is
the model category of pointed simplicial sets.
Finally, in this particular case, the result is well known; see for instance \cite[Lemma~6.6.2]{Hovey}.
\end{proof}
\section{Symmetric monoidal structure}\label{chap:monoidal}
In this section we motivate, state and prove our main result:
the localizing motivator carries a canonical symmetric monoidal structure;
see Theorem~\ref{thm:main-mon}.
\begin{definition}\label{def:ses}
A sequence of triangulated categories
$$ \cR \stackrel{I}{\too} \cS \stackrel{P}{\too} \cT$$
is called {\em exact} if the composition is zero, the functor $I$ is
fully-faithful and the induced functor from the Verdier quotient $\cS/\cR$ to $\cT$
is {\em cofinal}, \ie it is fully-faithful and every object in $\cT$ is a direct summand
of an object of $\cS/\cR$; see \cite[\S2]{Neeman} for details. A sequence of dg categories
$$ \cA \stackrel{F}{\too} \cB \stackrel{G}{\too} \cC$$
is called {\em exact} if the induced sequence of triangulated categories (see \S\ref{sub:modules})
$$ \cD(\cA) \stackrel{\bbL F_!}{\too} \cD(\cB) \stackrel{\bbL G_!}{\too} \cD(\cC)$$
is exact.
\end{definition}
Recall from \cite[\S 10]{Duke} the construction of the {\em universal localizing invariant}.

\begin{theorem}{(\cite[Theorem~10.5]{Duke})}\label{thm:Uloc}
There exists a morphism of derivators
$$ \Uloc: \HO(\dgcat) \too \Mloc\,,$$
with values in a strong triangulated derivator (see \S\ref{sub:properties}),
which has the following properties\,:
\begin{itemize}
\item[$\flt$)] $\Uloc$ preserves filtered homotopy colimits;
\item[$\p$)] $\Uloc$ preserves the terminal object;
\item[$\loc$)] $\Uloc$ satisfies {\em localization}, \ie sends exact sequence of dg categories
$$ \cA \too \cB \too \cC $$
to distinguished triangles in $\Mloc(e)$
$$\Uloc(\cA) \too \Uloc(\cB) \too \Uloc(\cC) \too \Uloc(\cA)[1]\,.$$
\end{itemize}
Moreover, $\Uloc$ is universal with respect to these properties, $\ie$ given any strong
triangulated derivator $\bbD$, we have an induced equivalence of categories
$$ (\Uloc)^{\ast}: \HomC(\Mloc, \bbD) \stackrel{\sim}{\too} \HomL(\HO(\dgcat), \bbD)\,,$$
where the left-hand side stands for the category of homotopy colimit
preserving morphisms of derivators, {while the
right-hand side stands for the full subcategory of $\uHom(\HO(\dgcat), \bbD)$
spanned by the morphisms of derivators which verify the three conditions above.}
\end{theorem}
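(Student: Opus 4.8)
\textbf{Proof plan for Theorem~\ref{thm:Uloc}.}
The statement is quoted from \cite[Theorem~10.5]{Duke}, so the real task is to reconstruct how such a universal localizing invariant is produced; I would follow the standard ``presentable-derivator'' recipe. First I would fix a small full subcategory $\dgcat_f \subset \dgcat$ of (cofibrant) finite dg cells, which by Theorem~\ref{thm:enhance} and \cite[Proposition~5.2]{Duke} generates $\Hmo$ under filtered homotopy colimits and retracts; this is the source of the ``$\flt$'' normalization. Then I would form the derivator of simplicial presheaves $\HO(\spref{\dgcat_f}_\bullet)$ (using the projective model structure of \S\ref{sub:modelBK}), which is the free homotopy-cocomplete derivator on $\dgcat_f$: every homotopy-colimit-preserving morphism out of it into a target derivator $\bbD$ is determined, up to the expected equivalence, by its restriction to the representables $\dgcat_f$, by the density argument used in Theorem~\ref{thm:monloc} (every simplicial presheaf is a homotopy colimit of representables, \cite[Proposition~2.9]{Dugger}).

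Next I would impose the three properties by left Bousfield localization, in order. Inverting the maps $\hocolim_j \cA_j \to \colim_j \cA_j$ for filtered diagrams in $\dgcat_f$ forces ``$\flt$'' and identifies the localized derivator with presheaves on all of $\dgcat_f$ extended by filtered colimits; inverting $\emptyset \to \underline{0}$ (the map detecting the terminal/zero object) forces ``$\p$''; and inverting, for each exact sequence $\cA \to \cB \to \cC$ of finite dg cells, the canonical map from the homotopy cofibre of $\Uloc(\cA)\to\Uloc(\cB)$ to $\Uloc(\cC)$ forces ``$\loc$''. After these localizations one is no longer pointed-stable, so the final step is to stabilize, i.e. pass to $S^1$-spectra (equivalently symmetric $S^1$-spectra by Theorem~\ref{thm:main-Hovey}), obtaining a strong triangulated derivator $\Mloc$ together with $\Uloc \colon \HO(\dgcat)\to\Mloc$ defined as the composite of the Yoneda embedding of $\dgcat_f$, the left-exact extension to all dg categories, and the localization-stabilization functors. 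Each localization and the stabilization is accompanied by its own universal property (homotopy-colimit-preserving morphisms out of the localization correspond to homotopy-colimit-preserving morphisms out of the original that send the inverted maps to isomorphisms; homotopy-colimit-preserving morphisms out of the stabilization correspond to those out of the pointed derivator landing in a triangulated target), and composing these universal properties yields exactly the asserted equivalence
$(\Uloc)^{\ast}\colon \HomC(\Mloc,\bbD)\xrightarrow{\ \sim\ }\HomL(\HO(\dgcat),\bbD)$.

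The one point requiring genuine care — and what I expect to be the main obstacle — is matching the universal property stated for $\dgcat_f$-presheaves with the one stated for $\HO(\dgcat)$ itself: one must check that restriction along $\dgcat_f \hookrightarrow \dgcat$ sets up an equivalence between homotopy-colimit-preserving morphisms $\HO(\dgcat)\to\bbD$ and morphisms $\dgcat_f\to\bbD$ (no conditions), and, dually, that a localizing invariant on $\HO(\dgcat)$ is determined by its restriction to finite dg cells because every dg category is, in $\Hmo$, a filtered homotopy colimit of finite dg cells. This uses Theorem~\ref{thm:enhance} in an essential way, together with the fact that filtered homotopy colimits in a derivator commute with the finite homotopy (co)limits appearing in the localization conditions. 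Everything else — properness and cellularity of the model categories involved, existence of the left Bousfield localizations, compatibility of the successive universal properties — is routine once the derivator formalism of the appendix is in place, and I would simply cite \cite[\S10]{Duke} for the verification that the three localizations can be performed and that the resulting derivator is strong and triangulated.
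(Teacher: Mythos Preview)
The paper does not prove this theorem itself --- it is cited from \cite[Theorem~10.5]{Duke}. However, the construction of $\Uloc$ is recalled in detail at the start of \S\ref{sub:proof-main}, and your outline matches that recalled construction in its overall shape: simplicial presheaves on a small category $\dgcat_{\f}$ of homotopically finitely presented dg categories, followed by successive left Bousfield localizations (for the terminal object and for exact sequences) and a stabilization, each step carrying its own universal property that composes into the stated one.

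There is, however, one genuine omission in your sketch. The very first localization in the construction is at the set $\Sigma$ of derived Morita equivalences inside $\dgcat_{\f}$, giving $\Loc_\Sigma\mathsf{Hot}_{\dgcat_{\f}}$. You never mention this step, and it is precisely what makes the $\flt$ condition work: your proposed mechanism --- ``inverting the maps $\hocolim_j\cA_j\to\colim_j\cA_j$'' --- is not what is done, and is not quite well-posed (the colimits need not land in $\dgcat_{\f}$, and the relevant comparison is with $\HO(\dgcat)$, not with presheaves). In the actual construction, $\flt$ is not imposed by a Bousfield localization at all. Rather, one uses that $\HO(\dgcat)$ is the free completion of the prederivator $\underline{\dgcat_{\f}}[\Sigma^{-1}]$ under \emph{filtered} homotopy colimits, while $\Loc_\Sigma\mathsf{Hot}_{\dgcat_{\f}}$ is the free completion of the same prederivator under \emph{all} homotopy colimits; the morphism $\bbR\underline{h}$ is then the comparison map, and the equivalence between $\HomC(\Loc_\Sigma\mathsf{Hot}_{\dgcat_{\f}},\bbD)$ and $\uHom_\flt(\HO(\dgcat),\bbD)$ drops out of these two universal properties (see the proof of Proposition~\ref{prop:FltMon}). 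This is exactly the issue you correctly flag as ``the main obstacle'' in your final paragraph, but its resolution goes through the $\Sigma$-localization rather than through a further Bousfield localization of the presheaf derivator.

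The remaining steps --- localizing at $P:\varnothing\to h(\varnothing)$ for condition $\p$, localizing at the maps $\mathsf{cone}[\bbR\underline{h}(\cA)\to\bbR\underline{h}(\cB)]\to\bbR\underline{h}(\cC)$ for condition $\loc$, and stabilizing (with the commutation of stabilization and localization handled by Proposition~\ref{commut}) --- are exactly as you describe.
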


\begin{definition}\label{not:Linv}
The objects of the category $\HomL(\HO(\dgcat), \bbD)$ are called {\em localizing invariants}
and $\Uloc$ is called the {\em universal localizing invariant}. Because of its universal property,
which is a reminiscence of motives, $\Mloc$ is called the {\em localizing motivator}.
Its base category $\Mloc(e)$ is a triangulated category and
is morally what we would like to consider as the category
of {\em non-commutative motives}.
\end{definition}

\begin{example}\label{ex:examples-loc}
{Examples of localizing invariants include Hochschild homology and cyclic homology (see \cite[Theorem~10.7]{Duke})}, non-connective algebraic
$K$-theory (see \cite[Theorem~10.9]{Duke}), and even topological Hochschild homology and
topological cyclic homology (see \cite[Theorem~6.1]{Blum-Mandell} and \cite[\S8]{AGT}).
\end{example}
In this section we introduce a new ingredient in Theorem~\ref{thm:Uloc}:
symmetric monoidal structures. As shown in Theorem~\ref{thm:Mdgcat} the derivator
$\HO(\dgcat)$ carries a symmetric monoidal structure. It is therefore natural to consider
localizing invariants which are symmetric monoidal; {see Examples \ref{ex:HH}-\ref{ex:Periodiccomp}}.
Our main result is the following.
\begin{theorem}\label{thm:main-mon}
The localizing motivator $\Mloc$ carries a canonical symmetric mo\-no\-idal struc\-ture
$-\otimes^{\bbL}-$, making the universal localizing invariant $\Uloc$
symmetric monoidal (see \S\ref{sub:M1}). Moreover, this tensor product
preserves homotopy colimits in each variable, and is characterized by the following universal property:
given any strong triangulated derivator $\bbD$ (see \S\ref{sub:properties}), endowed with
a symmetric monoidal structure which preserves homotopy colimits in each
variable, we have an induced equivalence of categories
\begin{equation*}
(\Uloc)^{\ast}: \HomC^{\otimes}(\Mloc, \bbD) \stackrel{\sim}{\too} \HomL^{\otimes}(\HO(\dgcat), \bbD)\, ,
\end{equation*}
where the left-hand side stands for the category of symmetric monoidal homotopy
colimit preserving morphisms of derivators, while
the right-hand side stands for the category of symmetric monoidal
morphisms of derivators which belong to the category $\HomL(\HO(\dgcat), \bbD)$.
Furthermore, $\Mloc$ admits an explicit symmetric monoidal Quillen model.
\end{theorem}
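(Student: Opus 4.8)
The plan is to re-run the construction of $\Mloc$ from \cite[\S10]{Duke} while keeping track of symmetric monoidal structures at every stage, and then to combine this with the ``monoidal'' refinements of Kan extension, left Bousfield localization and stabilization proved in the appendix (Theorem~\ref{monoidalderKanext}, Proposition~\ref{derivatorleftlocmonoidal}, Theorem~\ref{stableDay}). The first step is to fix a small \emph{strict} symmetric monoidal category $\cC$ modelling the homotopically finitely presented part of $\Hmo$: take a small full subcategory of $k$-flat (\eg cofibrant) dg categories which contains the finite dg cells, which is closed under the tensor product of \S\ref{sub:Monoidstructure}, and which represents every homotopically finitely presented dg category up to derived Morita equivalence. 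Stability under $\otimes$ is guaranteed by Theorem~\ref{thm:tensorstable} together with \cite[Proposition~2.3(3)]{Toen}, and one strictifies using the coherence theorem for symmetric monoidal categories. Then form the projective model category $\spref\cC_\bullet$ of pointed simplicial presheaves on $\cC$; by Theorem~\ref{thm:Mprojective} and Proposition~\ref{prop:Mprojectivepointed} it is a combinatorial, left proper symmetric monoidal model category for the (pointed) Day convolution product, with cofibrant unit, and the composite $\cC\to\spref\cC\to\spref\cC_\bullet$, $\cB\mapsto (h_\cB)_+$, is symmetric monoidal.

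Next I would carry out the successive left Bousfield localizations that produce $\Mloc$ out of the presheaf derivator --- first the localization recovering $\HO(\dgcat)$ from $\HO(\spref\cC)$ (inverting derived Morita equivalences between objects of $\cC$ and the maps expressing a general dg category as a filtered homotopy colimit of objects of $\cC$), then the localizations at $S_\flt$ and $S_\p$, and finally, after stabilization, the localization at $S_\loc$ encoding Drinfeld exact sequences --- verifying at each stage that the localizing set satisfies condition \textbf{(C)} of Theorem~\ref{thm:monloc}, so that Proposition~\ref{prop:loc-mon} applies and the localized model category stays symmetric monoidal. The sets recovering $\HO(\dgcat)$ and $S_\flt$ are, up to the Yoneda embedding, generated by maps of $\cC$ closed under tensoring, so Corollary~\ref{cor:monloc} applies, using Remark~\ref{rk:k-flatprop}; moreover $S_\flt$ satisfies \textbf{(C)} because the derived tensor product preserves homotopy colimits in each variable (Proposition~\ref{prop:Fltdgcat}) and Yoneda turns $\cB\otimes^\bbL\cA$ into $h_\cB\otimes h_\cA$, while for $S_\p$ one uses that $\cB\otimes^\bbL(-)$ kills the terminal dg category. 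For the stabilization step I would invoke Theorem~\ref{thm:main-Hovey}: passing from $S^1$-spectra to symmetric $S^1\otimes\mathbf{1}$-spectra over the (localized) symmetric monoidal model category yields a Quillen-equivalent \emph{symmetric monoidal} model, with $\Sigma^\infty$ symmetric monoidal left Quillen. Assembling these steps produces an explicit symmetric monoidal model category whose associated derivator is $\Mloc$; since localization functors are the identity on underlying categories and $\Sigma^\infty$ is symmetric monoidal, the resulting structure morphism $\HO(\dgcat)\to\Mloc$ is a symmetric monoidal, homotopy colimit preserving morphism, \ie it is $\Uloc$ equipped with its monoidal structure. Preservation of homotopy colimits in each variable is inherited from the closedness of the Day convolution product, which survives both left Bousfield localization and stabilization.

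The universal property then follows by chaining the monoidal universal properties of the individual steps: Theorem~\ref{monoidalderKanext} identifies symmetric monoidal homotopy cocontinuous morphisms out of $\HO(\spref\cC)$ with symmetric monoidal morphisms out of $\cC$; Proposition~\ref{derivatorleftlocmonoidal} identifies such morphisms out of a left Bousfield localization with those out of the original derivator that invert the localizing set; and Theorem~\ref{stableDay} identifies them, after stabilization, with those sending $S^1\otimes\mathbf{1}$ to an invertible object. Composing these equivalences, and unwinding the localizing sets in terms of the conditions $\flt$, $\p$ and $\loc$ of Theorem~\ref{thm:Uloc}, shows that $(\Uloc)^\ast$ is an equivalence from $\HomC^{\otimes}(\Mloc,\bbD)$ onto the full subcategory of symmetric monoidal morphisms $\HO(\dgcat)\to\bbD$ that preserve filtered homotopy colimits and the terminal object and send Drinfeld exact sequences to triangles, that is, onto $\HomL^{\otimes}(\HO(\dgcat),\bbD)$; this simultaneously establishes that the underlying non-monoidal derivator is the $\Mloc$ of Theorem~\ref{thm:Uloc}.

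The main obstacle I expect is the verification of condition \textbf{(C)} for the set $S_\loc$: one must show that for every object $\cB$ of $\cC$ the functor $\cB\otimes^\bbL(-)$ sends Drinfeld exact sequences of dg categories to Drinfeld exact sequences. Since an exact sequence $\cA\to\cB'\to\cC$ exhibits $\cD(\cC)$ (up to the cofinality/idempotent-completion correction absorbed by the additivity bookkeeping) as a Verdier quotient, and Drinfeld quotients are homotopy colimits, this ultimately reduces to Proposition~\ref{prop:Fltdgcat}; but making it precise at the level of the presheaf model --- checking that tensoring the generators of $S_\loc$ by a representable presheaf lands among $S_\loc$-local equivalences --- requires a careful comparison of $-\otimes^\bbL-$ with the bifunctor $\rep(-,-)$ via Corollary~\ref{cor:Toen} (and Theorem~\ref{thm:enhance} to control filtered colimits), together with keeping $k$-flatness under control throughout. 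A secondary technical point is the strictification of $\cC$ and the compatibility of the chosen cofibrant/fibrant resolution functors with the tensor product.
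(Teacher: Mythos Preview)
Your proposal follows essentially the same route as the paper's proof: choose a small $k$-flat, tensor-closed category $\dgcat_\f$ of homotopically finitely presented dg categories (this is the paper's list of properties (a)--(f), with Theorem~\ref{thm:tensorstable} ensuring closure under $\otimes$), build $\Mloc$ as a chain of left Bousfield localizations of $\spref{\dgcat_\f}_\bullet$ followed by stabilization via symmetric spectra, verify monoidality at each stage via Theorem~\ref{thm:monloc}, and chain the universal properties of Theorem~\ref{monoidalderKanext}, Proposition~\ref{derivatorleftlocmonoidal}, and Theorem~\ref{stableDay}.

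Two small corrections. First, $\HO(\dgcat)$ is \emph{not} a localization of the presheaf derivator; rather, $\bbR\underline{h}:\HO(\dgcat)\to\Loc_\Sigma\mathsf{Hot}_{\dgcat_\f}$ exhibits the target as the free completion of $\underline{\dgcat_\f}[\Sigma^{-1}]$ by \emph{all} homotopy colimits, while $\HO(\dgcat)$ is its completion by \emph{filtered} homotopy colimits (see the proof of Proposition~\ref{prop:FltMon}). So your ``localization recovering $\HO(\dgcat)$'' step does not exist; the monoidal comparison at that stage is handled instead by the argument in Proposition~\ref{prop:FltMon}, which uses the universal property of filtered completion variable-by-variable to show $\bbR\underline{h}$ is symmetric monoidal. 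Second, your ``main obstacle'' --- condition~\textbf{(C)} for the set $S_\loc$ --- does not reduce to Proposition~\ref{prop:Fltdgcat} (preservation of homotopy colimits) as you suggest, since exact sequences of dg categories are not defined as homotopy colimit diagrams but via Verdier quotients of derived categories. The paper resolves this in one line by citing \cite[Proposition~1.6.3]{Drinfeld}: tensoring by a $k$-flat dg category preserves exact sequences. Combined with the already-established monoidality of $\bbR\underline{h}$, condition~\textbf{(C)} is then immediate, and no comparison with $\rep(-,-)$ or appeal to Theorem~\ref{thm:enhance} is needed.
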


\begin{definition}\label{not:Linv-mon}
The objects of the category $\HomL^{\otimes}(\HO(\dgcat), \bbD)$ are
called {\em symmetric monoidal localizing invariants}.
\end{definition}

\begin{corollary}\label{satcompact}
Any dualizable object of $\Mloc(e)$ is compact; see Definition~\ref{def:dualizable}
and \S\ref{sub:triangulated}.
In particular, given any saturated dg category $\cA$ (see Definition~\ref{def:saturated}),
the object $\Uloc(\cA)$ is compact.
\end{corollary}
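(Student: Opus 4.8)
The plan is to deduce both assertions from a single fact about the unit object $\mathbf{1}=\Uloc(\underline{k})$ of $\Mloc(e)$: namely, that it is compact. I would first recall why this holds, in either of two ways. On the one hand, by the co-representability theorem of \cite{CT} (see~(\ref{eq:intro2})) there is a natural isomorphism $\bbR\Hom(\Uloc(\underline{k}),\Uloc(\cA))\simeq\bbK(\cA)$; since non-connective algebraic $K$-theory and $\Uloc$ both commute with filtered homotopy colimits, and $\Mloc$ is generated under homotopy colimits by the image of $\Uloc$, the morphism $\bbR\Hom(\Uloc(\underline{k}),-)$ commutes with filtered homotopy colimits on all of $\Mloc$; as arbitrary coproducts in $\Mloc(e)$ are such filtered homotopy colimits (telescopes of finite sub-coproducts), passing to $\pi_0$ shows that $\Hom(\Uloc(\underline{k}),-)$ commutes with coproducts, i.e. $\Uloc(\underline{k})$ is compact. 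On the other hand, this is also immediate from the explicit symmetric monoidal Quillen model for $\Mloc$ of Theorem~\ref{thm:main-mon}, in which $\Uloc(\underline{k})$ is one of the representable, hence compact, generators.

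Next I would treat an arbitrary dualizable object $X$ of $\Mloc(e)$, with dual $X^{\vee}$. Taking $Y=\mathbf{1}$ in the functorial isomorphism of Remark~\ref{rk:adj-dualizable}(ii) yields a natural isomorphism $\Hom(X,Z)\simeq\Hom(\mathbf{1},X^{\vee}\otimes^{\bbL}Z)$ in $\Mloc(e)$. Given a family $\{Z_i\}_{i\in I}$, the canonical map $\bigoplus_i\Hom(X,Z_i)\to\Hom(X,\bigoplus_iZ_i)$ is then identified with the canonical map
$$\bigoplus_{i\in I}\Hom(\mathbf{1},X^{\vee}\otimes^{\bbL}Z_i)\too\Hom\Bigl(\mathbf{1},\bigoplus_{i\in I}(X^{\vee}\otimes^{\bbL}Z_i)\Bigr),$$
where I have used that, by Theorem~\ref{thm:main-mon}, the functor $X^{\vee}\otimes^{\bbL}-$ preserves homotopy colimits, in particular coproducts. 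Since $\mathbf{1}$ is compact by the previous paragraph, this last map is an isomorphism; hence so is the first, and $X$ is compact.

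For the final assertion, let $\cA$ be a saturated dg category. By Theorem~\ref{thm:Toen-dualizable} it is a dualizable object of $\Hmo$ with dual $\cA^{\op}$, and since by Theorem~\ref{thm:main-mon} the universal localizing invariant $\Uloc$ is symmetric monoidal, Remark~\ref{rk:adj-dualizable}(iv) shows that $\Uloc(\cA)$ is a dualizable object of $\Mloc(e)$ (with dual $\Uloc(\cA^{\op})$). By the second paragraph it is therefore compact.

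The only real content is the compactness of the unit $\Uloc(\underline{k})$; everything else is a formal manipulation with dualizable objects, using exclusively that the tensor product on $\Mloc$ preserves homotopy colimits in each variable. If one wishes to avoid invoking the explicit model, the point requiring care is the passage from "$\bbR\Hom(\Uloc(\underline{k}),-)$ commutes with filtered homotopy colimits of diagrams of the form $\Uloc(\cA_j)$" to the same statement for all diagrams in $\Mloc$; this relies on the generation of $\Mloc$ under homotopy colimits by the image of $\Uloc$ together with the compatibility of the canonical spectral enrichment with homotopy colimits.
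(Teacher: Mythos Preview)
Your proof is correct and follows essentially the same approach as the paper: reduce $\Hom(X,-)$ to $\Hom(\mathbf{1},X^\vee\otimes^{\bbL}-)$ via dualizability, use that the tensor product preserves coproducts, and invoke compactness of the unit; then deduce the statement for saturated $\cA$ from $\Uloc$ being symmetric monoidal together with Theorem~\ref{thm:Toen-dualizable} and Remark~\ref{rk:adj-dualizable}(iv). The only difference is that the paper simply cites \cite[Theorem~7.16]{CT} for the compactness of $\Uloc(\underline{k})$, whereas you reconstruct it; your second argument (via the explicit model) is the one closest to how that result is actually proved, while your first argument, as you yourself note, requires some care to pass from objects in the image of $\Uloc$ to all of $\Mloc(e)$.
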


\begin{proof}
Let $M$ be a dualizable object of $\Mloc(e)$.
We need to prove that the functor $\Hom(M,-)$
commutes with arbitrary sums. Since $M$ is dualizable, this functor
is isomorphic to $\Hom(\Uloc(\underline{k}),M^\vee\otimes^\bbL -)$.
The unit object $\Uloc(\underline{k})$ is known to be compact (see \cite[Theorem 7.16]{CT}),
and so the first assertion is proven.
The second assertion follows from the fact that $\Uloc$ is symmetric monoidal,
and that $\Uloc(\cA)$ is dualizable for any saturated dg category $\cA$
(see Remark \ref{rk:adj-dualizable}~(iv)
and Theorem \ref{thm:Toen-dualizable}).
\end{proof}

\begin{remark}
Although we do not know if the triangulated category $\Mloc(e)$ is
compactly generated, Corollary~\ref{satcompact} implies that
the localizing triangulated subcategory of $\Mloc(e)$ generated by dualizable objects
is compactly generated.
\end{remark}
{Before proving Theorem~\ref{thm:main-mon}, let us give some examples of symmetric monoidal localizing invariants.

\begin{example}[Hochschild homology]\label{ex:HH}
Let $\cA$ be a small $k$-flat dg category; see Definition~\ref{def:k-flatdg}.
We can associate to $\cA$ a simplicial object in $\cC(k)$, \ie a contravariant functor from $\Delta$ to
$\cC(k)$ : its $n$-th term is given by
$$ \underset{(x_0, \ldots, x_n)}{\bigoplus} \cA(x_n, x_0) \otimes \cA(x_{n-1}, x_n) \otimes
\cdots \otimes \cA(x_0, x_1)\,,$$
where $(x_0, \ldots, x_n)$ is an ordered sequence of objects in $\cA$. The face maps are given by 
\begin{equation*}
d_i(f_n, \ldots, f_i, f_{i-1}, \ldots, f_0) = \left\{ \begin{array}{lcr}
(f_n, \ldots, f_i \circ f_{i-1}, \ldots , f_0) & \text{if} & i>0 \\
(-1)^{(n + \sigma)}(f_0\circ f_n, \ldots, f_1) & \text{if} & i=0 \\
\end{array} \right. 
\end{equation*}
where $\sigma=(\mathrm{deg}f_0)(\mathrm{deg}f_1 + \ldots + \mathrm{deg}f_{n-1})$, and the
degenerancies maps are given by 
$$ s_j(f_n, \ldots, f_j, f_{j-1}, \ldots, f_0)=(f_n, \ldots, f_j, \id_{x_j}, f_{j-1}, \ldots, f_0)\,.$$
Associated to this simplicial object we have a chain complex in $\cC(k)$ (by the Dold-Kan equivalence),
and so a bicomplex of $k$-modules. The {\em Hochschild complex} $\mathit{HH}(\cA)$ of $\cA$ is
the sum-total complex associated to this bicomplex. If $\cA$ is an arbitrary dg category, its
Hochschild complex is obtained by first taking a $k$-flat (\eg~cofibrant) resolution of $\cA$;
see Definition~\ref{def:k-flatdg}. This construction furnishes us a functor
$$ \mathit{HH}: \dgcat \too \cC(k) \,,$$
which by \cite[Theorem~10.7]{Duke} gives rise to a localizing invariant 
\begin{equation}\label{eq:HHder}
\mathit{HH}: \HO(\dgcat) \too \HO(\cC(k))\,.
\end{equation}
Given small dg categories $\cA$ and $\cB$, we have a functorial quasi-isomorphism
$$ sh: \mathit{HH}(\cA) \otimes \mathit{HH}(\cB) \too \mathit{HH}(\cA \otimes \cB)$$
given by the shuffle product map; see \cite[\S4.2.3]{Loday}.
The localizing invariant (\ref{eq:HHder}),
endowed with the shuffle product map, becomes then a symmetric monoidal localizing invariant.
\end{example}
\begin{example}[Mixed complexes]\label{ex:Mixedcomp}
Following Kassel~\cite[\S1]{Kassel} we denote by $\Lambda$ the dg algebra $k[\epsilon]/\epsilon^2$, where $\epsilon$ is of degree $-1$ and $d(\epsilon)=0$. Under this notation, a {\em mixed complex} is a right dg $\Lambda$-module (see Definition~\ref{def:modules}).

Let $\cA$ be a small dg category. The Hochschild complex $\mathit{HH}(\cA)$ of $\cA$ (see Example~\ref{ex:HH}), endowed with the cyclic operator
$$ t_n(f_{n-1}, \ldots, f_0) = (-1)^{n+\sigma}(f_0,f_{n-1}, f_{n-2}, \ldots, f_1)\,,$$
gives rise to a mixed complex $\mathit{C}(\cA)$; see \cite[\S1.3]{Exact}. The assignment $\cA \mapsto \mathit{C}(\cA)$ yields a localizing invariant
\begin{equation}\label{eq:mixed}
\mathit{C}: \HO(\dgcat) \too \HO(\cC(\Lambda))
\end{equation}
with values in the derivator associated to right dg $\Lambda$-modules; see \cite[Theorem~10.7]{Duke}. Recall from \cite[\S1]{Kassel} that the category $\cC(\Lambda)$ carries a natural symmetric monoidal structure whose unit object is $k$: given two mixed complexes there is a canonical mixed complex structure on the tensor product of the underlying complexes. Moreover, this symmetric monoidal structure is compatible with the projective model structure (see \S\ref{sub:modules}). Thanks to \cite[Theorem~2.4]{Kassel} the localizing invariant (\ref{eq:mixed}) becomes then a symmetric monoidal localizing invariant. 

This example will be used in the construction of a canonical Chern character map from non-connective algebraic $K$-theory to negative cyclic homology; see Example~\ref{ex:negative}. 
\end{example}
\begin{example}[Periodic complexes]\label{ex:Periodiccomp}
In this example we assume that our base ring $k$ is a field. Let $k[u]$ be the cocommutative
Hopf algebra of polynomials in one variable $u$ of degree $2$; see \cite[\S1]{Kassel}.
Consider the symmetric monoidal model category $k[u]\text{-}\mathrm{Comod}$ of
$k[u]$-comodules; see \cite[Theorem~2.5.17]{Hovey}. The monoidal structure is given
by the cotensor product $-\square_{k[u]}-$ of comodules, with unit $k[u]$.

Given a mixed complex $M$ (see Example~\ref{ex:Mixedcomp}) we denote by
$\mathit{P}(M)$ the $k[u]$-comodule, whose underlying complex is $M \otimes^{\bbL}_{\Lambda}k$,
obtained by iteration of the map
$$ (M \otimes^{\bbL}_{\Lambda}k)[-2] \stackrel{S}{\too} M\otimes^{\bbL}_{\Lambda}k\,,$$
see \cite[Proposition~1.4]{Kassel}. Using \cite[Theorem~1.7]{Kassel}
and \cite[Proposition 9.2]{EilMoo}, we conclude that we have a symmetric
monoidal morphism of derivators
$$\mathit{P}:\HO(\cC(\Lambda)) \too \HO(k[u]\text{-}\mathrm{Comod})\,.$$
By composing $\mathit{P}$ with the localizing invariant (\ref{eq:mixed}) we obtain then a
symmetric monoidal localizing invariant
$$ (\mathit{P}\circ \mathit{C}): \HO(\dgcat) \too \HO(k[u]\text{-}\mathrm{Comod})\,.$$
This example will be used in the construction of a canonical Chern character map from non-connective
algebraic $K$-theory to periodic cyclic homology; see Example~\ref{ex:periodic}. 
\end{example}
}
\subsection{Proof of Theorem~\ref{thm:main-mon}}\label{sub:proof-main}
We will use freely the theory of derivators which is recalled and developed in the appendix.
Recall from \cite[\S10]{Duke} that $\Uloc$ is obtained by the following composition\,:
$$ \HO(\dgcat) \stackrel{\bbR\underline{h}}{\too} \Loc_{\Sigma}\mathsf{Hot}_{\dgcat_{\f}}
\stackrel{\Phi}{\too} 
\Loc_{\Sigma, P}\mathsf{Hot}_{\dgcat_{\f}} \stackrel{\stab}{\too}
\St(\Loc_{\Sigma, P}\mathsf{Hot}_{\dgcat_{\f}}) \stackrel{\gamma}{\too} \Mloc\,.$$
The morphism $\stab$ corresponds to a stabilization procedure (see \S\ref{sub:Heller})
and the morphism $\gamma$ to a left Bousfield localization procedure (see \S\ref{sub:leftBousfield}).
Since these procedures commute (see Proposition~\ref{commut}), we can also obtain $\Uloc$ by the
following composition
\begin{equation}\label{eq:comp-new}
\HO(\dgcat) \stackrel{\bbR\underline{h}}{\too}
\Loc_{\Sigma}\mathsf{Hot}_{\dgcat_{\f}} \stackrel{\Phi}{\too} 
\Loc_{\Sigma, P}\mathsf{Hot}_{\dgcat_{\f}} \stackrel{\gamma}
{\too} \Mlocu \stackrel{\stab}{\too} \Mloc\,,
\end{equation}
where $\Mlocu$ is the {\em unstable} analogue of the localizing motivator.

The proof of Theorem~\ref{thm:main-mon} will consist on the concatenation of
Propositions~\ref{prop:FltMon}-\ref{prop:stabMon} followed by
Remark~\ref{rk:explanations}. In each one of these propositions we construct
an explicit symmetric monoidal Quillen model for the corresponding (intermediate)
derivator of the composition (\ref{eq:comp-new}).

We start by fixing on $\dgcat$ a fibrant resolution functor $R$,
a cofibrant resolution functor $Q$,
a left framing $\Gamma_*$ (\ie a well-behaved cosimplicial resolution functor;
see \cite[Definition~5.2.7 and Theorem~5.2.8]{Hovey}),
as well as a small full subcategory $\dgcat_{\f}$ of $\dgcat$, satisfying the following properties\,:
\begin{itemize}
\item[(a)] any finite dg cell (see Definition~\ref{def:dg-cell}) is in $\dgcat_{\f}$;
\item[(b)] any object in $\dgcat_{\f}$ is homotopically finitely presentated (see Definition~\ref{def:HFP});
\item[(c)] given any object $\cA$ in $\dgcat_{\f}$, $Q(R(\cA))$ and $Q(\cA)$
belong to $\dgcat_{\f}$;
\item[(d)] for any cofibrant object $\cA$ of $\dgcat_{\f}$, if
$\Gamma_*(\cA)$ denotes the given cosimplicial frame of $\cA$, then
$\Gamma_n(\cA)$ belongs to $\dgcat_{\f}$ for all $n\geq 0$.
\end{itemize}
We let $\Sigma$ be the set of derived Morita
equivalences in $\dgcat_{\f}$.
The derivator $\Loc_\Sigma\mathsf{Hot}_{\dgcat_{\f}}$ is simply the
derivator $\HO(\Loc_\Sigma \, \spref{\dgcat_{\f}})$ associated to the
left Bousfield localization of the projective model structure on $\spref{\dgcat_{\f}}$
(see \S\ref{sub:modelBK}), with respect to the set $\Sigma$.
Note that, up to Quillen equivalence, this construction
does not depend on the choice of the category
$\dgcat_{\f}$ but only on the Dwyer-Kan
localization of $\dgcat_{\f}$ by $\Sigma$ (see \cite{DwKan}). The above stability properties
imply that the Dwyer-Kan localization of $\dgcat_{\f}$ by $\Sigma$
is simply (equivalent to) the full simplicial subcategory
of the Dwyer-Kan localization of the model category $\dgcat$
spanned by the homotopically finitely presented dg categories.
In order to obtain a symmetric monoidal structure, we have then the freedom to
add the following properties to $\dgcat_{\f}$\,:
\begin{itemize}
\item[(e)] any dg category in $\dgcat_{\f}$ is $k$-flat (see Definition~\ref{def:k-flatdg});
\item[(f)] given any dg categories $\cA$ and $\cB$ in $\dgcat_{\f}$, $\cA\otimes \cB$
belongs to $\dgcat_{\f}$ (this makes sense because of Theorem~\ref{thm:tensorstable}).
\end{itemize}
In the sequel, we assume that a small full subcategory $\dgcat_{\f}$
of $\dgcat$ satisfying all the above properties (a)--(f) has been chosen; for instance, one might
consider the smallest one relatively to $R$, $Q$ and $\Gamma_*$.
The morphism
$$\bbR\underline{h} : \HO(\dgcat)\too\Loc_{\Sigma}\mathsf{Hot}_{\dgcat_{\f}}
=\HO(\Loc_\Sigma \spref\dgcat_{\f})$$
is induced by the functor
$$\underline{h}:\dgcat\too \spref\dgcat_{\f}\,,$$
which associates to any dg category $\cA$ the simplicial
presheaf on $\dgcat_{\f}$\,:
$$\bbR\underline{h}(\cA): \cB\longmapsto \Map(\cB,\cA)=\Hom(\Gamma_*(Q(\cB)),R(\cA))\, .$$

\begin{proposition}\label{prop:FltMon}
The tensor product of dg categories in $\dgcat_{\f}$
extends uniquelly to a closed symmetric monoidal structure
on the category of simplicial presheaves on $\dgcat_{\f}$, making $\Loc_\Sigma\,  \spref\dgcat_{\f}$
into a symmetric monoidal
model category. As a consequence, the derivator
$\Loc_{\Sigma}\mathsf{Hot}_{\dgcat_{\f}}$ carries a symmetric
monoidal structure, making the morphism $\bbR \underline{h}$
symmetric monoidal. Moreover, given any derivator $\bbD$,
the category of filtered homotopy colimit preserving
symmetric monoidal morphisms
from $ \HO(\dgcat)$ to $\bbD$ is canonically equivalent
to the category of homotopy colimit preserving symmetric monoidal
morphisms from $\Loc_{\Sigma}\mathsf{Hot}_{\dgcat_{\f}}$ to $\bbD$.
\end{proposition}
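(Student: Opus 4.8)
The plan is to build the symmetric monoidal structure on $\Loc_\Sigma\,\spref\dgcat_{\f}$ from Day's convolution product and then transport it to the derivator level. By properties~(e) and~(f), $\dgcat_{\f}$ is a small symmetric monoidal category, namely a symmetric monoidal full subcategory of $(\dgcat_\Flat,\otimes,\underline{k})$; see Remark~\ref{rk:k-flatprop}. Hence \S\ref{sub:Day} equips $\pref\dgcat_{\f}$, and therefore $\spref\dgcat_{\f}$, with its unique closed symmetric monoidal structure making the Yoneda embedding symmetric monoidal, and Theorem~\ref{thm:Mprojective} shows that the projective model structure is then a symmetric monoidal model category. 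I would then check that the set $\Sigma$ of derived Morita equivalences in $\dgcat_{\f}$ is closed under $\otimes$: for $f,g\in\Sigma$ the map $f\otimes g=(f\otimes\id)\circ(\id\otimes g)$ is a composite of derived Morita equivalences, because the objects of $\dgcat_{\f}$ are $k$-flat and $-\otimes-$ preserves derived Morita equivalences in each variable (Remark~\ref{rk:k-flatprop}), and its source $\cA_1\otimes\cA_2$ and target $\cA_1'\otimes\cA_2'$ lie in $\dgcat_{\f}$ by~(f); hence $f\otimes g\in\Sigma$. Corollary~\ref{cor:monloc} then yields that $\Loc_\Sigma\,\spref\dgcat_{\f}$ is a symmetric monoidal model category, which is the first assertion.

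Since $\Loc_\Sigma\,\spref\dgcat_{\f}$ is now a symmetric monoidal model category, the general passage from monoidal model categories to monoidal derivators (Proposition~\ref{prop:Mderivator}, which is exactly the step that was unavailable for the Morita model structure on $\dgcat$ itself) endows $\Loc_{\Sigma}\mathsf{Hot}_{\dgcat_{\f}}=\HO(\Loc_\Sigma\,\spref\dgcat_{\f})$ with a symmetric monoidal structure. To see that $\bbR\underline{h}$ is symmetric monoidal, I would first note that on an object $\cA$ of $\dgcat_{\f}$ the simplicial presheaf $\bbR\underline{h}(\cA)$, sending $\cB$ to $\Map(\cB,\cA)$, is in $\Loc_\Sigma\,\spref\dgcat_{\f}$ canonically equivalent to the representable presheaf at $\cA$, so that — the Yoneda embedding being symmetric monoidal and $\cA\otimes^{\bbL}\cB\simeq\cA\otimes\cB$ for $\cA,\cB\in\dgcat_{\f}$ — the natural comparison maps $\bbR\underline{h}(\cA)\otimes^{\bbL}\bbR\underline{h}(\cB)\to\bbR\underline{h}(\cA\otimes^{\bbL}\cB)$ are isomorphisms on $\dgcat_{\f}$. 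Since both bifunctors $\otimes^{\bbL}$ and the morphism $\bbR\underline{h}$ preserve filtered homotopy colimits (Proposition~\ref{prop:Fltdgcat} on the source, Day convolution on the target, and \cite[\S10]{Duke} for $\bbR\underline{h}$), and every dg category is a filtered homotopy colimit of objects of $\dgcat_{\f}$ (see \cite{Duke}), these natural maps are then isomorphisms on all of $\HO(\dgcat)$, so $\bbR\underline{h}$ is symmetric monoidal.

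It remains to establish the universal property, and I would deduce it from its non-monoidal version — namely that $\Loc_{\Sigma}\mathsf{Hot}_{\dgcat_{\f}}$ is the universal homotopy-cocomplete derivator equipped with a filtered-homotopy-colimit-preserving morphism out of $\HO(\dgcat)$ (see \cite[\S10]{Duke} and the appendix) — by upgrading it with the general monoidal results of the appendix: Theorem~\ref{monoidalderKanext} identifies $\HO(\spref\dgcat_{\f})$ equipped with the Day convolution as the monoidal homotopy-cocompletion of the small symmetric monoidal category $\dgcat_{\f}$, and Proposition~\ref{derivatorleftlocmonoidal} transports this monoidal universal property through the left Bousfield localization at $\Sigma$, using that a homotopy-colimit-preserving morphism inverts $\Sigma$ exactly when it factors through $\Loc_\Sigma$. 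Concatenating these with the previous paragraph gives the claimed equivalence between filtered-homotopy-colimit-preserving symmetric monoidal morphisms out of $\HO(\dgcat)$ and homotopy-colimit-preserving symmetric monoidal morphisms out of $\Loc_{\Sigma}\mathsf{Hot}_{\dgcat_{\f}}$. I expect the main obstacle to be precisely this last step: one must verify that the concretely built Day-convolution structure is the \emph{universal} monoidal structure, i.e. that the appendix's monoidal Kan-extension and monoidal-localization theorems apply verbatim to this particular $\dgcat_{\f}$ and $\Sigma$ — the only genuinely non-formal input being the $\otimes$-closedness of $\Sigma$ from the first paragraph, with everything else amounting to careful derivator bookkeeping; a secondary subtlety is the identification of $\bbR\underline{h}$ on $\dgcat_{\f}$ with the localized Yoneda embedding, which is where the chosen framing and resolution functors $\Gamma_*,Q,R$ enter.
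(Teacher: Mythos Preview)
Your proposal is correct and follows essentially the same architecture as the paper: Day convolution via properties~(e)--(f), Corollary~\ref{cor:monloc} for the monoidal localization (using closure of $\Sigma$ under $\otimes$), then Theorem~\ref{monoidalderKanext} and Proposition~\ref{derivatorleftlocmonoidal} for the universal property.

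The one place where your argument diverges from the paper is in showing that $\bbR\underline{h}$ is symmetric monoidal. You argue concretely: identify $\bbR\underline{h}$ with the Yoneda embedding on $\dgcat_{\f}$ up to $\Sigma$-local equivalence, note that the comparison maps are isomorphisms there, and extend by filtered homotopy colimits. The paper instead argues abstractly via the universal property of $\HO(\dgcat)$ as the free filtered-homotopy-colimit completion of $\underline{\dgcat_{\f}}[\Sigma^{-1}]$: by an induction on arity it shows that, for any derivator $\bbD$, morphisms $\HO(\dgcat)^n\to\bbD$ preserving filtered homotopy colimits in each variable correspond to morphisms $\underline{\dgcat_{\f}}[\Sigma^{-1}]^n\to\bbD$; since the restriction $i$ of $\bbR\underline{h}$ to $\underline{\dgcat_{\f}}[\Sigma^{-1}]$ is symmetric monoidal, $\bbR\underline{h}$ inherits a (unique) symmetric monoidal structure. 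The paper's route handles the coherence data automatically and simultaneously yields the equivalence between symmetric monoidal morphisms out of $\underline{\dgcat_{\f}}[\Sigma^{-1}]$ and filtered-colimit-preserving symmetric monoidal morphisms out of $\HO(\dgcat)$ --- which is precisely the bridge you need in your last paragraph but only gesture at. Your concrete route is fine in spirit, but the ``natural comparison maps'' you invoke must themselves be constructed as a coherent lax monoidal structure on $\bbR\underline{h}$ before you can check they are isomorphisms; the paper's $n$-ary universal-property argument is what produces that structure without further work.
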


\begin{proof}
As derived Morita equivalences are stable under
derived tensor product, it follows immediately from
Corollary \ref{cor:monloc} that $\Loc_\Sigma\,  \spref\dgcat_{\f}$
is a symmetric monoidal model category.

Let us now show that the morphism $\bbR\underline{h}$ is symmetric monoidal.
Recall from \cite[\S5]{Duke} that the morphism $\bbR\underline{h}$ preserves
filtered homotopy colimits and that we have a commutative diagram
\begin{equation}\label{eq:filtered}
\xymatrix{
\underline{\dgcat_{\f}}[\Sigma^{-1}] \ar[d] \ar[r]^i & \HO(\dgcat) \ar[dl]^{\bbR\underline{h}} \\
\Loc_{\Sigma} \mathsf{Hot}_{\dgcat_{\f}} & ,
}\end{equation}
where $\underline{\dgcat_{\f}}$ stands for the prederivator
represented by ${\dgcat_{\f}}$, and $\underline{\dgcat_{\f}}[\Sigma^{-1}]$
for its formal localization by $\Sigma$.
By construction, the left vertical morphism in the above diagram (\ref{eq:filtered}) is symmetric monoidal and the symmetric monoidal
structure on $\Loc_{\Sigma} \mathsf{Hot}_{\dgcat_{\f}}$ preserves homotopy colimits
in each variable. Moreover, thanks to Lemma~\ref{prop:Fltdgcat}, the symmetric monoidal
structure on $\HO(\dgcat)$ preserves filtered homotopy colimits.

Now, recall the universal property of $\HO(\dgcat)$\,: it is the free completion of the prederivator
$\underline{\dgcat_{\f}}[\Sigma^{-1}]$ by filtered
homotopy colimits. In other words, given any derivator $\bbD$, the category
of filtered homotopy colimit preserving morphisms
from $\HO(\dgcat)$ to $\bbD$ is canonically equivalent
to the category of morphisms from $\underline{\dgcat_{\f}}[\Sigma^{-1}]$ to $\bbD$; see \cite[\S5]{Duke}.
Replacing $\bbD$ by the derivator of filtered homotopy colimit
preserving morphisms from $\HO(\dgcat)$ to $\bbD$, we deduce that
the category of morphisms from $\HO(\dgcat)\times \HO(\dgcat)$ to $\bbD$
which preserve filtered homotopy colimits in each variable
is equivalent to the category of morphisms from
$\underline{\dgcat_{\f}}[\Sigma^{-1}]\times \underline{\dgcat_{\f}}[\Sigma^{-1}]$
to $\bbD$. By induction, we prove similarly that, for any $n\geq 0$,
the category of morphisms from $\HO(\dgcat)^n$ to $\bbD$
which preserve filtered homotopy colimits in each variable
is equivalent to the category of morphisms
from $\underline{\dgcat_{\f}}[\Sigma^{-1}]^n$ to $\bbD$.
As the morphism $i$ in the above diagram \eqref{eq:filtered}
is symmetric monoidal, this implies that the morphism $\bbR\underline{h}$
is symmetric monoidal as well.
Similarly, we see that, given any derivator $\bbD$, the
category of symmetric monoidal morphisms
from $\underline{\dgcat_{\f}}[\Sigma^{-1}]$ to $\bbD$
is equivalent to the category of filtered homotopy colimit
preserving symmetric monoidal morphisms from
$\HO(\dgcat)$ to $\bbD$.
The last assertion of this proposition thus follows
from Theorem \ref{monoidalderKanext} and
Proposition \ref{derivatorleftlocmonoidal}.
\end{proof}

Let $h:\dgcat_{\f}\too\spref\dgcat_{\f}$ be the Yoneda embedding.
We denote by $P:\varnothing\too h(\varnothing)$ the canonical map.
Then, the derivator $\Loc_{\Sigma,P} \mathsf{Hot}_{\dgcat_{\f}}$
is simply the left Bousfield localization of $\Loc_{\Sigma} \mathsf{Hot}_{\dgcat_{\f}}$
by $P$. Thus, it can be described as
$$\Loc_{\Sigma,P} \mathsf{Hot}_{\dgcat_{\f}}=\HO(\Loc_{\Sigma,P} \spref\dgcat_{\f})\, ,$$
where $\Loc_{\Sigma,P} \spref\dgcat_{\f}$ is the left Bousfield localization
of the model category $\Loc_\Sigma \spref\dgcat_{\f}$ by the map $P$.

\begin{proposition}\label{prop:FltMonP}
The model category $\Loc_{\Sigma,P}\,  \spref\dgcat_{\f}$
is symmetric monoidal, and the localization
functor
$$\Loc_{\Sigma} \, \spref\dgcat_{\f}\too \Loc_{\Sigma,P}\, \spref\dgcat_{\f}$$
is a symmetric monoidal left Quillen functor.
In particular, the derivator
$\Loc_{\Sigma,P} \mathsf{Hot}_{\dgcat_{\f}}$ is symmetric monoidal, and
the localization morphism
$$\Phi: \Loc_{\Sigma} \mathsf{Hot}_{\dgcat_{\f}}\too
\Loc_{\Sigma,P} \mathsf{Hot}_{\dgcat_{\f}}$$
is symmetric monoidal.
\end{proposition}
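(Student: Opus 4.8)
The plan is to reduce the statement to Theorem~\ref{thm:monloc} by performing the two localizations — at $\Sigma$ and at $P$ — in a single step. By definition $\Loc_{\Sigma,P}\spref\dgcat_{\f}$ is the left Bousfield localization of $\Loc_{\Sigma}\spref\dgcat_{\f}$ at the map $P:\varnothing\to h(\varnothing)$, and since iterating left Bousfield localizations amounts to localizing at the union of the two sets of maps — an object is local for the iterated localization exactly when it is fibrant, $\Sigma$-local and $P$-local, and the class of cofibrations does not change along the way; see \cite{Hirschhorn} — one has an equality of model categories
\begin{equation*}
\Loc_{\Sigma,P}\spref\dgcat_{\f}=\Loc_{\Sigma\cup\{P\}}\spref\dgcat_{\f}\,.
\end{equation*}
Because $\dgcat_{\f}$ is a small symmetric monoidal category (it contains the unit $\underline{k}$, a finite dg cell, by property (a) of \S\ref{sub:proof-main}, and is closed under $\otimes$ by property (f)), it then suffices to verify that $S:=\Sigma\cup\{P\}$ satisfies the hypotheses of Theorem~\ref{thm:monloc}.

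First I would record that $S$ consists of maps between cofibrant objects of $\spref\dgcat_{\f}$: the elements of $\Sigma$ are Yoneda images of maps in $\dgcat_{\f}$, hence maps between representable (therefore cofibrant) presheaves, while $P$ goes from the initial presheaf $\varnothing$, which is cofibrant, to the representable $h(\varnothing)$. The substantial point is condition {\bf (C)} of Theorem~\ref{thm:monloc}: for each object $\alpha$ of $\dgcat_{\f}$ and each arrow $G\to H$ of $S$, the induced map $\alpha\otimes G\to\alpha\otimes H$ (with $\alpha$ denoting the representable presheaf $h(\alpha)$) must be an $S$-local equivalence. When $G\to H$ is the Yoneda image of a derived Morita equivalence $f:\cB_1\to\cB_2$ in $\dgcat_{\f}$, the symmetric monoidality of the Yoneda embedding identifies $\alpha\otimes G\to\alpha\otimes H$ with the Yoneda image of $\id_{\alpha}\otimes f:\alpha\otimes\cB_1\to\alpha\otimes\cB_2$; by properties (e) and (f) of \S\ref{sub:proof-main} every object of $\dgcat_{\f}$ is $k$-flat and $\dgcat_{\f}$ is closed under $\otimes$, so by Remark~\ref{rk:k-flatprop} this arrow is again a derived Morita equivalence in $\dgcat_{\f}$, hence belongs to $\Sigma\subset S$ and is in particular an $S$-local equivalence. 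When $G\to H$ is the map $P:\varnothing\to h(\varnothing)$, the key observation is that $\alpha\otimes\varnothing=\varnothing$ already in $\dgcat$, since $\mathrm{obj}(\alpha\otimes\varnothing)=\mathrm{obj}(\alpha)\times\varnothing=\varnothing$; together with the monoidality of the Yoneda embedding and the fact that the tensor product on $\spref\dgcat_{\f}$ preserves colimits (in particular initial objects) in each variable, this identifies $\alpha\otimes P$ with $P$ itself, which is an $S$-local equivalence by the definition of $S$-local object. Hence {\bf (C)} holds, and Theorem~\ref{thm:monloc} shows that $\Loc_{\Sigma,P}\spref\dgcat_{\f}=\Loc_{S}\spref\dgcat_{\f}$ is a symmetric monoidal model category.

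It then remains to deal with the localization functor and to descend to derivators. The functor $\Loc_{\Sigma}\spref\dgcat_{\f}\to\Loc_{\Sigma,P}\spref\dgcat_{\f}$ is the identity on underlying categories, hence strictly symmetric monoidal for the common (Day convolution) tensor product, and it is a left Quillen functor since it preserves cofibrations and carries $\Sigma$-local weak equivalences to $(\Sigma\cup\{P\})$-local weak equivalences; so it is a symmetric monoidal left Quillen functor. Invoking the general fact that the derivator associated to a symmetric monoidal model category is symmetric monoidal and that a symmetric monoidal left Quillen functor induces a symmetric monoidal morphism of derivators (see \S\ref{sub:M1} and Proposition~\ref{prop:Mderivator}) then yields the remaining assertions, that $\Loc_{\Sigma,P}\mathsf{Hot}_{\dgcat_{\f}}$ is symmetric monoidal and that $\Phi$ is symmetric monoidal. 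The only points I expect to require a little care are the identification of $\Loc_{\Sigma,P}\spref\dgcat_{\f}$ with a one-step localization and the elementary computation $\alpha\otimes\varnothing=\varnothing$ in $\dgcat$ that powers the $P$-case of condition {\bf (C)}; the rest is assembling Theorem~\ref{thm:monloc}, Remark~\ref{rk:k-flatprop} and the derivator formalism already established.
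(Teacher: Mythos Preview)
Your proof is correct and follows essentially the same approach as the paper: the key observation in both is the elementary computation $\alpha\otimes\varnothing\simeq\varnothing$ in $\dgcat$, which makes condition~\textbf{(C)} of Theorem~\ref{thm:monloc} hold for the map $P$. The only organizational difference is that the paper applies the argument of Theorem~\ref{thm:monloc} directly to $\cM=\Loc_{\Sigma}\,\spref\dgcat_{\f}$ (so only the $P$-case needs checking, the $\Sigma$-part having been handled in Proposition~\ref{prop:FltMon}), whereas you redo the one-step localization at $\Sigma\cup\{P\}$ and re-verify the $\Sigma$-case; this is harmless extra work but not a different idea.
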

\begin{proof}
For any dg category $\cA$, we have $\cA\otimes\varnothing\simeq\varnothing$.
We deduce easily from this formula that condition \textbf{(C)}
of Theorem \ref{thm:monloc} (with $\cM=\Loc_{\Sigma}\,  \spref\dgcat_{\f}$) is satisfied, and so the proof is finished.
\end{proof}

Let  $\spref\dgcat_{\f,\bullet}$ be the model category of
pointed simplicial presheaves on $\spref\dgcat_{\f}$.
By virtue of Proposition \ref{prop:Mprojectivepointed}, this
is a symmetric monoidal model category, and the functor
$$\spref\dgcat_{\f}\too\spref\dgcat_{\f,\bullet} \ , \quad F\longmapsto F_+$$
is a symmetric monoidal left Quillen functor.
We define the pointed model category $\Loc_{\Sigma,P}\, \spref\dgcat_{\f,\bullet}$
as the left Bousfield localization of $\spref\dgcat_{\f,\bullet}$ with respect to the set of maps $\Sigma_+\cup\{P_+\}$.

\begin{proposition}\label{prop:FltMonP2}
The model category $\Loc_{\Sigma,P}\, \spref\dgcat_{\f,\bullet}$
is symmetric monoidal, and the symmetric monoidal left Quillen functor
$$\Loc_{\Sigma,P} \, \spref\dgcat_{\f}\too \Loc_{\Sigma,P}\, \spref\dgcat_{\f,\bullet}$$
is a Quillen equivalence. In particular, we have a canonical
equivalence of symmetric monoidal derivators
$$\Loc_{\Sigma,P} \mathsf{Hot}_{\dgcat_{\f}}\simeq \HO(\Loc_{\Sigma,P}\, \spref\dgcat_{\f,\bullet})\, .$$
\end{proposition}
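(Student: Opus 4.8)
The plan is to establish the three assertions of the proposition in turn: that $\Loc_{\Sigma,P}\,\spref\dgcat_{\f,\bullet}$ is a symmetric monoidal model category, that the pointing functor $F\mapsto F_+$ is a Quillen equivalence after this localization, and that the resulting symmetric monoidal Quillen equivalence induces the stated equivalence of symmetric monoidal derivators.

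For the first assertion I would apply Proposition~\ref{prop:loc-mon} to the symmetric monoidal model category $\spref\dgcat_{\f,\bullet}$ (Proposition~\ref{prop:Mprojectivepointed}), whose unit ${\bf 1}_+$ is cofibrant, and to the set $S=\Sigma_+\cup\{P_+\}$. Conditions (i), (ii) and (iv) of \loccit\ hold because the generating (trivial) cofibrations of $\spref\dgcat_{\f,\bullet}$, and the maps in $S$, are images under the left Quillen functor $(-)_+$ of maps between cofibrant objects of $\spref\dgcat_{\f}$ (there, the initial object and all representables are cofibrant). The only point requiring an argument is condition (iii), and here I would argue just as in the proof of Theorem~\ref{thm:monloc}: since $(-)\otimes_\bullet(-)$ preserves homotopy colimits in each variable, the localization functor preserves homotopy colimits, and every pointed simplicial presheaf is a homotopy colimit of presheaves of the form $h(\alpha)_+$ with $\alpha\in\dgcat_{\f}$, it is enough to treat $X=h(\alpha)_+$. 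Using the isomorphism $(G\otimes H)_+\simeq G_+\otimes_\bullet H_+$ of \eqref{adpointmonoidal} together with the fact that the Yoneda embedding is symmetric monoidal, one gets $h(\alpha)_+\otimes_\bullet h(f)_+\simeq h(\id_\alpha\otimes f)_+$ for $f$ in $\Sigma$; here $\id_\alpha\otimes f$ is again a derived Morita equivalence with source and target in $\dgcat_{\f}$, by properties (e) and (f) of $\dgcat_{\f}$ and by Remark~\ref{rk:k-flatprop}, hence lies in $\Sigma$. Likewise $h(\alpha)_+\otimes_\bullet P_+\simeq P_+$, because $\alpha\otimes\varnothing=\varnothing$ both for simplicial presheaves (the tensor product commuting with colimits) and for dg categories (the empty dg category absorbing the tensor product). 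This gives condition (iii). The same computation shows that $(-)_+\colon\Loc_{\Sigma,P}\,\spref\dgcat_{\f}\to\Loc_{\Sigma,P}\,\spref\dgcat_{\f,\bullet}$ is a symmetric monoidal left Quillen functor: it is symmetric monoidal by Proposition~\ref{prop:Mprojectivepointed} (the monoidal structures on both localizations being inherited, via Proposition~\ref{prop:FltMonP} and via the above, from the unlocalized ones), it preserves cofibrations, and it sends $\Sigma\cup\{P\}$ into the $S$-local weak equivalences.

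The heart of the proof is the Quillen equivalence. Since the right adjoint $U$ reflects weak equivalences (by the very definition of the pointed model structure), it suffices to check that for every cofibrant object $X$ of $\Loc_{\Sigma,P}\,\spref\dgcat_{\f}$ the coproduct inclusion $X\to X\amalg\star$, which represents the derived unit, is a weak equivalence. By left properness, $X\amalg\star$ is the pushout of $\varnothing\to\star$ along the cofibration $\varnothing\to X$, so the claim reduces to showing that the canonical map $\varnothing\to\star$ from the initial to the terminal presheaf is a weak equivalence in $\Loc_{\Sigma,P}\,\spref\dgcat_{\f}$ — that is, that this model category is homotopically pointed. This is exactly the effect of the localization at $P$. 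Indeed, by construction a fibrant replacement of the representable $h(\varnothing)$ on the empty dg category, computed in $\Loc_{\Sigma}\,\spref\dgcat_{\f}$, evaluates at $\cB\in\dgcat_{\f}$ to the Morita homotopy mapping space $\Map(\cB,\varnothing)$; and this space is contractible, since the empty dg category is derived Morita equivalent to the terminal dg category (their derived categories both being trivial) and mapping into a terminal object is always contractible. Hence $h(\varnothing)\simeq\star$ in $\Loc_{\Sigma}\,\spref\dgcat_{\f}$, so that in the homotopy category the map $P$ is identified with the canonical arrow $\varnothing\to\star$; inverting it in the further localization $\Loc_{\Sigma,P}$ forces $\varnothing\simeq\star$, as needed. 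I expect this step — realizing that the abstract localization at $P$ renders the model category homotopically pointed (the unstable shadow of the fact that $\Uloc$ preserves the terminal object) — to be the main obstacle; everything else is formal.

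Finally, I would invoke the general principle that a symmetric monoidal left Quillen functor which is a Quillen equivalence induces an equivalence of symmetric monoidal derivators: its left derived functor is symmetric monoidal because tensor products of cofibrant objects are cofibrant in a symmetric monoidal model category and the units are cofibrant, and it is an equivalence on underlying derivators because it is a Quillen equivalence. Applying this to $(-)_+$, and recalling that by definition $\Loc_{\Sigma,P}\mathsf{Hot}_{\dgcat_{\f}}=\HO(\Loc_{\Sigma,P}\,\spref\dgcat_{\f})$, one obtains the asserted canonical equivalence of symmetric monoidal derivators
$$\Loc_{\Sigma,P}\mathsf{Hot}_{\dgcat_{\f}}\;\stackrel{\sim}{\too}\;\HO(\Loc_{\Sigma,P}\,\spref\dgcat_{\f,\bullet})\, ,$$
which would complete the proof.
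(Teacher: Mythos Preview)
Your proof is correct and follows the same two-step strategy as the paper: invoke the pointed analogue of Theorem~\ref{thm:monloc} (via Proposition~\ref{prop:loc-mon}) for the monoidal structure, then argue that the localized category is homotopically pointed so that $(-)_+$ is a Quillen equivalence. The paper compresses this into two citations (Theorem~\ref{thm:monloc} and \cite[Remark~8.2]{Duke}); your version is more self-contained, and in fact slightly more careful on the first point, since Theorem~\ref{thm:monloc} is stated only for unpointed presheaves.

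One small imprecision worth tightening: when you write that ``$U$ reflects weak equivalences (by the very definition of the pointed model structure)'', this is literally true for the \emph{unlocalized} pointed projective structure, but the model structure in question is the Bousfield localization $\Loc_{\Sigma_+\cup\{P_+\}}\,\spref\dgcat_{\f,\bullet}$, whose weak equivalences are defined as local equivalences, not via $U$. What you are implicitly using is that this model structure coincides with the pointed model structure on $\Loc_{\Sigma,P}\,\spref\dgcat_{\f}$. This is easy: both have the same cofibrations, and a pointed object $W$ is $(\Sigma_+\cup\{P_+\})$-local if and only if $U(W)$ is $(\Sigma\cup\{P\})$-local, because $\Map_\bullet(A_+,W)\simeq\Map(A,U(W))$; hence the fibrant objects agree, and the two left Bousfield localizations are equal. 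Once this identification is made, your derived-unit argument goes through exactly as written.
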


\begin{proof}
The first assertion is a direct application of Theorem \ref{thm:monloc},
while the second one follows from \cite[Remark 8.2]{Duke}.
\end{proof}

Note that the initial and terminal dg categories are Morita
equivalent. This implies that the dg category $0$
is sent to the point (up to weak equivalence) in $\Loc_{\Sigma,P}\, \spref\dgcat_{\f,\bullet}$.
Let $\cE$ be the set of morphisms of $\Loc_{\Sigma,P}\, \spref\dgcat_{\f,\bullet}$
of shape
$$\mathsf{cone}[\bbR\underline{h}(\cA)\too \bbR\underline{h}(\cB)]\too
\bbR\underline{h}(\cC)\, ,$$
associated to each exact sequence of dg categories
$$\cA\too\cB \too\cC\, ,$$
with $\cB$ in $\dgcat_{\f}$ (where $\mathsf{cone}$ means homotopy cofiber).
We define $\cMlocu$ as the left Bousfield localization of
$\Loc_{\Sigma,P}\spref\dgcat_{\f,\bullet}$ by $\cE$.
The derivator $\Mlocu$ is defined as
$$\Mlocu=\HO(\cMlocu)\, .$$

\begin{proposition}\label{prop:LocMon}
The model category $\cMlocu$ is symmetric monoidal, in such a way that
the left Quillen functor
$$\Loc_{\Sigma,P}\, \spref\dgcat_{\f,\bullet}\too
\cMlocu$$
is symmetric monoidal. Under the identification of Proposition \ref{prop:FltMonP},
the induced morphism of derivators
$$\gamma :\Loc_{\Sigma,P} \mathsf{Hot}_{\dgcat_{\f}} \too \Mlocu$$
is symmetric monoidal.
\end{proposition}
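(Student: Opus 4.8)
The plan is to realise $\cMlocu$ as a single left Bousfield localization of the symmetric monoidal model category $\spref\dgcat_{\f,\bullet}$ (see Proposition~\ref{prop:Mprojectivepointed}) and to apply Theorem~\ref{thm:monloc} in its pointed form, exactly as in the proof of Proposition~\ref{prop:FltMonP2}. Since iterated left Bousfield localizations compose, $\cMlocu$ is the localization $\Loc_{S}\,\spref\dgcat_{\f,\bullet}$ at the set $S:=\Sigma_+\cup\{P_+\}\cup\cE$. For the elements of $\Sigma_+$ and of $\{P_+\}$, condition \textbf{(C)} of Theorem~\ref{thm:monloc} was already checked in the proofs of Propositions~\ref{prop:FltMonP} and \ref{prop:FltMonP2} (and a $(\Sigma_+\cup\{P_+\})$-local equivalence is \emph{a fortiori} an $S$-local equivalence). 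So the only new thing to verify is condition \textbf{(C)} for the elements of $\cE$.

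First I would fix a dg category $\cX$ in $\dgcat_{\f}$ together with an exact sequence of dg categories $\cA\too\cB\too\cC$ with $\cB$ in $\dgcat_{\f}$, with associated element $g\colon\mathsf{cone}[\bbR\underline{h}(\cA)\too\bbR\underline{h}(\cB)]\too\bbR\underline{h}(\cC)$ of $\cE$, and identify the morphism $\bbR\underline{h}(\cX)_+\otimes g$. Because $\bbR\underline{h}$ is symmetric monoidal (Proposition~\ref{prop:FltMon}), because the functor $F\mapsto F_+$ and the pointed tensor product are symmetric monoidal, and because the tensor product of $\spref\dgcat_{\f,\bullet}$ preserves homotopy colimits — in particular homotopy cofibres — in each variable, one obtains a canonical isomorphism in $\Ho(\spref\dgcat_{\f,\bullet})$
\[
\bbR\underline{h}(\cX)_+\otimes g\;\simeq\;\big(\mathsf{cone}[\bbR\underline{h}(\cX\otimes^{\bbL}\cA)\too\bbR\underline{h}(\cX\otimes^{\bbL}\cB)]\too\bbR\underline{h}(\cX\otimes^{\bbL}\cC)\big)
\]
(base points being understood as in the definition of $\cE$). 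By property (e) the dg category $\cX$ is $k$-flat, hence $\cX\otimes^{\bbL}\cB\simeq\cX\otimes\cB$, and by property (f) the latter still lies in $\dgcat_{\f}$. Therefore the displayed right-hand morphism is itself an element of $\cE$ — and thus a weak equivalence in $\cMlocu$ — provided one knows that
\[
\cX\otimes^{\bbL}\cA\too\cX\otimes^{\bbL}\cB\too\cX\otimes^{\bbL}\cC
\]
is again an exact sequence of dg categories.

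The main obstacle is precisely this last point: the stability of exact sequences of dg categories under $-\otimes^{\bbL}\cX$ when $\cX$ is $k$-flat. I would deduce it from Keller's description of exact sequences: after replacing $\cB$ by a cofibrant (hence $k$-flat) derived Morita equivalent dg category, one may assume that $\cA\to\cB$ is a fully faithful dg functor with $\cC$ derived Morita equivalent to the Drinfeld dg quotient $\cB/\cA$. Now tensoring a fully faithful dg functor with the $k$-flat dg category $\cX$ is again fully faithful; the Drinfeld dg quotient is compatible with $k$-flat base change, so $(\cB/\cA)\otimes\cX$ is derived Morita equivalent to $(\cB\otimes\cX)/(\cA\otimes\cX)$; and $-\otimes^{\bbL}\cX$ preserves derived Morita equivalences (Remark~\ref{rk:k-flatprop}). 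Combining these facts, the sequence $\cX\otimes^{\bbL}\cA\to\cX\otimes^{\bbL}\cB\to\cX\otimes^{\bbL}\cC$ is derived Morita equivalent to the exact sequence $\cX\otimes\cA\to\cX\otimes\cB\to(\cX\otimes\cB)/(\cX\otimes\cA)$, hence exact. (Alternatively, the exactness of the tensored sequence can be checked directly on the derived categories of modules, $k$-flatness of $\cX$ guaranteeing that both the full faithfulness of the first functor and the Verdier-quotient condition are preserved.) With condition \textbf{(C)} for $\cE$ in hand, Theorem~\ref{thm:monloc} yields that $\cMlocu$ is symmetric monoidal and that the localization functor $\Loc_{\Sigma,P}\,\spref\dgcat_{\f,\bullet}\to\cMlocu$ is symmetric monoidal.

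Finally, for the statement about derivators I would simply invoke that a symmetric monoidal left Quillen functor induces a symmetric monoidal morphism of associated derivators (Proposition~\ref{derivatorleftlocmonoidal}): applied to $\Loc_{\Sigma,P}\,\spref\dgcat_{\f,\bullet}\to\cMlocu$ this gives a symmetric monoidal morphism of derivators which, under the identification of Proposition~\ref{prop:FltMonP2}, is exactly $\gamma\colon\Loc_{\Sigma,P}\mathsf{Hot}_{\dgcat_{\f}}\too\Mlocu$.
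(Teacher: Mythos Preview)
Your proposal is correct and follows essentially the same approach as the paper: verify condition \textbf{(C)} of Theorem~\ref{thm:monloc} for the set $\cE$ by using that $\bbR\underline{h}$ is symmetric monoidal (Proposition~\ref{prop:FltMon}) together with the fact that tensoring by a $k$-flat dg category preserves exact sequences. The paper's proof is terser only because it cites \cite[Proposition~1.6.3]{Drinfeld} for this last fact rather than sketching it via Drinfeld quotients as you do; and for the derivator statement the correct reference is Proposition~\ref{prop:Mderivator} (symmetric monoidal left Quillen functors induce symmetric monoidal morphisms of derivators), not Proposition~\ref{derivatorleftlocmonoidal}.
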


\begin{proof}
As tensoring by a $k$-flat dg category preserves
exact sequences of dg categories (see \cite[Proposition~1.6.3]{Drinfeld}),
and as $\bbR\underline{h}$ is symmetric monoidal (see Proposition \ref{prop:FltMon}), the proof follows from Theorem~\ref{thm:monloc}.
\end{proof}

Finally, since by construction the model category
$\cMlocu$ is symmetric monoidal and simplicially enriched,
we can consider its stabilization $\cMloc$, \ie the
stable model category of symmetric spectra in $\cMlocu$ (see \S\ref{chap:stab})\,:
$$\cMloc=\Sp(\cMlocu)\, .$$
The derivator $\Mloc$ is defined as
$$\Mloc=\HO(\cMloc)\, .$$

\begin{proposition}\label{prop:stabMon}
The model category $\cMloc$ is symmetric monoidal, and the left Quillen
functor
$$\Sigma^\infty:\cMlocu\too\cMloc$$
is symmetric monoidal.
The induced morphism of derivators
$$\stab=\bbL\Sigma^\infty:\Mlocu\too\Mloc$$
is symmetric monoidal.
\end{proposition}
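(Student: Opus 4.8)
The plan is to deduce the proposition from the general theory of monoidal stabilization recalled in Section~\ref{chap:stab} (after \cite{HoveyS}); the only genuine verification is that $\cMlocu$ satisfies the hypotheses required there. So the first step is to record that $\cMlocu$ is a left proper, cellular (or combinatorial), pointed, simplicial symmetric monoidal model category whose unit object is cofibrant. All of these properties except possibly the last are clear: $\spref\dgcat_{\f}$ has them, the passage to pointed simplicial presheaves preserves them (Proposition~\ref{prop:Mprojectivepointed}), and the successive left Bousfield localizations carried out in Propositions~\ref{prop:FltMon}--\ref{prop:LocMon} preserve left properness and cellularity/combinatoriality while leaving the class of cofibrations and the simplicial structure unchanged. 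For the unit: by construction the monoidal unit of $\cMlocu$ is the image of the monoidal unit $\underline{k}$ of $\dgcat_{\f}$ under the Yoneda embedding $\dgcat_{\f}\hookrightarrow\spref\dgcat_{\f}$ followed by $F\mapsto F_+$ and the localization functors; since representable presheaves are cofibrant in $\spref\dgcat_{\f}$, since $F\mapsto F_+$ is a left Quillen functor, and since left Bousfield localization does not change the class of cofibrations, this object is cofibrant in $\cMlocu$.

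With this in hand, the second step is a direct appeal to Section~\ref{chap:stab} applied to $\cM=\cMlocu$: the category $\cMloc=\Sp(\cMlocu)$ of symmetric $S^1\otimes{\bf 1}$-spectra is then a stable symmetric monoidal model category, and $\Sigma^\infty:\cMlocu\to\cMloc$ is a symmetric monoidal left Quillen functor. This yields the first two assertions verbatim. (By Theorem~\ref{thm:main-Hovey} this model is moreover canonically Quillen equivalent to the $S^1$-spectra stabilization, which is why $\Mloc=\HO(\cMloc)$ agrees with the localizing motivator of \cite{Duke}.)

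The third step transports this to derivators. A symmetric monoidal model category induces a symmetric monoidal derivator (Proposition~\ref{prop:Mderivator}, which does apply to $\cMloc$ since, unlike $\dgcat$ with the Morita model structure, $\cMloc$ is a symmetric monoidal model category), a symmetric monoidal left Quillen functor induces a symmetric monoidal morphism between the associated derivators, and Theorem~\ref{stableDay} records that these constructions are compatible with stabilization; feeding $\Sigma^\infty$ into this shows that $\Mloc$ is symmetric monoidal and that $\stab=\bbL\Sigma^\infty$ is symmetric monoidal. I do not expect a serious obstacle here: the only delicate point is tracking cofibrancy of the unit object along the tower of localizations, handled in the first step --- which is precisely the hypothesis that the Morita model structure on $\dgcat$ fails (cf.\ Remark~\ref{rk:monoid-str}), and whose circumvention was the whole reason for passing through $\dgcat_{\f}$ and $k$-flat dg categories in the preceding propositions.
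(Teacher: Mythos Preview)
Your proposal is correct and follows the same approach as the paper, which dispatches the proposition in one line (``This is true by construction (see Proposition~\ref{prop:Mderivator}).''); you have simply spelled out what ``by construction'' means --- verifying the hypotheses of Section~\ref{chap:stab} for $\cMlocu$ and then invoking Proposition~\ref{prop:Mderivator}. One small remark: the appeal to Theorem~\ref{stableDay} in your third step is unnecessary for this proposition, since Proposition~\ref{prop:Mderivator} already covers the passage from a symmetric monoidal left Quillen functor to a symmetric monoidal morphism of derivators; Theorem~\ref{stableDay} concerns the universal property of the stabilization, which only enters later in Remark~\ref{rk:explanations}.
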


\begin{proof}
This is true by construction (see Proposition~\ref{prop:Mderivator}).
\end{proof}

\begin{remark}
In the construction of $\Mloc$ given in \cite{Duke}, the definition
of $\Mloc$ was $\HO(\Spt^{\bbN}(\cMlocu))$, \ie it used
non-symmetric spectra. However, thanks to
Theorem \ref{thm:main-Hovey}, both definitions agree up to a canonical equivalence
of derivators.
\end{remark}

\begin{remark}\label{rk:explanations}
The concatenation of Propositions~\ref{prop:FltMon}-\ref{prop:stabMon}
show us that the localizing motivator $\Mloc$ carries a symmetric monoidal structure
$-\otimes^{\bbL}-$, making the universal localizing invariant $\Uloc$ symmetric
monoidal. By Proposition~\ref{prop:Mderivator} the
associated symmetric monoidal structure preserves homotopy colimits.
Therefore, in order to conclude the proof of Theorem~\ref{thm:main-mon},
it remains to show the universal property. Let $\bbD$ be a strong triangulated
derivator endowed with a symmetric monoidal structure which preserves homotopy
colimits in each variable.
Thanks to Theorem~\ref{thm:Uloc}, we have an induced equivalence of categories
\begin{equation*}
(\Uloc)^{\ast}: \HomC(\Mloc, \bbD) \stackrel{\sim}{\too} \HomL(\HO(\dgcat), \bbD)\,.
\end{equation*}
This implies that the induced functor 
\begin{equation}\label{eq:equivalence1}
(\Uloc)^{\ast}: \HomC^{\otimes}(\Mloc, \bbD) \too \HomL^{\otimes}(\HO(\dgcat), \bbD)\,.
\end{equation}
is faithful. More precisely, by Proposition \ref{prop:FltMon},
the category of filtered homotopy
colimit preserving symmetric monoidal morphisms
from $\HO(\dgcat)$ to $\bbD$ is equivalent to the category
of homotopy colimit preserving symmetric monoidal morphisms
from $\Loc_{\Sigma}\spref\dgcat_{\f}$ to $\bbD$.
Using the universal properties of Bousfield localization
and stabilization in the setting of derivators (see Theorem~\ref{thm:Quillenloc} and
Corollary~\ref{stabcombmodcat}), we can apply Proposition \ref{derivatorleftlocmonoidal},
and Theorem \ref{stableDay} to conclude, by
construction of $\Mloc$, that \eqref{eq:equivalence1}
is an equivalence of categories. This ends
the proof of Theorem \ref{thm:main-mon}.
\end{remark}

\section{Applications}\label{sub:applications}
In this section we describe several applications of Theorem~\ref{thm:main-mon}.
\subsection{Non-connective $K$-theory}
Recall from Example~\ref{ex:examples-loc} that non-connective algebraic $K$-theory is an
example of a localizing invariant of dg categories. In \cite{CT} the authors proved that this
localizing invariant becomes co-representable in the category $\Mloc(e)$ of non-commutative
motives (see Definition~\ref{not:Linv}).
\begin{theorem}{(\cite[Theorem~7.16]{CT})}\label{thm:co-repres}
For every small dg category $\cA$, we have a natural isomorphism in the stable homotopy
category of spectra
\begin{equation*}
\bbR\Hom(\,\Uloc(\underline{k}),\, \Uloc(\cA)\,) \simeq \bbK(\cA)\,.
\end{equation*}
Here, $\underline{k}$ denotes the dg category with one object $\ast$ such that
$\underline{k}(\ast,\ast)=k$ in degree zero (see \S\ref{sub:dg-cells}(i)), and $\bbK(\cA)$
the non-connective algebraic $K$-theory spectrum of $\cA$. In particular, we obtain isomorphisms
of abelian groups
\begin{eqnarray*}
\Hom(\,\Uloc(\underline{k})[n],\, \Uloc(\cA)\,)
\simeq \bbK_n(\cA)&& n \in \bbZ\,.
\end{eqnarray*}
\end{theorem}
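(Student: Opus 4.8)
The plan is to realize both sides as localizing invariants with values in spectra and then to identify them by passing through the \emph{additive} theory.

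First I would check that the left-hand side is a localizing invariant. Since $\underline{k}$ is a finite dg cell (Notation~\ref{not:I-cell}), Theorem~\ref{thm:enhance} shows it is homotopically finitely presented; in the construction of $\Uloc$ as a stabilization of a left Bousfield localization of simplicial presheaves on $\dgcat_{\f}$, the representable $\underline{h}(\underline{k})$ is a compact object, and this compactness survives the localizations and the stabilization, so that the corepresentable morphism of derivators $X\mapsto\bbR\Hom(\Uloc(\underline{k}),X)$ from $\Mloc$ to the derivator of spectra preserves filtered homotopy colimits. Being triangulated, it also preserves finite homotopy colimits, hence all homotopy colimits, and it plainly preserves the terminal object. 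Precomposing with $\Uloc$ and invoking Theorem~\ref{thm:Uloc}, the functor $\cA\mapsto\bbR\Hom(\Uloc(\underline{k}),\Uloc(\cA))$ is therefore a localizing invariant (and $\Uloc(\underline{k})$ is in particular compact in $\Mloc(e)$). On the other side, non-connective $K$-theory $\bbK$ is a localizing invariant by Thomason--Trobaugh and Schlichting, as recalled in the introduction.

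To match the two I would factor through the universal additive invariant $\Uadd\colon\HO(\dgcat)\to\Madd$. Connective $K$-theory $K$, built from Waldhausen's $S_\bullet$-construction, is an additive invariant and hence factors through $\Uadd$; the crux of the whole argument is the co-representability statement
\begin{equation*}
\bbR\Hom_{\Madd}(\Uadd(\underline{k}),\Uadd(\cA))\simeq K(\cA)\,,
\end{equation*}
natural in $\cA$, which I would prove by unwinding the explicit Quillen model of $\Madd$: the mapping spectrum on the left is then computed, after the relevant localizations and stabilization, from the evaluation-at-$\underline{k}$ presheaf, and one matches it termwise with Waldhausen's construction using Waldhausen additivity together with the group-completion theorem. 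Granting this, one passes from $K$ to $\bbK$. The localizing motivator $\Mloc$ is a left Bousfield localization of $\Madd$ (one inverts the cones attached to \emph{all} exact sequences of dg categories, not just the split ones), and under the resulting reflection $\Madd\to\Mloc$ the corepresentable functor $\bbR\Hom(\Uloc(\underline{k}),\Uloc(-))$ is obtained from $\bbR\Hom_{\Madd}(\Uadd(\underline{k}),\Uadd(-))$; by Schlichting's theorem identifying non-connective $K$-theory as the Bass-type localizing delooping of connective $K$-theory, this reflection carries $K(-)$ to $\bbK(-)$. Passing to homotopy groups then yields $\Hom(\Uloc(\underline{k})[n],\Uloc(\cA))\simeq\bbK_n(\cA)$ for every $n\in\bbZ$.

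The hard part is the additive co-representability $\bbR\Hom_{\Madd}(\Uadd(\underline{k}),\Uadd(\cA))\simeq K(\cA)$: this is not a formal consequence of universal properties, but requires honestly identifying the homotopy type of a mapping spectrum in a localized and stabilized category of simplicial presheaves with the $K$-theory space, via Waldhausen's machinery. The remaining ingredients --- that $\bbR\Hom(\Uloc(\underline{k}),-)$ preserves filtered homotopy colimits, the compactness of $\Uloc(\underline{k})$, and the Bass-type passage from connective to non-connective $K$-theory --- are comparatively routine given the results recalled above.
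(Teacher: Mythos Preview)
The present paper does not prove this statement: it is quoted verbatim from \cite[Theorem~7.16]{CT} and used as a black box (for instance in the proof of Theorem~\ref{thm:co-repres-ext} and of Corollary~\ref{satcompact}). So there is no proof in this paper to compare your proposal against.

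That said, your outline is a faithful sketch of the strategy carried out in \cite{CT}, which in turn relies on \cite{Duke}. The additive co-representability $\bbR\Hom_{\Madd}(\Uadd(\underline{k}),\Uadd(\cA))\simeq K(\cA)$ is indeed the heart of the matter and is \cite[Theorem~15.10]{Duke}; it is proved there by an explicit computation in the Quillen model for $\Madd$, using Waldhausen's additivity and fibration theorems. You are also right that the passage from connective to non-connective $K$-theory is the content of \cite{CT}, which compares the localization $\Madd\to\Mloc$ with Schlichting's construction of $\bbK$. One caveat: your phrase ``$\Mloc$ is a left Bousfield localization of $\Madd$'' is morally correct but the actual argument in \cite{CT} is more delicate---one works with a filtered tower of intermediate motivators indexed by regular cardinals and identifies the effect of each localization step on the mapping spectra with Schlichting's set-up step by step; the passage is not a one-line formal consequence of the reflection. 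Your assessment of what is hard and what is routine is accurate.
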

A fundamental problem of the theory of non-commutatives motives is the computation of
the (spectra of) morphisms between two object in the localizing motivator. Using
Theorem~\ref{thm:main-mon} we give a partial solution to this fundamental problem.
\begin{theorem}\label{thm:co-repres-ext}
Let $\cB$ be a saturated dg category; see Definition~\ref{def:saturated}. For every
small dg category $\cA$, we have a natural isomorphism in the stable homotopy category of spectra
\begin{equation*}
\bbR\Hom(\,\Uloc(\cB),\, \Uloc(\cA)\,) \simeq \bbK(\rep(\cB, \cA))\,.
\end{equation*}
Here $\rep(-,-)$ denotes the internal Hom-functor in $\Hmo$; see Theorem~\ref{thm:Toen}.
In particular, we obtain isomorphisms of abelian groups
\begin{eqnarray*}
\Hom(\,\Uloc(\cB)[n],\, \Uloc(\cA)\,)
\simeq \bbK_n(\rep(\cB, \cA))&& n \in \bbZ\,.
\end{eqnarray*}
\end{theorem}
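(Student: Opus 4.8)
The plan is to reduce the statement to the co-representability theorem (Theorem~\ref{thm:co-repres}) by exploiting the fact that $\Uloc$ is symmetric monoidal together with the dualizability of $\Uloc(\cB)$.

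First I would record the relevant duality data. Since $\cB$ is saturated, Theorem~\ref{thm:Toen-dualizable} shows that $\cB$ is a dualizable object of the Morita homotopy category $\Hmo$, with dual $\cB^\op$. As the universal localizing invariant $\Uloc$ is symmetric monoidal (Theorem~\ref{thm:main-mon}), Remark~\ref{rk:adj-dualizable}(iv) then shows that $\Uloc(\cB)$ is a dualizable object of $\Mloc(e)$ with dual $\Uloc(\cB^\op)$ (this was already observed in the proof of Corollary~\ref{satcompact}). By Remark~\ref{rk:adj-dualizable}(ii), tensoring therefore yields an adjunction
$$(-)\otimes^{\bbL}\Uloc(\cB)\ \dashv\ \Uloc(\cB^\op)\otimes^{\bbL}(-)$$
on $\Mloc(e)$; moreover, since $(-)\otimes^{\bbL}\Uloc(\cB)$ preserves homotopy colimits (Theorem~\ref{thm:main-mon}), this promotes to an adjunction between homotopy colimit preserving morphisms of derivators $\Mloc\to\Mloc$.

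Next I would run the chain of identifications at the level of spectra of maps. Using the canonical enrichment of the triangulated derivator $\Mloc$ over spectra (see \S\ref{sub:spectral-enr}), the above adjunction of homotopy colimit preserving morphisms of derivators produces, naturally in $X$ and $Y$, an isomorphism
$$\bbR\Hom(X\otimes^{\bbL}\Uloc(\cB),\,Y)\ \simeq\ \bbR\Hom(X,\,\Uloc(\cB^\op)\otimes^{\bbL}Y)$$
in the stable homotopy category of spectra. Taking $X=\Uloc(\underline{k})$ and $Y=\Uloc(\cA)$, and using that $\Uloc$ is symmetric monoidal (so that $\Uloc(\underline{k})$ is the unit and $\Uloc(\cB^\op)\otimes^{\bbL}\Uloc(\cA)\simeq\Uloc(\cB^\op\otimes^{\bbL}\cA)$), the left-hand side becomes $\bbR\Hom(\Uloc(\cB),\Uloc(\cA))$ and the right-hand side becomes $\bbR\Hom(\Uloc(\underline{k}),\Uloc(\cB^\op\otimes^{\bbL}\cA))$. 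By Lemma~\ref{lem:tensor-rep} and the symmetry of $\otimes^{\bbL}$ we have $\cB^\op\otimes^{\bbL}\cA\simeq\rep(\cB,\cA)$ in $\Hmo$, so the latter is $\bbR\Hom(\Uloc(\underline{k}),\Uloc(\rep(\cB,\cA)))$, which by Theorem~\ref{thm:co-repres} is $\bbK(\rep(\cB,\cA))$. The asserted isomorphisms of abelian groups then follow by passing to stable homotopy groups, via $\Hom(\Uloc(\cB)[n],\Uloc(\cA))\simeq\pi_n\bbR\Hom(\Uloc(\cB),\Uloc(\cA))$.

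The main obstacle I anticipate is the passage from the adjunction of morphisms of derivators to the corresponding adjunction on the canonical spectral enrichment — that is, justifying that a homotopy colimit preserving morphism of triangulated derivators admitting a right adjoint induces an adjunction on the spectra $\bbR\Hom(-,-)$. This is precisely the point where the formalism of \S\ref{sub:spectral-enr} must be invoked with care; everything else is a formal consequence of dualizability together with the already established Theorems~\ref{thm:co-repres}, \ref{thm:main-mon}, \ref{thm:Toen-dualizable} and Lemma~\ref{lem:tensor-rep}.
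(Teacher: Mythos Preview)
Your proof is correct and follows essentially the same route as the paper: both arguments use that $\Uloc$ is symmetric monoidal to transport the dualizability of $\cB$ (with dual $\cB^\op$) to $\Uloc(\cB)$, invoke the adjunction coming from dualizability at the level of the spectral enrichment (\S\ref{sub:spectral-enr}), and then reduce to Theorem~\ref{thm:co-repres} via Lemma~\ref{lem:tensor-rep}. The ``obstacle'' you flag is exactly the content of the compatibility statement in \S\ref{sub:spectral-enr}, which the paper also appeals to without further comment.
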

\begin{proof}
The proof is a consequence of the following weak equivalences\,:
\begin{eqnarray}
\bbR\Hom(\,\Uloc(\cB), \, \Uloc(\cA)\,) & \simeq & \bbR\Hom(\, \Uloc(\underline{k})\otimes^{\bbL} \Uloc(\cB), \,\Uloc(\cA)) \label{eq:I}\\
& \simeq & \bbR\Hom(\, \Uloc(\underline{k}), \,   \Uloc(\cB)^{\vee} \otimes^{\bbL} \Uloc(\cA) ) \label{eq:II} \\
& \simeq & \bbR\Hom(\, \Uloc(\underline{k}), \,   \Uloc(\cB^\op  \otimes^{\bbL} \cA)) \label{eq:IV} \\
& \simeq & \bbR\Hom(\, \Uloc(\underline{k}), \, \Uloc(\rep(\cB,\cA))) \label{eq:V} \\
& \simeq & \bbK(\rep(\cB, \cA)) \label{eq:VI} \,. \\ \nonumber 
\end{eqnarray}
Equivalence (\ref{eq:I}) follows from the fact that $\Uloc(\underline{k})$ is the unit object in
$\Mloc(e)$; see Remark~\ref{rk:monoid-str} and Theorems~\ref{thm:Mdgcat} and \ref{thm:main-mon}.
Since $\cB$ is a saturated dg category, Theorem~\ref{thm:Toen-dualizable} implies that $\cB$ is a
dualizable object in $\Hmo$. Therefore, Equivalence (\ref{eq:II}) follows from the fact that
$\Uloc(\cB)$ is a dualizable object in $\Mloc(e)$ (see Remark~\ref{rk:adj-dualizable}(iv) and
Theorem~\ref{thm:main-mon}) and from the adjunction (\ref{eq:adj-dualizable}) of
Remark~\ref{rk:adj-dualizable}(ii) (see \S\ref{sub:spectral-enr} for its spectral enrichment).
Equivalence (\ref{eq:IV}) follows from Remark~\ref{rk:adj-dualizable}(iv), from
Theorem~\ref{thm:Toen-dualizable}, and from the fact that the universal localizing invariant is
symmetric monoidal. Equivalence (\ref{eq:V}) follows from Lemma~\ref{lem:tensor-rep}.
Finally, Equivalence (\ref{eq:VI}) follows from Theorem~\ref{thm:co-repres}.
\end{proof}

\begin{proposition}\label{prop:co-repres-schemes}
Let $X$ and $Y$ be smooth and proper $k$-schemes. Then, we have a natural isomorphism in the
stable homotopy category of spectra
$$\bbR\Hom(\, \Uloc(\perf(X)), \Uloc(\perf(Y)) \,) \simeq \bbK(X \times Y)\,.$$
Here, $\bbK(X \times Y)$ denotes the non-connective algebraic $K$-theory spectrum of $X \times Y$ (see \cite[\S8]{Marco}), and $\perf(-)$ the dg category constructed in
Example~\ref{ex:Toen}(i). In particular, we obtain isomorphisms of abelian groups
\begin{eqnarray*}
\Hom(\,\Uloc(\perf(X))[n],\, \Uloc(\perf(Y))\,)
\simeq \bbK_n(X \times Y)&& n \in \bbZ\,.
\end{eqnarray*}
\end{proposition}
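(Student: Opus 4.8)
The plan is to combine Theorem~\ref{thm:co-repres-ext} with an identification of the dg category $\rep(\perf(X),\perf(Y))$ with $\perf(X\times Y)$ up to derived Morita equivalence, and then to pass from the $K$-theory of the latter dg category to the $K$-theory of the scheme. Concretely: since $X$ is smooth and proper over $k$, Example~\ref{ex:Toen}(i) tells us that $\perf(X)$ is a saturated dg category, so Theorem~\ref{thm:co-repres-ext} applies with $\cB=\perf(X)$ and $\cA=\perf(Y)$ and yields a natural isomorphism
$$\bbR\Hom(\,\Uloc(\perf(X)),\,\Uloc(\perf(Y))\,)\simeq\bbK(\rep(\perf(X),\perf(Y)))$$
in the stable homotopy category of spectra. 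It remains to identify $\rep(\perf(X),\perf(Y))$ with $\perf(X\times Y)$ in $\Hmo$, and to compare the two notions of non-connective $K$-theory.

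For the first identification I would proceed in three steps. First, invoking Lemma~\ref{lem:tensor-rep} (using once more that $\perf(X)$ is saturated, with dual $\perf(X)^\op$ by Theorem~\ref{thm:Toen-dualizable}) gives $\rep(\perf(X),\perf(Y))\simeq\perf(X)^\op\otimes^{\bbL}\perf(Y)$. Second, the duality functor $E\mapsto E^{\vee}$ on perfect complexes is a derived Morita equivalence $\perf(X)^\op\simeq\perf(X)$, so that $\rep(\perf(X),\perf(Y))\simeq\perf(X)\otimes^{\bbL}\perf(Y)$. Third, one needs the K\"unneth-type comparison $\perf(X)\otimes^{\bbL}\perf(Y)\simeq\perf(X\times Y)$; since $X$ and $Y$ are smooth over $k$ they are in particular flat over $k$, hence Tor-independent, and this is precisely To{\"e}n's computation of the (derived) tensor product of dg categories of perfect complexes of quasi-compact separated schemes (see \cite[\S8]{Toen}).

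Finally, the non-connective $K$-theory functor on dg categories restricts, along $Z\mapsto\perf(Z)$, to the Bass--Thomason--Trobaugh non-connective $K$-theory of quasi-compact separated $k$-schemes (see \cite{Thomason} and \cite[\S8]{Marco}), so $\bbK(\perf(X\times Y))\simeq\bbK(X\times Y)$. Concatenating the displayed isomorphisms, and observing that each of them is natural in $X$ and $Y$, yields the asserted isomorphism of spectra; the statement for the groups $\bbK_n$ then follows by taking stable homotopy groups. The main obstacle is the K\"unneth identification $\perf(X)\otimes^{\bbL}\perf(Y)\simeq\perf(X\times Y)$: one must verify that the \emph{derived} tensor product of the dg enhancements — rather than some naive tensor product of the underlying triangulated categories — actually computes perfect complexes on the product, which is exactly where the flatness of $X$ and $Y$ over $k$ and the derived Morita theory of \cite{Toen} are needed.
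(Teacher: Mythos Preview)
Your proof is correct and follows essentially the same route as the paper: invoke saturatedness of $\perf(X)$, apply Theorem~\ref{thm:co-repres-ext}, identify $\rep(\perf(X),\perf(Y))$ with $\perf(X\times Y)$, and then pass to the $K$-theory of the scheme via \cite{Marco}. The only difference is that the paper handles the middle identification in one stroke by citing \cite[Theorem~8.9]{Toen} for $\rep(\perf(X),\perf(Y))\simeq\perf(X\times Y)$ directly, whereas you factor it through Lemma~\ref{lem:tensor-rep}, the self-duality $\perf(X)^\op\simeq\perf(X)$, and the K\"unneth formula $\perf(X)\otimes^{\bbL}\perf(Y)\simeq\perf(X\times Y)$; both arguments ultimately rest on the same result of To\"en.
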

\begin{proof}
Since $X$ and $Y$ are smooth and proper $k$-schemes, \cite[Lemma~3.27]{Moduli} implies
that $\perf(X)$ and $\perf(Y)$ are saturated dg categories. Therefore, by
Theorem~\ref{thm:co-repres-ext} we have a natural isomorphism in the stable homotopy category of spectra
$$\bbR\Hom(\, \Uloc(\perf(X)), \Uloc(\perf(Y)) \,) \simeq \bbK(\rep(\perf(X),\perf(Y)))\,.$$
Moreover, by \cite[Theorem~8.9]{Toen} we have a natural isomorphism
$$ \perf(X\times Y) \simeq \rep(\perf(X), \perf(Y))$$
in $\Hmo$. Finally, thanks to \cite[\S8\,\,Theorem~5]{Marco} we have a natural isomorphism
$$ \bbK(\perf(X \times Y)) \simeq \bbK(X \times Y)$$
and so the proof is finished.
\end{proof}
{
\begin{remark}
Let $Z$ be a noetherian regular scheme. Thanks to \cite[Exp.~I, Cor.~5.9 and Exp.~II, Cor.~2.2.2.1]{SGA6}
we have a derived Morita equivalence
$$ \perf(Z) \stackrel{\sim}{\too} \cD^b_{\dg}(\mathit{Coh}(Z))\,,$$
where the left-hand side is the saturated dg category of Example~\ref{ex:Toen}(i) and the right-hand side is the bounded derived (dg) category of coherent sheaves on $Z$.
Since $\mathit{Coh}(Z)$ is a noetherian abelian category (see \cite[\S10.1]{Marco}),
we conclude by \cite[\S10.1\,\,Theorem~7]{Marco} that
$$ \bbK_n(Z)=\bbK_n(\perf(Z))=0 \qquad n <0\,.$$
In particular, if in Proposition~\ref{prop:co-repres-schemes} the base ring $k$ is regular and
noetherian, the negative stable homotopy groups of the spectrum $\bbK(X \times Y)$ vanish.
\end{remark}
}
\subsection{Kontsevich's non-commutative mixed motives}\label{sub:Kontsevich}
Kontsevich introduced in \cite{IAS, Kontsevich-Langlands, Lattice} the category of non-commutative
mixed motives. His construction decomposes in three steps\,:
\begin{itemize}
\item[(1)] First, consider the following category $\Mix_k$, enriched over
symmetric spectra: the objects are the dualizable dg categories (see Definition~\ref{def:saturated});
given saturated dg categories $\cA$ and $\cB$, the symmetric spectrum of
morphisms from $\cA$ to $\cB$ is the non-connective
$K$-theory spectrum $\bbK(\cA^\op\otimes^{\bbL}\cB)$;
the composition corresponds to the derived tensor product of bimodules\footnote{This category is the
non-commutative (and derived) analogue of Grothendieck's category of pure motives\,: $\mathrm{PM}$
stands for Pure Motives, while $\mathrm{K}$ stands for both Kontsevich and $K$-theory.}.

\item[(2)] Then, take the formal triangulated envelope {$\tri(\Mix_k)$} of $\Mix_k$.
Objects in this new category are formal finite extensions of formal shifts of
objects in $\Mix_k$.

\item[(3)] Finally, add formal direct summands for projectors {in $\tri(\Mix_k)$}.
The resulting category $\Mixtr_k$ is what Kontsevich named
the {\em category of non-commutative mixed motives}\footnote{$\mathrm{MM}$
stands for Mixed Motives, while $\mathrm{K}$ stands for both Kontsevich and $K$-theory.}.
\end{itemize}
A precise way to perform these constructions consists on
seeing $\Mixtr_k$ as the spectral category
of perfect $\Mix_k$-modules ($\Mixtr_k$ is the Morita
completion of $\Mix_k$; see \cite[\S 5.2]{MISC} for a precise exposition of these constructions).

Thanks to Theorem~\ref{thm:co-repres-ext}, we are now able to construct a fully-faithful
embedding of $\Mixtr_k$ into our category $\Mloc(e)$ of non-commutative motives,
\ie the base category of the localizing motivator. Note that, in contrast with Kontsevich's
{\em ad hoc} definition, our category of non-commutative motives is defined purely in terms
of precise universal properties.

Let $\cMloc$ be the model category underlying the derivator $\Mloc$.
As, by construction, $\cMloc$ is a left Bousfield localization of
a category of presheaves of symmetric spectra over some small category,
this model category is canonically enriched over symmetric spectra.
We can thus consider the category $\Mloc(e)$ as a category
enriched over symmetric spectra (by considering fibrant and
cofibrant objects in $\cMloc$).

\begin{proposition}\label{prop:Kontsevich}
There is a natural fully-faithful embedding (enriched over symmetric spectra) of Kontsevich's
category of non-commutative motives $\Mixtr_k$ into the
category $\Mloc(e)$. The essential image is the thick triangulated subcategory
spanned by motives of saturated dg categories.
\end{proposition}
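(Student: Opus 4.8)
The plan is to build the functor out of the universal property of $\Mixtr_k$ as a Morita completion of the spectral category $\Mix_k$, together with Theorem~\ref{thm:co-repres-ext}. First I would define a spectral functor $\Mix_k \to \Mloc(e)$ on objects by $\cB \mapsto \Uloc(\cB)$ (recall that the objects of $\Mix_k$ are precisely the saturated, \ie dualizable, dg categories, by Theorem~\ref{thm:Toen-dualizable}). On morphism spectra, the spectrum of maps from $\cA$ to $\cB$ in $\Mix_k$ is $\bbK(\cA^\op\otimes^\bbL\cB)$, while in $\Mloc(e)$ (enriched over symmetric spectra via the model category $\cMloc$) the spectrum of maps from $\Uloc(\cA)$ to $\Uloc(\cB)$ is $\bbR\Hom(\Uloc(\cA),\Uloc(\cB))$. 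By Theorem~\ref{thm:co-repres-ext} and Lemma~\ref{lem:tensor-rep} the latter is naturally isomorphic to $\bbK(\rep(\cA,\cB))\simeq\bbK(\cA^\op\otimes^\bbL\cB)$. The main technical point here is to upgrade this isomorphism of spectra to a genuine \emph{spectral functor}, \ie to check that the identification is compatible with composition (which, on the $\Mix_k$ side, is the derived tensor product of bimodules, matching the composition in $\Mloc(e)$ coming from the symmetric monoidal structure via the duality isomorphisms). This is where I expect the bulk of the work to lie; it is essentially a coherence check, most cleanly done by exhibiting everything through the symmetric monoidal functor $\Uloc$ and the adjunction \eqref{eq:adj-dualizable} of Remark~\ref{rk:adj-dualizable}(ii), so that associativity is inherited from that of $\Hmo$ and $\Mloc(e)$.

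Next I would extend this spectral functor along the Morita completion $\Mix_k \to \Mixtr_k$. Since $\Mixtr_k$ is the category of perfect $\Mix_k$-modules, a spectral functor $\Mix_k \to \Mloc(e)$ into a stable, idempotent-complete target extends (uniquely up to equivalence) to an exact, coproduct-respecting functor on modules, hence to $\Mixtr_k \to \Mloc(e)$; concretely this uses that $\Mloc(e)$ is triangulated (so formal shifts and cones have targets) and that, by Corollary~\ref{satcompact}, the objects $\Uloc(\cB)$ for $\cB$ saturated are compact, so the relevant mapping spectra into finite extensions and retracts are computed correctly. I would then record that the resulting functor $\Mixtr_k \to \Mloc(e)$ is still a functor enriched over symmetric spectra, by construction.

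Finally, for full faithfulness: on the generating objects $\cB$ saturated, full faithfulness on mapping spectra is exactly the content of Theorem~\ref{thm:co-repres-ext} together with the composition-compatibility established in the first step. Since both $\Mixtr_k$ and the thick subcategory of $\Mloc(e)$ generated by the $\Uloc(\cB)$ are obtained from these generators by taking shifts, cones, and retracts, and since the functor preserves all of these (it is exact, being a functor of stable/triangulated structures, and preserves retracts), a standard dévissage argument propagates full faithfulness from the generators to the whole of $\Mixtr_k$: the full subcategory of objects on which the comparison map of mapping spectra is an equivalence is closed under shifts, extensions and retracts and contains the generators. (One uses here compactness of the $\Uloc(\cB)$ from Corollary~\ref{satcompact} to ensure mapping spectra out of the generators commute with the finite homotopy colimits involved — in fact only finite ones are needed, so this is automatic.) The essential image is then, by definition, the thick triangulated subcategory of $\Mloc(e)$ spanned by the motives $\Uloc(\cB)$ of saturated dg categories, which is the last assertion. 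The main obstacle, as noted, is the first step: making the object-level and morphism-level comparison into a coherent spectral functor rather than a mere pointwise isomorphism; everything afterward is formal manipulation with triangulated categories and Morita completions.
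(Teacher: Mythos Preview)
Your proposal is correct and follows essentially the same approach as the paper: establish a fully-faithful spectral functor $\Mix_k\to\Mloc(e)$ via Theorem~\ref{thm:co-repres-ext} and Lemma~\ref{lem:tensor-rep}, extend it to the Morita completion $\Mixtr_k$, and propagate full faithfulness from the generators by d\'evissage. The paper's proof is terser---it invokes an external reference (\cite[Proposition~5.3.1]{MISC}) for the extension step and does not dwell on the coherence check you flag---but the skeleton is identical, and your more explicit treatment of the coherence issue and of the d\'evissage is a faithful unpacking rather than a different route.
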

\begin{proof}
Given saturated dg categories $\cA$ and $\cB$, Lemma~\ref{lem:tensor-rep} implies
that we have a natural isomorphism in $\Hmo$
$$ \cA^\op \otimes^{\bbL} \cB \simeq \rep(\cA, \cB)\,.$$
Therefore, using Theorem~\ref{thm:co-repres-ext} we obtain a natural fully-faithful
spectral functor
\begin{eqnarray*}
\Mix_k \too \Mloc(e) && \cA \mapsto \Uloc(\cA)\,.
\end{eqnarray*}
By construction of $\Mixtr_k$, \cite[Proposition~5.3.1]{MISC} implies that
this functor extends (uniquely) to a spectral functor
\begin{equation*}
\Mixtr_k \too \Mloc(e)\,.
\end{equation*}
In order to show that this functor is (homotopically) fully-faithful, it is sufficient to prove that
its restriction to a generating family of $\Mixtr_k$ is fully faithful,
which holds by construction.
\end{proof}
{
\subsection{Chern characters}\label{sub:Chern}
In \cite{CT} the authors used the co-representability Theorem~\ref{thm:co-repres} to
classify all natural transformations out of non-connective $K$-theory. More precisely,
they proved in \cite[Theorem~8.1]{CT} that given a localizing invariant $\mathit{L}$,
with values in the derivators of spectra, the data of a natural transformation
$\bbK(-)\Rightarrow \mathit{L}(-)$ is equivalent to the datum of a single class in
the stable homotopy group $\pi_0\mathit{L}(\underline{k})$. From this result they
obtained higher Chern characters (resp. higher trace maps),
from non-connective $K$-theory to (topological) cyclic homology
(resp. to (topological) Hochschild homology); see \cite[Theorem~8.4]{CT}.

However, negative cyclic homology $\mathit{HC}^-$
and periodic cyclic homology $\mathit{HP}$ do not preserve filtered homotopy
colimits since they are defined using infinite products; see \cite[\S5.1]{Loday}.
Therefore, they are not examples of localizing invariants and so the theory developed in \cite{CT}
is not directly applicable in these cases. Nevertheless, we shall explain below why
and how negative cyclic homology and periodic cyclic homology fit naturally in our framework;
see Examples~\ref{ex:negative} and \ref{ex:periodic}.  

Let $\bbD$ be a strong triangulated derivator endowed with a symmetric monoidal structure (with unit ${\bf 1}$) which preserves homotopy colimits in each variable (see \S\ref{sub:M1}), and 
$$ \mathit{E}: \HO(\dgcat) \too \bbD$$
a symmetric monoidal localizing invariant (see Definition~\ref{not:Linv-mon}).
Thanks to Theorem~\ref{thm:main-mon} there is a (unique) symmetric monoidal homotopy
colimit preserving morphism of derivators $\mathit{E}_{\gm}$ which  makes the diagram
$$
\xymatrix{
\HO(\dgcat) \ar[r]^-{\mathit{E}} \ar[d]_-{\Uloc} & \bbD \\
\Mloc \ar[ur]_{\mathit{E}_{\gm}} & 
}
$$
commute (up to unique $2$-isomorphism).
\begin{definition}\label{not:geom-real}
The morphism $\mathit{E}_{\gm}$ is called the {\em geometric realization} of $\mathit{E}$.
Since by hypothesis $\bbD$ is triangulated, we have a natural morphism of derivators
\begin{eqnarray*}\bbR\Hom({\bf 1},-): \bbD \too \HO(\Spt^{\bbN})&& (\mathrm{see}\,\,\S\ref{sub:spectral-enr})\,.
\end{eqnarray*}
The composed morphism
$$ \mathit{E}_{\abs}:= \bbR\Hom({\bf 1}, \mathit{E}_{\gm}(-)): \Mloc \too \HO(\Spt^{\bbN})$$
is called the {\em absolute realization} of $\mathit{E}$.
\end{definition}
Given a symmetric monoidal localizing invariant $\mathit{E}$, we have two objects associated to a non-commutative motive $M \in \Mloc(e)$\,: its geometric realization $\mathit{E}_{\gm}(M)$ and its absolute realization $\mathit{E}_{\abs}(M)$. Although the morphism $\mathit{E}_{\gm}$ always preserves homotopy colimits, this is not always the case for the morphism $\mathit{E}_{\abs}$; a sufficient (and almost necessary) condition for $\mathit{E}_{\abs}$ to preserve homotopy colimits is that the unit ${\bf 1}$ of $\bbD$ is a compact object.
\begin{proposition}\label{prop:Chern}
The geometric realization of $\mathit{E}$ induces a canonical Chern character
\begin{equation*}
\bbK(-) \Rightarrow \bbR\Hom({\bf 1},E(-))\simeq \mathit{E}_{\abs}(\Uloc(-))\,.
\end{equation*} 
Here, $\bbK(-)$ and $\mathit{E}_{\abs}(\Uloc(-))$ are two morphisms of derivators defined on $\HO(\dgcat)$.
\end{proposition}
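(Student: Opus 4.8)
The plan is to produce the Chern character as the image of a universal class under the co-representability isomorphism of Theorem~\ref{thm:co-repres}, then transport it along the geometric realization. First I would observe that, by Theorem~\ref{thm:co-repres}, giving a natural transformation of morphisms of derivators $\bbK(-)\Rightarrow \bbR\Hom(\mathbf{1},E(-))$ defined on $\HO(\dgcat)$ is, after unwinding the identification $\bbK(\cA)\simeq\bbR\Hom(\Uloc(\underline{k}),\Uloc(\cA))$, the same as giving a natural transformation $\bbR\Hom(\Uloc(\underline{k}),\Uloc(-))\Rightarrow \bbR\Hom(\mathbf{1},E_{\gm}(\Uloc(-)))$. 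So the strategy is: build a morphism of spectra-valued functors on $\Mloc$ of the form $\bbR\Hom(\Uloc(\underline{k}),-)\Rightarrow \bbR\Hom(\mathbf{1},E_{\gm}(-))$, and then precompose with $\Uloc$.

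The second step is the construction of that morphism on $\Mloc$. Since $E_{\gm}$ is a symmetric monoidal morphism of derivators, it sends the unit $\Uloc(\underline{k})$ of $\Mloc$ to the unit $\mathbf{1}$ of $\bbD$, so there is a canonical identification $E_{\gm}(\Uloc(\underline{k}))\simeq\mathbf{1}$. Moreover $E_{\gm}$, being a morphism of triangulated derivators, is compatible with the canonical spectral enrichments (see \S\ref{sub:spectral-enr}): it induces for every pair of objects $X,Y$ of $\Mloc$ a map of spectra
$$\bbR\Hom_{\Mloc}(X,Y)\too \bbR\Hom_{\bbD}(E_{\gm}(X),E_{\gm}(Y))\,.$$
Specializing to $X=\Uloc(\underline{k})$ and using $E_{\gm}(\Uloc(\underline{k}))\simeq\mathbf{1}$ gives a natural transformation of morphisms of derivators
$$\bbR\Hom_{\Mloc}(\Uloc(\underline{k}),-)\;\Longrightarrow\; \bbR\Hom_{\bbD}(\mathbf{1},E_{\gm}(-))=\mathit{E}_{\abs}(-)$$
defined on $\Mloc$. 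Precomposing with $\Uloc:\HO(\dgcat)\to\Mloc$ and invoking Theorem~\ref{thm:co-repres} on the source to identify $\bbR\Hom_{\Mloc}(\Uloc(\underline{k}),\Uloc(-))$ with $\bbK(-)$, and the commutativity $E_{\gm}\circ\Uloc\simeq E$ to identify $\mathit{E}_{\abs}(\Uloc(-))=\bbR\Hom_{\bbD}(\mathbf{1},E_{\gm}(\Uloc(-)))$ with $\bbR\Hom_{\bbD}(\mathbf{1},E(-))$, yields the desired natural transformation $\bbK(-)\Rightarrow\bbR\Hom(\mathbf{1},E(-))\simeq \mathit{E}_{\abs}(\Uloc(-))$.

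The main point to check carefully — and the step I expect to require the most care — is the compatibility of the geometric realization $E_{\gm}$ with the spectral enrichments of $\Mloc$ and $\bbD$, i.e.\ that a homotopy colimit preserving morphism between triangulated derivators induces a natural map on the mapping spectra constructed in \S\ref{sub:spectral-enr}, and that this is functorial enough to give a natural transformation of morphisms of derivators rather than merely a collection of maps. This is a formal consequence of how the spectral enrichment is defined (via the action of pointed simplicial sets / suspension on a triangulated derivator, which any exact cocontinuous morphism preserves up to coherent isomorphism), but it must be spelled out. Everything else is bookkeeping: the identification $E_{\gm}(\Uloc(\underline{k}))\simeq\mathbf{1}$ is part of the monoidality of $E_{\gm}$ from Theorem~\ref{thm:main-mon}, and the two applications of Theorem~\ref{thm:co-repres} (one on $\HO(\dgcat)$, implicitly one describing the unit) are already available.
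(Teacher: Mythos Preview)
Your proposal is correct and follows essentially the same approach as the paper: use the monoidality of $E_{\gm}$ to identify $E_{\gm}(\Uloc(\underline{k}))\simeq{\bf 1}$, apply $E_{\gm}$ on mapping spectra, and invoke Theorem~\ref{thm:co-repres} on the source. The paper's proof is in fact terser than yours---it simply asserts the induced map on $\bbR\Hom$ and its functoriality without further comment---so your explicit discussion of the compatibility of $E_{\gm}$ with the spectral enrichment is more careful than what the paper provides.
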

\begin{proof}
The geometric realization of $\mathit{E}$ is symmetric monoidal and so it sends the unit object $\Uloc(\underline{k})$ to ${\bf 1} \in \bbD$. Therefore, given a small dg category $\cA$, we obtain an induced map
$$\bbK(\cA)\simeq \bbR\Hom_{\Mloc}(\Uloc(\underline{k}), \Uloc(\cA)) \too \bbR\Hom_{\bbD}({\bf 1}, \mathit{E}_{\gm}(\Uloc(\cA)))= \mathit{E}_{\abs}(\Uloc(\cA))\,,$$
where the left-hand side equivalence follows from Theorem~\ref{thm:co-repres}. Since this induced map is functorial in $\cA$, the proof is finished.
\end{proof}
Let us now give some examples which illustrate Proposition~\ref{prop:Chern}.

\begin{example}[Non-connective $K$-theory]
The tautological version of the situation above is:
for $E=\Uloc$, $E_{\gm}$ is by definition the identity of $\Mloc$
(see Theorem~\ref{thm:main-mon}), while $E_{\abs}=\bbK$ is non-connective $K$-theory (see Theorem~\ref{thm:co-repres}).
The corresponding Chern character is the identity, and this is in this precise
sense that non-connective $K$-theory is initial among \emph{absolute} homology theories.
\end{example}

\begin{example}[Hochschild homology]
Take for $\mathit{E}$ the symmetric monoidal localizing invariant
$$\mathit{HH}: \HO(\dgcat) \too \HO(\cC(k))$$
of Example~\ref{ex:HH}. In this case, there is no difference (up to the Dold-Kan correspondance relating complexes of $k$-modules and spectra) between the geometric and the absolute realization: if we consider $\HO(\cC(k))$ as enriched over itself, then the morphism
$$\bbR\Hom(k,-): \HO(\cC(k)) \too \HO(\cC(k))$$ is (isomorphic to) the identity. Therefore
by Proposition~\ref{prop:Chern}, we obtain a canonical Chern character
$$ \bbK(-) \Rightarrow \mathit{HH}(-)\,.$$
\end{example}
\begin{example}[Negative cyclic homology]\label{ex:negative}
Take for $\mathit{E}$ the symmetric monoidal localizing invariant
$$ \mathit{C}: \HO(\dgcat) \too \HO(\cC(\Lambda))$$
of Example~\ref{ex:Mixedcomp}. Given a small dg category $\cA$, we have an equivalence
$$ \mathit{C}_{\abs}(\Uloc(\cA)) = \bbR\Hom(k, \mathit{C}(\cA)) \simeq \mathit{HC}^-(\cA)\,,$$
where $\mathit{HC}^-(\cA)$ denotes the negative cyclic homology complex
of $\cA$; see \cite[\S2.2]{Exact1}. Therefore by Proposition~\ref{prop:Chern}, we
obtain a canonical Chern character
$$ \bbK(-) \Rightarrow \mathit{HC}^-(-)\,.$$
\end{example}
\begin{example}[Periodic cyclic homology]\label{ex:periodic}
Take for $\mathit{E}$ the symmetric monoidal localizing invariant
$$ (\mathit{P}\circ \mathit{C}): \HO(\dgcat) \too \HO(k[u]\text{-}\mathrm{Comod})$$
of Example~\ref{ex:Periodiccomp}. Assuming that the ground ring $k$ is a field,
for any small dg category $\cA$, we have a natural identification
$$ (\mathit{P}\circ \mathit{C})_{\abs}(\Uloc(\cA)) = \bbR\Hom(k[u], (\mathit{P}\circ\mathit{C})(\cA))
\simeq \mathit{HP}(\cA)\,,$$
where $\mathit{HP}(\cA)$ denotes the periodic cyclic homology complex of $\cA$.
This can be seen as follows.
Given a mixed complex $M$ (see Example~\ref{ex:Mixedcomp}), a map $k[u]\to P(M)$
in $k[u]\text{-}\mathrm{Comod}$ corresponds to a collection
of maps $k\to {(M\otimes^\bbL_\Lambda k)}[2n]$, $n\geq 0$, in $\cC(k)$ which are
compatible with the operator $S$. In other words, these data correspond to a map in $\cC(k)$
from $k$ to the tower
$$\cdots \xrightarrow{\, S \, } {(M\otimes^\bbL_\Lambda k)}[-2n] \xrightarrow{\, S \, }
{(M\otimes^\bbL_\Lambda k)}[-2n+2]  \xrightarrow{\, S \, } \cdots
 \xrightarrow{\, S \, } (M\otimes^\bbL_\Lambda k)[-2]  \xrightarrow{\, S \, } M\otimes^\bbL_\Lambda k\, .$$
In other words, we have\,:
$$\bbR\Hom(k[u],P(M))\simeq \underset{n}{\mathrm{holim}}\,(M\otimes^\bbL_\Lambda k)[-2n]\, .$$
The Milnor short exact sequence \cite[Proposition~7.3.2]{Hovey} applied to this
homotopy limit corresponds to the short exact sequence
$$0\too {\varprojlim_n}^1 H_{i+2n-1}(M\otimes^\bbL_\Lambda k)
\too \Hom(k[u],P(M)[-i])\too  {\varprojlim_n}\,  H_{i+2n}(M\otimes^\bbL_\Lambda k)
\too 0\, .$$
Now, let $\cA$ be a small dg category. The above arguments, with $M=\mathit{C}(\cA)$, allow
us to deduce the  formula
$$ \bbR\Hom(k[u],{(P\circ C)}(\cA)) \simeq
\underset{n}{\mathrm{holim}}\,(\mathit{C}(\cA)\otimes^{\bbL}_{\Lambda}k)[-2n] \simeq
\mathit{HP}(\cA)\,.$$
Therefore, by Proposition~\ref{prop:Chern}, we obtain a canonical Chern character
$$ \bbK(-) \Rightarrow \mathit{HP}(-)\,.$$
\end{example}
}
\subsection{To{\"e}n's secondary $K$-theory}\label{sub:secondary}
To{\"e}n introduced in \cite{Sedano, NW-Toen} a ``categorified'' version of algebraic
$K$-theory named {\em secondary $K$-theory}; see \cite{Abel} for a survey article. 
\begin{definition}[{\cite[\S5.4]{Sedano}}]\label{def:secondary}
Given a commutative ring $k$, let $\bbZ[\Hmo_{\mathsf{sat},k}]$ be the free
abelian group on the isomorphism classes of objects in $\Hmo_{\mathsf{sat},k}$
(see Notation~\ref{not:sat}). The {\em secondary $K$-theory group $K^{(2)}_0(k)$ of $k$}
is the quotient of $\bbZ[\Hmo_{\mathsf{sat},k}]$ by the relations $[\cB] = [\cA] + [\cC]$
associated to exact sequences (see Definition~\ref{def:ses})
$$ \cA \too \cB \too \cC$$
of saturated dg categories.
\end{definition}
\begin{remark}
\begin{itemize}
\item[(i)] Thanks to Theorem~\ref{thm:Toen-dualizable} the category $\Hmo_{\mathsf{sat},k}$
coincides with the category of dualizable objects in $\Hmo_k$.
Therefore, by Remark~\ref{rk:adj-dualizable}(iii), the derived tensor product in $\Hmo_k$
restricts to a bifunctor
\begin{equation*}
- \otimes^{\bbL}-: \Hmo_{\mathsf{sat},k} \times \Hmo_{\mathsf{sat},k} \too \Hmo_{\mathsf{sat},k}\,.
\end{equation*}
By \cite[Proposition~1.6.3]{Drinfeld} the derived tensor product preserves exact sequences
(in both variables), and so we obtain a commutative ring structure on $K^{(2)}_0(k)$.
\item[(ii)] Given a ring homomorphism $k \to k'$, we have a derived base change functor
\begin{eqnarray*}
 - \otimes^{\bbL}_k k': \Hmo_k \too \Hmo_{k'} && \cA \mapsto \cA \otimes^{\bbL}_k k'\,.
\end{eqnarray*} 
This functor preserves exact sequences and is symmetric monoidal. Therefore, by
Theorem~\ref{thm:Toen-dualizable} and Remark~\ref{rk:adj-dualizable}(iv), we
obtain a ring homomorphism
$$ K^{(2)}_0(k) \too K_0^{(2)}(k')\,.$$
\end{itemize}
In conclusion, secondary $K$-theory is a functor $K_0^{(2)}(-)$ from the category of commutative
rings to itself.
\end{remark}
One of the motivations for the study of this secondary $K$-theory was its expected connection with
an hypothetical Grothendieck ring of motives in the non-commutative setting; see \cite[page~1]{NW-Toen}.
Thanks to Theorem~\ref{thm:main-mon}, we are now able to make this connection precise;
see Remarks~\ref{rk:non-trivial}-\ref{rk:connection}. We shall use the following well known property of dualizable objects in a
triangulated category.

\begin{proposition}\label{prop:thick}
Let $\cC$ be a closed symmetric monoidal triangulated category. Then, the category $\cC^{\vee}$ of
dualizable objects in $\cC$ (see Definition~\ref{def:dualizable}) is a symmetric monoidal thick
triangulated subcategory of $\cC$. 
\end{proposition}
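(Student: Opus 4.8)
The plan is to verify directly that $\cC^\vee$ is closed under the three operations defining a thick triangulated subcategory — shifts, cones (extensions in the triangulated sense), and direct summands — and that it contains the unit and is closed under $\otimes$, hence inherits a symmetric monoidal structure. The key organizing observation is the characterization recorded in Remark~\ref{rk:adj-dualizable}(ii): an object $X$ is dualizable with dual $X^\vee$ if and only if, for every object $Z$ of $\cC$, the object $X^\vee\otimes Z$ represents the internal Hom functor $\underline{\Hom}(X,Z)$, equivalently there is a functorial isomorphism $\Hom_\cC(Y\otimes X,Z)\simeq\Hom_\cC(Y,X^\vee\otimes Z)$. Since $\cC$ is \emph{closed} symmetric monoidal, the internal Hom $\underline{\Hom}(X,Z)$ always exists, so dualizability of $X$ is precisely the condition that the canonical evaluation-induced map $\underline{\Hom}(X,{\bf 1})\otimes Z\to\underline{\Hom}(X,Z)$ is an isomorphism for all $Z$ (with $X^\vee=\underline{\Hom}(X,{\bf 1})$).

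First I would record the easy closure properties. The unit ${\bf 1}$ is dualizable with ${\bf 1}^\vee={\bf 1}$; Remark~\ref{rk:adj-dualizable}(iii) gives closure under $\otimes$ with $(X\otimes Y)^\vee=Y^\vee\otimes X^\vee$. For shifts: in a closed symmetric monoidal triangulated category the functors $X\otimes-$ and $\underline{\Hom}(X,-)$, $\underline{\Hom}(-,Z)$ are triangulated (exact), and in particular commute with the suspension up to natural isomorphism; hence $(X[1])^\vee\simeq X^\vee[-1]$ and the defining isomorphisms transport, so $X[n]\in\cC^\vee$ whenever $X\in\cC^\vee$. Closure under direct summands (retracts) is formal: if $X$ is a retract of a dualizable object $W$, write the idempotent; applying the exact functor $\underline{\Hom}(-,{\bf 1})$ produces a candidate dual $X^\vee$ as a retract of $W^\vee$, and one checks the natural transformation $X^\vee\otimes Z\to\underline{\Hom}(X,Z)$ is a retract of the isomorphism $W^\vee\otimes Z\to\underline{\Hom}(W,Z)$, hence itself an isomorphism — this uses only that a retract of an isomorphism is an isomorphism.

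The step requiring genuine argument is closure under cones: given a distinguished triangle $X\to Y\to C\to X[1]$ with $X,Y\in\cC^\vee$, show $C\in\cC^\vee$. Here I would argue via the functorial criterion. For any $Z$ in $\cC$, the functor $-\otimes Z$ applied to the dual triangle, and the functor $\underline{\Hom}(-,Z)$ applied to the original triangle, both produce distinguished triangles; one gets a commutative ladder between these two triangles whose maps at the $X$- and $Y$-spots are the isomorphisms $X^\vee\otimes Z\xrightarrow{\sim}\underline{\Hom}(X,Z)$ and $Y^\vee\otimes Z\xrightarrow{\sim}\underline{\Hom}(Y,Z)$. By the five lemma (more precisely, the weak form valid in triangulated categories: a map of triangles that is an isomorphism on two of three objects is an isomorphism on the third), the induced map $C^\vee\otimes Z\to\underline{\Hom}(C,Z)$ is an isomorphism, where $C^\vee$ is defined by the dual triangle $C^\vee\to Y^\vee\to X^\vee\to C^\vee[1]$. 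This holds functorially in $Z$, so $C$ is dualizable with the indicated dual. The main obstacle — and the only place one must be careful — is the construction of the compatible morphism of triangles and verifying that the relevant squares commute; this is the standard but slightly delicate point that the evaluation and coevaluation maps are natural, so that dualizing a triangle and the identity-on-objects functoriality of $-\otimes Z$ versus $\underline{\Hom}(-,Z)$ are compatible. Once this ladder is in hand, the conclusion is immediate, and assembling the pieces shows $\cC^\vee$ is a symmetric monoidal thick triangulated subcategory of $\cC$.
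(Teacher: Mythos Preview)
Your proposal is correct and follows essentially the same approach as the paper: both hinge on the criterion that $X$ is dualizable if and only if the canonical map $u_{X,Y}:\underline{\Hom}(X,{\bf 1})\otimes Y\to\underline{\Hom}(X,Y)$ is an isomorphism for all $Y$. The paper is simply more compressed: rather than verifying closure under shifts, cones, and retracts separately, it observes in one sentence that for each fixed $Y$ the map $u_{-,Y}$ is a natural transformation between triangulated (contravariant) functors in $X$, and the locus where such a transformation is an isomorphism is automatically a thick triangulated subcategory --- your five-lemma argument for cones and your retract argument are exactly what unpacks this general principle.
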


\begin{proof}
The fact that dualizable objects are stable under tensor product
is clear; see Remark~\ref{rk:adj-dualizable}(iii). For an object $X$ in $\cC$, set $X^\vee=\uHom(X,{\bf 1})$. Given two objects $X$ and $Y$ in $\cC$, we have a canonical map
$$u_{X,Y}:X^\vee\otimes Y\too \uHom(X,Y)$$
which corresponds by adjunction to the map
$X^\vee\otimes X\otimes Y\to Y$ obtained by tensoring $Y$ with the
evaluation map $X\otimes X^\vee \to {\bf 1}$.
The object $X$ is dualizable if and only if the map $u_{X,Y}$
is invertible for any object $Y$. Since for any fixed object $Y$ the map $u_{X,Y}$
is a natural transformation of triangulated functors, we conclude that $\cC^\vee$ is a thick triangulated subcategory of $\cC$.
\end{proof}

\begin{notation}\label{not:thick}
Thanks to Theorem~\ref{thm:main-mon} the localizing motivator carries a symmetric monoidal
structure, and so its base category $\Mot^{\loc}_{\dg,k}(e)$ is a symmetric monoidal triangulated category.
Therefore by Proposition~\ref{prop:thick}, the category $\Mot^{\loc}_{\dg,k}(e)^{\vee}$ of dualizable objects is {a
symmetric monoidal thick triangulated subcategory of $\Mot^{\loc}_{\dg,k}(e)$}.

Let $\Mixtr_k$ be Kontsevich's category of non-commutative mixed motives. Thanks to Proposition~\ref{prop:Kontsevich}, we can identify it with the thick triangulated subcategory of {$\Mot^{\loc}_{\dg,k}(e)^{\vee}$}
generated by objects of shape $\Uloc(\cA)$, where $\cA$ runs over the family of
saturated dg categories over $k$. Therefore, $\Mixtr_k$ is naturally a rigid symmetric monoidal
triangulated category.
\end{notation}

\begin{definition}\label{def:motivic-ring} Let $k$ be a commutative ring. The {\em Grothendieck
ring $\cK_0(k)$ of non-commutative motives over $k$} is the Grothendieck ring $K_0(\Mixtr_k)$.
\end{definition}
{
\begin{remark}[Non-triviality]\label{rk:non-trivial}
Recall from Example~\ref{ex:HH} the construction of the symmetric monoidal localizing invariant
$$ \mathit{HH}: \HO(\dgcat) \too \HO(\cC(k))\,.$$
By restricting its geometric realization (see Definition~\ref{not:geom-real}) to the base category, we obtain a symmetric monoidal triangulated functor
$$ \mathit{HH}_{\gm}(e): \Mot^{\loc}_{\dg,k}(e) \too  \HO(\cC(k))(e)=\cD(k)\,.$$
Recall that the dualizable objects in $\cD(k)$ are precisely the perfect complexes of $k$-modules. Therefore, by Remark~\ref{rk:adj-dualizable}~(iv), $\mathit{HH}_{\gm}(e)$
sends dualizable objects to perfect complexes and so it induces a ring homomorphism
\begin{equation}\label{eq:rank}
 r\cK_0: \cK_0(k)=K_0(\Mixtr_k) \too K_0(\cD_c(k))=K_0(k)\,.
\end{equation}
Finally, since $K_0(k)$ is non-trivial we conclude that $\cK_0(k)$ is also non-trivial.
\end{remark}
}
\begin{remark}[Functoriality]\label{rk:functoriality}
Given a ring homomorphism $k \to k'$, we have a base change functor
\begin{eqnarray*}
- \otimes_k k' : \dgcat_k \too \dgcat_{k'} && \cA \mapsto \cA \otimes_k k'\,.
\end{eqnarray*}
This functor gives rise to a morphism of derivators
$$ - \otimes^{\bbL}_k k' : \HO(\dgcat_k) \too \HO(\dgcat_{k'})\,,$$
which is symmetric monoidal, preserves homotopy colimits (and the point), and satisfies localization;
see Theorem~\ref{thm:Uloc}. Therefore, the composition
$$ \HO(\dgcat_k) \stackrel{-\otimes^{\bbL}_k k'}{\too} \HO(\dgcat_{k'}) \stackrel{\Uloc}{\too}
\Mot^{\loc}_{\dg,k'}$$
is a symmetric monoidal localizing invariant; see \ref{not:Linv-mon}.
Using Theorem~\ref{thm:main-mon},
we obtain a (unique) symmetric monoidal morphism, which we still denoted by $-\otimes^{\bbL}_k k'$,
making the diagram
 \begin{equation}\label{eq:base-change}\begin{split}
\xymatrix{
\HO(\dgcat_k) \ar[d]_{\Uloc} \ar[rr]^{-\otimes^{\bbL}_k k'} && \HO(\dgcat_{k'}) \ar[d]^{\Uloc} \\
\Mot^{\loc}_{\dg,k} \ar[rr]_{-\otimes^{\bbL}_k k'} && \Mot^{\loc}_{\dg,k'} \\
}\end{split}
\end{equation}
commute (up to $2$-isomorphism).
By restricting ourselves to the base categories, we have a symmetric monoidal triangulated functor
$$-\otimes^{\bbL}_k k': \Mot^{\loc}_{\dg,k}(e) \too \Mot^{\loc}_{\dg,k'}(e)\, .$$
As the (derived) change of scalars functor preserves saturated dg categories, we obtain then an induced ring homomorphism 
$$\cK_0(k) \too \cK_0(k')\,.$$
In conclusion, the Grothendieck ring of non-commutative motives is a functor $\cK_0(-)$ from the category
of commutative rings to itself.
\end{remark}
\begin{remark}[Connection]\label{rk:connection} Thanks to Theorem~\ref{thm:main-mon}, the functor
$$ \Uloc: \Hmo_k \too \Mot^{\loc}_{\dg,k}(e)$$
is symmetric monoidal. By construction it sends exact sequences to distinguished triangles, and so it induces a ring homomorphism
\begin{equation}\label{eq:ring} 
\Phi(k): K_0^{(2)}(k) \too \cK_0(k)\,.
\end{equation}
{Note that this ring homomorphism is not necessarily surjective because of step (3) in the construction of $\Mixtr_k$. However, the image of $\Phi(k)$ can be described as the Grothendieck group of the triangulated category $\tri(\Mix_k)$\,: by cofinality the Grothendieck ring $K_0(\tri(\Mix_k))$ is a subring of $\cK_0(k)$ and by d{\'e}vissage $\Phi(k)$ surjects on $K_0(\tri(\Mix_k))$.} 
Moreover, the above (up to $2$-isomorphism) commutative square (\ref{eq:base-change}) shows us that the ring homomorphism (\ref{eq:ring}) gives rise to a natural transformation of functors
\begin{eqnarray*}
K_0^{(2)}(-) \Rightarrow \cK_0(-)\,, && k \mapsto \Phi(k)\,.
\end{eqnarray*}\label{rem:phiksurj}
Now, let $R$ be a commutative ring and $l: K_0^{(2)}(k) \to R$ a {\em realization of $K_0^{(2)}(k)$},
\ie a ring homomorphism. Note that if there exists a symmetric monoidal localizing invariant
$$ \HO(\dgcat_k) \too \bbD\,,$$
whose induced ring homomorphism (see Proposition~\ref{prop:thick})
$$ K_0^{(2)}(k) \too K_0(\bbD(e)^{\vee})$$
identifies with $l$, then $l$ factors through $\Phi(k)$. An interesting example is proved by To{\"e}n's
rank map (see \cite[\S5.4]{Sedano})
$$ rk_0: K_0^{(2)} \too K_0(k)\,.$$
Thanks to \cite[\S5.4 Lemma~3]{Sedano} this rank map is induced from the symmetric monoidal localizing invariant 
$$\mathit{HH}: \HO(\dgcat_k) \too \HO(\cC(k))$$
of Example~\ref{ex:HH}. Therefore, it corresponds to the following composition
$$ K_0^{(2)}(k) \stackrel{\Phi(k)}{\too} \cK_0(k) \stackrel{r\cK_0}{\too} K_0(k)\,,$$
where $r\cK_0$ is the ring homomorphism (\ref{eq:rank}) of Remark~\ref{rk:non-trivial}.  
\end{remark}



\subsection{Euler characteristic}
\begin{definition}\label{def:Euler}
Let $\cC$ be a symmetric monoidal category with monoidal product $\otimes$ and
unit object ${\bf 1}$. Given a dualizable object $X$ in $\cC$ (see Definition~\ref{def:dualizable})
its {\em Euler characteristic $\chi(X)$} is the following composition
$$ \chi(X): {\bf 1} \stackrel{\delta}{\too} X^{\vee}  \otimes X \stackrel{\tau}{\too} X
\otimes X^{\vee} \stackrel{\mathrm{ev}}{\too} {\bf 1}\,,$$
where $\tau$ denotes the symmetry isomorphism.
\end{definition}

\begin{remark}\label{Eulerringmap}
Let $\cC$ is a well behaved symmetric monoidal triangulated category; e.g. $\cC=\bbD(e)$ for some symmetric monoidal triangulated derivator $\bbD$. Then, thanks to \cite[Theorem~1.9]{Maytraces}, the Euler characteristic gives rise to a ring homomorphism
$$\chi:K_0(\cC^\vee)\too \Hom_\cC({\bf 1},{\bf 1})\, .$$
\end{remark}

\begin{proposition}
Let $\cA$ be a saturated dg category.
Then its Euler characteristic $\chi(\cA)$ in $\Hmo$ is the isomorphism
class of $\cD_c(k)$ which is associated to the (perfect)
Hochschild homology complex $\mathit{HH}(\cA)$ of $\cA$
(see Example~\ref{ex:HH}). 
\end{proposition}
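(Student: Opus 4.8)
The plan is to compute the Euler characteristic $\chi(\cA)$ of a saturated dg category $\cA$ directly from the definitions, using the explicit description of the duality data in $\Hmo$ provided by Theorem~\ref{thm:Toen-dualizable}. Recall that by that theorem $\cA$ is dualizable with dual $\cA^\op$, the evaluation map $\mathrm{ev}:\cA\otimes^{\bbL}\cA^\op\to\underline{k}$ being the class in $\cD(\cA^\op\otimes^{\bbL}\cA)$ of the bimodule $\cA(-,-)$, and the co-evaluation $\delta:\underline{k}\to\cA^\op\otimes^{\bbL}\cA$ being the class of the same bimodule. First I would unwind Definition~\ref{def:Euler}: $\chi(\cA)$ is the composite $\underline{k}\xrightarrow{\delta}\cA^\op\otimes^{\bbL}\cA\xrightarrow{\tau}\cA\otimes^{\bbL}\cA^\op\xrightarrow{\mathrm{ev}}\underline{k}$, which under Theorem~\ref{thm:Toen0} is an isomorphism class of objects in $\dgHo(\rep(\underline{k},\underline{k}))\simeq\dgHo(\perf(\underline{k}))\simeq\cD_c(k)$, i.e.\ an element of $K_0(k)$ once we pass to the Grothendieck group (or rather, Remark~\ref{Eulerringmap} tells us $\chi$ lands in $\Hom_{\Hmo}(\underline{k},\underline{k})\simeq\mathrm{Iso}\,\cD_c(k)$, and composing with the class map gives the element of $K_0(k)$).

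The key computation is to identify this composite with the Hochschild homology complex. Using Theorem~\ref{thm:Toen0}, composition in $\Hmo$ is the derived tensor product of bimodules. So $\chi(\cA)$, viewed as an object of $\cD_c(k)$, is the derived tensor product of the $(\underline{k},\cA^\op\otimes^{\bbL}\cA)$-bimodule representing $\delta$ with the $(\cA\otimes^{\bbL}\cA^\op,\underline{k})$-bimodule representing $\mathrm{ev}$, taken over $\cA^\op\otimes^{\bbL}\cA$ (after transporting through the symmetry $\tau$, which simply swaps the two tensor factors). Concretely this gives
$$\chi(\cA)\simeq \cA(-,-)\otimes^{\bbL}_{\cA^\op\otimes^{\bbL}\cA}\cA(-,-)\,,$$
the derived tensor product of the diagonal bimodule with itself over the enveloping category $\cA^\op\otimes^{\bbL}\cA$. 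The second main step is to recognize the right-hand side as $\mathit{HH}(\cA)$. This is the standard identification of Hochschild homology of a dg category with the derived tensor product of the identity bimodule with itself over the enveloping dg category: after replacing $\cA$ by a $k$-flat (e.g.\ cofibrant) resolution, the bar resolution of the diagonal bimodule $\cA(-,-)$ over $\cA^\op\otimes\cA$ has $n$-th term $\bigoplus_{(x_0,\dots,x_n)}\cA(-,x_0)\otimes\cA(x_n,x_1)\otimes\cdots\otimes\cA(x_{n-1},x_n)\otimes\cA(x_n,-)$, and tensoring over $\cA^\op\otimes\cA$ against $\cA(-,-)$ collapses the two outer variables and produces exactly the cyclic bar complex of Example~\ref{ex:HH}, with the face maps as written there. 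One must check that the symmetry isomorphism $\tau$ introduces no sign discrepancy, which amounts to the Koszul-sign bookkeeping already built into the face map $d_0$ in Example~\ref{ex:HH}.

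Finally, since $\cA$ is proper, each $\cA(x,y)$ is perfect over $k$, and since $\cA$ is smooth, the diagonal bimodule is perfect over $\cA^\op\otimes^{\bbL}\cA$, so the derived tensor product $\mathit{HH}(\cA)$ is a perfect complex of $k$-modules; thus it defines an element of $K_0(\cD_c(k))=K_0(k)$, and by the above it equals $\chi(\Uloc(\cA))$ pushed into $K_0(k)$ — or, in the phrasing of the statement, $\chi(\cA)$ in $\Hmo$ is the isomorphism class in $\cD_c(k)$ of $\mathit{HH}(\cA)$. The main obstacle I anticipate is not conceptual but notational: carefully matching the ``internal-Hom''/evaluation-map description of the duality in $\mathsf{DGCAT}$ (and its restriction to $\Hmo$) with the bimodule-tensor description of composition, and verifying that the signs produced by threading the co-evaluation through the symmetry $\tau$ agree precisely with the sign $\sigma$ appearing in $d_0$ of the Hochschild complex; this is the point where one could easily make an error, so I would treat the sign check explicitly rather than waving at it.
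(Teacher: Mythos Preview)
Your proposal is correct and follows essentially the same approach as the paper: identify the duality data via Theorem~\ref{thm:Toen-dualizable}, compute the composite $\mathrm{ev}\circ\tau\circ\delta$ as the derived tensor product $\cA(-,-)\otimes^{\bbL}_{\cA^\op\otimes^{\bbL}\cA}\cA(-,-)$ using Theorem~\ref{thm:Toen0}, and recognize this as $\mathit{HH}(\cA)$. The only difference is that where you sketch the bar-resolution argument for this last identification (and worry about sign bookkeeping), the paper simply invokes \cite[Proposition~1.1.13]{Loday}; your final paragraph on perfectness and the slight detour through $K_0(k)$ are not needed for this statement (that passage belongs to the next proposition, on $\chi(\Uloc(\cA))$).
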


\begin{proof}
From Theorem~\ref{thm:Toen-dualizable} (and its proof) we see
that the dual of $\cA$ is its opposite dg category $\cA^\op$,
and that the following composition in $\Hmo$
$$ \chi(\cA): \underline{k} \stackrel{[\cA(-,-)]}{\too} \cA^\op\otimes^{\bbL}\cA \stackrel{\tau}{\too}
\cA\otimes^{\bbL}\cA^\op \stackrel{[\cA(-,-)]}{\too} \underline{k}$$
corresponds to the complex
$$ \cA(-,-) \bigotimes_{\cA^\op\otimes^{\bbL}\cA}^{\bbL} \cA(-,-)\,.$$
By \cite[Proposition~1.1.13]{Loday} this complex of $k$-modules computes Hochschild
homology of $\cA$ (with coefficients in itself), which achieves the proof.
\end{proof}

\begin{proposition}\label{prop:preserve-Euler}
Let $F: \cC \to \cC'$ be a symmetric monoidal functor between symmetric monoidal
categories with unit objects ${\bf 1}$ and ${\bf 1'}$.
Then, given a dualizable object $X$ in $\cC$, the Euler characteristic $\chi(F(X))$ of $F(X)$
agrees with $F(\chi(X))$ on ${\bf 1'} \simeq F({\bf 1})$.
\end{proposition}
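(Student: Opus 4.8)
The plan is to unwind the definitions and run a short coherence chase. First I would recall, from the proof of Remark~\ref{rk:adj-dualizable}(iv), the explicit duality data for $F(X)$. Write $\varphi_{A,B}\colon F(A)\otimes F(B)\stackrel{\sim}{\too}F(A\otimes B)$ and $\varphi_0\colon {\bf 1'}\stackrel{\sim}{\too}F({\bf 1})$ for the structural isomorphisms of the symmetric monoidal functor $F$. Then $F(X)$ is dualizable with dual $F(X^\vee)$, with evaluation and co-evaluation
$$\mathrm{ev}_{F(X)}=\bigl(F(X)\otimes F(X^\vee)\stackrel{\varphi_{X,X^\vee}}{\too}F(X\otimes X^\vee)\stackrel{F(\mathrm{ev})}{\too}F({\bf 1})\stackrel{\varphi_0^{-1}}{\too}{\bf 1'}\bigr),$$
$$\delta_{F(X)}=\bigl({\bf 1'}\stackrel{\varphi_0}{\too}F({\bf 1})\stackrel{F(\delta)}{\too}F(X^\vee\otimes X)\stackrel{\varphi_{X^\vee,X}^{-1}}{\too}F(X^\vee)\otimes F(X)\bigr).$$
That these satisfy the triangle identities \eqref{eq:dualizable1}--\eqref{eq:dualizable2} for $F(X)$ is exactly what is asserted in Remark~\ref{rk:adj-dualizable}(iv), so it may be used freely.

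Next I would substitute these into Definition~\ref{def:Euler}. Denoting by $\tau$ the symmetry of $\cC'$, we have
$$\chi(F(X))=\mathrm{ev}_{F(X)}\circ\tau_{F(X^\vee),F(X)}\circ\delta_{F(X)}.$$
The key input is the compatibility of $F$ with the symmetry constraints, $\varphi_{X,X^\vee}\circ\tau_{F(X^\vee),F(X)}=F(\tau_{X^\vee,X})\circ\varphi_{X^\vee,X}$, which holds by definition of a symmetric monoidal functor. Plugging in the formulas and using this identity to rewrite the middle composite $\varphi_{X,X^\vee}\circ\tau_{F(X^\vee),F(X)}\circ\varphi_{X^\vee,X}^{-1}=F(\tau_{X^\vee,X})$, the four interior occurrences of the $\varphi$'s cancel in a telescoping fashion and one is left with
$$\chi(F(X))=\varphi_0^{-1}\circ F(\mathrm{ev})\circ F(\tau_{X^\vee,X})\circ F(\delta)\circ\varphi_0=\varphi_0^{-1}\circ F(\chi(X))\circ\varphi_0,$$
which is precisely the claim that $\chi(F(X))$ agrees with $F(\chi(X))$ under the identification ${\bf 1'}\simeq F({\bf 1})$.

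The only real subtlety, and hence the main obstacle, is the bookkeeping of the associativity and left/right unit constraints that I have suppressed above (for instance the isomorphisms $X\simeq X\otimes{\bf 1}$ occurring in \eqref{eq:dualizable1}). I would dispose of this by invoking Mac Lane's coherence theorem to replace $\cC$, $\cC'$ and $F$ by a strict symmetric monoidal functor between strict symmetric monoidal categories; there the displayed computation literally reduces to $\chi(F(X))=F(\mathrm{ev})\circ F(\tau)\circ F(\delta)=F(\mathrm{ev}\circ\tau\circ\delta)=F(\chi(X))$, and the general statement follows by transporting along the coherence equivalences. Alternatively one can simply write out the finite pasting diagram and invoke the monoidal functor coherence axioms square by square; no new idea is involved either way.
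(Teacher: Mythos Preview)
Your proposal is correct and is precisely the kind of coherence verification the paper has in mind; the paper's own proof simply reads ``It is a straightforward consequence of the definitions. The details are left as an exercise for the reader.'' Your argument is therefore the fleshed-out version of the intended proof.
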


\begin{proof}
It is a straightforward consequence of the definitions. The details are left as an exercise for the reader.
\end{proof}

\begin{proposition}\label{prop:Euler}
Let $\cA$ be a saturated dg category. Then, $\chi(\Uloc(\cA))$ is the element of the
Grothendieck group $K_0(k)$ which is associated to the (perfect)
Hochschild homology complex $\mathit{HH}(\cA)$ of $\cA$.
\end{proposition}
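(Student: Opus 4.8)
The plan is to deduce this from the previous proposition together with the fact that $\Uloc$ is symmetric monoidal, the only genuine work being to match the two relevant monoids of endomorphisms of the unit object.

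First I would use Theorem~\ref{thm:main-mon} to regard $\Uloc\colon\Hmo\to\Mloc(e)$ as a symmetric monoidal functor. Since $\cA$ is saturated, Theorem~\ref{thm:Toen-dualizable} shows $\cA$ is dualizable in $\Hmo$, so $\Uloc(\cA)$ is dualizable in $\Mloc(e)$ by Remark~\ref{rk:adj-dualizable}(iv). Proposition~\ref{prop:preserve-Euler} then gives $\chi(\Uloc(\cA))=\Uloc(\chi(\cA))$ under the identification ${\bf 1}_{\Mloc(e)}\simeq\Uloc(\underline{k})$. By the previous proposition, under the isomorphism $\Hom_{\Hmo}(\underline{k},\underline{k})\simeq\mathrm{Iso}\,\dgHo(\rep(\underline{k},\underline{k}))\simeq\mathrm{Iso}\,\cD_c(k)$ furnished by Theorem~\ref{thm:Toen0} and Remark~\ref{rk:Toen}, the endomorphism $\chi(\cA)$ is the class of the perfect complex $\mathit{HH}(\cA)$.

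It then remains to identify the composite
$$\mathrm{Iso}\,\cD_c(k)\;\simeq\;\Hom_{\Hmo}(\underline{k},\underline{k})\;\xrightarrow{\ \Uloc\ }\;\Hom_{\Mloc(e)}(\Uloc(\underline{k}),\Uloc(\underline{k}))\;\simeq\;\bbK_0(k)=K_0(k)\, ,$$
the last isomorphism being Theorem~\ref{thm:co-repres}, with the canonical map sending a perfect complex to its class in $K_0(k)$. For this I would invoke the naturality of the co-representability isomorphism in the dg category variable: applying it to the $\Hmo$-morphism $\underline{k}\to\underline{k}$ attached to a perfect complex $M$ (\ie to the $k$-bimodule $M$) and evaluating at $\mathrm{id}_{\Uloc(\underline{k})}$ reduces the claim to two points — that $\mathrm{id}_{\Uloc(\underline{k})}$ corresponds to $[k]=1\in\bbK_0(k)$, and that the endomorphism of $\bbK(k)$ induced by the bimodule $M$ is $-\otimes^{\bbL}_k M$, hence carries $[k]$ to $[M]$ on $\pi_0$. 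Putting the three steps together, $\chi(\Uloc(\cA))$ is the class of $\mathit{HH}(\cA)$ in $K_0(k)$.

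The main obstacle is precisely this last bookkeeping: one has to check that the co-representability isomorphism of \cite{CT} is compatible with the elementary bimodule description of morphisms in $\Hmo$ (Theorem~\ref{thm:Toen0}), in particular that it transports composition, which is derived tensor product of bimodules, into the multiplication of $\bbK_0(k)$ and sends $\mathrm{id}$ to $1$; once that is granted, the rest is formal. One could try instead to argue that both maps are natural transformations of additive functors on $\perf(k)$ agreeing on the generator $k$, but the fact that $\bbK$ is a spectrum rather than a mere abelian group still forces one to perform the identification at the level of $\pi_0$.
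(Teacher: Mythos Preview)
Your proposal is correct and follows essentially the same route as the paper: use that $\Uloc$ is symmetric monoidal (Theorem~\ref{thm:main-mon}), apply Proposition~\ref{prop:preserve-Euler} together with Theorem~\ref{thm:Toen-dualizable} and the previous proposition to reduce to identifying the map $\mathrm{Iso}\,\cD_c(k)\to K_0(k)$ induced by $\Uloc$ on endomorphisms of the unit, and then invoke the co-representability Theorem~\ref{thm:co-repres}. The paper asserts this last identification in a single sentence from Theorem~\ref{thm:co-repres}, whereas you spell out more carefully why the induced map is the canonical one; your extra care here is justified, but the overall structure is the same.
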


\begin{proof}
Thanks to Theorem~\ref{thm:main-mon}, the universal localizing invariant
$\Uloc$ is symmetric monoidal. Using Theorem~\ref{thm:co-repres},
we see that the map
$$ \Uloc(e): \mathrm{Iso}\,\cD_c(k)\simeq\Hom_{\Hmo}(\underline{k},\underline{k})
\too \Hom(\Uloc(\underline{k}),\, \Uloc(\underline{k}))\simeq K_0(k) $$
sends an element in $\mathrm{Iso}\,\cD_c(k)$ to the corresponding class in the Grothendieck
group $K_0(\cD_c(k))\simeq K_0(k)$.
Hence, Theorem~\ref{thm:Toen-dualizable}
and Proposition~\ref{prop:preserve-Euler} achieve the proof.
\end{proof}


\begin{example}
Recall from Example~\ref{ex:Toen} that, given a smooth and proper $k$-scheme $X$,
we have a saturated dg category $\perf(X)$ which enhances the category of compact objects
in $\cD_{qcoh}(X)$. Thanks to Keller~\cite{Exact, Exact1} the Hochschild homology of $\perf(X)$
(see Example~\ref{ex:HH}) agrees with the Hochschild homology of $X$
in the sense of Weibel~\cite{Weibel-Schemes}.
Therefore, by Proposition~\ref{prop:Euler} the Euler characteristic of $\Uloc(\perf(X))$ is the element
of the Grothendieck group $K_0(k)$ which is associated to the (perfect)
Hochschild homology complex $\mathit{HH}(X)$ of $X$. 

When $k$ is the field of complex numbers, the Grothendieck ring $K_0(\bbC)$ is
naturally isomorphic to $\bbZ$ and the Hochschild homology of $X$ agrees with the
Hodge cohomology $H^{\ast}(X, \Omega^{\ast}_X)$ of $X$.
Therefore, when we work over $\bbC$, the Euler characteristic of $\Uloc(\perf(X))$
is the classical Euler characteristic of $X$.
\end{example}
\appendix
\section{Grothendieck derivators}\label{appendix:A}
The original reference for the theory of derivators is Grothendieck's
manus\-cript~\cite{Grothendieck} and Heller's monograph~\cite{Heller0}.
See also \cite{imdir,propuni,CN,Duke}.
\subsection{Prederivators}\label{sub:prederivators}
A {\em prederivator} $\bbD$ consists of a strict contravariant $2$-functor from the $2$-category of small
categories to the $2$-category of categories 
$$
\bbD: \Cat^{\op} \longrightarrow \CAT.
$$
Prederivators organize themselves naturally in a $2$-category: the $1$-morphisms
(usually called morphisms) are the pseudo natural transformations and the $2$-morphisms
are the modifications; see \cite[\S 5]{CN} for details.
Given prederivators $\bbD$ and $\bbD'$, we denote by $\uHom(\bbD, \bbD')$ the category of morphisms.

Given a category $\cM$, we denote by $\underline{\cM}$ the prederivator defined for every small
category $X$ by
$$ \underline{\cM}(X) := \Fun(X^\op, \cM),$$
where $\Fun(X^\op, \cM)$ is the category of presheaves on $X$ with values in $\cM$.
If $\cW$ is a class of morphisms in $\cM$, we denote by
$\underline{\cM}[\cW^{-1}]$ the prederivator defined for every small category $X$ by
$$ \underline{\cM}[\cW^{-1}](X) := \Fun(X^\op, \cM)[\cW^{-1}].$$
Here $\Fun(X^\op, \cM)[\cW^{-1}]$ is the localization of $\Fun(X^\op, \cM)$ with respect to the
class of morphism which belong termwise to $\cW$. Note that the assignment
$(\cM, \cW) \mapsto \underline{\cM}[\cW^{-1}]$
is {\em $2$-functorial}, \ie given a natural transformation
$$
\xymatrix{
(\cM, \cW) \ar@/^1pc/[rr]^F \ar@/_1pc/[rr]_G &\Downarrow& (\cN, \cV)
}
$$
between functors, such that $F(\cW)\subset (\cV)$ and $G(\cW) \subset (\cV)$, we obtain an induced $2$-morphism
$$
\xymatrix{
\underline{\cM}[\cW^{-1}] \ar@/^1pc/[rr]^F \ar@/_1pc/[rr]_G &\Downarrow& \underline{\cN}[\cV^{-1}]
}
$$
of prederivators.
\subsection{Derivators}\label{sub:derivators}
A {\em derivator} is a prederivator which is subject to certain conditions, the main ones being that for any
functor $u:X\to Y$ between small categories, the \emph{inverse image
functor}
$$u^*=\bbD(u):\bbD(Y)\too\bbD(X)$$
has a left adjoint, called the \emph{homological direct image functor},
$$u_!:\bbD(X)\too\bbD(Y)\, ,$$
as well as right adjoint, called the \emph{cohomological direct image functor}
$$u_{\ast}:\bbD(X)\too\bbD(Y)\,.$$
See~\cite{imdir} for details. Similarly to the case of prederivators, derivators organize themselves
in a $2$-category. Given derivators $\bbD$ and $\bbD'$, we denote by $\uHom_{\flt}(\bbD,\bbD')$ the
category of morphisms of derivators which preserve filtered homotopy colimits,
and by $\HomC(\bbD,\bbD')$ the
category of morphisms of derivators which commute with all homotopy colimits;
see \cite{imdir,propuni}.
 
The essential example of a derivator to keep in mind is the derivator $\bbD=\HO(\cM)$
associated to a (complete and cocomplete) Quillen model category~$\cM$
(see \cite[Theorem~6.11]{imdir}), which is defined for every small category~$X$ by
\begin{equation*}
\HO(\cM)(X):=\Ho\big(\Fun(X^{\op},\cM)\big)\,.
\end{equation*}
In this case, any colimit (resp. limit)
preserving left (resp. right) Quillen functor induces a morphism of derivators
which preserves homotopy colimits (resp. limits); see \cite[Proposition\, 6.12]{imdir}.

Finally, we denote by $e$ the $1$-point category with one object and one
(identity) morphism. Heuristically, the category $\bbD(e)$ is the
basic ``derived" category under consideration in the
derivator~$\bbD$. For instance, if $\bbD=\HO(\cM)$ then
$\bbD(e)=\Ho(\cM)$ is the usual homotopy category of $\cM$.
\subsection{Properties}\label{sub:properties}
\begin{itemize}
\item[(i)] A derivator $\bbD$ is called {\em strong} if for every finite free category
$X$ and every small category $Y$, the natural functor $ \bbD(X
\times Y) \to \Fun(X^{\op},\bbD(Y))$ is full and
essentially surjective.
\item[(ii)]
A derivator $\bbD$ is called {\em regular} if sequential homotopy
colimits commute with finite products and homotopy pullbacks.
\item[(iii)] A derivator $\bbD$ is called {\em pointed} if for any closed immersion $i:Z
\rightarrow X$ in $\Cat$ the cohomological direct image functor
$i_{\ast}:\bbD(Z) \to \bbD(X)$ has a right adjoint, and if, dually,
for any open immersion $j:U \to X$
the homological direct image functor $ j_!: \bbD(U) \to
\bbD(X)$ has a left adjoint; see~\cite[Definition~1.13]{CN}.
\item[(iv)]
A derivator $\bbD$ is called {\em triangulated} or {\em stable} if it is
pointed and if every global commutative square is
cartesian exactly when it is cocartesian; see~\cite[Definition~1.15]{CN}.
\end{itemize}
A strong derivator is the same thing as a small homotopy theory in
the sense of Heller~\cite{Heller}. Thanks to~\cite[Proposition~2.15]{catder}, if
$\cM$ is a Quillen model category its associated derivator
$\HO(\mathcal{M})$ is strong. Moreover, if sequential homotopy
colimits commute with finite products and homotopy pullbacks
in~$\cM$, the associated derivator $\HO(\cM)$ is regular. Notice that if
$\cM$ is pointed, then the derivator $\HO(\cM)$ is pointed. Finally, a pointed
Quillen model category $\cM$ is stable
if and only if its associated derivator $\HO(\cM)$ is triangulated.

\subsection{Kan extensions}\label{sub:Kanext}
Given a small category $A$, we denote by $\mathsf{Hot}_A=\HO(\spref A)$
the derivator associated to the projective model category structure on the
category of simplicial presehaves. We then have a Yoneda
embedding
\begin{equation}\label{YonedaHot}
h:\underline{A}\too \mathsf{Hot}_A\, .
\end{equation}
Let $\bbD$ be a derivator.
A $2$-functorial version of the Yoneda lemma
gives a canonical equivalence of categories
\begin{equation*}
\uHom(\underline{A},\bbD) \simeq \bbD(A^\op)\, .
\end{equation*}

\begin{theorem}\label{derKanext1}
The morphism of prederivators \eqref{YonedaHot} is the universal
morphism from $\underline{A}$ to a derivator. In other words,
given any derivator $\bbD$, the induced functor
$$h^*: \uHom_!(\mathsf{Hot}_A,\bbD) \stackrel{\sim}{\too} \uHom(\underline{A},\bbD)$$
is an equivalence of categories.
\end{theorem}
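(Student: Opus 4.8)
The plan is to exhibit an explicit quasi-inverse to $h^*$ given by derived left Kan extension along $h$. By the $2$-functorial Yoneda lemma recalled above we have $\uHom(\underline{A},\bbD)\simeq\bbD(A^\op)$, so a morphism $f:\underline{A}\to\bbD$ amounts to the datum of a single object $F$ of $\bbD(A^\op)$. To such an $f$ I would associate its derived left Kan extension $f_!:\mathsf{Hot}_A\to\bbD$ along $h$; the existence of such Kan extensions, and the fact that they preserve homotopy colimits, belong to the general theory of derived Kan extensions of morphisms of derivators developed in \cite{imdir,propuni}. The key point making the construction explicit is that in $\spref{A}$ every object $X$ is canonically a homotopy colimit of representable presheaves, indexed by the associated category of simplices $u_X:\cE_X\to A$ (see \cite[Proposition~2.9]{Dugger} or \cite[Proposition~3.4.34]{pcmth}), so that $f_!(X)$ is forced to be the homotopy colimit of the diagram $\alpha\mapsto f(u_X(\alpha))$ in $\bbD$; the pointwise left Kan extension formula then produces $f_!$ coherently over every base category and shows it preserves homotopy colimits, since homotopy colimits commute with one another.

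Granting this construction, I would check that $h^*$ and $f\mapsto f_!$ are mutually quasi-inverse. For the composite $h^*\circ(-)_!$: a representable presheaf $h(a)$ is cofibrant in $\spref{A}$, and its category of simplices admits $(a,\mathrm{id}_a)$ as a terminal object, so the canonical presentation of $h(a)$ by representables is indexed by a category with terminal object and hence $f_!(h(a))\simeq f(a)$, naturally in $a$; this identifies the unit $f\Rightarrow h^*f_!$ with an isomorphism of morphisms of derivators. For the composite $(-)_!\circ h^*$: given $G$ in $\uHom_!(\mathsf{Hot}_A,\bbD)$ and setting $f=h^*G$, the hypothesis that $G$ preserves homotopy colimits yields, for every $X$, a natural chain of isomorphisms $G(X)\simeq\hocolim_\alpha G(h(u_X(\alpha)))\simeq\hocolim_\alpha f(u_X(\alpha))\simeq f_!(X)$, compatible with the $2$-functorial structure, whence $G\simeq(h^*G)_!$. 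Together these two facts show that $h^*$ is an equivalence of categories.

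The step I expect to be the main obstacle is the first one, namely making the derived left Kan extension $f_!$ rigorous as a morphism of derivators: producing it coherently over all base categories $I$, and verifying that it is genuinely left adjoint to $h^*$ and preserves homotopy colimits. This is precisely the content of the general machinery of \cite{imdir,propuni}, which applies here because the projective model structure on $\spref{A}$ is cofibrantly generated by maps whose domains are representable (hence cofibrant) presheaves, so that homotopy left Kan extension along $h$ is computed by the density presentation of simplicial presheaves used above. Once that input is in hand, the remaining verifications are essentially formal, resting only on the co-Yoneda description of an arbitrary simplicial presheaf as a homotopy colimit of representables.
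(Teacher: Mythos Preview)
Your proposal is correct and in fact goes well beyond what the paper does: the paper's ``proof'' of this theorem is a single citation, \emph{See \cite[Corollaire~3.26]{propuni}}, with no argument given. What you have sketched is essentially the strategy behind that cited result --- construct a quasi-inverse by derived left Kan extension along the Yoneda embedding, using that every simplicial presheaf is a homotopy colimit of representables --- and you correctly identify that the genuine technical burden (building $f_!$ coherently as a morphism of derivators and verifying the adjunction) is discharged by the machinery of \cite{imdir,propuni}. So there is no discrepancy in approach to report; your outline is the standard one, and the paper simply defers to the reference rather than reproducing it.
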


\begin{proof}
See \cite[Corollaire~3.26]{propuni}.
\end{proof}
\subsection{Monoidal structures}\label{sub:M1}
Thanks to \cite[Proposition\,5.2]{propuni} the $2$-category of prederivators
form a closed symmetric monoidal $2$-category with respect to the cartesian product.
Given two prederivators $\bbD$ and $\bbD'$, we denote by $\mathbf{Hom}(\bbD,\bbD')$
the corresponding internal Hom; see \cite[\S5.1]{propuni}.

Given a prederivator $\bbD$, by a {\em symmetric monoidal structure on $\bbD$} we
mean a structure of symmetric pseudo monoid on $\bbD$.
In other words, for every small category $X$, $\bbD(X)$ is a
symmetric monoidal category, and for every functor $u:X \to Y$ between small categories,
the inverse image functor
$$ u^{\ast}: \bbD(u): \bbD(Y) \too \bbD(X)$$
is symmetric monoidal; see \cite[\S 5.4]{propuni}.
A {\em symmetric monoidal prederivator} is a prederivator endowed
with a symmetric monoidal structure.
Given symmetric monoidal prederivators $\bbD$ and $\bbD'$, we
denote by $\uHom^{\otimes}(\bbD, \bbD')$ the category of symmetric monoidal
morphisms; see \cite[\S 5.11]{propuni} for details.
A \emph{symmetric monoidal derivator}
is a symmetric monoidal prederivator $\bbD$ which is also
a derivator, and such that the tensor product {\em preserves homotopy colimits
in each variable}, \ie sucht that, for any object $X \in \bbD(e)$ the induced morphism
$$X \otimes - : \bbD \too \bbD$$
preserves homotopy colimits.

Given symmetric monoidal derivators $\bbD$ and $\bbD'$, we
denote by $\uHom^{\otimes}_!(\bbD, \bbD')$ the category of symmetric monoidal
morphisms which preserve homotopy colimits.

A basic example of a symmetric monoidal prederivator is given as follows\,:
let $\cM$ be a symmetric monoidal category (with monoidal product $-\otimes-$)
and $\cW$ a class of morphisms in $\cM$. If the monoidal product preserves the
class $\cW$, \ie if we have an inclusion $\cW \otimes \cW \subseteq \cW$,
then the prederivator $\underline{\cM}[\cW^{-1}]$ of \S \ref{sub:prederivators}
is naturally a symmetric monoidal prederivator.
Moreover, if $F:(\cM,\cW) \to (\cN,\cV)$ is a
symmetric monoidal functor such that $F(\cW)\subset \cV$, then the induced morphism
$$ \underline{\cM}[\cW^{-1}] \too \underline{\cN}[\cV^{-1}]$$
is symmetric monoidal.

As for examples of symmetric monoidal derivators, most of them are obtained
from symmetric monoidal model categories \cite[Definition~4.2.6]{Hovey}.

\begin{proposition}\label{prop:Mderivator}
Let $\cM$ be a symmetric monoidal model category.
Then its associated derivator $\HO(\cM)$ carries a symmetric monoidal structure.
Moreover, any symmetric monoidal left Quillen functor
between symmetric monoidal model categories induces a symmetric monoidal morphism
between the associated derivators.
\end{proposition}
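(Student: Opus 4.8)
The plan is to reduce the statement to the general principle recalled in \S\ref{sub:M1}: if $\cN$ is a symmetric monoidal category and $\cW$ a class of morphisms with $\cW\otimes\cW\subseteq\cW$, then the prederivator $\underline{\cN}[\cW^{-1}]$ is naturally a symmetric monoidal prederivator, and a symmetric monoidal functor preserving weak equivalences induces a symmetric monoidal morphism of such prederivators. The only obstruction to applying this directly is that the tensor product of a symmetric monoidal model category preserves weak equivalences only between cofibrant objects, so one must first pass to a suitable symmetric monoidal subcategory.

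First I would recall, from \S\ref{sub:prederivators} and \S\ref{sub:derivators} and \cite{imdir}, that writing $\cW$ for the class of weak equivalences of $\cM$ one has a canonical identification of prederivators $\HO(\cM)\simeq\underline{\cM}[\cW^{-1}]$; indeed, for every small category $X$, the category $\HO(\cM)(X)=\Ho(\Fun(X^{\op},\cM))$ is the localisation of $\Fun(X^{\op},\cM)$ at the objectwise weak equivalences. Let $\cM_{\mathrm c}\subseteq\cM$ be the full subcategory spanned by the cofibrant objects together with the unit object ${\bf 1}$ (adjoining ${\bf 1}$ is needed only when ${\bf 1}$ is not itself cofibrant). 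Since $\cM$ is closed monoidal, $\otimes$ preserves initial objects, so the pushout-product axiom shows that a tensor of cofibrant objects is cofibrant; together with ${\bf 1}\otimes X=X$, this gives that $\cM_{\mathrm c}$ is closed under $\otimes$, hence is a symmetric monoidal subcategory of $\cM$ with strict unit ${\bf 1}$. The technical heart of the argument is then the verification that the class $\cW_{\mathrm c}$ of weak equivalences of $\cM$ lying in $\cM_{\mathrm c}$ satisfies $\cW_{\mathrm c}\otimes\cW_{\mathrm c}\subseteq\cW_{\mathrm c}$: this combines the pushout-product axiom (so that tensoring with a cofibrant object sends trivial cofibrations between cofibrant objects to trivial cofibrations), Ken Brown's lemma (so that tensoring with a cofibrant object preserves weak equivalences between cofibrant objects), and the unit axiom of \cite[Definition~4.2.6]{Hovey} together with a two-out-of-three argument (to deal with the weak equivalences having source or target ${\bf 1}$). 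Granting this, the principle recalled in \S\ref{sub:M1} equips $\underline{\cM_{\mathrm c}}[\cW_{\mathrm c}^{-1}]$ with a symmetric monoidal structure; crucially, no coherence diagram has to be checked by hand, because this tensor product is computed objectwise from the \emph{strictly} associative, symmetric and unital tensor product of $\cM_{\mathrm c}$.

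It remains to transport this structure to $\HO(\cM)$ and to record the two further properties. The inclusion $i\colon\cM_{\mathrm c}\hookrightarrow\cM$ and a functorial cofibrant resolution functor $Q\colon\cM\to\cM_{\mathrm c}$ both preserve weak equivalences and come with natural weak equivalences $iQ\Rightarrow\mathrm{Id}$ and $Qi\Rightarrow\mathrm{Id}$; exactly as in the proof of Theorem~\ref{thm:Mdgcat}, they induce by $2$-functoriality mutually quasi-inverse equivalences between $\underline{\cM_{\mathrm c}}[\cW_{\mathrm c}^{-1}]$ and $\underline{\cM}[\cW^{-1}]\simeq\HO(\cM)$, along which we transport the symmetric monoidal structure. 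To see that $\HO(\cM)$ is a symmetric monoidal \emph{derivator}, i.e.\ that the tensor product preserves homotopy colimits in each variable, I would take $X\in\HO(\cM)(e)$, choose a cofibrant representative $A$, and note that $X\otimes^{\bbL}(-)$ is isomorphic to the morphism of derivators induced by the left Quillen functor $A\otimes(-)\colon\cM\to\cM$, which preserves homotopy colimits by \cite[Proposition~6.12]{imdir}. Finally, a (strong) symmetric monoidal left Quillen functor $F\colon\cM\to\cN$ preserves cofibrant objects and, by Ken Brown's lemma, weak equivalences between them, and carries ${\bf 1}$ to an object isomorphic to the unit of $\cN$; it therefore restricts to a symmetric monoidal functor $\cM_{\mathrm c}\to\cN_{\mathrm c}$ carrying weak equivalences to weak equivalences, which by the functoriality statement of \S\ref{sub:M1} induces a symmetric monoidal morphism of the corresponding localised prederivators. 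Under the equivalences above, this is precisely the morphism $\HO(\cM)\to\HO(\cN)$ attached to the left Quillen functor $F$, which is therefore symmetric monoidal.

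The step I expect to be the main obstacle is the inclusion $\cW_{\mathrm c}\otimes\cW_{\mathrm c}\subseteq\cW_{\mathrm c}$, and within it the careful handling of the interaction between $\otimes$ and the possibly non-cofibrant unit via the unit axiom; once this is in place, the strictness of the objectwise tensor product on presheaves makes the remaining transport and functoriality arguments essentially formal.
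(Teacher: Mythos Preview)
Your proposal is correct. The paper's own proof is simply the citation ``See \cite[Proposition~6.1]{propuni}'', so there is no in-text argument to compare against; your approach---restricting to the full subcategory of cofibrant objects (adjoining the unit if necessary), verifying $\cW_{\mathrm c}\otimes\cW_{\mathrm c}\subseteq\cW_{\mathrm c}$ via the pushout-product axiom, Ken Brown's lemma and the unit axiom, and then transporting the monoidal structure along the equivalence induced by cofibrant replacement---is the standard one, and is exactly the method the paper itself carries out in detail for the special case of dg categories in Theorem~\ref{thm:Mdgcat}.
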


\begin{proof}
See \cite[Proposition~6.1]{propuni}.
\end{proof}

\subsection{Derived Day convolution product}\label{sub:biKanext}
Let $A$ be a small symmetric monoidal category. Then $\underline{A}$
is a symmetric monoidal prederivator.
Moreover, as $\spref A$ is then a symmetric monoidal model
category (see Theorem~\ref{thm:Mprojective}), the derivator $\mathsf{Hot}_A$ is then
a symmetric monoidal derivator, in such a way that the Yoneda embedding
\eqref{YonedaHot} is a symmetric monoidal morphism of prederivators.
Given a symmetric monoidal derivator $\bbD$, we thus have an induced functor
\begin{equation}\label{restrictyonedamonoidal}
h^*: \uHom^\otimes_!(\mathsf{Hot}_A,\bbD)\too\uHom^\otimes(\underline{A},\bbD)\, .
\end{equation}
\begin{theorem}\label{monoidalderKanext}
The functor \eqref{restrictyonedamonoidal} is an equivalence of categories.
\end{theorem}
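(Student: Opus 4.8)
The plan is to deduce Theorem~\ref{monoidalderKanext} from the non-monoidal universal property (Theorem~\ref{derKanext1}) together with the general formalism of monoidal structures on derivators recalled in \S\ref{sub:M1}. First I would unpack what a symmetric monoidal homotopy-colimit-preserving morphism $\mathsf{Hot}_A \to \bbD$ amounts to: by Theorem~\ref{derKanext1} such a morphism is, as a plain morphism of derivators, uniquely the left Kan extension of a morphism $\underline A \to \bbD$, so the forgetful functor $\uHom^\otimes_!(\mathsf{Hot}_A,\bbD) \to \uHom_!(\mathsf{Hot}_A,\bbD) \simeq \uHom(\underline A,\bbD)$ is faithful and the real content is to upgrade the monoidal data and to check essential surjectivity onto $\uHom^\otimes(\underline A,\bbD)$.

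The key steps, in order, are as follows. (1) Given a symmetric monoidal morphism $\varphi : \underline A \to \bbD$, let $\Phi = h_!(\varphi) : \mathsf{Hot}_A \to \bbD$ be its (unique) homotopy-colimit-preserving extension from Theorem~\ref{derKanext1}; I must endow $\Phi$ with a symmetric monoidal structure. The point is that the tensor product on $\mathsf{Hot}_A$ is, by construction (Day convolution, cf.\ \S\ref{sub:Day} and Theorem~\ref{thm:Mprojective}), the left Kan extension along $h\times h$ of the composite $\underline{A}\times\underline{A}\xrightarrow{\otimes}\underline A \xrightarrow{h}\mathsf{Hot}_A$; equivalently, the tensor product on $\mathsf{Hot}_A$ preserves homotopy colimits in each variable and restricts to the tensor product of $A$ on representables. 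Hence both $\Phi(-)\otimes\Phi(-)$ and $\Phi(-\otimes-)$ are morphisms $\mathsf{Hot}_A\times\mathsf{Hot}_A\to\bbD$ which preserve homotopy colimits in each variable separately, and the universal property of $\mathsf{Hot}_A$ in each variable (apply Theorem~\ref{derKanext1} twice, as is done in the proof of Proposition~\ref{prop:FltMon}) shows that a natural transformation between them is determined by its restriction to $\underline A\times\underline A$. (2) On $\underline A\times\underline A$ the monoidal structure maps of $\varphi$ provide a canonical comparison $\varphi(-)\otimes\varphi(-)\Rightarrow\varphi(-\otimes-)$, which under the Yoneda identification corresponds to the desired constraint $\Phi(-)\otimes\Phi(-)\Rightarrow\Phi(-\otimes-)$; similarly the unit of $\bbD$ is $\Phi$ of the monoidal unit of $\mathsf{Hot}_A$ (which is $h$ of ${\bf 1}_A$) because $\varphi$ sends ${\bf 1}_A$ to the unit of $\bbD$. (3) The coherence axioms (associativity, unit, symmetry) for $\Phi$ are equalities of $2$-morphisms between homotopy-colimit-preserving morphisms out of $\mathsf{Hot}_A^{\times n}$; by the same multivariable universal property these can be checked after restriction along $h^{\times n}$, where they reduce to the coherence axioms already satisfied by $\varphi$. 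This produces a symmetric monoidal homotopy-colimit-preserving $\Phi$ with $h^*\Phi\simeq\varphi$ as symmetric monoidal morphisms, giving essential surjectivity of \eqref{restrictyonedamonoidal}. (4) For full faithfulness of \eqref{restrictyonedamonoidal}: a symmetric monoidal $2$-morphism between two objects of $\uHom^\otimes_!(\mathsf{Hot}_A,\bbD)$ is in particular a $2$-morphism in $\uHom_!(\mathsf{Hot}_A,\bbD)$, hence by Theorem~\ref{derKanext1} is determined by its restriction along $h$, and conversely any monoidal $2$-morphism downstairs extends to a monoidal one upstairs by the same multivariable argument (the monoidality condition on the extended $2$-morphism is again an identity that holds on representables).

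The main obstacle I anticipate is step (1)/(3): making rigorous and precise the "multivariable" form of the universal property of $\mathsf{Hot}_A$ — that a morphism out of $\mathsf{Hot}_A^{\times n}$ which preserves homotopy colimits in each variable separately is freely generated by its restriction to $\underline A^{\times n}$ — and verifying that the Day convolution tensor product is genuinely characterized by being the homotopy-colimit-preserving-in-each-variable extension of $h\circ\otimes_A$. This is exactly the bookkeeping carried out in the proof of Proposition~\ref{prop:FltMon}, and the cleanest route is to cite that argument verbatim: apply Theorem~\ref{derKanext1} with $\bbD$ replaced successively by the derivator $\mathbf{Hom}(\mathsf{Hot}_A,\bbD)$ of homotopy-colimit-preserving morphisms, obtaining by induction on $n$ that $\uHom_!(\mathsf{Hot}_A^{\times n},\bbD)\simeq\uHom(\underline A^{\times n},\bbD)$, where on the left one takes morphisms preserving homotopy colimits in each variable. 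Once this is in place, all coherence data and axioms for $\Phi$ transport formally from $\varphi$, and the theorem follows. (One may also note that this is the derivator analogue of the classical statement that the presheaf category $\widehat A$ with Day convolution is the free cocompletion of a monoidal category $A$ as a monoidal cocomplete category, so no genuinely new idea is required beyond Theorem~\ref{derKanext1}.)
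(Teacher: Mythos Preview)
Your proposal is correct and follows essentially the same approach as the paper: both reduce to the non-monoidal universal property (Theorem~\ref{derKanext1}), establish the multivariable version $\uHom_!(\mathsf{Hot}_A^{\times n},\bbD)\simeq\uHom(\underline{A}^{\times n},\bbD)$ by iterating through the internal-hom derivator $\mathbf{Hom}_!(\mathsf{Hot}_A,\bbD)$, and then transport the monoidal data and coherences along this equivalence. One organizational caveat: you appeal to the proof of Proposition~\ref{prop:FltMon} for the multivariable bookkeeping, but in the paper that proposition \emph{cites} Theorem~\ref{monoidalderKanext}, so the argument must be (and in your sketch already is) self-contained here rather than imported from there.
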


\begin{proof}
This is simply a variation on Theorem \ref{derKanext1}.
Given two derivators $\bbD'$ and $\bbD''$, the derivator $\mathbf{Hom}_!(\bbD',\bbD'')$
is defined by
$$\mathbf{Hom}_!(\bbD',\bbD'')(A)=\uHom_!(\bbD',\bbD''_A)\, ,$$
where $\bbD''_A$ is in turn the derivator of ``presheaves on $A$ with values
in $A$'', \ie the derivator defined by
$$\bbD''_A(X)=\bbD''(A\times X)\,.$$
For a third derivator $\bbD$, the data of a morphism of prederivators
$$\bbD\times\bbD'\too\bbD''$$
which preserves homotopy colimits in each variable is equivalent to the data
of an object in $\uHom_!(\bbD,\mathbf{Hom}_!(\bbD',\bbD''))$; see \cite[Lemme~5.18]{propuni}.
Moreover, when $\bbD'$ is of the form $\mathsf{Hot}_B$, with $B$ a small category, it follows immediately from Theorem \ref{derKanext1} that
we have equivalences of derivators
$$\mathbf{Hom}_!(\mathsf{Hot}_B,\bbD'')\simeq \mathbf{Hom}(\underline{B},\bbD'')
\simeq \bbD''_{B^\op}\, .$$
Hence, if $A$ is another small category, we obtain canonical
equivalences of categories\,:
$$\begin{aligned}
\uHom_!(\mathsf{Hot}_A,\mathbf{Hom}_!(\mathsf{Hot}_B,\bbD''))
& \simeq \uHom(\underline{A},\mathbf{Hom}_!(\mathsf{Hot}_B,\bbD''))\\
& \simeq \uHom(\underline{A},\mathbf{Hom}(\underline{B},\bbD''))\\
& \simeq \uHom(\underline{A}\times \underline{B},\bbD'')=\bbD''(A^\op\times B^\op)\, .
\end{aligned}$$
For $A=B$ and $\bbD''=\mathsf{Hot}_A$, we note that the tensor product
on $\mathsf{Hot}_A$ corresponds, under these equivalences of categories, to the tensor product $\otimes : A\times A\too A$ composed with the
Yoneda embeding \eqref{YonedaHot}.

More generally, we obtain (by induction on $n\geq 0$) that for any 
$n$-tuple of small categories $(A_1,\ldots,A_n)$, the category
of morphisms $\mathsf{Hot}_{A_1}\times \dots \times \mathsf{Hot}_{A_n}\too\bbD''$
which preserve homotopy colimits in each variable is
canonically equivalent to the category of morphisms
$\underline{A_1}\times \dots \times \underline{A_n}\too\bbD''$.
This fact implies that the symmetric monoidal structure on $A$
extends uniquely to a symmetric monoidal structure on the derivator
$\mathsf{Hot}_A$. Moreover, the category of
symmetric monoidal morphisms from $\mathsf{Hot}_A$ to $\bbD''$, which preserve homotopy colimits, is canonically equivalent
to the category of symmetric monoidal morphisms from $\underline{A}$ to
$\bbD''$.
\end{proof}

\subsection{Left Bousfield localization}\label{sub:leftBousfield}
Let $\bbD$ be a derivator and $S$ a class of morphisms in the base category $\bbD(e)$.
We say that the derivator $\bbD$ admits a {\em left Bousfield localization} with
respect to the class $S$, if there exists a morphism of derivators
$$ \gamma : \bbD \too \Loc_S\bbD\,,$$
which commutes with homotopy colimits, sends the elements of $S$ to
isomorphisms in $\Loc_S\bbD(e)$, and satisfies the following
universal property\,: given any derivator $\bbD'$, the morphism
$\gamma$ induces an equivalence of categories
\begin{equation*}
\gamma^{\ast}: \HomC(\Loc_S\bbD,\bbD')
  \stackrel{\sim}{\too}
  \uHom_{!,S}(\bbD,\bbD')\,,
\end{equation*}
where $\uHom_{!,S}(\bbD,\bbD')$ denotes
the category of morphisms of derivators which commute with homotopy
colimits and send the elements of $S$ to isomorphisms in $\bbD'(e)$.
\begin{theorem}\label{thm:Quillenloc}
Let $\cM$ be a left proper cellular model category and $S$ a set of maps in the
homotopy category $\Ho(\cM)$ of $\cM$. Consider the left Bousfield localization
$\Loc_S\cM$ of $\cM$  with respect to the set $S$, $\ie$ to perform the
localization we choose in $\cM$ a representative for each element of $S$.
Then, the induced morphism of
derivators $\HO(\cM)\to \HO(\Loc_S\cM)$ is a left Bousfield
localization of the derivator $\HO(\cM)$ with respect to the set $S$.
Moreover, we have a natural adjunction of derivators\,:
$$
\xymatrix{
\HO(\cM) \ar@<-1ex>[d] \\
\HO(\Loc_S\cM) \ar@<-1ex>[u]\,.
}
$$
\end{theorem}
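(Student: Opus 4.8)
The plan is to reduce everything to the known universal property of left Bousfield localization at the level of model categories (Hirschhorn~\cite[Theorem~3.3.19 and Theorem~4.1.1]{Hirschhorn}) together with the standard dictionary between left Quillen functors and homotopy-colimit-preserving morphisms of derivators (\cite[Proposition~6.12]{imdir}). First I would recall that, since $\cM$ is left proper and cellular, the left Bousfield localization $\Loc_S\cM$ exists: it has the same underlying category and the same cofibrations as $\cM$, and the identity functor $\cM \to \Loc_S\cM$ is a left Quillen functor whose right adjoint (again the identity) detects the $S$-local objects as the fibrant objects. This immediately gives the asserted adjunction of derivators $\HO(\cM) \rightleftarrows \HO(\Loc_S\cM)$, since a Quillen adjunction induces an adjunction between the associated derivators (apply \cite[Proposition~6.12]{imdir} levelwise over each small category $X$, using that the diagram categories $\Fun(X^\op,\cM)$ and $\Fun(X^\op,\Loc_S\cM)$ are again related by a Quillen adjunction with the projective model structures).

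Next I would verify the three defining properties of a left Bousfield localization of derivators for the morphism $\gamma = \bbL\,\id \colon \HO(\cM)\to\HO(\Loc_S\cM)$. That $\gamma$ preserves homotopy colimits is automatic, being a left derived left Quillen functor. That $\gamma$ sends the elements of $S$ to isomorphisms in $\HO(\Loc_S\cM)(e)=\Ho(\Loc_S\cM)$ is exactly the statement that the $S$-local equivalences become invertible in the localized homotopy category. For the universal property, let $\bbD'$ be any derivator and consider a homotopy-colimit-preserving morphism $F\colon \HO(\cM)\to\bbD'$ which inverts the elements of $S$; I must produce a unique (up to unique $2$-isomorphism) factorization through $\gamma$. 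The key point is a levelwise argument: for each small category $X$, the projective model structure on $\Fun(X^\op,\Loc_S\cM)$ is the left Bousfield localization of $\Fun(X^\op,\cM)$ at the set $S_X$ of maps of presheaves which are objectwise in $S$ (this is \cite[Theorem~4.1.1]{Hirschhorn} applied to diagram categories, or can be extracted from the cellularity of the projective structure). Hence a homotopy-colimit-preserving functor out of $\Ho(\Fun(X^\op,\cM))$ that inverts $S_X$ factors uniquely through $\Ho(\Fun(X^\op,\Loc_S\cM))$; assembling these factorizations $2$-functorially in $X$ — using that $F$ commutes with the inverse image functors $u^\ast$ and with homotopy colimits — yields the desired morphism $\Loc_S\bbD' \colon \HO(\Loc_S\cM)\to\bbD'$ and the equivalence of categories
\begin{equation*}
\gamma^{\ast}\colon \HomC(\HO(\Loc_S\cM),\bbD') \stackrel{\sim}{\too} \uHom_{!,S}(\HO(\cM),\bbD')\,.
\end{equation*}

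The main obstacle I anticipate is the compatibility of localization with the passage to diagram categories, i.e.\ the identification of $\HO(\Loc_S\cM)(X)$ with the homotopy category of $\Loc_{S_X}\Fun(X^\op,\cM)$, and the verification that the universal factorizations obtained levelwise are genuinely natural in $X$ (so that they glue to a morphism of derivators rather than merely a compatible family of functors). This requires knowing that the cofibration structure, left properness, and cellularity all pass to the projective diagram model structures — which is standard but must be invoked carefully — and that a morphism of derivators inverting $S$ automatically inverts all of $S_X$ because $\bbD'(X)$ receives the homotopy colimit of a diagram of representables, reducing objectwise equivalences to the case of $S$ itself. Once this compatibility is in place, the remainder is a formal manipulation with $2$-functors and adjunctions. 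Everything else — the existence of $\Loc_S\cM$, the Quillen adjunction, the preservation of homotopy colimits — is immediate from the cited results.
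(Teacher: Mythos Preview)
The paper does not prove this here; it simply cites \cite[Theorem~4.4]{Duke}. Your outline --- the identity Quillen adjunction induces an adjunction of derivators, then verify the universal property --- is the right shape, and the adjunction part is fine. But the universal-property step has a real gap. You assert that ``a homotopy-colimit-preserving functor out of $\Ho(\Fun(X^\op,\cM))$ that inverts $S_X$ factors uniquely through $\Ho(\Fun(X^\op,\Loc_S\cM))$'', citing Hirschhorn. Hirschhorn's universal property \cite[Theorem~3.3.19]{Hirschhorn}, however, is a statement about \emph{left Quillen functors}, and $F(X)$ is not one. At the homotopy-category level, $\Ho(\Loc_{S_X}\Fun(X^\op,\cM))$ is the localization at the class of \emph{all} $S_X$-local equivalences, so to factor $F(X)$ you must know that $F(X)$ inverts that whole class, not merely the generating set $S_X$. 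By conservativity of the evaluation functors $x^*$ this reduces to: $F(e)$ inverts every $S$-local equivalence in $\Ho(\cM)$. But that is exactly the content of the theorem at level $e$, so your levelwise reduction is circular.

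The missing lemma is: a homotopy-colimit-preserving morphism $F$ of derivators that inverts $S$ inverts every $S$-local equivalence. One proves this via the small object argument. Factor an $S$-local equivalence (in $\Loc_S\cM$) as an $S$-local trivial cofibration followed by a trivial fibration of $\cM$; the latter is already inverted. The former is a retract of a transfinite composition of pushouts of maps in $J\cup\tilde\Lambda(S)$, the generating trivial cofibrations of $\Loc_S\cM$. Since $F$ preserves homotopy pushouts and filtered homotopy colimits, it suffices to show $F$ inverts each horn $\tilde s\,\square\,i_n$ built from $s\in S$. For this, observe that $A\otimes K$ is the homotopy colimit of the constant diagram $A$ over the simplex category of $K$, so $F(s\otimes\id_K)$ is an isomorphism for every simplicial set $K$; a cube argument on the defining pushout then shows $F(\tilde s\,\square\,i_n)$ is an isomorphism. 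With this lemma in hand the factorization is immediate from the adjunction: set $G=F\iota$ and use that $F$ inverts the unit $\id\to\iota\gamma$. No levelwise gluing is required.
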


\begin{proof}
See \cite[Theorem~4.4]{Duke}.
\end{proof}

\begin{remark}\label{rk:regular}
If the domains and codomains of the elements of the set $S$
are homotopically finitely presented (see Definition~\ref{def:HFP}),
the morphism $\HO(\Loc_S\cM) \to \HO(\cM)$ (right adjoint to the localizing functor)
preserves filtered homotopy colimits. Therefore, under these hypothesis, if $\HO(\cM)$ is
regular so it is $\HO(\Loc_S\cM)$.
\end{remark}
By~\cite[Lemma~4.3]{Duke}, the Bousfield localization $\Loc_S\bbD$
of a \emph{triangulated} derivator $\bbD$ remains triangulated as
long as $S$ is stable under the loop space functor. For more
general~$S$, to remain in the world of \emph{triangulated}
derivators, one has to localize with respect to the set $\Omega(S)$
generated by $S$ and loops, as follows.
\begin{proposition}\label{prop:omega}
Let $\bbD$ be a triangulated derivator and $S$ a class of morphisms
in~$\bbD(e)$. Let us denote by $\Omega(S)$ the smallest class of
morphisms in $\bbD(e)$ which contains $S$ and is stable under the
loop space functor $\Omega:\bbD(e) \rightarrow \bbD(e)$. Then for
any \emph{triangulated} derivator~$\bbT$, we have an equality of
categories
\begin{equation*}
\uHom_{!, \Omega(S)}(\bbD,\bbT) = \uHom_{!,S}(\bbD,\bbT)\,.
\end{equation*}
As a consequence, whenever $\Loc_{\Omega(S)}\bbD$ exists,
this is the \emph{triangulated} left Bousfied localization of $\bbD$ with respect to~$S$.
\end{proposition}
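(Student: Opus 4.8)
The plan is to deduce the whole statement from a single structural fact: a morphism of derivators which preserves homotopy colimits, when considered between \emph{triangulated} derivators, automatically commutes with the loop space functor $\Omega$ (up to canonical isomorphism).

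First I would fix a triangulated derivator $\bbT$ and an arbitrary $F\in\uHom_!(\bbD,\bbT)$. Since $F$ preserves homotopy colimits it preserves initial objects, hence the zero object (a triangulated derivator being in particular pointed); and because $\Sigma X$ is computed as the homotopy pushout of the diagram $0\leftarrow X\to 0$, the morphism $F$ commutes with $\Sigma$ up to a canonical isomorphism. As $\bbD$ and $\bbT$ are triangulated, on each base category $\Sigma$ is an equivalence with quasi-inverse $\Omega$, and $F\Sigma\cong\Sigma F$ then formally yields $F\Omega\cong\Omega F$.

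Next I would argue that the class $W_F$ of arrows of $\bbD(e)$ inverted by $F$ is stable under $\Omega$ — indeed $\Omega$ of an isomorphism is an isomorphism and $F$ commutes with $\Omega$. Consequently, if $F$ lies in $\uHom_{!,S}(\bbD,\bbT)$, i.e.\ $S\subseteq W_F$, the minimality of $\Omega(S)$ forces $\Omega(S)\subseteq W_F$, so $F$ lies in $\uHom_{!,\Omega(S)}(\bbD,\bbT)$; the opposite inclusion is trivial since $S\subseteq\Omega(S)$. As these are full subcategories of $\uHom_!(\bbD,\bbT)$ determined by conditions on objects, this proves the asserted equality of categories. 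For the final assertion, when $\Loc_{\Omega(S)}\bbD$ exists it is triangulated by \cite[Lemma~4.3]{Duke}, because $\Omega(S)$ is stable under $\Omega$ by construction; and for every triangulated $\bbT$ the defining universal property of the Bousfield localization together with the equality just proved give $\HomC(\Loc_{\Omega(S)}\bbD,\bbT)\simeq\uHom_{!,\Omega(S)}(\bbD,\bbT)=\uHom_{!,S}(\bbD,\bbT)$, which is precisely the universal property of the triangulated left Bousfield localization of $\bbD$ at $S$.

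The only step requiring genuine care, and hence the main obstacle, is the first one: justifying rigorously that $F$ commutes with $\Sigma$ (equivalently $\Omega$) from homotopy colimit preservation. This is standard — it amounts to saying that such an $F$ is an exact morphism of triangulated derivators — but it does rely on $\Sigma$ being assembled out of the functors $u_!$ and base change isomorphisms, so one should invoke the relevant compatibilities rather than wave them through. Everything after that is purely formal.
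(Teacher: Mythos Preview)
Your proposal is correct and follows essentially the same approach as the paper: show that a homotopy colimit preserving morphism between triangulated derivators commutes with $\Sigma$, hence with its inverse $\Omega$, and conclude that inverting $S$ is equivalent to inverting $\Omega(S)$. Your version is simply more explicit about the final consequence (invoking \cite[Lemma~4.3]{Duke} and the universal property), which the paper leaves implicit.
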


\begin{proof}
For $F$ an element of $\uHom_{!}(\bbD,\bbT)$, the functor
$F(e):\bbD(e)\to\mathbb{T}(e)$ commutes with homotopy colimits,
hence it commutes in particular with the suspension functor. Since
both $\bbD$ and $\bbT$ are triangulated, suspension and loop space
functors are inverse to each other. Hence $F(e)$ also commutes
with~$\Omega$. It is then obvious that $F(e)$ sends $S$ to
isomorphisms if and only if it does so with~$\Omega(S)$.
\end{proof}

\begin{theorem}[Dugger]\label{dugger1}
Let $\cM$ be a combinatorial model category.
Then, there exists a small category $A$ and a small set of maps $S$
in $\mathsf{Hot}_A(e)=\Ho(\spref A)$,
such that $\HO(\cM)$ is equivalent to $\Loc_S\mathsf{Hot}_A$.
\end{theorem}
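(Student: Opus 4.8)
The plan is to reduce the statement to Dugger's presentation theorem for combinatorial model categories, combined with Theorem~\ref{thm:Quillenloc}. Recall Dugger's result (see \cite{Dugger}): for every combinatorial model category $\cM$ there is a small category $A$ together with a small set $S_0$ of morphisms in the category $\spref A$ of simplicial presheaves on $A$, such that $\cM$ is Quillen equivalent to the left Bousfield localization $\Loc_{S_0}\spref A$ of the projective model structure on $\spref A$ with respect to $S_0$. Since the projective model structure on $\spref A$ is left proper and cellular (see \S\ref{sub:modelBK}), this localization is well defined in the sense of \cite[Definition~9.3.1]{Hirschhorn}, so Dugger's statement makes sense, and moreover Theorem~\ref{thm:Quillenloc} will be applicable to it verbatim.

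First I would pass to the associated derivators. A Quillen equivalence induces an equivalence of the associated derivators: this is immediate from \cite[Proposition~6.12]{imdir}, since the derived adjunction restricts to inverse equivalences on each diagram category $\Fun(X^{\op},-)$. Hence $\HO(\cM)$ is equivalent to $\HO(\Loc_{S_0}\spref A)$. Next, let $S$ be the image of $S_0$ in $\mathsf{Hot}_A(e)=\Ho(\spref A)$; this is a small set of maps, and the elements of $S_0$ serve as representatives in $\spref A$ for the elements of $S$. Applying Theorem~\ref{thm:Quillenloc} to the left proper cellular model category $\spref A$ and the set $S$, one obtains that the canonical morphism of derivators
$$\mathsf{Hot}_A=\HO(\spref A)\too \HO(\Loc_{S_0}\spref A)$$
exhibits $\HO(\Loc_{S_0}\spref A)$ as the left Bousfield localization $\Loc_S\mathsf{Hot}_A$ of $\mathsf{Hot}_A$ with respect to $S$, in the sense of \S\ref{sub:leftBousfield}. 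Concatenating the two identifications yields an equivalence $\HO(\cM)\simeq \Loc_S\mathsf{Hot}_A$, which is the desired conclusion.

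All the genuinely substantial input here is external to the present paper, so the main point to be careful about is the faithful application of Dugger's theorem: one must use that the localizing set $S_0$ can be taken small (which Dugger's construction guarantees, the set being assembled from a small set of generators detecting the weak equivalences of $\cM$), and that the target of Dugger's Quillen equivalence really is the \emph{projective} model structure on a presheaf category localized at a set, so that Theorem~\ref{thm:Quillenloc} applies without modification. Granting this, no further computation is required.
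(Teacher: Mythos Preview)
Your proposal is correct and follows essentially the same approach as the paper's proof: invoke Dugger's presentation theorem to obtain a Quillen equivalence $\cM\simeq \Loc_{S_0}\spref A$, pass to derivators, and then apply Theorem~\ref{thm:Quillenloc} to identify $\HO(\Loc_{S_0}\spref A)$ with $\Loc_S\mathsf{Hot}_A$. The only minor point is that the relevant presentation theorem is in \cite{Duggerbis} (specifically Proposition~3.3 there), not in \cite{Dugger}; otherwise your argument matches the paper's.
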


\begin{proof}
This follows from \cite[Proposition 3.3]{Duggerbis}
and from Theorem \ref{thm:Quillenloc} applied to the
projective model structure on the category of simplicial
presheaves of a small category.
\end{proof}

\begin{remark}
It follows immediately from Theorem~\ref{dugger1} that the statement of Theorem \ref{thm:Quillenloc}
holds also for left proper combinatoriel model categories.
In particular, any derivator which is equivalent to a derivator
associated to a combinatorial model category admits a left Bousfield localization with respect to any small set of maps.
\end{remark}

\begin{proposition}\label{derivatorleftlocmonoidal}
Let $\bbD$ be a symmetric monoidal derivator, and $S$
a class of maps in $\bbD(e)$. Assume that $S$ is closed under
tensor product in $\bbD$, and that the left Bousfield localization
of $\bbD$ by $S$ exists. Then, $\Loc_S\bbD$ is
symmetric monoidal, and the localization morphism
$\gamma:\bbD\too\Loc_S\bbD$ is symmetric monoidal.
Moreover, given any symmetric monoidal
derivator $\bbD'$, the induced functor
$$\gamma^*:\uHom^\otimes_!(\Loc_S\bbD,\bbD')\too
\uHom^\otimes_!(\bbD,\bbD')$$
is fully-faithful, and its essential image
consists of the symmetric monoidal
homotopy colimit preserving morphisms which
send $S$ to isomorphisms.
\end{proposition}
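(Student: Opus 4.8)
The plan is to mimic, in the setting of Bousfield localization, the argument used in the proof of Theorem~\ref{monoidalderKanext}: one upgrades the one-variable universal property of $\gamma : \bbD \too \Loc_S\bbD$ (recalled in~\S\ref{sub:leftBousfield}) to an $n$-variable statement, and then transports the symmetric monoidal structure of $\bbD$ along $\gamma$. The first --- and main --- step is to prove that for every integer $n\geq 1$ and every derivator $\bbD''$, the functor
$$(\gamma^{\times n})^*: \uHom_!\big((\Loc_S\bbD)^{\times n}, \bbD''\big) \too \uHom_!\big(\bbD^{\times n}, \bbD''\big)$$
(where $\uHom_!$ denotes the categories of morphisms preserving homotopy colimits in each variable separately) is fully faithful, with essential image the morphisms which send the elements of $S$ to isomorphisms in each variable. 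For $n=1$ this is exactly the defining property of $\Loc_S\bbD$. For the inductive step one argues as in the proof of Theorem~\ref{monoidalderKanext}: using \cite[Lemme~5.18]{propuni} one rewrites $\uHom_!\big((\Loc_S\bbD)^{\times n}, \bbD''\big) \simeq \uHom_!\big(\Loc_S\bbD, \mathbf{Hom}_!((\Loc_S\bbD)^{\times(n-1)}, \bbD'')\big)$, and one checks that $\mathbf{Hom}_!(\Loc_S\bbD, \bbE)$ is canonically equivalent to the full subderivator $\mathbf{Hom}_{!,S}(\bbD, \bbE)$ of $\mathbf{Hom}_!(\bbD, \bbE)$ spanned by the homotopy colimit preserving morphisms inverting $S$ --- which follows from the case $n=1$ applied levelwise --- so that the induction hypothesis applies. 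The delicate points here are of a bookkeeping nature: that $\mathbf{Hom}_{!,S}(\bbD,\bbE)$ is again a derivator, and that these identifications are compatible with the internal-hom adjunctions of \cite{propuni}.

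Granting this, the second step is routine. Since $S$ is closed under tensor product in $\bbD$, the composite $\gamma\circ(-\otimes-): \bbD\times\bbD \too \Loc_S\bbD$ preserves homotopy colimits in each variable and sends $S$ to isomorphisms in each variable; hence by the $n=2$ case it factors, uniquely up to unique isomorphism, through $\gamma\times\gamma$ as a morphism $\Loc_S\bbD\times\Loc_S\bbD\too\Loc_S\bbD$ preserving homotopy colimits in each variable. The associativity, symmetry and unit constraints (with unit $\gamma({\bf 1})$), together with the pentagon and hexagon coherences, are equalities of homotopy colimit preserving morphisms out of finite powers of $\Loc_S\bbD$; by the $n$-variable statement they may be checked after precomposition with the corresponding power of $\gamma$, where they reduce to the constraints of $\bbD$. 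This endows $\Loc_S\bbD$ with a symmetric monoidal structure for which $\gamma$ is symmetric monoidal by construction, and the induced tensor product preserves homotopy colimits in each variable, so $\Loc_S\bbD$ is a symmetric monoidal derivator in the sense of~\S\ref{sub:M1}.

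For the last assertion, let $\bbD'$ be a symmetric monoidal derivator. Fully-faithfulness of $\gamma^*: \uHom^\otimes_!(\Loc_S\bbD, \bbD')\too\uHom^\otimes_!(\bbD,\bbD')$ is obtained by restricting the equivalence $\HomC(\Loc_S\bbD,\bbD')\stackrel{\sim}{\too}\uHom_{!,S}(\bbD,\bbD')$ to the relevant full subcategories, together with the observation that a $2$-morphism between symmetric monoidal morphisms upstairs is itself monoidal if and only if its restriction along $\gamma$ is. For the essential image, let $F\in\uHom^\otimes_!(\bbD,\bbD')$ send $S$ to isomorphisms; by the case $n=1$ it factors as $F\simeq\bar F\circ\gamma$ with $\bar F$ homotopy colimit preserving, and the structure isomorphisms of $F$, via the $n$-variable statement, produce (uniquely) structure isomorphisms $\bar F(X)\otimes\bar F(Y)\too\bar F(X\otimes Y)$ and ${\bf 1}\too\bar F({\bf 1})$ making $\bar F$ symmetric monoidal --- again the coherence identities for $\bar F$ are equalities of homotopy colimit preserving morphisms out of powers of $\Loc_S\bbD$, hence checkable after precomposition with powers of $\gamma$, where they follow from the monoidality of $F$. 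Thus $\gamma^*\bar F\simeq F$, and conversely any such $\bar F$ manifestly restricts along $\gamma$ to a symmetric monoidal morphism inverting $S$; this identifies the essential image.

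The main obstacle is concentrated in the first step --- formulating and proving the $n$-variable universal property of $\Loc_S$ --- together with the verification that the hypothesis that $S$ is closed under tensor product is precisely what guarantees that $\gamma\circ(-\otimes-)$ inverts $S$ in each variable. One should also note that, when $\bbD=\HO(\cM)$ for a left proper combinatorial symmetric monoidal model category $\cM$ for which $\Loc_S\cM$ exists --- the only situation needed in the proof of Theorem~\ref{thm:main-mon} --- the statement can alternatively be deduced from Theorem~\ref{thm:monloc}, Theorem~\ref{thm:Quillenloc} and Proposition~\ref{prop:Mderivator}.
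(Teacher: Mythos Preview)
Your proposal is correct and is precisely the kind of argument the paper has in mind: the paper's own proof consists of the single sentence ``This is an immediate consequence of the universal property of $\Loc_S\bbD$. The details are left as an exercise for the reader.'' Your sketch fills in that exercise along the natural lines, mirroring the proof of Theorem~\ref{monoidalderKanext} (upgrading the one-variable universal property to an $n$-variable one via the internal Hom of \cite[Lemme~5.18]{propuni}, then transporting the monoidal structure and coherences). There is nothing to compare: you have written out the intended argument.
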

\begin{proof}
This is an immediate consequence of the universal property
of $\Loc_S\bbD$. The details are left as an exercise for the reader.
\end{proof}
\subsection{Stabilization}\label{sub:Heller}
Let $\bbD$ be a derivator. Then there is a \emph{universal
pointed derivator} $\bbD\to\bbD_\bullet$\,: given any pointed derivator $\bbD'$, the induced functor
$$\uHom_!(\bbD_\bullet,\bbD')\stackrel{\sim}{\too}\uHom_!(\bbD,\bbD')$$
is an equivalence of categories; see \cite[Corollaire~4.19]{propuni}.
Using the explicit construction of $\bbD_\bullet$ (see \cite[\S4.5]{propuni})
it is easy to see that when $\bbD_\bullet$ is strong (resp. regular)
so is $\bbD$. If $\bbD=\HO(\cM)$ for some model category $\cM$, then
$\bbD_\bullet$ is equivalent to $\HO(\cM_\bullet)$, where $\cM_\bullet$
denotes the model category of pointed objects in $\cM$; see \cite[Proposition~1.1.8]{Hovey}.

Let $\bbD$ be a pointed derivator. A \emph{stabilization} of $\bbD$
is a homotopy colimit preserving morphism $\stab:\bbD\to\St(\bbD)$, with $\St(\bbD)$ a triangulated strong derivator, which is universal
for these properties\,: given any
triangulated strong derivator $\bbT$, the induced functor
$$ \stab^{\ast}:\HomC(\St(\bbD),\bbT) \stackrel{\sim}{\too} \HomC(\bbD,\bbT)\,.$$
is an equivalence of categories.

\begin{theorem}[Heller~\cite{Heller}]\label{thm:Heller}
Any pointed regular strong derivator admits a stabilization.
\end{theorem}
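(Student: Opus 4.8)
The plan is to follow Heller's construction \cite{Heller}, realizing $\St(\bbD)$ as a sequential homotopy colimit of copies of $\bbD$ glued along the suspension functor. Since $\bbD$ is pointed there is an adjunction $\Sigma\dashv\Omega$ on $\bbD$, where, for $X$ a small category and an object of $\bbD(X)$, $\Sigma$ is computed as a homotopy pushout of $0\leftarrow(-)\to 0$ and $\Omega$ as a homotopy pullback of $0\to(-)\leftarrow 0$. First I would form, for every small category $X$, the pseudo-filtered $2$-colimit
$$\St(\bbD)(X):=\colim\bigl(\bbD(X)\xrightarrow{\ \Sigma\ }\bbD(X)\xrightarrow{\ \Sigma\ }\bbD(X)\xrightarrow{\ \Sigma\ }\cdots\bigr)\,,$$
and assemble these into a prederivator $\St(\bbD)$, together with the canonical morphism $\stab\colon\bbD\to\St(\bbD)$ given by the inclusion of the zeroth term. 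This levelwise colimit behaves well because filtered $2$-colimits of categories are computed as ordinary filtered colimits on objects and morphisms.

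The core of the argument is then to check that $\St(\bbD)$ is a strong triangulated derivator. For the derivator axioms, given $u\colon X\to Y$ in $\Cat$, the functor $u^\ast$ and its left adjoint $u_!$ commute with $\Sigma$ and therefore descend to $\St(\bbD)$ together with their adjunction; the delicate point is the cohomological direct image $u_\ast$, and this is exactly where regularity of $\bbD$ enters, guaranteeing that the relevant homotopy limits commute with the sequential homotopy colimit defining $\St(\bbD)$, so that $u_\ast$ and the required base-change isomorphisms descend as well. Strongness follows from strongness of $\bbD$ together with the elementary fact that a filtered colimit of functors which are full and essentially surjective is again full and essentially surjective. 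Finally, $\St(\bbD)$ is pointed (a filtered colimit of pointed categories along pointed functors is pointed), and the suspension functor on $\St(\bbD)$ is, by construction, the shift of the colimit diagram, hence an equivalence, with quasi-inverse induced by $\Omega$ — here regularity is used once more, since $\Omega$ is a finite homotopy limit and so commutes with the sequential homotopy colimit. A pointed derivator on which $\Sigma$ is an equivalence is stable (see \S\ref{sub:properties}), so $\St(\bbD)$ is triangulated.

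It remains to verify the universal property: for every strong triangulated derivator $\bbT$, the functor $\stab^{\ast}\colon\HomC(\St(\bbD),\bbT)\to\HomC(\bbD,\bbT)$ is an equivalence. Given a homotopy-colimit-preserving morphism $F\colon\bbD\to\bbT$, stability of $\bbT$ makes $\Sigma$ invertible on $\bbT$, so $F$ extends uniquely (up to unique $2$-isomorphism) to a cocone on the tower $\bbD\xrightarrow{\Sigma}\bbD\xrightarrow{\Sigma}\cdots$ with $n$-th leg $\Sigma_{\bbT}^{-n}\circ F$; by the universal property of the $2$-colimit this yields a unique homotopy-colimit-preserving $\widetilde F\colon\St(\bbD)\to\bbT$ with $\widetilde F\circ\stab\simeq F$, and the same argument applied to morphisms between such $F$'s shows full faithfulness of $\stab^{\ast}$. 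This establishes the equivalence, hence Heller's theorem.

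I expect the main obstacle to be the derivator axioms for $\St(\bbD)$, specifically producing the cohomological direct images $u_\ast$ for \emph{arbitrary} $u\colon X\to Y$ — regularity controls only finite homotopy limits, so one must either exploit that the verification of these axioms reduces, via standard dévissage in $\Cat$, to diagrams for which the needed homotopy limits commute with the sequential colimit, or else adopt Heller's more refined model of $\St(\bbD)$ as a sub-prederivator of the derivator of \emph{spectrum objects} in $\bbD$, where the global Kan extensions are inherited directly. The remaining verifications (strongness, and the cartesian $\Leftrightarrow$ cocartesian condition passing to the colimit) are routine once the compatibility of sequential homotopy colimits with finite homotopy limits is in place.
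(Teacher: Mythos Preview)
The paper does not give a proof of this theorem: it is stated with attribution to Heller and a citation to \cite{Heller}, and the text proceeds immediately to the next topic. So there is nothing in the paper to compare your proposal against.

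That said, your sketch is essentially Heller's approach, so it is consistent with what the citation points to. One caution: you correctly flag the main difficulty yourself, namely the existence of cohomological direct images $u_\ast$ for arbitrary $u$, since regularity only buys you commutation of sequential homotopy colimits with \emph{finite} homotopy limits. Heller's actual argument in \cite{Heller} does not proceed via the naive levelwise filtered $2$-colimit you describe first, but rather constructs $\St(\bbD)$ inside the derivator of spectrum-like diagrams (as you allude to in your final paragraph), precisely to inherit the full derivator structure. If you were to write this up in earnest, you should start from that model rather than the tower colimit, since the reduction you hope for (``standard d\'evissage in $\Cat$'') is not straightforward to carry out.
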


Given a pointed simplicial model category $\cM$,
the second named author compared in \cite[\S8]{Duke} the
derivator associated to the model category $\Spt^{\bbN}(\cM)$ of
${S^1}$-spectra on $\cM$ with the stabilization of
the derivator associated to the model category $\cM$.

\begin{proposition}\label{prop:comparison}
Let $\cM$ be a pointed, simplicial, left proper, cellular model category.
Assume that sequential homotopy colimits commute with finite products
and homotopy pullbacks. Then, the induced morphism of triangulated strong derivators
$$ \St(\HO(\cM)) \stackrel{\sim}{\too} \HO(\Spt^{\bbN}(\cM))$$
is an equivalence.
\end{proposition}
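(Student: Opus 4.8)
The plan is to show that $\HO(\Spt^{\bbN}(\cM))$ satisfies the universal property of the stabilization of $\HO(\cM)$; since such a stabilization exists by Heller's theorem, the canonical comparison morphism then corepresents an isomorphism of $2$-functors and is therefore an equivalence. First I would record that under the stated hypotheses $\HO(\cM)$ is pointed (as $\cM$ is), strong (as $\cM$ is a model category, by \cite[Proposition~2.15]{catder}) and regular in the sense of \S\ref{sub:properties} (sequential homotopy colimits commute with finite products and homotopy pullbacks in $\cM$, hence in $\HO(\cM)$), so that Theorem~\ref{thm:Heller} yields a stabilization $\stab\colon\HO(\cM)\to\St(\HO(\cM))$. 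On the other hand, by Hovey~\cite{HoveyS} the category $\Spt^{\bbN}(\cM)$ of $S^1$-spectra carries a stable, left proper, cellular, simplicial model structure, so $\HO(\Spt^{\bbN}(\cM))$ is a triangulated strong derivator, and the left Quillen suspension spectrum functor $\Sigma^\infty\colon\cM\to\Spt^{\bbN}(\cM)$ induces a homotopy colimit preserving morphism $\HO(\cM)\to\HO(\Spt^{\bbN}(\cM))$, which factors uniquely through $\theta\colon\St(\HO(\cM))\to\HO(\Spt^{\bbN}(\cM))$. It remains to prove that $\theta$ is an equivalence.

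For this I would check directly that, for every triangulated strong derivator $\bbT$, the restriction functor $(\Sigma^\infty)^{\ast}\colon\HomC(\HO(\Spt^{\bbN}(\cM)),\bbT)\to\HomC(\HO(\cM),\bbT)$ is an equivalence. The structural input is the presentation of $\Spt^{\bbN}(\cM)$ as the left Bousfield localization of the projective model structure on sequences $(X_n)_{n\in\bbN}$ with bonding maps $X_n\otimes S^1\to X_{n+1}$: denoting by $F_n\colon\cM\to\Spt^{\bbN}(\cM)$ the $n$-th shift desuspension functor ($F_0=\Sigma^\infty$), one has natural isomorphisms $\bbL F_{n+1}(-\otimes S^1)\simeq\bbL F_n(-)$, and every spectrum $E$ is canonically the homotopy colimit of the sequence $\bbL F_0 E_0\to\bbL F_1 E_1\to\cdots$ built from its bonding maps. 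Since any homotopy colimit preserving $G\colon\HO(\Spt^{\bbN}(\cM))\to\bbT$ commutes with $-\otimes S^1$, which is invertible in $\bbT$ with inverse $\Omega_{\bbT}$, one gets $G\circ\bbL F_n\simeq\Omega_{\bbT}^{\,n}\circ G\circ\Sigma^\infty$; combined with the colimit description this shows $G$ is determined up to canonical isomorphism by its restriction along $\Sigma^\infty$, giving full faithfulness of $(\Sigma^\infty)^{\ast}$. For essential surjectivity, given $F\colon\HO(\cM)\to\bbT$ homotopy colimit preserving I would send a spectrum $E$ to $\hocolim_n\Omega_{\bbT}^{\,n}F(E_n)$ with transition maps induced from the bonding maps of $E$, carried out functorially over all diagram categories, and then check that the resulting $\tilde F$ preserves homotopy colimits (using regularity of $\bbT$, exactness of $\Omega_{\bbT}$, and interchange of homotopy colimits) and that $\tilde F\circ\Sigma^\infty\simeq F$ (because $\Sigma^\infty X$ has $n$-th term $X\otimes S^n$ with identity bonding maps, so $\tilde F(\Sigma^\infty X)\simeq\hocolim_n\Omega_{\bbT}^{\,n}\Sigma_{\bbT}^{\,n}F(X)\simeq F(X)$, all transition maps being isomorphisms).

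The hard part will be making the constructions and verifications of the essential surjectivity step precise \emph{at the level of derivators} --- over every base category $X$, with all the coherences of the pseudo-functoriality --- rather than merely on underlying homotopy categories; this is exactly what is carried out in \cite[\S8]{Duke}, to which I would ultimately reduce. One should also note that the model $\Spt^{\bbN}(\cM)$ requires $\cM$ to be left proper, cellular and simplicial, which explains those hypotheses, while the hypothesis on sequential homotopy colimits is used both to invoke Theorem~\ref{thm:Heller} and to guarantee that $\tilde F$ above preserves homotopy colimits.
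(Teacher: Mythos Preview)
Your proposal is correct and ultimately reduces to the same source as the paper: the paper's proof is simply the citation ``See \cite[Theorem~8.7]{Duke}'', and you yourself acknowledge that the coherence work at the derivator level is ``exactly what is carried out in \cite[\S8]{Duke}''. So there is no genuine difference in approach --- you have provided a helpful sketch of the strategy behind that reference (verify the universal property of stabilization for $\HO(\Spt^{\bbN}(\cM))$ via the shifted suspension functors $F_n$ and the sequential colimit presentation of spectra), but the substance is the same citation.

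One small remark: where you invoke ``regularity of $\bbT$'' in the essential surjectivity step, note that $\bbT$ is only assumed triangulated and strong, not regular. This is harmless, since in a triangulated derivator $\Omega_{\bbT}$ is an equivalence (hence commutes with all homotopy colimits), and that is all you actually need for the argument; but you should phrase it that way rather than appealing to regularity of $\bbT$.
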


\begin{proof}
See \cite[Theorem~8.7]{Duke}.
\end{proof}

Let $\bbD$ be a pointed strong derivator (see \S\ref{sub:properties})
and $S$ be a class of morphisms in $\bbD(e)$. Assume that $\bbD$ admits a left Bousfield localization
$\Loc_{S}\bbD$ with respect to~$S$; see \S\ref{sub:leftBousfield}.
Assume also that the stablization $\St(\bbD)$ of $\bbD$ exists.

We then have two homotopy colimit preserving morphisms\,:
$$\Loc_S\bbD\xleftarrow{\ \gamma \ }\bbD\xrightarrow{\stab}\St(\bbD)\, .$$
By examining the relevant universal properties, we obtain the following result.

\begin{proposition}\label{commut}
Under the above assumptions, the derivator
$\Loc_{\Omega(\stab(S))}\St(\bbD)$ exists if and only if the derivator
$\St(\Loc_S\bbD)$ exists. Moreover, if this is the case, then
$\Loc_{\Omega(\stab(S))}\St(\bbD)\simeq \St(\Loc_S\bbD)$
under $\bbD$.
\end{proposition}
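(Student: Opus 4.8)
The statement asserts a commutation between left Bousfield localization and stabilization, together with the non-trivial equivalence of the iterated constructions. My approach is entirely formal: I want to reduce everything to the universal properties of $\Loc_S(-)$ (see \S\ref{sub:leftBousfield}), of $\St(-)$ (see \S\ref{sub:Heller}), and of the triangulated refinement via $\Omega(-)$ (see Proposition~\ref{prop:omega}). The key observation is that all four derivators in play — $\bbD$, $\Loc_S\bbD$, $\St(\bbD)$, and the two candidate iterates — corepresent categories of homotopy colimit preserving morphisms subject to extra conditions (sending certain classes of maps to isomorphisms, and/or being triangulated), and a Yoneda-type argument then forces the identification. So the plan is: first characterise, for an arbitrary triangulated strong derivator $\bbT$, the category $\HomC(X,\bbT)$ for each of the two candidates $X$; then observe these two characterisations coincide; then conclude by the uniqueness of objects corepresenting the same $2$-functor on triangulated strong derivators.

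\textbf{Step 1: Unwind $\St(\Loc_S\bbD)$.} Assume $\St(\Loc_S\bbD)$ exists. For any triangulated strong derivator $\bbT$, its universal property gives
$$\HomC(\St(\Loc_S\bbD),\bbT)\simeq \HomC(\Loc_S\bbD,\bbT)\,,$$
and then the universal property of the Bousfield localization $\gamma:\bbD\to\Loc_S\bbD$ identifies the right-hand side with $\uHom_{!,S}(\bbD,\bbT)$, the category of homotopy colimit preserving morphisms $\bbD\to\bbT$ sending $S$ to isomorphisms. Since $\bbT$ is triangulated, every such morphism commutes with $\Omega$, so it sends $S$ to isomorphisms if and only if it sends $\Omega(S)$ — and hence $\stab(\Omega(S))$ pushed forward along $\stab$ — to isomorphisms; more precisely, factoring through $\stab:\bbD\to\St(\bbD)$ via the universal property of $\St(\bbD)$, we get
$$\uHom_{!,S}(\bbD,\bbT)\simeq \uHom_{!,\,\stab(S)}(\St(\bbD),\bbT)=\uHom_{!,\,\Omega(\stab(S))}(\St(\bbD),\bbT)\,,$$
the last equality again by Proposition~\ref{prop:omega} (applied with $\bbD$ replaced by $\St(\bbD)$).

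\textbf{Step 2: Unwind $\Loc_{\Omega(\stab(S))}\St(\bbD)$ and conclude.} Symmetrically, assume $\Loc_{\Omega(\stab(S))}\St(\bbD)$ exists; since $\Omega(\stab(S))$ is by construction stable under the loop functor, \cite[Lemma~4.3]{Duke} guarantees this localization is again triangulated (and it is strong, being a Bousfield localization of a strong derivator; cf.\ Remark~\ref{rk:regular} and the strongness of $\St(\bbD)$). Its universal property gives, for any triangulated strong derivator $\bbT$,
$$\HomC(\Loc_{\Omega(\stab(S))}\St(\bbD),\bbT)\simeq \uHom_{!,\,\Omega(\stab(S))}(\St(\bbD),\bbT)\,,$$
which is exactly the category appearing at the end of Step~1. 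Thus $\St(\Loc_S\bbD)$ and $\Loc_{\Omega(\stab(S))}\St(\bbD)$ corepresent the same $2$-functor $\bbT\mapsto \uHom_{!,\,\Omega(\stab(S))}(\St(\bbD),\bbT)$ on the $2$-category of triangulated strong derivators, compatibly with the structure morphisms from $\bbD$; by the usual $2$-categorical Yoneda argument they are canonically equivalent under $\bbD$. The ``if and only if'' for existence is obtained by running the same bookkeeping in reverse: if $\St(\Loc_S\bbD)$ exists it corepresents the stated $2$-functor, which is then corepresentable, i.e.\ the target derivator $\Loc_{\Omega(\stab(S))}\St(\bbD)$ exists (and conversely), since corepresentability of this precise $2$-functor is literally the defining property of each side.

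\textbf{Main obstacle.} The genuinely delicate point is not the formal Yoneda shuffle but keeping the \emph{property-level} hypotheses consistent throughout: one must check that at each stage the derivator in sight is still strong (so that ``triangulated strong'' is the right test class and Heller's Theorem~\ref{thm:Heller} applies where needed), that $\Omega(\stab(S))$ is genuinely loop-stable so the localized derivator stays triangulated, and that the reindexing $\uHom_{!,S}(\bbD,\bbT)\simeq\uHom_{!,\stab(S)}(\St(\bbD),\bbT)$ is an \emph{equivalence} and not merely a fully faithful inclusion — this uses that $\bbT$ triangulated forces any homotopy colimit preserving $\bbD\to\bbT$ killing $S$ to factor (uniquely) through $\stab$, which is precisely where strongness and the universal property of stabilization are jointly invoked. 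Once these compatibilities are pinned down, the equivalence and the biconditional fall out formally, so the proof can reasonably be left to the reader as the author does.
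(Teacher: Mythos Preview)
Your proposal is correct and matches the paper's approach exactly: the paper gives no detailed proof, stating only ``By examining the relevant universal properties, we obtain the following result,'' and your Steps~1--2 are precisely that examination carried out in full. The chain of equivalences
\[
\HomC(\St(\Loc_S\bbD),\bbT)\simeq\HomC(\Loc_S\bbD,\bbT)\simeq\uHom_{!,S}(\bbD,\bbT)
\simeq\uHom_{!,\stab(S)}(\St(\bbD),\bbT)=\uHom_{!,\Omega(\stab(S))}(\St(\bbD),\bbT)
\]
is exactly right, and your invocation of Proposition~\ref{prop:omega} and \cite[Lemma~4.3]{Duke} at the appropriate points is on target.

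One small remark on your existence biconditional: you write that ``corepresentability of this precise $2$-functor is literally the defining property of each side,'' but strictly speaking the defining universal property of $\Loc_{\Omega(\stab(S))}\St(\bbD)$ quantifies over \emph{all} derivators $\bbD'$, whereas the universal property of $\St(\Loc_S\bbD)$ quantifies only over \emph{triangulated strong} derivators $\bbT$. The direction ``$\Loc_{\Omega(\stab(S))}\St(\bbD)$ exists $\Rightarrow$ $\St(\Loc_S\bbD)$ exists'' is therefore immediate (restrict the test class), but the converse requires a word: one should observe that the induced morphism $\St(\bbD)\to\St(\Loc_S\bbD)$ admits a fully faithful right adjoint (coming from the right adjoint to $\gamma$), which forces it to be a left Bousfield localization in the sense of \S\ref{sub:leftBousfield} at the class of maps it inverts, and then identify that class. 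This is the sort of routine verification the paper is content to sweep under ``examining the relevant universal properties,'' and you flag it appropriately in your ``Main obstacle'' paragraph, so the proposal is fine as written.
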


\begin{corollary}\label{stabcombmodcat}
Let $\cM$ be a pointed left proper combinatorial model category.
Then the stabilization $\St(\HO(\cM))$ of $\HO(\cM)$
exists and is equivalent to the
derivator associated to a stable combinatorial model
category.
\end{corollary}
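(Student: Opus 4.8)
The plan is to reduce to the case of pointed simplicial presheaves, where both stabilization and left Bousfield localization are well understood, and then to exchange the order of these two operations by means of Proposition~\ref{commut}. First I would invoke Dugger's presentation theorem~\ref{dugger1}: there are a small category $A$ and a small set $S$ of maps in $\mathsf{Hot}_A(e)$ with $\HO(\cM)\simeq\Loc_S\mathsf{Hot}_A$. Since $\cM$ is pointed, $\HO(\cM)$ is a pointed derivator, so $\HO(\cM)\simeq\HO(\cM)_\bullet$; using that the formation of the universal pointed derivator commutes with left Bousfield localization (a formal consequence of the two universal properties, both being expressed through homotopy colimit preserving morphisms), this would give $\HO(\cM)\simeq\Loc_{S_\bullet}\mathsf{Hot}_{A,\bullet}$, where $\mathsf{Hot}_{A,\bullet}=(\mathsf{Hot}_A)_\bullet\simeq\HO(\spref A_\bullet)$ is the derivator of the model category $\spref A_\bullet$ of pointed simplicial presheaves on $A$, and $S_\bullet=\{f_+ : f\in S\}$ is again a small set. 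The point of this reformulation is that $\spref A_\bullet$ is a pointed, simplicial, left proper, cellular combinatorial model category in which sequential homotopy colimits commute with finite products and homotopy pullbacks (all inherited from $\spref A$), so that $\mathsf{Hot}_{A,\bullet}$ is a pointed, regular, strong derivator (see \S\ref{sub:properties} and \S\ref{sub:Heller}).

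Next I would stabilize this presentation. By Heller's Theorem~\ref{thm:Heller} the stabilization $\St(\mathsf{Hot}_{A,\bullet})$ exists, and by Proposition~\ref{prop:comparison} it is equivalent to $\HO(\Spt^{\bbN}(\spref A_\bullet))$, the derivator of the stable combinatorial model category of $S^1$-spectra on $\spref A_\bullet$. Writing $\stab$ for the stabilization morphism, I would then consider the smallest class $\Omega(\stab(S_\bullet))$ of maps in $\St(\mathsf{Hot}_{A,\bullet})(e)$ that contains $\stab(S_\bullet)$ and is stable under the loop functor; this is again a small set, being a countable union of small sets. Since $\St(\mathsf{Hot}_{A,\bullet})$ is (equivalent to) the derivator of a combinatorial model category, the left Bousfield localization $\Loc_{\Omega(\stab(S_\bullet))}\St(\mathsf{Hot}_{A,\bullet})$ exists (remark following Theorem~\ref{dugger1}), and by the combinatorial version of Theorem~\ref{thm:Quillenloc} it is the derivator of $\Loc_{\Omega(\stab(S_\bullet))}\Spt^{\bbN}(\spref A_\bullet)$; this model category is again combinatorial, and it is stable because one localizes at a set closed under the loop functor (cf. Proposition~\ref{prop:omega} and \cite[Lemma~4.3]{Duke}).

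Finally I would apply Proposition~\ref{commut} to $\bbD=\mathsf{Hot}_{A,\bullet}$ and $S_\bullet$: its hypotheses hold, since $\bbD$ is pointed and strong, $\Loc_{S_\bullet}\bbD$ exists, and $\St(\bbD)$ exists. As $\Loc_{\Omega(\stab(S_\bullet))}\St(\bbD)$ was just shown to exist, the proposition yields that $\St(\Loc_{S_\bullet}\bbD)$ exists and is equivalent, over $\bbD$, to $\Loc_{\Omega(\stab(S_\bullet))}\St(\bbD)$. Combining this with the identifications $\Loc_{S_\bullet}\mathsf{Hot}_{A,\bullet}\simeq\HO(\cM)$ and $\St(\mathsf{Hot}_{A,\bullet})\simeq\HO(\Spt^{\bbN}(\spref A_\bullet))$, I would conclude that $\St(\HO(\cM))$ exists and is equivalent to $\HO(\Loc_{\Omega(\stab(S_\bullet))}\Spt^{\bbN}(\spref A_\bullet))$, the derivator associated to a stable combinatorial model category.

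The step I expect to be the main obstacle is the bookkeeping around the pointedness: one must secure a Dugger-type presentation of $\HO(\cM)$ by pointed simplicial presheaves (equivalently, the commutation of $(-)_\bullet$ with left Bousfield localization), and one must verify that $\spref A_\bullet$ inherits from $\spref A$ exactly the properties (left properness, cellularity, and the commutation of sequential homotopy colimits with finite products and homotopy pullbacks) needed to apply Theorem~\ref{thm:Heller} and Proposition~\ref{prop:comparison}. A secondary point requiring care is to confirm that $\Loc_{\Omega(\stab(S_\bullet))}\Spt^{\bbN}(\spref A_\bullet)$ remains simultaneously stable and combinatorial, \ie that stability survives localization at a loop-closed set and that combinatoriality survives Bousfield localization.
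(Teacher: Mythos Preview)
Your proposal is correct and follows essentially the same route as the paper: invoke Dugger's presentation theorem, pass to the pointed setting, and then use Proposition~\ref{commut} to swap stabilization and left Bousfield localization, with Proposition~\ref{prop:comparison} identifying $\St(\mathsf{Hot}_{A,\bullet})$ with the derivator of $S^1$-spectra. The paper's proof is extremely terse --- it simply asserts $\HO(\cM)\simeq\Loc_S\mathsf{Hot}_{A,\bullet}$ and says one may reduce to the case $S=\varnothing$ --- whereas you have spelled out the intermediate steps (the passage from $\mathsf{Hot}_A$ to $\mathsf{Hot}_{A,\bullet}$, the explicit construction of $\Omega(\stab(S_\bullet))$, and the verification that the resulting model category is both stable and combinatorial). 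The obstacle you flag, namely the bookkeeping around pointedness, is exactly the step the paper leaves implicit; your treatment of it via the universal property of $(-)_\bullet$ is the natural way to fill this in.
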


\begin{proof}
Thanks to Theorem \ref{dugger1}
we may assume that $\HO(\cM)\simeq \Loc_S\mathsf{Hot}_{A,\bullet}$.
Using Proposition \ref{commut} and Theorem \ref{thm:Quillenloc},
we see it is sufficient to treat the case where $S$ is the empty set. Proposition~\ref{prop:comparison} allow us then to conclude the proof.
\end{proof}

\begin{remark}
A careful analysis of the proof of the Corollary~\ref{stabcombmodcat}
will lead to a proof of Proposition \ref{prop:comparison}
for any simplicial combinatorial model category $\cM$. Note that since any combinatorial model category is
equivalent to a simplicial one, we conclude the existence of stabilizations for any derivator associated to
a combinatorial model category; however, we will not need
this level of generality.
\end{remark}

\begin{theorem}\label{stableDay}
Let $A$ be a small symmetric monoidal category.
Then there is a unique symmetric monoidal structure
on the triangulated derivator $\St(\mathsf{Hot}_{A,\bullet})$
whose tensor product preserves homotopy colimits in each
variables, such that the composed morphism
$$\underline{A}\too \mathsf{Hot}_{A}\too\St(\mathsf{Hot}_{A,\bullet})$$
is symmetric monoidal. Moreover, given any strong
triangulated derivator $\bbD$, the induced functor
$$\uHom^\otimes_!(\St(\mathsf{Hot}_{A,\bullet}),\bbD)
\stackrel{\sim}{\too} \uHom^\otimes (\underline{A},\bbD)$$
is an equivalence of categories.
\end{theorem}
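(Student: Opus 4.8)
The plan is to deduce Theorem~\ref{stableDay} by concatenating three monoidal universal properties: the Day convolution completion (Theorem~\ref{monoidalderKanext}), the universal pointed derivator (\S\ref{sub:Heller}), and the stabilization (Theorem~\ref{thm:Heller}). First I would exhibit the symmetric monoidal structure concretely. By Theorem~\ref{thm:Mprojective} the category $\spref A$ is a symmetric monoidal model category; by Proposition~\ref{prop:Mprojectivepointed} so is $\spref A_\bullet$, with $F\mapsto F_+$ a symmetric monoidal left Quillen functor; and by \S\ref{chap:stab} the category $\Sp(\spref A_\bullet)$ of symmetric $S^1\otimes\mathbf 1$-spectra is a symmetric monoidal model category with $\Sigma^\infty$ symmetric monoidal left Quillen. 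Applying Proposition~\ref{prop:Mderivator} to the composite $\spref A\to\spref A_\bullet\to\Sp(\spref A_\bullet)$, and identifying $\HO(\Sp(\spref A_\bullet))$ with $\St(\mathsf{Hot}_{A,\bullet})$ via Theorem~\ref{thm:main-Hovey} and Corollary~\ref{stabcombmodcat} (or Proposition~\ref{prop:comparison}), I obtain a symmetric monoidal morphism $\mathsf{Hot}_A\to\St(\mathsf{Hot}_{A,\bullet})$ whose tensor product preserves homotopy colimits in each variable. Precomposing with the symmetric monoidal Yoneda morphism $\underline A\to\mathsf{Hot}_A$ of \S\ref{sub:biKanext} gives the desired symmetric monoidal morphism $\underline A\to\St(\mathsf{Hot}_{A,\bullet})$; by Theorem~\ref{monoidalderKanext} the monoidal structure induced on $\mathsf{Hot}_A$ is the unique one extending that of $A$, so all of these are compatible.

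For the universal property, let $\bbD$ be a strong triangulated derivator equipped with a symmetric monoidal structure preserving homotopy colimits in each variable. I would chain three equivalences of categories, each a monoidal refinement of the corresponding colimit-preserving universal property: (i) since $\bbD$ is strong and triangulated, the universal property of stabilization gives $\uHom^\otimes_!(\St(\mathsf{Hot}_{A,\bullet}),\bbD)\simeq\uHom^\otimes_!(\mathsf{Hot}_{A,\bullet},\bbD)$; (ii) since $\bbD$ is pointed, the universal property of the pointed derivator gives $\uHom^\otimes_!(\mathsf{Hot}_{A,\bullet},\bbD)\simeq\uHom^\otimes_!(\mathsf{Hot}_A,\bbD)$; (iii) Theorem~\ref{monoidalderKanext} gives $\uHom^\otimes_!(\mathsf{Hot}_A,\bbD)\simeq\uHom^\otimes(\underline A,\bbD)$. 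Composing (i)--(iii) yields the asserted equivalence, and feeding $\bbD=\St(\mathsf{Hot}_{A,\bullet})$ back in (and tracing the identity) shows that the monoidal structure is unique with the stated properties.

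Steps (i) and (ii) I would prove by mimicking the proof of Theorem~\ref{monoidalderKanext}: a symmetric monoidal structure on a derivator is equivalent to a coherent system of $n$-ary morphisms that preserve homotopy colimits in each variable, and both the pointing and the stabilization universal properties --- being statements about colimit-preserving morphisms --- propagate to such multimorphisms, using that the internal hom derivator $\mathbf{Hom}_!(-,\bbD)$ is pointed (resp. triangulated) whenever $\bbD$ is, and then invoking the relevant universal property one variable at a time. An alternative route I would keep in reserve is to present $\St(\mathsf{Hot}_{A,\bullet})$ as an iterated left Bousfield localization of a Day convolution derivator (via Theorem~\ref{dugger1}, Corollary~\ref{stabcombmodcat} and Proposition~\ref{commut}) and then apply Proposition~\ref{derivatorleftlocmonoidal}, after checking that the localizing set is stable under tensoring with the relevant (invertible) objects.

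The main obstacle I anticipate is step (i): one must reconcile the abstract monoidal structure produced by the universal properties with the concrete Day convolution structure carried by symmetric spectra, and check that the latter really does satisfy the universal property among \emph{all} strong triangulated symmetric monoidal derivators, not merely those presented by symmetric monoidal model categories. This is exactly where the passage from ordinary to symmetric spectra is essential, and I expect to rely on Theorem~\ref{thm:main-Hovey} --- hence ultimately on the triviality of the cyclic permutation action on $(S^1\otimes\mathbf 1)^{\otimes 3}$ --- to guarantee that stabilization is compatible with the monoidal structure and that $\Sp(\spref A_\bullet)$ presents the universal object. The pointing step, the concluding Day convolution step, and the coherence bookkeeping for multimorphisms should then be routine, given the machinery already developed in this appendix.
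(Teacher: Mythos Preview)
Your proposal is correct and follows essentially the same strategy as the paper --- both rely on the non-monoidal universal properties (Kan extension, pointing, stabilization) together with the internal-hom/multimorphism argument from the proof of Theorem~\ref{monoidalderKanext}. The organizational difference is that the paper is more economical: it first composes the non-monoidal universal properties into a single equivalence $\uHom_!(\St(\mathsf{Hot}_{A,\bullet}),\bbD)\simeq\uHom(\underline{A},\bbD)\simeq\bbD(A^\op)$, and then reruns the proof of Theorem~\ref{monoidalderKanext} \emph{mutatis mutandis} once against this composite. You instead establish the monoidal refinement at each of the three stages separately, which works but triples the bookkeeping. A second difference is that you begin by constructing the monoidal structure concretely via symmetric spectra and then verify the universal property, whereas the paper derives existence and uniqueness of the monoidal structure directly from the universal property itself, relegating the identification with the symmetric-spectrum model to a subsequent remark; this makes your worry about reconciling the ``abstract'' and ``concrete'' monoidal structures in step~(i) unnecessary --- uniqueness handles it automatically.
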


\begin{proof}
It follows immediately from
Theorem \ref{derKanext1} and from the universal property
of $\St(\mathsf{Hot}_{A,\bullet})$ that,
for any small category $A$ and any strong triangulated
derivator $\bbD$, we have canonical equivalences of categories:
$$\uHom_!(\St(\mathsf{Hot}_{A,\bullet}),\bbD)\simeq
\uHom(\underline{A},\bbD)\simeq \bbD(A^\op) \, .$$
Starting from this point on, the proof of Theorem \ref{monoidalderKanext}
holds here \emph{mutatis mutandis}.
\end{proof}

\begin{remark}
The derivator $\St(\mathsf{Hot}_{A,\bullet})$ is equivalent
to the derivator associated to the model category of
symmetric spectra in the category of pointed simplicial
presheaves on $A$. This equivalence defines a symmetric monoidal
structure on $\St(\mathsf{Hot}_{A,\bullet})$, making the
morphism
$$\underline{A}\too \mathsf{Hot}_{A}\too\St(\mathsf{Hot}_{A,\bullet})$$
symmetric monoidal; see Proposition~\ref{prop:Mprojectivepointed}, Theorem~\ref{thm:main-Hovey},
and Proposition~\ref{prop:comparison}. This monoidal structure coincides with the one of
Theorem~\ref{stableDay}, thanks to the uniqueness of the latter.
\end{remark}

\subsection{Spectral enrichment}\label{sub:spectral-enr}
Recall from \cite[Appendix~A.3]{CT} that any triangulated derivator
$\bbD$ is canonically enriched over spectra, \ie we have a morphism of derivators
$$ \bbR\Hom(-,-): \bbD^\op \times \bbD \too \HO(\Spt^{\bbN})\,.$$
Moreover, this enrichment over spectra is compatible with adjunctions\,: given an adjunction
$$
\xymatrix{
\bbD' \ar@<1ex>[d]^{\Psi} \\
\bbD \ar@<1ex>[u]^{\Phi}
}
$$
we have a canonical isomorphism in the stable homotopy category of spectra\,:
\begin{eqnarray*}
\bbR\Hom_{\bbD'}(\Phi  X,Y) \simeq \bbR \Hom_{\bbD}(X, \Psi Y) && X \in \bbD,\,\, Y \in \bbD'\,.
\end{eqnarray*}
\backmatter

\end{document}